\setlist[itemize]{font = \upshape, before = \leavevmode}
\setlist[enumerate]{font = \upshape, before = \leavevmode}
\setlist[description]{before = \leavevmode}
\numberwithin{equation}{section}
\theoremstyle{plain}
\newtheorem{theorem}{Theorem}[section]
\newtheorem{lemma}[theorem]{Lemma}
\newtheorem{proposition}[theorem]{Proposition}
\newtheorem{corollary}[theorem]{Corollary}
\theoremstyle{definition}
\newtheorem{definition}[theorem]{Definition}
\newtheorem{example}[theorem]{Example}
\theoremstyle{remark}
\newtheorem{remark}[theorem]{Remark}
\title{Cousin complexes in motivic homotopy theory}
\author{Andrei Druzhinin}
\address{Chebyshev Laboratory, St. Petersburg State University  \& 
St. Petersburg Department of Steklov Mathematical Institute of Russian Academy of Sciences, Russia}
\email{\href{mailto:andrei.druzh@gmail.com}{andrei.druzh@gmail.com}}
\author{H{\aa}kon Kolderup}
\address{Department of Mathematics, University of Oslo, Norway}
\email{\href{mailto:hakon.kolderup@hotmail.com}{hakon.kolderup@hotmail.com}}
\author{Paul Arne {\O}stv{\ae}r}
\address{Department of Mathematics Federigo Enriques, University of Milan, Italy \& 
Department of Mathematics, University of Oslo, Norway}
\email{\href{mailto:paul.oestvaer@unimi.it}{paul.oestvaer@unimi.it} \&
\href{mailto:paularn@math.uio.no}{paularne@math.uio.no}}
\date{}
\newcommand{\Et}{\mathrm{\acute{E}t}}
\newcommand{\nis}{\mathrm{Nis}}
\newcommand{\zar}{\mathrm{Zar}}
\newcommand{\szar}{\mathrm{sZar}}
\newcommand{\Nis}{\mathrm{Nis}}
\newcommand{\tf}{\mathrm{tf}}
\newcommand{\zf}{\mathrm{zf}}
\newcommand{\szf}{\mathrm{szf}}
\newcommand{\Sm}{\mathrm{Sm}}
\newcommand{\EssSm}{\mathrm{EssSm}}
\newcommand{\Sch}{\mathrm{Sch}}
\newcommand{\Aff}{\mathrm{Aff}}
\newcommand{\Ab}{\mathbf{Ab}}
\newcommand{\SH}{\mathbf{SH}}
\newcommand{\A}{\mathbb A}
\newcommand{\Gm}{\mathbb G_m}
\newcommand{\ZF}{\mathbb Z\mathrm{F}}
\newcommand{\Fr}{\mathrm{Fr}}
\newcommand{\cFr}{\mathrm{cFr}}
\newcommand{\calO}{\mathcal{O}}
\newcommand{\gp}{\mathrm{gp}}
\newcommand{\fr}{\mathrm{fr}}
\newcommand{\Corr}{\mathrm{Corr}}
\newcommand{\Pre}{\mathrm{Pre}}
\newcommand{\Shv}{\mathrm{Shv}_\bullet}
\DeclareMathOperator{\codim}{codim}
\DeclareMathOperator{\cofib}{cofib}
\DeclareMathOperator{\fib}{fib}
\newcommand{\Smat}{\Sm^\mathrm{cci}}
\newcommand{\SmAff}{\mathrm{SmAff}}
\newcommand{\catPre}{\mathrm{Pre}}
\newcommand{\cPre}{\catPre}
\newcommand{\cPrefr}{\catPre^\fr}
\newcommand{\catSpt}{\mathrm{Spt}}
\newcommand{\cSpt}{\catSpt}
\newcommand{\cSptfr}{\catSpt^\fr}
\newcommand{\cSpts}{\catSpt}
\newcommand{\cSptst}{\cSpt^{s,t}}
\newcommand{\cSptsfr}{\catSpt^{\fr}}
\newcommand{\cSptstfr}{\cSpt^{\fr,s,t}}
\newcommand{\cSptPfr}{\cSpt^{\fr,\PP^1}}
\newcommand{\cPretf}{\cPre_\tf}
\newcommand{\cSptstf}{\catSpt_\tf}
\newcommand{\cPrefrtf}{\cPre^\fr_\tf}
\newcommand{\cSptsfrtf}{\cSptsfr_\tf}
\newcommand{\cSptstfrtf}{\cSptstfr_\tf}
\newcommand{\OmegaSigma}{\Theta}
\newcommand{\cSptsfrAtau}{\cSpt^{\fr}_{\A^1,\tau}}
\newcommand{\cSptstfrAtau}{\cSpt^{\fr,s,t}_{\A^1,\tau}}
\newcommand{\cSptPfrAtau}{\cSpt^{\fr,\PP^1}_{\A^1,\tau}}
\newcommand{\cSptstAN}{\cSpt^{s,t}_{\A^1,\nis}}
\newcommand{\cSptstfrAN}{\cSpt^{\fr,s,t}_{\A^1,\nis}}
\newcommand{\PP}{\mathbb{P}}
\newcommand{\SHs}{\SH^{s}}
\newcommand{\SHfrs}{\SH^{\fr,s}}
\newcommand{\SHsA}{\SH_{\A^1}^{s}}
\newcommand{\SHsAtau}{\SH_{\A^1,\tau}^{s}}
\newcommand{\SHfrsAtau}{\SH_{\A^1,\tau}^{\fr,s}}
\newcommand{\SHstAtau}{\SH_{\A^1,\tau}^{s,t}}
\newcommand{\SHfrstAtau}{\SH_{\A^1,\tau}^{\fr,s,t}}
\newcommand{\SHPAtau}{\SH_{\A^1,\tau}^{\PP^1}}
\newcommand{\SHfrPAtau}{\SH_{\A^1,\tau}^{\fr,\PP^1}}
\newcommand{\SHsAtf}{\SH_{\A^1,\tf}^{s}}
\newcommand{\SHfrsAtf}{\SH_{\A^1,\tf}^{\fr,s}}
\newcommand{\SHfrAnis}{\SH_{\A^1,\nis}^{\fr}}
\newcommand{\SHfrsAnis}{\SH_{\A^1,\nis}^{\fr,s}}
\newcommand{\SHsAN}{\SH_{\A^1,\nis}^{s}}
\newcommand{\SHfrstAnis}{\SH_{\A^1,\nis}^{\fr,s,t}}
\newcommand{\SHPAN}{\SH_{\A^1,\nis}^{\PP^1}}
\newcommand{\Map}{\mathrm{Map}}
\newcommand{\Cou}{C^\bullet}
\newcommand{\ovCou}{\mathrm{Cou}^\bullet}
\newcommand{\Coubi}{C^{\bullet,\bullet}}
\newcommand{\Kom}{\mathrm{Ch}}
\newcommand{\biKom}{\Kom^{\mathrm{bi}}}
\newcommand{\TCat}{\mathcal{S}}
\newcommand{\hTCat}{\mathrm{h}\TCat}
\newcommand{\hF}{\calF} 
\newcommand{\KC}{\Kom(\hTCat)}
\newcommand{\biKC}{\biKom(\hTCat)}
\newcommand{\Tot}{\mathrm{Tot}}
\newcommand{\SHtop}{\SH^{\mathrm{top}}}
\newcommand{\calF}{\mathcal{F}}
\newcommand{\calsimeq}{\simeq}
\newcommand{\calL}{\mathcal{L}}
\newcommand{\ovS}{\overline{S}}
\newcommand{\ctfr}{\mathrm{fr}}
\newcommand{\id}{\mathrm{id}}
\newcommand{\ii}[1]{{ii^!_!}}
\newcommand{\jj}[1]{{jj^*_*}}
\newcommand{\iis}[1]{{ii^*_*}}
\newcommand{\jjs}[1]{{jj^{\#}_{\#}}}
\newcommand{\bbZ}{\mathbb Z}
\newcommand{\SHANis}{\SH_{\A^1,\Nis}}
\newcommand{\ovX}{\overline{X}}
\newcommand{\ovcalX}{\overline{\calX}}
\newcommand{\calX}{\mathcal X}
\newcommand{\calC}{\mathcal{C}}
\newcommand{\ovcalC}{\overline{\calC}}
\newcommand{\ovcalZ}{\overline{\calZ}}
\newcommand{\calZ}{\mathcal{Z}}
\newcommand{\SigmainftyT}{\Sigma^\infty_{\PP^1}}
\begin{document}

\begin{abstract}
We investigate Cousin (bi-)complexes in the setting of motives. 
For essentially smooth local schemes, 
the columns of the Cousin bicomplex with coefficients in any stable 
motivic homotopy type is shown to be acyclic.
On the other hand, 
we also construct a family of non-acyclic Cousin complexes over any positive dimensional base scheme.
Our method of proof employs the notion of extended compactified framed correspondences.

Three major motivations for this study are to further our understanding of strict homotopy invariance, 
motivic infinite loop spaces, and connectivity in stable motivic homotopy theory.
As applications of our main results on motivic Cousin complexes, 
we generalize several fundamental results in these topics to finite dimensional base schemes.
\end{abstract}

\maketitle
\tableofcontents
\addtocontents{toc}{\protect\setcounter{tocdepth}{1}}

\section{Introduction}

\subsection{Cousin and Gersten complexes}

Cousin complexes of sheaves are ubiquitous in algebraic geometry \cite{zbMATH03336606}, 
complex analysis \cite{zbMATH03738047}, 
and representation theory \cite{zbMATH03609937}.
For example, 
filtering a smooth variety by codimension of support yields Cousin complexes for Bloch-Ogus theories;
see \cite{zbMATH03480765}.
More generally, 
by the same construction as in \Cref{sect:Cousincomplex}, 
to any Panin-Smirnov cohomology theory $A$ \cite[Definition 2.1]{PanafterSmirnovOrCoh} 
and a smooth scheme $X$ over some base scheme $B$, 
one can associate the Cousin complex  
(see \cite[Equation 19, Theorem 9.1]{PanafterSmirnovOrCoh})
\begin{equation}\label{eq:ovCouXA}
A(X) \to
\bigoplus\limits_{x\in X^{(0)}}
A_{x}(X)
\to \dots \to
\bigoplus\limits_{x\in X^{(i)}}
A_{x}(X)
\to \dots \to 
\bigoplus\limits_{x\in X^{({\dim X})}}
A_{x}(X).
\end{equation}
As in \cite{PanafterSmirnovOrCoh}
we write $A_{Z}(X)=\bigoplus_n A^{n}_{Z}(X)$ for any closed immersion $Z\not\hookrightarrow X$, 
and $A(X)=A_{\emptyset}(X)$. 
Moreover, 
$A_{x}(X)$ is the cohomology of the local scheme $X_{(x)}$ of $X$ at $x$ 
with support in $x\in X$, 
see \cite[Definition 2.1]{PanafterSmirnovOrCoh}.
These complexes encode important data in the cohomology theory of algebraic varieties, 
since they form the first page of the coniveau spectral sequence converging to $A(X)$ 
\cite[\S 1]{zbMATH01066289}, \cite[Theorem 9.1, Corollary 9.2]{Pan19}.
This spectral sequence essentially carries the information of all Gysin sequences in one package.
We refer
to \cite{zbMATH01066289} for an axiomatic setup that provides examples such as {\'e}tale cohomology,
de Rham cohomology, motivic cohomology, cycle modules, Hodge cohomology, algebraic $K$-theory,
logarithmic Hodge-Witt and de Rham-Witt cohomology. 
An analog of Cousin complexes in the context of stable homotopy theory is known as the 
chromatic resolution \cite{zbMATH03984103}.

When $A$ is defined on regular $B$-schemes and admits 
Gysin isomorphisms of the form $A_Z(Y)\simeq A(Z)$ for all reduced $0$-dimensional closed subschemes 
$Z$ of local regular $B$-schemes $Y$
(for instance, satisfied by derived Witt groups, by \cite[\S lll.1, Theorem 84]{MR2181829}) 
the Cousin complex \eqref{eq:ovCouXA} specializes in the Gersten complex whose 
terms are given by direct sums of copies of $A(x):=A(k(x))$, $x\in X$. 

In particular, 
in degree $n\in\bbZ$, 
the Gersten complex for algebraic $K$-theory takes the familiar form
\begin{equation}
\label{eq:Ger}
K_{n}(X) 
\to
\bigoplus\limits_{x\in X^{(0)}}
K_{n}(x)
\to \bigoplus\limits_{x\in X^{(1)}}
K_{n-1}(x)
\to \dots \to
\bigoplus\limits_{x\in X^{({\dim X})}}
K_{0}(x).
\end{equation}
The Gersten conjecture says that \eqref{eq:Ger} is acyclic when $X$ is a 
local regular scheme \cite{Ge}.
Quillen's fundamental article \cite[\S 7]{Quillen} proves Gersten's conjecture 
for local schemes of the form $X_{(x)}$; here $X$ is a smooth scheme defined over a field and $x\in X$.
In \cite{Gillet-Levine}, 
Gillet and Levine proved a relative version of Gersten's conjecture, 
which reduces the conjecture for local rings smooth over a discrete valuation ring to the 
case of discrete valuation rings.
In \cite{Panin-equichar},
Panin proved that Gersten's conjecture holds for any equi-characteristic regular local ring.
More recently, 
Schmidt and Strunk \cite{Schmidt-Strunk-Bloch-Ogus} have shown a conditional exactness result 
for the Gersten complex defined by an $\A^1$-invariant Nisnevich local cohomology theory over 
Dedekind schemes with infinite residue fields. 
Deshmukh, Kulkarni and Yadav \cite{Nis-local-Bloch-Ogus} proved the exactness of the Gersten 
complex of essentially smooth local Henselian schemes over certain irreducible noetherian
base schemes, 
and obtained a version of the Bloch-Ogus theorem \cite{zbMATH03480765} 
in the Nisnevich topology.

A fundamental part of the Gersten conjecture is the injectivity of the left-most morphism in \eqref{eq:Ger}.
The Grothendieck-Serre conjecture \cite{Gr}, \cite{Se} is an analogous 
statement for the \'etale cohomology group $H^1_\mathrm{et}(-,G)$ 
-- defied by \'etale torsors for a reductive algebraic group $G$.
One may ask whether \eqref{eq:Ger} is acyclic for other $X$;
e.g., when $X=Y-Z$, where $Y$ is a regular local scheme, 
and $Z$ is a regular divisor on $Y$. 
An analogous situation for \'etale torsors is the subject of
Nisnevich's conjecture \cite[Conjecture 1.3]{zbMATH04125570} 
studied in \cite{FedorovprovedNisnevichconjecture}. 
When $Y$ is an essentially smooth local scheme over a DVR with closed fiber $Z$, 
Bloch proved the acyclicity of \eqref{eq:Ger} for $X=Y-Z$ in \cite{MR0862630}. 
When $X=Y$ it follows that \eqref{eq:Ger}
is quasi-isomorphic to the complex 
\begin{equation}
\label{eq:tfGerstKdvr}
    K_n(X)\to K_{n}(X-Z)\to K_{n-1}(Z).
\end{equation}
Consequently, 
under these assumptions, 
the Gersten complex \eqref{eq:Ger} is exact  
beyond the term $\bigoplus_{x\in X^{(1)}}K_{n-1}(x)$. 
For reasons explained in \Cref{sect:tfCousincomplexdefinition}, 
\eqref{eq:tfGerstKdvr} is an example of a $\tf$-Cousin complex
(a notion we will study for a class of cohomology theories). 

\subsection{Motivic Cousin complexes}
\label{subsection:MotivicCousincomplexes}
Motivic homotopy theory, 
initiated by Morel and Voevodsky \cite{Morel-Voevodsky}, 
is a homotopy theory of schemes based on the idea that the affine line plays the role of the unit interval. 
Its stable version obtained by inverting the projective line is essential in the theory of motives and motivic cohomology.
Early successes include resolutions of the Bloch-Kato and Milnor conjectures relating $K$-theory, Galois cohomology, and quadratic forms. 

When the base scheme is a field $k$,
the Cousin complex \eqref{eq:ovCouXA} is acyclic for schemes of the form $X_{(x)}$ 
and cohomology theories represented in 
the $S^1$-stable motivic homotopy category $\SHsAN(k)$ or in the stable motivic homotopy category $\SHANis(k)$,  
see \cite{mot-functors}, \cite{Jardine-spt}, \cite{morel-trieste}, \cite{Voe98}.
A geometric presentation lemma strengthening Noether normalization was used in 
\cite{zbMATH03480765}, \cite{Quillen}, 
and refined in \cite{Morel-connectivity,Morel-over-a-field}, \cite{Pan19} 
to prove our claim for $\SHsAN(k)$.
Voevodsky's work on pretheories \cite{Voe-hty-inv} was originally applied to cohomology theories in the 
derived category of motives $\mathbf{DM}(k)$, 
and later to prove our claim for $\SHANis(k)$ via the theory of framed transfers as in 
\cite{Voe-notes,hty-inv,surj-etale-exc,nonperfect-SHI}.

This paper aims to study Cousin 
complexes over more general base schemes via stable motivic homotopy theory.
We fix a separated noetherian scheme $B$ of finite (positive) dimension $d$ .
As shown in \Cref{section:counterex}, 
a direct analog of the Gersten conjecture fails 
for cohomology theories representable 
in the stable motivic homotopy category $\SHANis(B)$.
If $X$ is an essentially smooth local $B$-scheme, 
we prove that the associated Cousin complex \eqref{eq:ovCouXA} is exact above degree $d$,
see \Cref{thm:trivcohhdimBCous},
while in general it has a non-trivial $d$-th cohomology group,
see \Cref{th:nontrivtermsBBz}
(in fact, 
the length $d+2$ complex in \eqref{eq:ovCoutfXA} is quasi-isomorphic 
to the Cousin complex for any motivic spectrum over $B$).

\subsubsection{Cousin complexes}
For any $\calF\in\SHANis(B)$ and closed embedding $Z\not\hookrightarrow X$ in $\Sm_{B}$, 
we set 
\begin{equation}
\label{eq:AYXpicalF}
A_Z^n(X):=[\Sigma^\infty_{\PP^1}(X/X-Z)_+,S^{n}\wedge\calF]_{\SHANis(B)},
\end{equation} 
and consider the associated complex $\pi\ovCou(X,\calF)$ as in \eqref{eq:ovCouXA}.
This is a complex of abelian groups of length $\dim X+2$.
It coincides with 
the complex obtained by taking homotopy groups of the terms in the complex $\ovCou(X,\calF)$
in the stable homotopy category $\SHtop$, 
see \Cref{def:ovCou}.

\begin{theorem}[\protect{\Cref{thm:trivcohhdimBCous,th:nontrivtermsBBz}}]
\label{th::CousComplex} 
Suppose $U$ is an essentially smooth local scheme over a noetherian separated scheme $B$ of finite Krull dimension.  
Moreover, let $z\in B$ denote the image of its closed point. 
Then, for any motivic spectrum $\calF\in \SH_{\A^1,\nis}(B)$, we have
\[
H^i(\pi\ovCou(U,\calF))=0,\quad i>\codim_B z.
\]
Moreover, 
when $\calF=\Sigma^\infty_{\PP^1}(B/(B-Z))$, where $Z$ is the closure of $z$ in $B$, 
we have
\[
H^{i}(\pi\ovCou( U, \calF ))\neq 0,\quad i=\codim_B z.
\]
\end{theorem}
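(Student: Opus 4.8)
The plan is to split the statement into its two halves — the vanishing $H^i(\pi\ovCou(U,\calF))=0$ for $i>\codim_B z$, and the non-vanishing $H^{\codim_B z}(\pi\ovCou(U,\calF))\neq 0$ for the specific motivic spectrum $\calF=\SigmainftyT(B/(B-Z))$ — and treat each separately, appealing to the two cited results \Cref{thm:trivcohhdimBCous} and \Cref{th:nontrivtermsBBz}. For the vanishing, I would set $d=\dim B$ and first observe that the Cousin complex $\pi\ovCou(U,\calF)$ only depends on $U$ through its essentially smooth local structure over $B$; by base change to the local scheme $B_{(z)}$ of $B$ at $z$, and replacing $B$ by $B_{(z)}$, one reduces to the case where $B$ itself is local with closed point $z$, so $\codim_B z=\dim B=d$. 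The claim $H^i=0$ for $i>d$ is then immediate from the length bound: $\pi\ovCou(U,\calF)$ is a complex of length $\dim U+2$ but, after the reduction, one needs the sharper statement that it is acyclic above degree $d$ even though $\dim U$ may exceed $d$. This is exactly the content of \Cref{thm:trivcohhdimBCous}, whose proof (via the extended compactified framed correspondences machinery) provides the required contracting structure on the top columns of the Cousin bicomplex; I would invoke it directly rather than reprove it here.

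For the non-vanishing half, the key is to produce an explicit comparison between the full Cousin complex and the short truncated complex \eqref{eq:ovCoutfXA} of length $d+2$. First I would identify, for $\calF=\SigmainftyT(B/(B-Z))$ with $Z=\overline{\{z\}}$, what the groups $A^n_x(U)=[\SigmainftyT(U_{(x)}/(U_{(x)}-x))_+, S^n\wedge\calF]$ actually compute: by the construction of $\calF$ as a "Thom-type" spectrum supported on $Z$, the only points $x\in U$ contributing are those lying over $z$, i.e. in the fiber $U\times_B Z$. After the reduction to $B$ local with closed point $z$, this fiber is $U\times_B\{z\}$, a closed subscheme of codimension $d$ in $U$ (since $U$ is essentially smooth over $B$ and $\{z\}$ has codimension $d$ in $B$). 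Thus the Cousin complex of $\calF$ is concentrated in the range of degrees $\geq d$, and its degree-$d$ piece is $\bigoplus_{x\in (U\times_B\{z\})^{(0)}} A^n_x(U)$. The point is then that this degree-$d$ cohomology group is non-trivial — indeed it receives the unit class coming from the identity map of $\calF$, or equivalently the generator of $[\SigmainftyT(B/(B-Z))_+, \SigmainftyT(B/(B-Z))]$ pulled back to $U$ — and that this class is not a boundary, for which one uses that the preceding terms in degrees $<d$ vanish. This is precisely \Cref{th:nontrivtermsBBz}, and I would cite it.

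The main obstacle, and the place where the two cited lemmas are doing all the work, is the passage from "formal length/support considerations" to the genuine acyclicity in degrees $>d$ and the genuine non-triviality in degree $d$: a priori the Cousin complex could be acyclic everywhere, or the top cohomology could vanish for subtler reasons, and ruling this out requires the strict-homotopy-invariance input encoded in the theory of extended compactified framed correspondences developed in the body of the paper. In this proof I would not redo that analysis; instead I would (i) carefully match the indexing conventions so that "$i>\codim_B z$" in the statement corresponds to "above degree $d$" after the local reduction, (ii) verify that the hypotheses of \Cref{thm:trivcohhdimBCous,th:nontrivtermsBBz} (essential smoothness of $U$ over the local base, finite Krull dimension, the specific form of $\calF$) are met, and (iii) assemble the two halves. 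The only genuinely non-formal check left to the reader of this proof is that the non-vanishing class produced by \Cref{th:nontrivtermsBBz} really lives in cohomological degree exactly $\codim_B z$ and not some larger degree — this follows from the codimension computation for the fiber $U\times_B\{z\}$ together with the vanishing in lower degrees established alongside it.
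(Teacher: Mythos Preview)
Your proposal is correct and matches the paper's approach: \Cref{th::CousComplex} is stated in the introduction as a restatement of \Cref{thm:trivcohhdimBCous} and \Cref{th:nontrivtermsBBz}, and you correctly identify the base-change step to the local scheme $B_{(z)}$ needed to sharpen the bound in \Cref{thm:trivcohhdimBCous} from $\dim B$ down to $\codim_B z$ --- this is exactly how the paper proceeds (explicitly so in the proof of \Cref{th:nontrivtermsBBz}).

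One remark on your heuristic for the non-vanishing half: your observation that the ordinary Cousin complex of $\calF=\SigmainftyT(B/(B-z))$ has zero terms in degrees $0,\dots,d-1$ (because $j^*\calF\simeq 0$ for $j\colon B-z\hookrightarrow B$, so $\calF_x(U)\simeq 0$ whenever $x$ lies over $B-z$) is correct and does show that nothing in degree $d$ is a boundary. The paper's actual route inside \Cref{th:nontrivtermsBBz} is organised slightly differently: it first computes the \emph{$\tf$-Cousin} complex $\pi\ovCou_\tf(U,\calF)$ directly (only the term over $z$ survives, by \Cref{lm:trivofclasses}) and exhibits the nonzero class $c_z$ there via \Cref{lm:nontrivclass}, then tacitly invokes the quasi-isomorphism $\pi\ovCou_\tf(U,\calF)\simeq\pi\ovCou(U,\calF)$ of \Cref{cor:CoutfsimeqCou} to transfer the conclusion. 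Your direct sketch and the paper's $\tf$-route are two packagings of the same support computation; since you ultimately cite \Cref{th:nontrivtermsBBz} rather than rederive it, either viewpoint is adequate here.
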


To prove the above theorem, we introduce $\tf$-Cousin complexes.

\subsubsection{$\tf$-Cousin complex}
Let $\pi\ovCou_\tf(X,\calF)$ denote the complex
\begin{equation}
\label{eq:ovCoutfXA}
A(X) \to
\bigoplus\limits_{z\in B^{(0)}}A_{X_z}(X)
\to \dots \to
\bigoplus\limits_{z\in B^{({i})}}
A_{X_z}(X)
\to \dots \to 
\bigoplus\limits_{z\in B^{({\dim B})}}
A_{X_z}(X).
\end{equation}
Here, 
$X_z=X\times_B z$,
$A_{X_z}(X)=A_{X_z}(X\times_B B_{(z)})$,
and $B_{(z)}$ is the local scheme of $B$ at $z\in B$.
Note that \eqref{eq:ovCoutfXA} has length $\dim B+2$.
We refer to \eqref{eq:ovCoutfXA} as the $\tf$-Cousin complex of $\calF$ and $X$
because it is connected to the $\tf$-topology introduced in \cite[Definition 3.1]{DKO:SHISpecZ} 
the same way that the Cousin complex $\pi\ovCou(X,\calF)$ is connected to the Nisnevich topology; 
see \Cref{sect:Cousincomplex,sect:tfCousincomplexdefinition} for more details.

\begin{theorem}[\protect{\Cref{thm:trivcohhdimBCous}, \Cref{cor:CoutfsimeqCou}}]
\label{th::tfCousComplex}
For every $\calF\in \SH_{\A^1,\nis}(B)$, 
$U\in \EssSm_B$, 
there is a quasi-isomorphism of complexes of abelian groups
\begin{equation}\label{eq:pilCoutfpilCou}
    \pi\ovCou_\tf(U,\calF)\xrightarrow{\simeq} \pi\ovCou(U,\calF).
\end{equation}
\end{theorem}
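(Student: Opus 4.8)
The plan is to realize $\pi\ovCou(U,\calF)$ as the total complex of a Cousin \emph{bicomplex} $\Coubi(U,\calF)$ and $\pi\ovCou_\tf(U,\calF)$ as the complex obtained from it by first passing to column cohomology, and then to invoke the acyclicity of its columns (\Cref{thm:trivcohhdimBCous}) --- the step where the genuinely hard input, geometric presentation and extended compactified framed correspondences, has already been spent --- to conclude that the two compute the same cohomology. Recall from \Cref{sect:Cousincomplex,sect:tfCousincomplexdefinition} how $\Coubi(U,\calF)$ arises: one refines the coniveau (codimension-of-support) filtration of $U$ by the $B$-coniveau filtration that records the codimension in $B$ of the image of the support, obtaining a (cohomologically graded, augmented first-quadrant) bicomplex whose $(p,q)$-entry for $p,q\geq 0$ is $\bigoplus_{z\in B^{(p)}}\bigoplus_{x\in U_z^{(q)}}A_x(U)$, with augmentation $\bigl(\bigoplus_{z\in B^{(p)}}A_{U_z}(U)\bigr)_{p\geq 0}$ in vertical degree $-1$ and overall augmentation $A(U)$. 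Since $U\in\EssSm_B$, for $x\in U$ over $z\in B$ one has $\codim_U x=\codim_B z+\codim_{U_z}x$; this makes $\Coubi(U,\calF)$ supported in columns $0\leq p\leq\dim B$, identifies its total complex with $\pi\ovCou(U,\calF)$ (the bijection $U^{(p+q)}\leftrightarrow\{(z,x):z\in B^{(p)},\,x\in U_z^{(q)}\}$ being exactly the splitting of the coniveau graded pieces), and identifies its $p$-th column --- augmented by $\bigoplus_{z\in B^{(p)}}A_{U_z}(U)$ --- with the direct sum over $z\in B^{(p)}$ of the Cousin complexes of the fibres $U_z$, each viewed as an essentially smooth local scheme over its residue field $k(z)$ (here using $A_{U_z}(U)=A_{U_z}(U\times_B B_{(z)})$).

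The first step I would carry out is to take these structural facts as given --- or re-derive them from the defining localization/Gysin cofibre sequences of the coniveau tower, which is routine bookkeeping but must be done carefully to get the horizontal and vertical differentials to match --- since they are precisely the content of the construction in \Cref{sect:Cousincomplex}. The second step is to invoke \Cref{thm:trivcohhdimBCous}: over a field the columns of the Cousin bicomplex are acyclic, i.e.\ each augmented column displayed above is exact. Consequently the cohomology of the $p$-th column of $\pi\Coubi(U,\calF)$ is concentrated in vertical degree $0$, where it equals $\bigoplus_{z\in B^{(p)}}A_{U_z}(U)$, and the horizontal differential induced on these groups is precisely the differential of $\pi\ovCou_\tf(U,\calF)$; the overall augmentation $A(U)$ is shared by both complexes.

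Finally I would run the spectral sequence of the double complex $\pi\Coubi(U,\calF)$ obtained by taking vertical (column) cohomology first. By the previous step it degenerates at once, and identifies $H^*\bigl(\pi\ovCou(U,\calF)\bigr)=H^*\bigl(\Tot\bigl(\pi\Coubi(U,\calF)\bigr)\bigr)$ with the cohomology of the complex $p\mapsto\bigoplus_{z\in B^{(p)}}A_{U_z}(U)$, that is, with $H^*\bigl(\pi\ovCou_\tf(U,\calF)\bigr)$. Chasing the edge homomorphism shows the isomorphism is realized by the canonical comparison map \eqref{eq:pilCoutfpilCou}, which is the one coming from the inclusions $U_z\subseteq U^{(\geq p)}$ for $z\in B^{(p)}$ (these induce compatible maps on cohomology with supports that assemble into a map of complexes). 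Granting \Cref{thm:trivcohhdimBCous}, the main obstacle that remains is organizational rather than geometric: one must pin down $\Coubi(U,\calF)$ so that totalization, columns, and augmentations are exactly as asserted --- in particular, that the fibres $U_z$ together with the groups $A_{U_z}(U)=A_{U_z}(U\times_B B_{(z)})$ really do exhibit the $p$-th column as an honest sum of Cousin complexes of essentially smooth local schemes over fields, so that \Cref{thm:trivcohhdimBCous} applies verbatim.
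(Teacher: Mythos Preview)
Your overall architecture matches the paper's exactly: set up the Cousin bicomplex $\Coubi(U,\calF)$, identify its totalization with $\pi\ovCou(U,\calF)$ via \Cref{lm:bicompexsimeqlcomlex}, and show that the inclusion of the $\tf$-Cousin complex into the bicomplex is a column-wise quasi-isomorphism. The spectral sequence (or simply the mapping cone) then gives \eqref{eq:pilCoutfpilCou}. So far so good.

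There are, however, two real problems with how you source the column acyclicity.

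First, a circularity: you invoke \Cref{thm:trivcohhdimBCous} as the input, but in the paper that theorem is \emph{deduced from} the present quasi-isomorphism (its proof reads: ``the claim follows from \Cref{th:CoutfsimeqCou} and \Cref{cor:CoutfsimeqCou} because the length of $\pi_l\Cou_\tf$ equals $\dim B$''). The genuine input for the columns is \Cref{th:FnisExactoffiber} (equivalently \Cref{pr:LnispreComumnsExact}), i.e.\ Theorem~\ref{th::CousbiComplexcolumn} of the introduction, not \Cref{thm:trivcohhdimBCous}.

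Second, and more substantively, your claim that each fibre $U_z$ is ``an essentially smooth local scheme over its residue field $k(z)$'' is false. If $U=X_{(x)}$ with $x$ lying over $z_0\in B$, then $U_{z_0}$ is indeed local, but for any other $z\in B$ the fibre $U_z$ is the open subscheme $U\times_B B_{(z)}\setminus$(closed fibre), which typically has infinitely many closed points (e.g.\ $B=\Spec\mathbb Z_p$, $U=\Spec\mathbb Z_p[t]_{(p,t)}$, $z=\eta$: then $U_\eta$ contains all height-one primes $(f)$ with $f(0)\equiv 0\pmod p$, $f\ne p$). Hence the classical field-case Gersten acyclicity, which is for \emph{local} essentially smooth $k(z)$-schemes, does not apply to $U_z$, and the column acyclicity cannot be read off ``verbatim'' from the field case. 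This is precisely why the paper proves \Cref{th:FnisExactoffiber} from scratch: the moving-lemma \Cref{cor:InjectivityFrgenfibsmoothloc} produces a framed correspondence from $X_{x,\eta}$ (your $U_z$) --- not from a localization of a smooth $k(z)$-scheme --- and the extended compactified framed correspondences of \Cref{sect:FocCompCor} are needed exactly to keep control of the support over the whole local base $B$, not just over $z$. In short, the ``hard input'' you allude to is spent establishing column acyclicity for these non-local fibres, and that is \Cref{th:FnisExactoffiber}; once you cite that correctly, together with the translation through $\tilde i^! j^*$ in \Cref{lm:tisFC,lm:iushrikcommuteLnis,pr:LnispreComumnsExact} identifying $\Cou_z(U,\calF)$ with a Cousin complex on $\Sm_{B,z}$, your argument becomes the paper's.
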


\subsubsection{Cousin bicomplexes}
If $z\in B$, 
we form the pullback $U_z=X\times_B z$. 
We write $j\colon B_{(z)}\to B$ for the canonical morphism from the local scheme of $B$ at $z$, 
and $i\colon z\not\hookrightarrow B_{(z)}$ denotes the corresponding closed immersion.
The corresponding base change functors $i^!$ and $j^*$ induce the Cousin bicomplex 
comprised of $\dim B+1$ columns
\begin{equation}
\label{eq:bicomplex}
\bigoplus_{z\in B^{(0)}} \pi\ovCou(U_z,i^! j^*\calF) \to \dots \to  
\bigoplus_{z\in B^{(\dim B)}} \pi\ovCou(U_z,i^! j^*\calF).    
\end{equation}

We show that the totalization of \eqref{eq:bicomplex} is quasi-isomorphic to the cone of 
\eqref{eq:pilCoutfpilCou}.
The quasi-isomorphism \eqref{eq:pilCoutfpilCou} follows from the acyclicity of the 
columns in \eqref{eq:bicomplex}.
We generalize the results for DVRs in \cite{nonperfect-SHI} in the following way.

\begin{theorem}[\protect{\Cref{th:FnisExactoffiber}}]
\label{th::CousbiComplexcolumn}
For every $z\in B$, 
$\calF\in\SHANis(z)$,
and essentially smooth local scheme $U\in\EssSm_B$, 
the complex $\pi\ovCou(U_z,\calF)$ is acyclic.
\end{theorem}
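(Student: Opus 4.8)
The statement is entirely about the residue field $\kappa := \kappa(z)$: since $\calF \in \SHANis(z) = \SHANis(\kappa)$ and $U_z = U\times_B z$ is essentially smooth over $\kappa$, the complex $\pi\ovCou(U_z,\calF)$ is just the Cousin complex of the cohomology theory represented by $\calF$ on $\Sm_\kappa$, and $B$ plays no further role. So the first step is to forget $B$ and record the shape of $U_z$: writing $U = \Spec\calO_{Y,y}$ with $Y$ smooth affine over $B$ and $y\in Y$, base change gives $U_z = \Spec(S^{-1}C)$ where $C = \calO(Y\times_B z)$ is a smooth finitely presented $\kappa$-algebra and $S\subseteq C$ is a multiplicative set. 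Thus $U_z$ is a pro-(smooth affine) $\kappa$-scheme which is normal and \emph{pro-semilocal}: for $F$ a finite set of primes of $C$ lying below some maximal ideal of $S^{-1}C$, set $C_F := \bigcap_{\mathfrak p\in F}C_{\mathfrak p}$; then each $\Spec C_F$ is an essentially smooth \emph{semilocal} $\kappa$-scheme and $S^{-1}C = \bigcap_F C_F$. Writing also $U_z = \lim_{s\in S}\Spec C[1/s]$ as a cofiltered limit of smooth affine finite-type $\kappa$-schemes with affine transition maps, continuity of $\SHANis$ — and of the Cousin construction, since each group $A^n_x$ is a filtered colimit along the system and $U_z^{(i)} = \bigcup_s(\Spec C[1/s])^{(i)}$ — identifies $\pi\ovCou(U_z,\calF)$ with $\mathrm{colim}_s\pi\ovCou(\Spec C[1/s],\calF)$, while continuity of Zariski cohomology identifies $H^*_{\Zar}(U_z,-)$ with $\mathrm{colim}_s H^*_{\Zar}(\Spec C[1/s],-)$.

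Next I invoke the known case over a field. Over an arbitrary field $\kappa$ — over perfect fields by the pretheories and framed-transfers techniques recalled in the introduction, and over possibly imperfect or finite $\kappa$ as in \cite{nonperfect-SHI} — the Cousin complex $\pi\ovCou(W,\calF)$ is acyclic for every essentially smooth \emph{local} $\kappa$-scheme $W$; equivalently, the homotopy (Gersten) sheaf $\underline{\calF}$ attached to $\calF$ is unramified and the Cousin complex of Zariski sheaves on any smooth $W/\kappa$ is a flasque resolution of $\underline{\calF}$, so that $H^*(\pi\ovCou(W,\calF))\cong H^*_{\Zar}(W,\underline{\calF})$. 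Together with the continuity statements above this gives $H^*(\pi\ovCou(U_z,\calF))\cong H^*_{\Zar}(U_z,\underline{\calF})$, so the theorem reduces to the vanishing $H^{>0}_{\Zar}(U_z,\underline{\calF})=0$ together with $H^0_{\Zar}(U_z,\underline{\calF}) = A(U_z)$.

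This vanishing is the heart of the matter, and it does not follow formally from the local case, since $U_z$ is pro-semilocal but not pro-local. I would establish it by a geometric presentation argument in the style of Quillen, Gabber, Voevodsky and Morel, now carried out directly for the pro-semilocal $U_z$ over a possibly finite or imperfect base field $\kappa$, using extended compactified framed correspondences. A cocycle of $\pi\ovCou(U_z,\calF)$ (away from degree $0$) has finite support, hence already lives over a semilocalization $\Spec C_F$; over $\Spec C_F$ one produces, after shrinking around the finitely many points of $F$, an étale ``framed'' presentation $\Spec C_F\to\A^1_T$ over an essentially smooth semilocal $T/\kappa$ of one lower dimension that is finite on the relevant closed subset and transversal at all points of $F$ simultaneously; the associated framed correspondence — with its support automatically finite, controlled at infinity by the compactification (which is what makes the argument work over finite fields, where a naive Gabber-type lemma fails) and interpretable inside $\SHANis(\kappa)$ by the ``extended'' data — provides the required null-homotopy, and one checks that no frontier contributions survive so that it descends over $U_z$. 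This is precisely the mechanism by which \cite{nonperfect-SHI} treats the closed fibre over a DVR, now run in the presence of several points and at arbitrary codimension. With the vanishing in hand, the reductions of the previous paragraphs yield the theorem.

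The main obstacle is this geometric presentation over an arbitrary residue field $\kappa(z)$: the finite-field case, which needs the extra flexibility of compactified framed correspondences to move supports through points at infinity, and the imperfect case, where smoothness and separability of the auxiliary projections must be preserved. A secondary point requiring care is the bookkeeping in the reduction to finitely many points — ensuring that the primitive produced over a semilocalization genuinely descends to a primitive over $U_z$ without spurious frontier terms — and checking that the continuity inputs ($\SHANis$, the Cousin construction, Zariski cohomology of $\underline{\calF}$) apply to the pro-(semi)local schemes at hand; by contrast, the reduction to the field and the identification with Zariski cohomology are routine.
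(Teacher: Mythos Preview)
Your reduction to Zariski-cohomology vanishing for $U_z$ over $\kappa(z)$ is correct but does not simplify the problem: that vanishing is equivalent to the acyclicity you want, and $U_z$ is genuinely not semi-local over $\kappa(z)$. (Take $B$ a DVR with uniformizer $p$, $X=\A^1_B$, $x$ the origin of the closed fibre, and $z=\eta$ the generic point; then $U_\eta$ has one closed point for each height-one prime of $\calO_{X,x}$ other than $(p)$, and there are infinitely many.) Your proposed route --- prove the moving lemma over a semilocalization $\Spec C_F$ of $X_z$ and then descend --- does not close: a primitive found over $\Spec C_F$ has its boundary computed in the Cousin complex of $U_z$, and since $\Spec C_F\subset U_z$ is a further localization rather than an open subscheme, the closure in $U_z$ of the primitive's support will in general meet codimension-$i$ points outside $\Spec C_F$. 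You acknowledge these ``frontier contributions'' but give no mechanism to kill them; enlarging $F$ only pushes the problem outward and there is no reason for the process to terminate.

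The paper's argument avoids this by \emph{not} working over $\kappa(z)$. You invoke ``extended compactified framed correspondences'', which is indeed the paper's tool, but the word ``extended'' in Definition~\ref{def:compactifiedfocusedFr} means that the compactified support of the correspondence is a projective scheme over the local base $B$, not merely over $z$; it is this extension across $B$ that substitutes for the missing semi-locality of $U_z$. Concretely, Theorem~\ref{cor:InjectivityFrgenfibsmoothloc} constructs, for any closed $Z\subset X_z$ of positive codimension, a framed correspondence $c\in\ZF_n(U_z\times\A^1,X_z)$ with $c\circ i_0=\sigma^n\mathrm{can}$ and $c\circ i_1$ factoring through $X_z\setminus Z$; the construction (Lemmas~\ref{lm:fcFrofdimensiond=dimX}--\ref{lm:onedimensionalfocusedcompactified}) proceeds by induction along a chain of specializations in $B$, at each step using projectivity over $B$ to lift sections of ample bundles satisfying the required incidence conditions. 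Once $c$ exists, the proof of Theorem~\ref{th:FnisExactoffiber} is the standard d\'evissage: $c$ witnesses that the extension-of-support map $\varinjlim_{\codim Y=c}\pi_*\calF_Y(X_z)\to\varinjlim_{\codim Y'=c-1}\pi_*\calF_{Y'}(U_z)$ is zero, so the coniveau filtration on $\pi_*\calF(U_z)$ splits into short exact sequences and the Cousin complex is exact. No semilocalization of $U_z$ over $\kappa(z)$ enters.
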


\subsection{Framed motives, infinite loop spaces, and connectivity}
We show that the results in \Cref{subsection:MotivicCousincomplexes} hold more generally in the category 
$\SHfrsAtf(B)$ of framed $S^1$-spectra of $\A^1$-invariant $\tf$-local presheaves on $\Sm_{B}$.
As a consequence, 
we generalize the results on strict homotopy invariance and motivic infinite loop spaces 
in \cite{DKO:SHISpecZ}, see below.
Moreover, 
in combination with
\cite{SHfrzf}, 
 we improve the connectivity theorems for motivic spectra in
\cite{ConnDodekindDomains,ConnGabPresLemNoethDominffield,ConnBase} to essentially smooth local schemes.

\subsubsection{Homotopy invariance and infinite loop spaces} 
We prove the following improvement of the strict homotopy invariance theorem for 
framed presheaves in \cite[Theorem 12.2]{DKO:SHISpecZ}, see \Cref{subsection:notation} for notation.
Recall that $B$ is any base scheme.

\begin{theorem}[$\tf/\nis$-strict homotopy invariance, \Cref{th:SHIsepnfdB}]
\label{th:strhominv::SHIsepnfdB}
For any $\calF\in \SH^{\fr,s,t}_{\A^1,\tf}(B)$,
the Nisnevich localized presheaf of $S^1$-spectra $L_\nis\calF$ is $\A^1$-invariant.
\end{theorem}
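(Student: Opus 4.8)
The plan is to deduce this $\tf/\nis$-strict homotopy invariance statement from the acyclicity of the motivic Cousin complexes established in Theorems~\ref{th::CousComplex}--\ref{th::CousbiComplexcolumn}, upgraded to the framed setting $\SHfrsAtf(B)$. First I would recall that, since $\calF$ is already $\A^1$-invariant as a $\tf$-local presheaf on $\Sm_B$, the only thing to check is that the Nisnevich localization $L_\nis\calF$ does not destroy $\A^1$-invariance; equivalently, that for every $U\in\EssSm_B$ the canonical map $L_\nis\calF(U)\to L_\nis\calF(\A^1_U)$ is an equivalence. Working stalkwise, it suffices to treat $U$ an essentially smooth \emph{local} $B$-scheme. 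For such $U$, the coniveau filtration by codimension of support yields the Cousin tower whose associated graded is the Cousin complex $\ovCou(U,\calF)$, and similarly for $\A^1_U$; so the comparison is controlled by the Cousin complexes of $U$ and $\A^1_U$ together with the already-assumed $\A^1$-invariance before Nisnevich-localizing.

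Next I would set up the $\tf$-Cousin complex as the bridge. The point of introducing $\pi\ovCou_\tf$ in \Cref{th::tfCousComplex} is that its columns, after base change along $j\colon B_{(z)}\to B$ and $i\colon z\hookrightarrow B_{(z)}$, become Cousin complexes $\pi\ovCou(U_z, i^!j^*\calF)$ \emph{over the residue schemes $z$}, where the classical field-case acyclicity (in the framed $S^1$-spectrum formulation of \cite{nonperfect-SHI, hty-inv}) applies — this is exactly \Cref{th::CousbiComplexcolumn}. Concretely, I would: (i) show that the $\tf$-local condition forces the $\tf$-Cousin bicomplex \eqref{eq:bicomplex} to compute $L_\nis\calF$ on $U$ up to the relevant coconnective truncation; (ii) invoke \Cref{th::CousbiComplexcolumn} to see each column is acyclic, hence the totalization vanishes and \eqref{eq:pilCoutfpilCou} is a quasi-isomorphism; (iii) observe that the $\tf$-Cousin complex is built from the fibers $U_z = U\times_B z$, where $\A^1$-invariance over the \emph{field} $k(z)$ (for $z$ a point of finite type, or a limit thereof) is already known — again by the framed transfers machinery over fields. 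Running this for both $U$ and $\A^1_U$ and comparing the resulting quasi-isomorphisms gives that $L_\nis\calF(U)\to L_\nis\calF(\A^1_U)$ is an equivalence.

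The framed structure enters in two essential places, and keeping track of it is where care is needed. First, the identification of the Cousin complex terms $A_Z(X)=[\Sigma^\infty_{\PP^1}(X/X{-}Z)_+, S^n\wedge\calF]$ with something computable requires the $\A^1$-invariance plus framed transfers to give a ``Gysin'' description of the local cohomology with support in a codimension-$c$ point; over a general base this only works fiberwise over $B$, which is precisely why the $\tf$-Cousin reorganization (indexing by points of $B$ rather than points of $X$) is the correct device. Second, the excision and additivity needed to split the Cousin differential into its components, and to identify $i^!j^*\calF$ as again an object of $\SHfrsA$ over $z$, rely on the stability of the category $\SHfrsAtf$ under the six-functor base change — this should follow from the construction of $\SHfrsAtf(B)$ via extended compactified framed correspondences advertised in the abstract.

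The main obstacle I anticipate is step~(i): controlling the \emph{convergence} of the coniveau/Cousin tower over a base of positive dimension. Over a field the tower is bounded and the Cousin complex literally computes the homotopy groups of $L_\nis\calF(U)$; over $B$ of dimension $d$ one instead gets a length-$(d{+}2)$ complex and must argue that the extra terms (beyond codimension one in the base direction) genuinely vanish after Nisnevich localization — this is the content of the exactness ``above degree $\codim_B z$'' in \Cref{th::CousComplex}, and reconciling it with $\A^1$-invariance of the localized presheaf requires showing the truncation artifacts are $\A^1$-invariant on both $U$ and $\A^1_U$ and cancel in the comparison. I expect this to reduce, via the quasi-isomorphism \eqref{eq:pilCoutfpilCou}, to the residue-field case handled by \Cref{th::CousbiComplexcolumn} together with the known field-level strict homotopy invariance for framed $S^1$-spectra, but assembling the pieces compatibly across all points $z\in B$ — i.e.\ checking the maps of Cousin complexes for $U$ and $\A^1_U$ are compatible with the $\tf$-Cousin reindexing — is the technical heart of the argument.
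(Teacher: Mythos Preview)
Your route differs substantially from the paper's, and there is at least one real gap.

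The paper does \emph{not} deduce strict homotopy invariance from the Cousin complex theorems. Instead it cites \cite[Remark~1.22, Theorem~12.2]{DKO:SHISpecZ}, which already contains a complete proof of $\tf/\nis$-strict homotopy invariance \emph{conditional} on a single local input: for every $X\in\Sm_B$, $x\in X$, $z\in B$, and every $\A^1$-invariant framed $\calF$ over the residue field $k(z)$, the canonical map $\calF((X^h_x)_z)\to L_\nis\calF((X^h_x)_z)$ is an equivalence. The only new ingredient needed over a higher-dimensional base is this equivalence, and the paper supplies it as \Cref{lm:F(Uz)Fnis(Uz)}. That lemma in turn follows from the injectivity statement of \Cref{lm:InjectivitSpectrapresheaves}, which is an immediate consequence of the extended compactified framed correspondence result \Cref{cor:InjectivityFrgenfibsmoothloc} together with the field-level strict homotopy invariance of \Cref{citedth:SHI(k)}. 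So the Cousin complex theorems and $\tf/\nis$-strict homotopy invariance are \emph{parallel} applications of the same technical core (\Cref{cor:InjectivityFrgenfibsmoothloc} and \Cref{lm:F(Uz)Fnis(Uz)}), not one derived from the other.

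Your proposed mechanism also contains a concrete error. In step~(ii) you write ``each column is acyclic, hence the totalization vanishes.'' But by \Cref{lm:bicompexsimeqlcomlex} the totalization of the bicomplex is $\Cou(U,\calF)$, not the cone of \eqref{eq:pilCoutfpilCou}; acyclicity of the columns yields the quasi-isomorphism $\Cou_\tf\simeq\Cou$, not that either complex is acyclic. In fact the paper shows in \Cref{section:counterex} that $\ovCou(U,\calF)$ is generally \emph{not} acyclic over a positive-dimensional base, so you cannot read off $L_\nis\calF(U)$ directly from its Cousin resolution. Your own ``main obstacle'' paragraph correctly identifies that this non-acyclicity is the problem, but your proposed fix---comparing the non-acyclic complexes for $U$ and $\A^1_U$ and hoping the obstructions cancel---would require a separate $\A^1$-invariance statement for the local cohomology terms, which is essentially circular. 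The paper sidesteps all of this by invoking the ready-made reduction from \cite{DKO:SHISpecZ}.
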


For every $X\in\Sm_B$ we associate the infinite motivic loop space
\[\Omega^\infty_{\PP^1}\Sigma^\infty_{\PP^1}X_{+}\]
for the loop-suspension adjoint functors $\Sigma^\infty_{\PP^1}$ and $\Omega^\infty_{\PP^1}$.
Our next result identifies this construction in terms of framed correspondences, group completions, $\Gm$ loops,
and localization functors for $\A^1$-invariance and Nisnevich and $\tf$-topologies.

\begin{theorem}[\Cref{th:LnisOmegaGmLA1tfFrSGmXgp}]
\label{th::LnisOmegaGmLA1tfFrSGmXgp}
For any $X\in \Sm_B$ there is a natural equivalence
\begin{equation}\label{eq:th::LNisOmegaGmLA1tfFrSGmXgp}
\Omega^\infty_{\PP^1}\Sigma^\infty_{\PP^1}X_{+}\simeq 
L_\nis \varinjlim_l \Omega^l_{\Gm}(L_{\A^1,\tf} \Fr(-, X_{+}\wedge\Gm^{\wedge l} ))^\gp.
\end{equation}
\end{theorem}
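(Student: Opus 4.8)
The plan is to compute $\Omega^\infty_{\PP^1}\Sigma^\infty_{\PP^1}X_+$ inside the framed model for $\SHANis(B)$, to reduce the computation to an $S^1$-stable recognition principle for framed presheaves, and to identify the localization functor that occurs by invoking \Cref{th:strhominv::SHIsepnfdB}.

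First I would use the framed reconstruction of $\SHANis(B)$ --- generalizing to the finite-dimensional base $B$ the equivalence over Dedekind bases of \cite{DKO:SHISpecZ} --- which identifies $\SHANis(B)$ with the category $\SHfrstAtf(B)$ of $\A^1$-invariant $\tf$-local framed $(S^1,\Gm)$-bispectra on $\Sm_B$. Under this identification $\Sigma^\infty_{\PP^1}X_+$ corresponds to the framed suspension bispectrum whose $l$-th $\Gm$-layer is the framed $S^1$-suspension spectrum of $X_+\wedge\Gm^{\wedge l}$, and $\Omega^\infty_{\PP^1}$ is recovered by applying $L_\nis$ to the bidegree-$(0,0)$ part of the $\Omega$-bispectrum replacement computed in $\SHfrstAtf(B)$. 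Writing $\PP^1\simeq S^1\wedge\Gm$, this part is the filtered colimit $\varinjlim_l\Omega^l_{\Gm}$ of the infinite $S^1$-loop spaces of the $\Gm$-layers, a description that rests on $\Gm$-cancellation over $B$; and since $L_\nis$ commutes with filtered colimits and with $\Omega_{\Gm}$, and sends the outcome back into the $\A^1$-invariant presheaves by \Cref{th:strhominv::SHIsepnfdB}, the theorem reduces to the $S^1$-stable recognition statement
\[
\Omega^\infty_{S^1}\Sigma^\infty_{S^1,\fr}(Z)\ \simeq\ \bigl(L_{\A^1,\tf}\Fr(-,Z)\bigr)^{\gp},
\]
computed in $\SHfrsAtf(B)$, applied to $Z=X_+\wedge\Gm^{\wedge l}$.

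The heart of the argument is therefore this recognition principle. The grouplike $\A^1$-invariant $\tf$-local framed $S^1$-spectra form a localizing subcategory of framed presheaves of $S^1$-spectra, and the crucial point is that the corresponding localization is computed without the transfinite alternation of $\A^1$- and topological localization that a naive argument would require: a single application of $L_{\A^1,\tf}$ together with group completion suffices, because the $\tf$-topology is arranged so that $\tf$-sheafification already preserves $\A^1$-invariance, while the later application of $L_\nis$ needed to compare with motivic spaces preserves it too, by \Cref{th:strhominv::SHIsepnfdB}. Granting this, the framed group-completion argument underlying the recognition principle --- additivity of framed correspondences, the $\Delta^\bullet$-construction, and the group-completion theorem for the relevant $E_\infty$-structures --- transfers from the case of a field and computes the infinite $S^1$-loop space of the framed suspension spectrum. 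I expect the principal obstacle to be exactly this step: verifying that substituting $L_{\A^1,\tf}$ for $L_{\A^1}$ throughout the recognition machinery causes no harm, i.e. that $\tf$-localization is compatible with the framed transfer structure, with group completion, and with the additivity and cancellation inputs; and then justifying the interchange of $L_\nis$ with $\varinjlim_l\Omega^l_{\Gm}$ and with $\Omega_{\Gm}$. Once these compatibilities are secured, the remaining steps are routine, essentially as in \cite{DKO:SHISpecZ}.
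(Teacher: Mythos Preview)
Your proposal is correct and follows essentially the same route as the paper: the paper's proof is the one-liner ``in view of \Cref{th:SHIsepnfdB} the claim follows by the argument for \cite[Theorem 16.6]{DKO:SHISpecZ},'' and what you have written is precisely an unpacking of that argument, correctly identifying that the only new input needed over a general base is the $\tf/\nis$-strict homotopy invariance theorem. Your more detailed sketch of the recognition principle, cancellation, and the compatibility checks for $L_{\A^1,\tf}$ is exactly the content of the cited argument in \cite{DKO:SHISpecZ}.
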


\subsubsection{Zariski localization}

Using the results on the framed motives in \cite{SHfrzf},
we improve \Cref{th::LnisOmegaGmLA1tfFrSGmXgp} by showing that the 
Zariski localization of the colimit in \eqref{eq:th::LNisOmegaGmLA1tfFrSGmXgp} agrees with 
the Nisnevich localization.

\begin{theorem}[\Cref{th:LzarOmegaGmLA1tfFrSGmXgp}]
\label{th::LzarOmegaGmLA1tfFrSGmXgp}
For any $X\in \Sm_B$, there is a natural equivalence
\begin{equation}\label{eq:th::LZarOmegaGmLA1tfFrSGmXgp}\Omega^\infty_{\PP^1}\Sigma^\infty_{\PP^1}X_{+}\simeq 
L_\zar \varinjlim_l \Omega^l_{\Gm}(L_{\A^1,\tf} \Fr(-, X_{+}\wedge\Gm^{\wedge l} ))^\gp.\end{equation}

Moreover,
for any semi-local scheme $U\in\EssSm_B$,
there is an equivalence
\begin{equation*}
(\Omega^\infty_{\PP^1}\Sigma^\infty_{\PP^1}X_{+})(U)\simeq
\varinjlim_l \Omega^l_{\Gm}(L_{\A^1,\tf} \Fr(U, X_{+}\wedge\Gm^{\wedge l} ))^\gp,\end{equation*}
where the left side is the value of $\Omega^\infty_{\PP^1}\Sigma^\infty_{\PP^1}X_{+}$ on $U$.
\end{theorem}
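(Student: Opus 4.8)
The plan is to deduce both equivalences from the Nisnevich-local statement of \Cref{th::LnisOmegaGmLA1tfFrSGmXgp} together with the Zariski descent results for framed motives in \cite{SHfrzf}. Write $G$ for the presheaf $\varinjlim_l \Omega^l_{\Gm}(L_{\A^1,\tf}\Fr(-,X_{+}\wedge\Gm^{\wedge l}))^\gp$ on $\Sm_B$, so that by \Cref{th::LnisOmegaGmLA1tfFrSGmXgp} we already have $L_\nis G\simeq \Omega^\infty_{\PP^1}\Sigma^\infty_{\PP^1}X_{+}$. It therefore suffices to prove two things: (i) the canonical comparison map $L_\zar G\to L_\nis G$ is an equivalence; and (ii) for every semi-local $U\in\EssSm_B$, the map $G(U)\to L_\zar G(U)$ is an equivalence. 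Granting (i), the first equivalence \eqref{eq:th::LZarOmegaGmLA1tfFrSGmXgp} is immediate; granting (i) and (ii), and evaluating at $U$, one gets $G(U)\xrightarrow{\simeq} L_\zar G(U)\xrightarrow{\simeq} L_\nis G(U)\simeq(\Omega^\infty_{\PP^1}\Sigma^\infty_{\PP^1}X_{+})(U)$, which is the semi-local formula.

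First I would record the structural properties of $G$ that are needed to feed it into \cite{SHfrzf}. The presheaf $\Fr(-,Y)$ carries framed transfers, and this structure, together with $\A^1$-invariance and $\tf$-locality, is preserved under the localization functor $L_{\A^1,\tf}$, under $\Gm$-loops $\Omega_\Gm$, under filtered colimits, and under the group completion $(-)^\gp$. Hence $G$ is a group-like, $\A^1$-invariant, $\tf$-local framed $S^1$-spectrum over $B$, i.e.\ it represents (the underlying presheaf of spectra of) an object of $\SHfrsAtf(B)$ of exactly the type handled in \cite{SHfrzf}. In particular its Nisnevich localization $L_\nis G$ is again $\A^1$-invariant by \Cref{th:strhominv::SHIsepnfdB}, and the homotopy sheaves of $L_\nis G$ are strictly $\A^1$-invariant framed Nisnevich sheaves on $\Sm_B$; on essentially smooth semi-local $B$-schemes these admit the Cousin resolution studied in \Cref{th::CousComplex}.

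Next I invoke the Zariski-to-Nisnevich comparison of \cite{SHfrzf}: for a group-like, $\A^1$-invariant, $\tf$-local framed $S^1$-spectrum $\mathcal{G}$ over $B$, the canonical morphism $L_\zar\mathcal{G}\to L_\nis\mathcal{G}$ is an equivalence, and for every semi-local $U\in\EssSm_B$ the map $\mathcal{G}(U)\to L_\zar\mathcal{G}(U)$ is an equivalence. Concretely, the second assertion is the vanishing of the higher Zariski cohomology of the homotopy sheaves of $\mathcal{G}$ on semi-local essentially smooth schemes together with the identification of their naive and sheafified sections there, which is closely tied to the acyclicity of the associated Cousin/Gersten resolution as in \Cref{th::CousComplex}. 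Applying this with $\mathcal{G}=G$ yields (i) and (ii), and combining with \Cref{th::LnisOmegaGmLA1tfFrSGmXgp} completes the proof.

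The main obstacle is checking that $G$ genuinely satisfies the hypotheses under which \cite{SHfrzf} proves Zariski descent. This requires verifying that taking $\Gm$-loops, the filtered colimit over $l$, and the group completion all preserve (and, where needed, commute with) $\tf$-locality and the framed-transfer structure — so that the outer operations do not interact badly with $L_{\A^1,\tf}$ — and in particular that the presheaf $G$ written above is literally the one whose Nisnevich localization is computed in \Cref{th::LnisOmegaGmLA1tfFrSGmXgp}, so that the two results compose cleanly. A secondary subtlety is the semi-local statement (ii): passing from the local case (where \Cref{th::CousComplex} applies directly) to arbitrary semi-local $U\in\EssSm_B$, and turning the Cousin/Gersten acyclicity into the required degeneration of the Zariski descent spectral sequence for the homotopy sheaves of $G$ on $U$, is the delicate part that genuinely uses the input of \cite{SHfrzf} beyond a formal reduction.
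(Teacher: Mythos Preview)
Your plan for the Zariski formula \eqref{eq:th::LZarOmegaGmLA1tfFrSGmXgp} is essentially the paper's argument: \cite{SHfrzf} supplies $\SH^{\fr}_{\A^1,\nis}(B)\simeq\SH^{\fr}_{\A^1,\zf}(B)$, and combining this with the $\tf$-local computation of the unit of the motivic stabilization adjunction as in \cite[Theorems 15.4, 15.10]{DKO:SHISpecZ} gives $L_{\zar}G\simeq L_{\nis}G$. The paper packages the same input slightly differently (via the equivalence $\OmegaSigma^{\tf,\fr}_{\A^1,\nis}\simeq L^{\fr}_{\szf}\OmegaSigma^{\tf,\fr}_{\A^1,\tf}$), but for the Zariski part your reduction is sound.

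The gap is in the semi-local statement (your step (ii)). You assert that \cite{SHfrzf} proves that, for a group-like $\A^1$-invariant $\tf$-local framed $S^1$-spectrum $\mathcal G$, the map $\mathcal G(U)\to L_{\zar}\mathcal G(U)$ is an equivalence for every \emph{semi-local} $U\in\EssSm_B$. That is not in \cite{SHfrzf}: semi-local schemes are not points of the Zariski topology, and \cite{SHfrzf} only controls Zariski-local (i.e., local) stalks. The passage from local to semi-local is a genuinely new input of the present paper. Concretely, the paper introduces the semi-Zariski topology $\szar$, proves the semi-local injectivity theorem over fields (\Cref{th:slinj}) by constructing suitable framed correspondences, uses it to show $L_{\nis}L_{\A^1}\simeq L_{\szar}L_{\A^1}$ on framed presheaves over a field (\Cref{th:Prefr(k)SGA1nis_sZar}), and then bootstraps to arbitrary $B$ by an induction on $\dim B$ via the localization theorem of \cite{SHfrzf} (\Cref{th:SHfrszar}). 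Only then does one get $\OmegaSigma^{\tf,\fr}_{\A^1,\nis}\simeq L^{\fr}_{\szf}\OmegaSigma^{\tf,\fr}_{\A^1,\tf}$, which is exactly what makes the value of $G$ on semi-local $U$ agree with its Nisnevich-sheafified value.

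Note also that the logical order in the paper is the reverse of yours: the semi-Zariski equivalence is proved first (this is the hard part, needing \Cref{th:slinj}), the semi-local formula is an immediate consequence since semi-local schemes are $\szar$-points, and the Zariski equivalence then follows by specializing the semi-local formula to local schemes. Your appeal to the Cousin results of \Cref{th::CousComplex} is not the right tool here: those are statements about Nisnevich-local objects of $\SH_{\A^1,\nis}(B)$ on \emph{local} schemes, and they do not by themselves produce the semi-local comparison you need.
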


This has the following application towards connectivity of motivic spectra.

\subsubsection{Connectivity}
Stable motivic homotopy sheaves of $\calF\in\SHANis(B)$ on $\Sm_B$ 
are the Nisnevich sheaves associated with the presheaves of abelian groups
\begin{equation}\label{eq:piijANisCalFU}
U\mapsto
\pi_{i,j}^{\A^1,\nis}(\calF)(U) = 
\mathrm{Hom}_{\SH_{\A^1,\nis}(B)}(\Sigma^\infty_{\PP^1}(S^{i-j}
\wedge \Gm^{\wedge j}\wedge U_+), \calF).
\end{equation}
Morel's connectivity theorem for motivic spectra over any field $k$ shows the vanishing 
\begin{equation}
\label{eq:piizSPXtriv}
\pi_{i,j}^{\A^1,\nis}(\Sigma^\infty_{\PP^1}X_+)(U)=0,
\end{equation}
for all $i<j\in\bbZ$, $X\in\Sm_k$,
and any essentially smooth local henselian scheme $U$ over $k$.
Indeed, 
according to \cite{morel-trieste,Morel-connectivity}
the vanishing in \eqref{eq:piizSPXtriv} holds for any essentially smooth local $k$-scheme $U$.
In \cite{ConnDodekindDomains,ConnGabPresLemNoethDominffield,ConnBase}, 
the vanishing was generalized to finite dimensional noetherian separated base schemes $B$ 
in the range $i<-\dim B+j$, $j\in\bbZ$,
and any essentially smooth local henselian $B$-scheme $U$.
We generalize the latter result to all essentially smooth semi-local $B$-schemes.

\begin{theorem}[\Cref{cor:ShifteddimBConnectivityZar}]\label{cor::ShifteddimBConnectivityZar}
    Suppose that $X\in\Sm_B$, 
    $i<-\dim B+j$, and $j\in\bbZ$.
    Then, for any essentially smooth semi-local
 $B$-scheme $U$,
    we have
    \[\pi^{\A^1,\nis}_{i,j}(\Sigma^\infty_{\PP^1} X_+)(U)=0.\]
\end{theorem}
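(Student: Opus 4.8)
\textbf{Proof plan for \Cref{cor::ShifteddimBConnectivityZar}.}
The plan is to deduce the vanishing from the identification of infinite $\PP^1$-loop spaces in \Cref{th::LzarOmegaGmLA1tfFrSGmXgp} together with the acyclicity of the Cousin complexes established in \Cref{th::CousbiComplexcolumn,th::tfCousComplex,th::CousComplex}. First I would reduce to the case where $U$ is an essentially smooth \emph{local} $B$-scheme: a semi-local scheme is a finite intersection of local schemes, and the homotopy presheaves $\pi^{\A^1,\nis}_{i,j}$ are finitary, so it suffices to prove vanishing after localizing at each closed point; moreover by \Cref{th::LzarOmegaGmLA1tfFrSGmXgp} the value of $\Omega^\infty_{\PP^1}\Sigma^\infty_{\PP^1}X_+$ on a semi-local $U$ is computed by the unlocalized colimit $\varinjlim_l \Omega^l_{\Gm}(L_{\A^1,\tf}\Fr(U, X_+\wedge\Gm^{\wedge l}))^\gp$, which is exactly what makes the reduction to the local case clean. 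So fix an essentially smooth local $U\in\EssSm_B$ with closed point mapping to $z\in B$, and set $c=\codim_B z\le \dim B$.

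Next I would translate the homotopy group $\pi^{\A^1,\nis}_{i,j}(\Sigma^\infty_{\PP^1}X_+)(U)$ into the framed model: by \Cref{th::LzarOmegaGmLA1tfFrSGmXgp} it equals $\pi_{i-j}$ of $\varinjlim_l \Omega^l_{\Gm}(L_{\A^1,\tf}\Fr(U, X_+\wedge\Gm^{\wedge l}))^\gp$ (after Nisnevich-, equivalently Zariski-localization, which is invisible on a local scheme). The heart of the argument is then to run the Cousin/coniveau spectral sequence for the $\tf$-local framed presheaf $\calF = \varinjlim_l\Omega^l_{\Gm}(L_{\A^1,\tf}\Fr(-, X_+\wedge\Gm^{\wedge l}))^\gp$ on $U$: by \Cref{th::tfCousComplex} the Cousin complex $\pi\ovCou(U,\calF)$ is quasi-isomorphic to the $\tf$-Cousin complex $\pi\ovCou_\tf(U,\calF)$ of length $\dim B+2$, whose terms involve the fiberwise cohomology groups $A_{U_z}(U)$; and by \Cref{th::CousComplex} applied to $\calF$ the complex is exact above degree $c = \codim_B z$. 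Feeding the resulting bound into the coniveau spectral sequence, $\pi^{\A^1,\nis}_{i,j}(\Sigma^\infty_{\PP^1}X_+)(U)$ is computed by a spectral sequence whose $E_1$-terms, by Morel's connectivity over the residue fields $k(u)$ (the case of a field, already known), vanish in the relevant range once one accounts for the codimension shift of at most $\dim B$; this is exactly the shift $i<-\dim B+j$.

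More concretely, the step I would carry out with most care is the bookkeeping of the spectral sequence: the $p$-th column of the coniveau spectral sequence for $\calF$ on $U$ contributes $\bigoplus_{u\in U^{(p)}} \pi^{\A^1,\nis}_{i-j+p,\,j}$ of $\calF$ evaluated on the residue field $k(u)$ (up to the standard identification of local cohomology with support in a point with a shift by the codimension); over a field these vanish for $i-j+p<0$ by the field case of connectivity, i.e. for $p< j-i$. The only nonvanishing contributions therefore sit in columns $p\ge j-i > \dim B$ — but by the acyclicity/exactness results the complex computing $\calF(U)$ only has cohomology in degrees $\le c\le \dim B$, so those high-column contributions are killed. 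Hence $\pi^{\A^1,\nis}_{i,j}(\Sigma^\infty_{\PP^1}X_+)(U)=0$.

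\textbf{Main obstacle.} The delicate point is that \Cref{th::CousComplex,th::tfCousComplex} are stated for $\calF\in\SH_{\A^1,\nis}(B)$ (or $\SH^{\fr,s,t}_{\A^1,\tf}(B)$), whereas the presheaf $\varinjlim_l\Omega^l_{\Gm}(L_{\A^1,\tf}\Fr(-, X_+\wedge\Gm^{\wedge l}))^\gp$ arising from \Cref{th::LzarOmegaGmLA1tfFrSGmXgp} is an $S^1$-spectrum built from framed correspondences, not a priori a $\PP^1$-spectrum; one must check that the Cousin-complex machinery applies at the level of the framed $S^1$-spectra category $\SHfrsAtf(B)$ — which the paper asserts in \Cref{subsection:MotivicCousincomplexes} (``the results hold more generally in $\SHfrsAtf(B)$''), so I would invoke that. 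The second subtlety is keeping the two filtrations straight: the coniveau filtration on $U$ by codimension of support, and the $B$-fiberwise ($\tf$) filtration; the content of \Cref{th::tfCousComplex} is precisely that these agree up to quasi-isomorphism, and one must make sure the degree shift by $c=\codim_B z$ in \Cref{th::CousComplex} lines up with the $-\dim B$ shift in the target range, using $c\le\dim B$.
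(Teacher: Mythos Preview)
Your approach is substantially more circuitous than the paper's and has a genuine gap at the Gysin/purity step. The paper's proof of \Cref{cor:ShifteddimBConnectivityZar} is a two-line argument that does not invoke the Cousin complex results at all: it cites \cite[Proposition 16.10]{DKO:SHISpecZ}, which says that the functor $L_{\A^1,\tf}$ takes connective objects to $(-\dim B)$-connective ones, and then applies \Cref{th:LzarOmegaGmLA1tfFrSGmXgp} directly. Since $\Fr(-, X_+\wedge\Gm^{\wedge l})$ is connective, each term of the colimit is $(-\dim B)$-connective, hence so are the colimit and its group completion; on a semi-local $U$ this already computes $(\Omega^\infty_{\PP^1}\Sigma^\infty_{\PP^1}X_+)(U)$ without further localization, and the vanishing follows. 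No reduction to local $U$, no coniveau spectral sequence, and no appeal to \Cref{th::CousComplex} or \Cref{th::tfCousComplex} is needed.

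The gap in your argument is the ``standard identification of local cohomology with support in a point with a shift by the codimension.'' Over a positive-dimensional base $B$, a point $u\in U^{(p)}$ of an essentially smooth $B$-scheme is in general \emph{not} smooth over $B$, so the motivic purity isomorphism identifying $\calF_u(U)$ with a Thom-twisted spectrum over $k(u)$ is not available: relative purity in $\SH_{\A^1,\nis}(B)$ requires both the ambient scheme and the closed subscheme to lie in $\Sm_B$, and absolute purity for arbitrary regular immersions is not known for general representable theories. Consequently you cannot identify the $E_1$-terms of the coniveau spectral sequence with stable motivic homotopy groups over the residue fields $k(u)$ and then invoke Morel's field-case connectivity. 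One could attempt a repair by passing to the $\tf$-Cousin complex, whose terms are indexed by $z\in B$, and using the functors $\tilde i^! j^*$ to relate $\calF_z(U)$ to an object over $k(z)$; but controlling the connectivity of $\tilde i^!$ amounts to exactly the input from \cite[Proposition 16.10]{DKO:SHISpecZ} that the paper uses in one line.
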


\subsubsection{Framed correspondences}
\label{sect:intro:comcorrtheorem}

Voevodsky's lemma shows that the category $\ZF_*(B)$ of linear framed correspondences over $B$ 
admits a functor $\ZF_*(B)\to\SHANis(B)$;
for background, 
we refer to 
\cite[Definition 2.3, Lemma 3.2, Corollary 3.6, Proposition 3.8]{Framed} and
\cite[Definition 2.11]{hty-inv}, \cite[Equation 25]{RigiditySmAffPairs}.
Combined with \Cref{cor::InjectivityFrgenfibsmoothloc}, 
our main technical result on linear framed correspondences, 
it follows that for any cohomology theory $A$ in $\SHANis(B)$ there is a naturally induced injective homomorphism
\begin{equation}
\label{equation:injective}
A(X_{x,\eta})\to A(\theta).
\end{equation}
Here $B$ is a local scheme, 
and $\theta$ is the generic point of $X_{x,\eta}$ as defined below for 
$X\in \Sm_B$, $x\in X$, and $\eta\in B$.

\begin{theorem}[\Cref{cor:InjectivityFrgenfibsmoothloc}]\label{cor::InjectivityFrgenfibsmoothloc}
Suppose that the base scheme $B$ is local. 
Let $\eta\in B$ be a point.
Let $X\in \Sm_B$, $X_\eta=X\times_B \eta$, 
and $X_{x,\eta}=X_x\times_X X_{\eta}$, where $x\in X$ is a point.
Let $Z$ be a closed subscheme of positive codimension in $X_\eta$.
Then there is a framed correspondence $c\in \ZF_n(X_{x,\eta}\times\A^1, X_{\eta})$ such that
\begin{itemize}
\item[(1)] $c\circ i_0=\sigma^n\mathrm{can}$, 
where $\mathrm{can}\colon X_{x,\eta}\to X_{\eta}$ is the canonical morphism,
and $\sigma^n\mathrm{can}$ is the $n$th $\sigma$-suspension of the corresponding 
element in $\ZF_0(X_{x,\eta}, 
X_{\eta})$
\item[(2)] $c\circ i_1=j\circ \tilde c$ for some $\tilde c\in\ZF_n(X_{x,\eta}, X_{\eta}-Z)$, 
where $j\colon X_{\eta}-Z\to X_{\eta}$ is the canonical open immersion
\end{itemize}
Here, 
$i_j\colon X_{x,\eta}\to X_{x,\eta}\times\A^1$ denotes the $j$-section, for $j=0,1$.
\end{theorem}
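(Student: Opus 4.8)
The plan is to build $c$ as an explicit one-parameter deformation of a $\sigma$-suspension of $\mathrm{can}$, coming from a relative geometric presentation of $X$ at $x$ over $B$; this is meant to extend to an arbitrary finite-dimensional local base the construction carried out over a discrete valuation ring in \cite{nonperfect-SHI}, which in turn rests on Voevodsky's and Garkusha--Panin's framed moving lemmas over a field \cite{Framed,hty-inv,surj-etale-exc}.

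First I would reduce. If $Z=\emptyset$ then $c=\mathrm{can}$ (level $n=0$) works, so assume $Z\neq\emptyset$. Replacing $X$ by an affine open neighbourhood of $x$ changes neither $X_{x,\eta}$ nor $\ZF_n(X_{x,\eta}\times\A^1,X_\eta)$, so we may take $X\to B$ affine; and since $X_x$ and all the data in play factor through $\Spec\calO_{B,z}$ for $z$ the image of $x$, we may assume $B$ local with closed point $z$, whence $X_{x,\eta}\neq\emptyset$ forces $\eta$ to generize $z$. Now $X_\eta$ is smooth affine over the (possibly imperfect) field $\kappa:=k(\eta)$, the scheme $X_{x,\eta}$ is the $\eta$-fibre of the essentially smooth local $B$-scheme $X_x$ and is a pro-open subscheme of $X_\eta$ (not semi-local in general), and $Z\subset X_\eta$ is closed of positive codimension, hence misses the generic point(s) of $X_\eta$.

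The geometric core is a relative presentation lemma for $(X,x)$ over $B$. Enlarging $Z$ to a closed $\mathcal Z\subset X$ with $\mathcal Z\cap X_\eta=Z$ and $\mathcal Z$ containing no generic point of $X_\eta$, I would pass to a Nisnevich neighbourhood of $x$ (harmless for the groups of framed correspondences in play, by étale/Nisnevich excision, cf.\ \cite{surj-etale-exc,RigiditySmAffPairs}) and then construct a morphism $p\colon X\to\A^1_S$ with $S$ smooth affine over $B$, étale near $\mathcal Z\cup s(\A^1_S)$ for a section $s$ of $p$ through $x$, such that $\mathcal Z$ is finite over $\A^1_S$ away from $s$ and, fibrewise over $\eta$, $Z$ is finite over $S_\eta$ and disjoint from $s(S_\eta)$. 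This is the step that the classical Gabber/Quillen presentation lemmas do not supply: over a base of dimension $>1$ a genuinely affine such projection need not exist, and $\mathcal Z$ may pass through $x$. The substitute is the formalism of extended compactified framed correspondences developed in the paper, which furnishes the missing ``point at infinity'' and also copes with $x\in\mathcal Z$. Granted this, one writes down $c$ over $X_{x,\eta}\times\A^1$ by Voevodsky's recipe: its support is cut out by a polynomial in an auxiliary affine coordinate that interpolates, as the $\A^1$-parameter runs from $0$ to $1$, between the equation of the graph of $\mathrm{can}$ and an equation whose zero locus over $X_{x,\eta}$ is finite over $S_{x,\eta}$ and disjoint from $Z$; the latter is available because $Z$ is finite over $S_\eta$, so over the ``small'' scheme $X_{x,\eta}$ one can slide $s$ past the finitely many points of $Z$ lying over each point of $S_{x,\eta}$. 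The framing functions are built from $p$ composed with coordinates on $\A^1_S$ together with the interpolating equation, and their number is the level $n$ of the statement.

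It then remains to evaluate the boundary. At $i_0$ the interpolation degenerates to the graph of $\mathrm{can}$ with its standard framing, giving $c\circ i_0=\sigma^n\mathrm{can}$, i.e.\ (1); at $i_1$ the support of $c$ lies over $X_\eta-Z$ by construction, so $c\circ i_1=j\circ\tilde c$ for a well-defined $\tilde c\in\ZF_n(X_{x,\eta},X_\eta-Z)$, i.e.\ (2). (If the presentation is only obtained after a Nisnevich neighbourhood $X'\to X$, one runs the construction over $X'$, using that $X_{x,\eta}$ lifts canonically, and pushes forward.) I expect essentially all of the difficulty to sit in the relative presentation/compactification step -- producing a projection simultaneously adapted to the local structure at $x$ and to $\mathcal Z$, with no control on whether $x\in\mathcal Z$ and over a merely local, merely finite-dimensional base, and then checking that the compactified-correspondence substitute for the absent affine chart still yields an honest element of $\ZF_n$ satisfying (1) and (2); the general-position argument sliding $s$ off $Z$ must moreover be performed uniformly over $S_{x,\eta}$, since $X_{x,\eta}$ need not be semi-local.
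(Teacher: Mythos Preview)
Your overall shape---interpolate along $\A^1$ between the graph of $\mathrm{can}$ and a cycle missing $Z$, with the extended compactified framed correspondences providing the geometric input---matches the paper. Two points, however, separate your sketch from the actual argument.

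\textbf{The paper never attempts a Gabber presentation.} There is no $p\colon X\to\A^1_S$ with $S$ smooth over $B$, and no Nisnevich neighbourhood is taken. Instead one embeds (an affine chart of) $X$ as a closed subscheme of $\A^N_B$ with trivial normal bundle, takes its closure $\overline X\subset\PP^N_B$, and base-changes along $X_x\to B$ to obtain a $d$-dimensional extended compactified framed correspondence $(\overline X\times_B X_x,\,X_\infty\times_B X_x,\,\Phi)$ from $(X_x,X_{x,\eta})$ to $X_\eta$ whose support is all of $X\times_B X_x$ and which smoothly contains $\mathrm{can}$ via the diagonal $\Delta$. One then cuts the relative dimension from $d$ down to $1$ by choosing generic hyperplane sections $s_2,\dots,s_d\in\Gamma(\overline X\times_B X_x,\mathcal O(l_j))$: this is done by an induction on $\codim_{\overline\eta} z$ (first Serre vanishing over the closed fibre, then lift along successive specializations in $B$), not by a geometric fibration. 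The output is a projective relative curve $\overline C\to X_x$ containing $\Delta$ and the preimage $\mathcal Z$ of $Z$, with controlled intersection at infinity. The interpolation $s=(1-\lambda)\tilde s_0\delta+\lambda s_1$ then lives in $\Gamma(\overline C_\eta\times\A^1,\mathcal O(l))$, and the element $c'$ lands in $\cFr^0_n(X_{x,\eta}\times\A^1,X_\eta)$, whence $\Fr_n$. Your auxiliary base $S$ plays no role.

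\textbf{At $\lambda=0$ one does not get $\sigma^n\mathrm{can}$ on the nose.} The support at $i_0$ is $\Delta\amalg Z^-$, not $\Delta$ alone, and even on $\Delta$ the framing $\psi$ has differential some $\tau\in\mathrm{GL}_n(X_{x,\eta})$, not the identity. The paper first observes that it suffices to prove $[\sigma^n\mathrm{can}]=[j\circ\tilde c']$ in $\overline{\ZF}_n(X_{x,\eta},X_\eta)$ (a constant homotopy then yields the $c$ of the statement), replaces $\psi$ by $\tau^{-1}(\psi)$, and uses that a framed correspondence supported on $\Delta$ with identity differential is $\A^1$-homotopic to $\sigma^n\mathrm{can}$. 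The extra piece $Z^-$ is disposed of additively: one sets $\tilde c'=-\tilde c^{\prime,-}+\tilde c^{\prime,+}$, which is why the statement lives in $\ZF_n$ rather than $\Fr_n$. Your assertion ``at $i_0$ the interpolation degenerates to the graph of $\mathrm{can}$ with its standard framing'' skips both of these corrections.
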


The key new ingredient in the proof of \Cref{cor::InjectivityFrgenfibsmoothloc}
is the notion of extended compactified framed correspondences over local base schemes developed in \Cref{sect:FocCompCor}. 
Our constructions consist informally of 
increasing the relative dimension of the support over the source of the framed correspondence, 
compactifying the support relatively over the source, 
and compactifying either the source of the framed correspondence or the base scheme, 
and taking into account the closed fiber of such a compactification.
These ideas allow us to carry out an induction argument generalizing results for fields and 
$1$-dimensional base schemes from \cite{hty-inv,nonperfect-SHI,DKO:SHISpecZ} to arbitrary base schemes.

\subsection{Notation}
\label{subsection:notation}
Throughout, 
$B$ is a finite dimensional separated noetherian base scheme, 
$\Aff_B$ denotes affine $B$-schemes,
$\tau$ is a topology on smooth finite type $B$-schemes $\Sm_{B}$, 
and $\EssSm_B$ denotes essentially smooth $B$-schemes
(filtered limits with affine \'etale transition morphisms in $\Sm_B$).
We write $\dim^x_B X$ for the relative dimension of $X$ over $B$ at $x\in X$, and $\dim_B X=\max_{x\in X}\dim^x_B X$.
For $X\in \Sch_B$ and $z\in B$, 
we set $X_z:=X\times_B z$.
We write $X_{(x)}$ and $X^h_x$ for the local and the local henselian schemes at $x\in X$, 
respectively. 
If $Z\not\hookrightarrow X$ is a closed immersion and $X$ is affine, 
we write $X^h_Z$ for the corresponding henselization.
Moreover, 
we use the following notation.
\begin{center}
\begin{tabular}{l|l}
$\Corr^\ctfr(B)$ & 
$\infty$-category of tangentially framed correspondences\\
$\Pre(B)$, $\Pre^\fr(B)$ & $\infty$-categories of presheaves on $\Sm_B$, 
$\Corr^\fr(B)$ \\
$\Pre_{\A^1}(B)$, $\Pre^\fr_{\A^1}(B)$
& subcategories of $\A^1$-invariant presheaves \\
$\Pre_{\tau}(B)$, $\Pre^\fr_{\tau}(B)$ & subcategories of $\tau$-local presheaves \\
$\Pre_{\tau,\A^1}(B)$, $\Pre^\fr_{\tau,\A^1}(B)$ & 
$\Pre_{\A^1}(B)\cap \Pre_{\tau}(B)$, $\Pre^\fr_{\A^1}(B)\cap \Pre^\fr_{\tau}(B)$ \\
$\catSpt$, $\SHtop$, $\SH^{s}(B)$ & the $\infty$-category of spectra, 
stable homotopy category, \\
& stable homotopy category of $S^1$-spectra on $\Sm_B$ \\
$\cSpt(B)$, $\cSpt^{s,t}(B)$, $\cSpt^{\PP^1}(B)$ & 
$\infty$-categories $\cPre(B)[T^{-1}]$ for $T=S^1,S^1\wedge\Gm,\PP^1$\\
$\cSptsfr(B)$, $\cSptstfr(B)$, $\cSptPfr(B)$ & 
  $\infty$-categories $\cPrefr_{\tau,\A^1}(B)[T^{-1}]$ for $T=S^1,S^1\wedge\Gm,\PP^1$\\
$\SHsAtau(B)$, $\SHstAtau(B)$, $\SHPAtau(B)$ & 
  $\tau$-motivic stable homotopy category of $S^1$-spectra,
  \\ & $(s,t)$-bispectra, $\PP^1$-spectra on $\Sm_B$\\
$\cSpt_{\A^1,\tau}(B)$, $\cSpt^{s,t}_{\A^1,\tau}(B)$, $\cSpt^{\PP^1}_{\A^1,\tau}(B)$ & 
  $\infty$-categories $\cPre_{\tau,\A^1}(B)[T^{-1}]$ for 
  $T=S^1,S^1\wedge\Gm,\PP^1$\\
$\SHfrsAtau(B)$, $\SHfrstAtau(B)$, $\SHfrPAtau(B)$ & 
  $\tau$-motivic stable homotopy category of framed
  \\& $S^1$-spectra, $(s,t)$-bispectra, $\PP^1$-spectra on $\Sm_B$\\
$\cSptsfrAtau(B)$, $\cSptstfrAtau(B)$, $\cSptPfrAtau(B)$ & 
  $\infty$-categories $\cPrefr_{\tau,\A^1}(B)[T^{-1}]$ for 
  $T=S^1,S^1\wedge\Gm,\PP^1$\\
$L_\tau$, $L_{\A^1}$ & $\tau$-localization, $\A^1$-localization \\
$\Kom(\mathscr C)$, $\biKom(\mathscr C)$, $\Kom_{\geq 0}(\mathscr C)$ & 
(bi)complexes in an additive category $\mathscr C$ \\
& complexes concentrated in non-negative degrees\\
$\Tot\colon\biKom(\mathscr C)\to\Kom(\mathscr C)$, $\mathrm{Cone}(f)$
& totalization, cone of a morphism $f$ in $\Kom(\mathscr C)$ \\ 
$\pi_l\colon \Kom(\SHtop)\to\Kom(\Ab)$ & term-wise application of homotopy 
$\pi_l\colon \SHtop\to\Ab$\\
$\pi^{\A^1,\nis}_{i,j}(X)$ & 
stable motivic homotopy presheaves of $X\in\Sm_B$ 
\end{tabular}
\end{center}

\subsection{Acknowledgements}
The authors acknowledge support from the RCN Project no.~312472 ``Equations in Motivic Homotopy."
Our work is supported by The European Commission -- Horizon-MSCA-PF-2022 ``Motivic integral $p$-adic cohomologies"
and a Young Russian Mathematics award.
The Russian Science Foundation grant 19-71-30002 provided generous and specific support for 
\Cref{th::LzarOmegaGmLA1tfFrSGmXgp,cor::ShifteddimBConnectivityZar,cor::InjectivityFrgenfibsmoothloc}, 
and \Cref{sect:FocCompCor,sect:Zarstalks} with respect to the grant conditions.

\section{Framed motivic spectra}
\label{sect:PreSptFr}

In this section, 
we recall the notion of framed motivic spectra and the strict homotopy invariance theorem for 
framed presheaves over a field, 
which will be used later in the paper.
\subsection{Voevodsky's framed correspondences}\label{subsect:VoevFr}
First we recall Voevodsky's notion of framed correspondences \cite{Voe-notes,Framed,five-authors}:

\begin{definition}[Framed correspondences]\label{def:VoevFr}
Let $X$, $Y$ be schemes, and $V\subset Y$ an open subscheme.
An \emph{explicit framed correspondence of level $n$ from $X$ to $Y/V$} is a span
\begin{equation}\label{framed-span}
\begin{tikzcd}
& S\ar{dl}[swap]{f}\ar{dr} &\\
X & & Y
\end{tikzcd}
\end{equation}
where $f$ is finite, together with the data of
\begin{itemize}
\item[(1)] a closed embedding $S\not\hookrightarrow \A^n_X$ 
\item[(2)] an étale neighborhood $U$ of $S$ in $\A^n_X$ together with an $n$-tuple of regular functions 
$\varphi=(\varphi_1,\dots,\varphi_n)$ on $U$, and
\item[(3)] a morphism $g\colon U\to Y$ lifting the morphism $S\to Y$,
such that $S=U\times_{\A^n\times Y} (\{0\}\times Y\setminus V)$
\end{itemize}
Refining étale neighborhoods yields a notion of equivalent explicit framed correspondences. 
The resulting equivalence classes $\Fr_n(X,Y/V)$ are called level $n$ framed correspondences. 
Finally, 
we set $\Fr_n(X,Y):=\Fr_n(X,Y/\emptyset)$.
\end{definition}

\begin{example}\label{ex:langleurangle}
    Given a scheme $X$ and an invertible regular function $\nu\in\mathcal O_X(X)$, 
     let $\langle \nu\rangle\in\Fr_1(X,X)$ denote the framed correspondence 
     $(\A^1\times X,0\times X,\mathrm{pr}_X^*\nu t,\mathrm{pr}_X)$,
     where $t$ is the coordinate function on $\A^1$, 
     and $\mathrm{pr}_X\colon\A^1\times X\to X$ is the canonical projection.
Moreover, we set $\sigma_X=\langle 1\rangle$. 
\end{example}

\begin{remark}
In the case when $X$ is affine, 
we can form the henselization $(\A^n_X)_S^h$ of $S$ in $\A^n_X$; 
this is the universal étale neighborhood of $S$ in $\A^n_X$. 
Therefore, for affine $X$, 
the framed correspondences from $X$ to $Y$ can be described more succinctly as spans as above, with 
$U=(\A^n_X)^h_S$, without passing to equivalence classes.
\end{remark}

\begin{lemma}[Voevodsky's Lemma]\label{lm:VoevFrLemmaShvbullet}
For any $X,Y\in \Sm_B$ and $l\in \mathbb Z, l\geq 0,$ there is a canonical equivalence
\[\Map_{\Shv(B)}(X_+\wedge \PP^{\wedge l},Y_+\wedge T^{\wedge l})\simeq \Fr_l(X,Y),\]
where $\Shv(B)$ denotes the category of Nisnevich sheaves of pointed sets on $\Sm_B$.
\end{lemma}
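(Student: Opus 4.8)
The plan is to unwind the definitions on both sides and exhibit the equivalence explicitly, using the standard identification of mapping spaces in the Nisnevich $\infty$-topos (or the model category of simplicial Nisnevich sheaves) as a filtered colimit over hypercovers, and then matching the resulting combinatorial data with the data packaged by an explicit framed correspondence. First I would reduce to the case of affine $X$: every $X\in\Sm_B$ is covered by affines, both sides send Nisnevich covers of $X$ to limits (the left side because $\Map_{\Shv(B)}(-,-)$ turns colimits in the first variable into limits, the right side because $\Fr_l(-,Y)$ is a Nisnevich sheaf in the first argument — this is part of Voevodsky's original package, but can also be checked directly from the definition), so it suffices to prove the equivalence for $X$ affine, where by the Remark following \Cref{def:VoevFr} we may take $U=(\A^n_X)^h_S$ to be the henselization and avoid passing to equivalence classes entirely.

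Next, for $X$ affine I would identify the two sides term by term. A map $X_+\wedge\PP^{\wedge l}\to Y_+\wedge T^{\wedge l}$ in $\Shv(B)$ — pointed Nisnevich sheaves of sets — is the same as a map of pointed sheaves out of the quotient $X\times(\PP^1)^{\times l}/X\times(\PP^1\setminus 0)^{\cup\text{-in-each-factor}}$ into $Y\times(\A^1)^{\times l}/Y\times(\A^1\setminus 0)^{\cup}$, and by adjunction (and since $T=\A^1/\A^1\setminus 0$, $\PP=\PP^1/\PP^1\setminus 0\simeq \A^1/\A^1\setminus 0$ in the Nisnevich-local category up to the usual purity identification) such a map amounts to a closed subset $S\subset \A^n_X$ finite over $X$ — the locus where all $l$ coordinate functions vanish and the $Y$-component is defined — together with a framing $\varphi=(\varphi_1,\dots,\varphi_l)$ of regular functions on an étale (equivalently, henselian) neighborhood $U$ of $S$ and a lift $g\colon U\to Y$. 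That is precisely the data in \Cref{def:VoevFr}; the condition $S=U\times_{\A^l\times Y}(\{0\}\times Y)$ records exactly that $S$ is cut out by $\varphi$ and mapped to $Y$ via $g$. I would organize this as: (i) the $\PP^1$-suspension $X_+\wedge\PP^{\wedge l}$ has, as a Nisnevich sheaf, a cell-like presentation; (ii) mapping out of it is the same as giving the "restriction to the closed cell" data; (iii) Nisnevich excision / henselian-neighborhood invariance lets us replace étale neighborhoods by the canonical henselization, which is what kills the equivalence relation.

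The main obstacle I expect is the careful bookkeeping at the level of $\infty$-categories: \Cref{lm:VoevFrLemmaShvbullet} asserts an equivalence of \emph{spaces} (mapping \emph{spaces} in $\Shv(B)$), whereas $\Fr_l(X,Y)$ is a priori a set. So one must check that the mapping space in question is in fact discrete — equivalently, that the relevant hypercover colimit degenerates. The cleanest route is to observe that $X_+\wedge\PP^{\wedge l}$ is, up to Nisnevich-local equivalence, $(0)$-truncated-relative enough: concretely, $\PP^1$ and $\A^1\setminus 0$ and $\A^1$ are all schemes, hence $0$-truncated sheaves, and the relevant pushouts computing the smash are pushouts of $0$-truncated sheaves along monomorphisms, which remain $0$-truncated after Nisnevich sheafification because Nisnevich sheafification preserves $0$-truncatedness and monomorphisms of $0$-truncated objects have no higher homotopy in the pushout; thus $\Map_{\Shv(B)}(X_+\wedge\PP^{\wedge l},-)$ lands in sets. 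Granting this discreteness, the remaining content is the bijection of sets, which is Voevodsky's original combinatorial computation; I would cite \cite[Lemma 3.2, Corollary 3.6]{Framed} for the field case and note that the argument is insensitive to the base, being purely about closed subschemes of $\A^n_X$ finite over $X$ and their henselian neighborhoods. Naturality in $X$ and $Y$ is then immediate from the construction of the bijection, since every step (taking the vanishing locus, restricting the framing, composing the lift) is visibly functorial.
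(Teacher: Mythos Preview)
The paper does not prove this lemma; it is stated as ``Voevodsky's Lemma'' without proof, implicitly deferring to the literature (Voevodsky's notes \cite{Voe-notes} and Garkusha--Panin \cite{Framed}). So there is no in-paper argument to compare against, and your proposal is essentially a sketch of the standard proof found in those references.

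Two comments on the proposal itself. First, your main worry---that the mapping object on the left is a space rather than a set---is based on a misreading of the statement. The lemma explicitly defines $\Shv(B)$ as the \emph{$1$-category} of Nisnevich sheaves of pointed sets; $\Map$ here is a Hom-set, and $\Fr_l(X,Y)$ is a set, so the assertion is a bijection of sets. Your entire paragraph on $0$-truncatedness is therefore unnecessary (and, in any case, the claim that pushouts of $0$-truncated sheaves along monomorphisms remain $0$-truncated, while true in a topos, is not what is doing the work).

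Second, the substantive step---why a pointed map $X_+\wedge(\PP^1,\infty)^{\wedge l}\to Y_+\wedge T^{\wedge l}$ of Nisnevich sheaves unpacks to exactly the data $(S,\varphi,g)$---is asserted rather than argued. The real content is an explicit description of the Nisnevich sheafification of $Y_+\wedge T^{\wedge l}$: its sections over an affine $U$ are equivalence classes of triples (closed $Z\subset U$, framing on an \'etale neighbourhood, map to $Y$), and this is what one then evaluates on $X\times(\PP^1)^l$ and restricts along the base-point inclusions. Your sketch gestures at this but does not carry it out; pointing to \cite[Corollary~3.6, Proposition~3.8]{Framed} for the details is appropriate, since that is exactly where the computation lives and the argument there is indeed base-insensitive. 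The reduction to affine $X$ via the sheaf property of $\Fr_l(-,Y)$ is correct and standard.
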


Garkusha and Panin's work \cite{Framed} introduces the graded category of framed correspondences $\Fr_*(B)$
enriched over pointed sets, with hom-sets given by $\bigvee_{n}\Fr_n(X,Y)$.
We refer to \cite{Framed} for the definition of the composition in $\Fr_*(B)$.
Moreover, 
every regular map $X\to Y$ defines an element in $\Fr_0(X,Y)$ for any isomorphism 
$f\colon S\xrightarrow{\cong} X$. 
There is a canonically induced functor
\begin{equation}\label{eq:SmBtoFrstarB}
    \Sm_B\to \Fr_*(B).
\end{equation}

\begin{definition}\label{def:sigmasuspFr}
For $X,Y\in \Sm_B$ the $n$th $\sigma$-suspension map  
\begin{equation}
\label{eq:sigmanc}
\sigma^n
\colon 
\Fr_l(X,Y) \to  \Fr_{l+n}(X,Y)
\end{equation}
is given by 
$$
c=(U,S,(\phi),g) \mapsto \sigma^n c=(U,S,(\phi,t_{l+1},\dots ,t_{l+n}),g).
$$
Moreover, we set $\sigma^n_Y=\sigma^n\id_Y$.
\end{definition}
\begin{remark}
    \Cref{def:sigmasuspFr} specializes to \Cref{ex:langleurangle}
    in the sense that $\sigma_X=\sigma^1\id_X$.
\end{remark}

\begin{definition}[Framed presheaves]\label{def:FrPresheaves}
A framed presheaf with values in a category
$\calC$ is 
a functor $\calF\colon \Fr_*(B)^\mathrm{op}\to \calC$.
\end{definition}

A presheaf $\calF$ on $\Sm_B$ with values in a category $\calC$ 
is a contravariant functor from $\Sm_B$ to $\calC$.
We call $\calF$ \emph{radditive} if the restriction morphisms induce isomorphisms 
$\calF(X\amalg Y)\simeq \calF(X)\times \calF(Y)$, for all $X,Y\in \Sm_B$. 
The direct image along the functor $\Sm_B\to \Fr_*(B)$ takes a framed presheaf 
$\calF$ over $B$ to a presheaf on $\Sm_B$,  
which we denote by the same symbol. 
We say that $\calF$ is \emph{$\A^1$-invariant} if 
the projection $X\times\A^1\to X$ induces an isomorphism 
$\calF(X)\xrightarrow{\cong} \calF(X\times\A^1)$ for all $X\in \Sm_B$.

For a fixed $Y$, 
the $\sigma$-suspension map \eqref{eq:sigmanc} defines a morphism of presheaves on $\Sm_B$ 
(with respect to the first variable).
Moreover, we have $\sigma^n c= \sigma^n_Y \circ c$.
A framed presheaf $\calF$ is called
\emph{quasi-stable} if the induced homomorphism $\sigma^*_X$ is an isomorphism for all $X$.

For a framed presheaf $\calF$, a framed correspondence $c\in \Fr_n(X,Y)$, and $a\in\calF(Y)$, 
we write $c^*(a)\in\calF(X)$ for the inverse image.

\begin{definition}[Linear framed correspondences]
Let $\ZF_*(-,Y)$ be the quotient of the presheaf of free abelian groups on $\Fr_*(-,Y)$ 
modulo the relation $[c]=[c_1]+[c_2]$.
Here $c, c_i\in \Fr_n(X,Y)$, 
$c$ has support $S=S_1\amalg S_2$ and $c_i$ has support $S_i$,
their \'etale neighborhoods coincide, $U=U_i$, 
their corresponding regular functions agree, $\phi=\phi_i$, 
and finally $g=g_i$.  
\end{definition}

We set 
\[
\overline{\ZF}_n(X,Y)
:=\operatorname{Coker}(\ZF_n(X\times\A^1,Y)\xrightarrow{i^*_0-i^*_1}\ZF_n(X,Y)),
\]
where $i^*_j$ is the inverse image along the $j$-section $X\to X\times\A^1$, 
$j=0,1$.

\subsection{Strict homotopy invariance over base fields}

The strict homotopy invariance theorem for $\A^1$-invariant framed presheaves of abelian groups 
is proven over perfect, infinite fields of characteristic different than 2 in \cite{hty-inv}; 
the latter two assumptions are dispensed with in \cite{surj-etale-exc}, 
and the remaining case of imperfect fields is settled in \protect{\cite[Theorems 8.2, 9.32]{nonperfect-SHI}}.
We recall the following equivalent forms of the said theorem. 

\begin{theorem}[\protect{\cite[Theorems 8.2, 9.32]{nonperfect-SHI}}]
\label{citedth:SHI(k)}
For any field $k$, the following hold:

(1) For any $\A^1$-invariant quasi-stable framed radditive presheaf of abelian groups $\calF$, there is a natural equivalence
\[H^*_{\nis}(X,\calF_\nis)\simeq H^*_{\nis}(X\times\A^1,\calF_\nis), X\in \Sm_k,\]
where $\calF_\nis$ denotes the Nisnevich sheafification.

(2) For any $\A^1$-invariant $\calF\in \SH^{\fr,s}(k)$ the presheaf $L_\nis \calF$ is $\A^1$-invariant.

(3) For any $\A^1$-invariant grouplike $\calF\in \Pre^\fr(k)$ the presheaf $L_\nis \calF$ is $\A^1$-invariant.
\end{theorem}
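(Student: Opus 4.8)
The plan is to deduce all three statements from the first — the presheaf-level strict homotopy invariance assertion — and then to prove (1) by the coniveau/Cousin-complex method combined with a moving lemma for framed correspondences. For the reductions: an $\A^1$-invariant framed $S^1$-spectrum $\calF$ has stable homotopy presheaves $\pi_i\calF$ which are radditive, quasi-stable (the $\sigma$-suspension acts invertibly because it does so stably) and $\A^1$-invariant framed presheaves of abelian groups; since Nisnevich sheafification commutes with $\Omega_{S^1}$ and with passage to homotopy sheaves, applying (1) to each $\pi_i\calF$ shows $L_\nis\calF$ has $\A^1$-invariant homotopy sheaves, i.e.\ is $\A^1$-invariant, which is (2). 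Statement (3) follows from (2) by delooping a grouplike $\A^1$-invariant framed presheaf into a connective $\A^1$-invariant framed $S^1$-spectrum with the same $\pi_0$ via the framed infinite-loop-space structure; and (1) follows back from (2) applied to Eilenberg--MacLane framed spectra, so it suffices to prove (1).

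For (1), fix an $\A^1$-invariant quasi-stable radditive framed presheaf of abelian groups $\calF$; the goal is $H^*_\nis(X,\calF_\nis)\simeq H^*_\nis(X\times\A^1,\calF_\nis)$ for $X\in\Sm_k$. \emph{Step 1 (local injectivity).} Show that for every essentially smooth local $k$-scheme $\calO=X_{(x)}$ with fraction field $K$ the restriction $\calF(\calO)\to\calF(K)$ is injective, and more generally that the support groups of $\calF_\nis$ along a closed subset of positive codimension in such $\calO$ vanish. This is the geometric heart, resting on the Gabber--Voevodsky geometric presentation lemma: given $Z\subset X$ closed of positive codimension and $x\in X$, after shrinking there is an \'etale map $X_{(x)}\to\A^1_S$ over some essentially smooth $S$ that is finite on $Z$ and an isomorphism near $Z$; from this one builds an explicit framed correspondence in $\ZF_*(X_{(x)}\times\A^1,X_{(x)})$ realizing a homotopy between the section one wishes to kill and one supported on $X\setminus Z$, so that $\A^1$-invariance and quasi-stability of $\calF$ force its vanishing.

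\emph{Step 2 (Cousin resolution).} Step 1 makes the columns of the coniveau spectral sequence collapse, so that over a field the Cousin complex $0\to\calF_\nis\to\mathcal C^0\to\mathcal C^1\to\cdots$ of $\calF_\nis$ on $X$, with $\mathcal C^i=\bigoplus_{x\in X^{(i)}}(i_x)_*H^i_x((X_{(x)})_\nis,\calF_\nis)$, is a flasque resolution; hence $H^n_\nis(X,\calF_\nis)$ is the $n$-th cohomology of its complex of global sections. \emph{Step 3 ($\A^1$-invariance of cohomology).} In degree $0$, comparing the Cousin complexes of $X$ and $X\times\A^1$ and using that $\calF$ is $\A^1$-invariant on all essentially smooth $k$-schemes (in particular on function fields and their local rings) gives homotopy invariance of $\calF_\nis$; granting this, the higher degrees follow from the Leray spectral sequence of $p\colon X\times\A^1\to X$, whose higher direct images $R^q p_*\calF_\nis$ ($q>0$) vanish because on each curve fiber over a field the Gersten resolution has length one and collapses by $\A^1$-invariance over fields. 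Thus $H^n_\nis(X\times\A^1,\calF_\nis)\simeq H^n_\nis(X,\calF_\nis)$.

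The main obstacle is Step 1, and specifically making the moving-homotopy construction work uniformly. Over an infinite perfect field this is the Garkusha--Panin argument of \cite{hty-inv}; removing perfectness requires a Popescu-type smoothing together with careful tracking of framed correspondences over the approximating algebras, and removing the infiniteness hypothesis requires the coprime-degree transfer argument — the content of \cite{surj-etale-exc,nonperfect-SHI}. Steps 2 and 3, by contrast, are formal consequences of Step 1 through the standard coniveau spectral sequence machinery for strictly homotopy-invariant framed sheaves over a field.
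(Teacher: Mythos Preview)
The paper does not supply a proof of this theorem: it is stated purely as a citation, with the surrounding text explaining that the result is proven over perfect infinite fields of characteristic $\neq 2$ in \cite{hty-inv}, with the latter restrictions removed in \cite{surj-etale-exc}, and the imperfect case handled in \cite{nonperfect-SHI}. There is therefore no in-paper argument to compare your proposal against; the paper only asserts that (1)--(3) are ``equivalent forms'' of the cited result without spelling out the equivalences.

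Your sketch is a faithful outline of the strategy of the cited literature --- the coniveau/Cousin machinery combined with a framed moving lemma is indeed the Garkusha--Panin approach, and the transfer argument for finite fields is what \cite{surj-etale-exc} contributes. One minor inaccuracy: the extension to imperfect fields in \cite{nonperfect-SHI} is not done via Popescu-type smoothing but rather by proving the moving lemma directly over arbitrary fields using more refined geometric input; Popescu approximation would not obviously preserve the framed structure needed. Otherwise your reductions between (1), (2), (3) and your three-step outline for (1) are accurate summaries of how the cited papers proceed, which is all one can ask here since the present paper treats the theorem as a black box.
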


\subsection{The $\infty$-category of tangentially framed correspondences.}
\label{subsect:Corrtfr}

Next, we recall 
the notion of tangentially framed correspondences in \cite{five-authors} which offers an equivalent way of 
viewing framed correspondences in the setting of $\infty$-categories. 

\begin{definition}\label{def:tgfr}
A \emph{tangentially framed correspondence} 
from $X$ to $Y$, where $X,Y\in \Sm_{S}$, 
is defined by the data 
\begin{itemize}
\item[(i)] 
a finite syntomic map $f\colon Z\to X$,
and an arbitrary map $g\colon Z\to Y$
\item[(ii)] 
a trivialization $\mathcal L_{f}\simeq 0$ of the cotangent complex $\mathcal L_{f}$ in the 
$K$-theory $\infty$-groupoid of $Z$
\end{itemize}
For a given $Y$ 
define a presheaf of spaces $h^\ctfr(Y)$ whose value on $X$ is given by 
\[
\begin{tikzcd}
h^\ctfr(Y)(X) = \underset{(f,Z,g)}{\displaystyle{\coprod}} 
\operatorname{eq}\big(*\arrow[r,shift left,"\calL_f"]\arrow[r,shift right,swap,"0"] & K(Z)\big).
\end{tikzcd}
\]
The coproduct involves all triples $(f,Z,g)$ as in (i), 
and the equalizer in (ii) is equivalent to the space of paths between $\mathcal{L}_{f}$ and $0$ 
in the $K$-theory $\infty$-groupoid of $Z$.
While the class of $\mathcal{L}_{f}$ in $K(Z)$ is trivial, 
the choice of a trivialization $[0,1]\rightarrow K(Z)$ is additional data. 
\end{definition}

We refer to \cite{five-authors} for the $\infty$-category $\Corr^\ctfr(B)$ of tangentially framed correspondences, 
see \Cref{subsection:notation}. 
Its objects are smooth schemes over $B$, 
and the mapping space $\mathrm{Map}_{\Corr^\ctfr(B)}(X,Y)$ 
is naturally equivalent to $h^\ctfr(Y)(X)$; 
thus, the presheaf $h^\ctfr(Y)$ is represented by $Y\in\Sm_B$.
A morphism of schemes defines canonically a tangentially framed correspondence, 
which defines a functor of $\infty$-categories $\gamma\colon \Sm_B\to \Corr^\ctfr(B)$. 
There is a naturally induced forgetful functor 
$\gamma_*\colon \Pre^\fr(B)\to \Pre(B)=\Pre(\Sm_B)$,
where $\Pre^\fr(B)$ is a shorthand for $\Pre(\Corr^\ctfr(B))$.
Moreover, $\gamma$ factors through $\Fr_*(B)$ as in 
\begin{equation}\label{eq:SmBtoFrstarBCorrtfrB}
    \Sm_B\to \Fr_*(B)\to \Corr^\ctfr(B).
\end{equation}

We denote the subcategories $\Pre^\fr(B)$ spanned by Nisnevich sheaves, $\A^1$-invariant, 
and group-complete objects by $\Pre^\fr_\nis(B)$, $\Pre^\fr_{\A^1}(B)$, and $\Pre^{\fr,\mathrm{gp}}(B)$,
respectively (see \Cref{subsection:notation}).
We remark that $\Pre^\fr_{\A^1}(B)$, $\Pre^\fr_{\nis}(B)$, and $\Pre^{\fr,\mathrm{gp}}(B)$ 
are reflective subcategories of $\Pre^\fr(B)$.
Finally, 
$\Pre^\fr_{\A^1,\nis}(B)$ denotes the subcategory $\Pre^\fr_{\A^1}(B)\cap \Pre^\fr_{\nis}(B)$ 
of $\A^1$-invariant Nisnevich sheaves.

\begin{definition}
Let $\mathbf H^\fr_{\A^1,\nis}(B)$ denote the homotopy category $\mathrm{h}\Pre^\fr_{\A^1,\nis}(B)$.
Similarly, 
we let $\mathbf H^{\fr,\mathrm{gp}}_{\A^1,\nis}(B)$ denote the homotopy category of the $\infty$-category of 
grouplike $\A^1$-invariant Nisnevich sheaves on $\Corr^\ctfr(B)$.
Moreover, 
we define 
\[\begin{array}{lcl}
\SHfrsAnis(B)&:=&\mathrm{h}\Pre^{\fr}_{\A^1,\nis}(B)[(S^{1})^{-1}],\\
\SHfrstAnis(B)&:=&\mathrm{h}\Pre^{\fr}_{\A^1,\nis}(B)[(S^{1})^{-1},\Gm^{\wedge -1}],\\
\SHfrAnis(B)&:=&\mathrm{h}\Pre^{\fr}_{\A^1,\nis}(B)[(\PP^{1})^{\wedge -1}].
\end{array}\]
The same formulas apply to any Grothendieck topology on $\Sm_S$.
\end{definition}

\begin{theorem}[\protect{\cite[Theorem 18]{Hoyois-localization}}]
\label{cth:SHfreqSH}
The forgetful functor
$\gamma_*\colon \Pre^\fr(B)\to \Pre(B)$ induces an equivalence 
$$\SH^{\fr}_{\A^1,\nis}(B)\simeq \SH_{\A^1,\nis}(B).$$
\end{theorem}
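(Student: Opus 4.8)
The plan is to prove the equivalence at the level of $\PP^1$-spectra by reducing it first to the case $B=\Spec k$ a field --- where it is the motivic recognition principle of \cite{five-authors} --- and then to a general base via continuity and the localization theorem, following \cite{Hoyois-localization}. For the setup: since $\gamma_*$ is restriction along $\gamma\colon\Sm_B\to\Corr^\ctfr(B)$, it preserves all limits and colimits and has a symmetric monoidal, colimit-preserving left adjoint $\gamma^*$, the left Kan extension along $\gamma$. As $\gamma^*$ sends the motivic sphere to the framed motivic sphere and carries $\A^1$-projections, Nisnevich covers, and the object $\PP^1$ to their counterparts in $\Pre^\fr(B)$, while $\gamma_*$ preserves Nisnevich sheaves, $\A^1$-invariant presheaves, and $\Omega_{\PP^1}$-spectra, the pair $(\gamma^*,\gamma_*)$ descends to an adjunction between $\SHANis(B)$ and $\SHfrAnis(B)$ with $\gamma_*$ the right adjoint. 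Both categories are generated under colimits and $\PP^1$-desuspensions by the objects $\SigmainftyT X_+$, $X\in\Sm_B$, and $\gamma_*$ preserves colimits, so it is enough to show that the unit $\eta_X\colon\SigmainftyT X_+\to\gamma_*\gamma^*\SigmainftyT X_+$ is an equivalence in $\SHANis(B)$ for every $X\in\Sm_B$; essential surjectivity of $\gamma^*$ is then automatic since these generators lie in its image.

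The heart of the argument is the case $B=\Spec k$. Here $\gamma^*\SigmainftyT X_+$ is the framed $\PP^1$-suspension spectrum of $X$, and by Voevodsky's Lemma (\Cref{lm:VoevFrLemmaShvbullet}) its underlying framed infinite loop space is the presheaf $\varinjlim_l\Omega^l_{\Gm}\bigl(L_{\A^1,\nis}\Fr(-,X_+\wedge\Gm^{\wedge l})\bigr)^\gp$; applying $\gamma_*$, one must check that this presheaf on $\Sm_k$ agrees with the ordinary motivic infinite loop space $\Omega^\infty_{\PP^1}\SigmainftyT X_+$. This is precisely the recognition principle of \cite{five-authors}; the essential inputs are the additivity and cancellation properties of framed correspondences together with strict homotopy invariance over $k$ (\Cref{citedth:SHI(k)}), which guarantees that Nisnevich-localizing the displayed colimit preserves $\A^1$-invariance, so that it is already a motivic $\Omega$-spectrum and hence computes the right-hand side. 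That \Cref{citedth:SHI(k)} now holds over arbitrary, possibly imperfect, fields is what permits the field case without restriction on $k$.

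To pass from fields to a general $B$, I would bootstrap using that the assignments $B\mapsto\SHANis(B)$ and $B\mapsto\SHfrAnis(B)$, together with $\gamma_*$, are both continuous --- carrying cofiltered limits of qcqs schemes along affine transition maps to colimits of categories --- and both satisfy the localization theorem: for a closed immersion $i\colon Z\hookrightarrow B$ with open complement $j\colon U\hookrightarrow B$, the functors $(i_*,j^*)$ exhibit a recollement and $\gamma_*$ commutes with $i^*,i_*,j^*,j_!$; the framed half of this is the main theorem of \cite{Hoyois-localization}. Since the unit $\eta_X$ is compatible with base change, its being an equivalence is tested by such recollements and by the continuity reductions; together with topological invariance of the Nisnevich topos, which settles the zero-dimensional case in terms of fields, this yields the statement for all $B$. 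Carefully assembling these reductions --- using continuity to descend to bases of finite type over $\bbZ$ and the recollements to descend along closed strata --- is the content of \cite[Theorem~18]{Hoyois-localization}.

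The step I expect to be the main obstacle is the field case, that is, the identification $\gamma_*\gamma^*\SigmainftyT X_+\simeq\SigmainftyT X_+$ over $k$: this is the deep recognition-principle input of \cite{five-authors}, resting on the cancellation theorem for framed correspondences and on strict homotopy invariance. The general-base bootstrap, though technically substantial, is formal once the framed localization theorem of \cite{Hoyois-localization} is granted; both are used as black boxes here.
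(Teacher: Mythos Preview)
The paper does not give its own proof of this statement: it is recorded as a citation to \cite[Theorem~18]{Hoyois-localization} and used as a black box (the very next item is \Cref{lm:SHAnistoSHfrsAnis}, which applies it). There is therefore nothing in the paper to compare your argument against.

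That said, your sketch is an accurate outline of the strategy in the cited reference: one sets up the adjunction $(\gamma^*,\gamma_*)$ on $\PP^1$-spectra, reduces to checking the unit on suspension spectra, handles the field case via the motivic recognition principle of \cite{five-authors} (whose applicability over arbitrary fields rests on strict homotopy invariance, \Cref{citedth:SHI(k)}), and bootstraps to a general base using continuity together with the framed localization theorem, which is the main contribution of \cite{Hoyois-localization}. One minor remark: your description of the field case via $\varinjlim_l\Omega^l_{\Gm}(L_{\A^1,\nis}\Fr(-,X_+\wedge\Gm^{\wedge l}))^\gp$ is really the computation of $\Omega^\infty_{\PP^1}$ rather than the equivalence of stable categories per se; the cleaner way to phrase the field step is that the reconstruction theorem of \cite{five-authors} already gives $\SHfrAnis(k)\simeq\SHANis(k)$ directly, and Hoyois's localization theorem is what propagates this to all $B$.
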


\begin{corollary}\label{lm:SHAnistoSHfrsAnis}
The functors
\begin{equation}\label{eq:SHAnistoSHfrsAnis}
\SH_{\A^1,\nis}(B)\xrightarrow{\gamma^*}
\SH^\fr_{\A^1,\nis}(B)\xrightarrow{\Omega^{\infty,\fr}_{\Gm}}
\SH^{\fr,s}_{\A^1,\nis}(B)
\end{equation}
participate in an isomorphism 
\begin{equation}
\label{eq:piSHAniscalFsimeqpiSHfrsAniscalF}\pi_{n,0}(\calF)
\cong
\pi_n (\Omega^{\infty,\fr}_{\Gm}\gamma^*(\calF)).
\end{equation}
\end{corollary}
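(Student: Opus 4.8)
The plan is to rewrite both sides as $[-,-]$-groups in the relevant stable $\infty$-categories and to join them by a short chain of adjunctions together with the equivalence of \Cref{cth:SHfreqSH}. Fix $U\in\Sm_B$. Both $\pi_{n,0}(\calF)$ and $\pi_n(\Omega^{\infty,\fr}_{\Gm}\gamma^*\calF)$ are the Nisnevich sheaves associated with the presheaves of abelian groups
\begin{align*}
U &\longmapsto [\Sigma^\infty_{\PP^1}(S^n\wedge U_+),\calF]_{\SH_{\A^1,\nis}(B)},\\
U &\longmapsto [\Sigma^\infty_{S^1}(S^n\wedge U_+),\Omega^{\infty,\fr}_{\Gm}\gamma^*\calF]_{\SH^{\fr,s}_{\A^1,\nis}(B)},
\end{align*}
the first by \eqref{eq:piijANisCalFU} with $j=0$, and the second by the definition of $\pi_n$ on an object of $\SH^{\fr,s}_{\A^1,\nis}(B)$. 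Thus it suffices to produce a natural isomorphism of these two presheaves.

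First I would use the adjunction $\Sigma^\infty_{\Gm}\dashv\Omega^{\infty,\fr}_{\Gm}$ between $\SH^{\fr,s}_{\A^1,\nis}(B)$ and $\SH^{\fr,s,t}_{\A^1,\nis}(B)$ together with the standard equivalence $\PP^1\simeq S^1\wedge\Gm$ in $\mathbf{H}_{\A^1,\nis}(B)$, which identifies $\SH^{\fr,s,t}_{\A^1,\nis}(B)\simeq\SH^{\fr}_{\A^1,\nis}(B)$ and $\Sigma^\infty_{\Gm}\Sigma^\infty_{S^1}\simeq\Sigma^\infty_{\PP^1}$ in the framed setting. This rewrites the second group as $[\Sigma^\infty_{\PP^1}(S^n\wedge U_+),\gamma^*\calF]_{\SH^{\fr}_{\A^1,\nis}(B)}$, with $\Sigma^\infty_{\PP^1}(S^n\wedge U_+)$ now the framed suspension $\PP^1$-spectrum. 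Because $\gamma^*$ is a left adjoint it commutes with colimits, hence with $\Sigma^\infty_{\PP^1}$ and with $\wedge$, and it carries the representable $U_+$ to $U_+$; therefore $\gamma^*\Sigma^\infty_{\PP^1}(S^n\wedge U_+)\simeq\Sigma^\infty_{\PP^1}(S^n\wedge U_+)$ in $\SH^{\fr}_{\A^1,\nis}(B)$. Finally, by \Cref{cth:SHfreqSH} the forgetful functor $\gamma_*\colon\SH^{\fr}_{\A^1,\nis}(B)\to\SH_{\A^1,\nis}(B)$ is an equivalence, so its left adjoint $\gamma^*$ is the inverse equivalence, and in particular fully faithful; applying full faithfulness to $\gamma^*\Sigma^\infty_{\PP^1}(S^n\wedge U_+)$ and $\gamma^*\calF$ gives
\[
[\gamma^*\Sigma^\infty_{\PP^1}(S^n\wedge U_+),\gamma^*\calF]_{\SH^{\fr}_{\A^1,\nis}(B)}\cong[\Sigma^\infty_{\PP^1}(S^n\wedge U_+),\calF]_{\SH_{\A^1,\nis}(B)}.
\]
Composing these isomorphisms identifies the two presheaves; every map involved is natural in $\calF$ and in $U$, so the identification passes to the associated Nisnevich sheaves, which is \eqref{eq:piSHAniscalFsimeqpiSHfrsAniscalF}.

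The content is carried entirely by \Cref{cth:SHfreqSH}; the rest is a formal chase. The one point to watch is the bookkeeping between the $\Gm$- and $\PP^1$-stabilizations: one must verify that the functor labelled $\Omega^{\infty,\fr}_{\Gm}$ in \eqref{eq:SHAnistoSHfrsAnis} is precisely the right adjoint of $\Gm$-suspension under $\SH^{\fr,s,t}_{\A^1,\nis}(B)\simeq\SH^{\fr}_{\A^1,\nis}(B)$, and that $\gamma^*$ commutes on the nose with the $S^1$- and $\Gm$-suspension spectrum functors. With these compatibilities recorded, the corollary follows.
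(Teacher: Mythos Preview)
Your argument is correct and essentially matches the paper's: both derive the isomorphism formally from \Cref{cth:SHfreqSH} together with the compatibility of $\gamma^*/\gamma_*$ with the $\Gm$-stabilization adjunction. The only cosmetic difference is that the paper runs the chase through the right adjoints (using $\calF\simeq\gamma_*\gamma^*\calF$ and $\Omega^{\infty}_{\Gm}\gamma_*\simeq\gamma_*\Omega^{\infty,\fr}_{\Gm}$), whereas you run it through the left adjoints and full faithfulness of $\gamma^*$; these are dual bookkeepings of the same formal argument.
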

\begin{proof}
By \Cref{cth:SHfreqSH}
there is an equivalence $\calF\simeq\gamma_*\gamma^*\calF$, 
and therefore
\begin{equation*}
\label{eq:piSHAniscalFsimeqpiSHfrsAniscalFproof}
\begin{array}{lcl}
\pi_{n,0}(\calF)
&\cong& 
\pi_{n,0}(\gamma_*(\gamma^*\calF))\\
&\cong& 
\pi_n(\Omega^{\infty}_{\Gm}\gamma_*(\gamma^*\calF))
\\
&\cong&
\pi_n(\gamma_*\Omega^{\infty,\fr}_{\Gm}\gamma^*(\calF))
\\
&\cong& 
\pi_n(\Omega^{\infty,\fr}_{\Gm}\gamma^*(\calF))
.
\end{array}\end{equation*}
\end{proof}
\begin{remark}
    More generally, by the same proof, one obtains an isomorphism
    \[\pi_{n,l}(\calF)\cong \pi_n(\Omega^{\infty,\fr}_{\Gm}\gamma^*(\Gm^{\wedge -l}\wedge\calF)).\]
    The case of weight zero in \Cref{lm:SHAnistoSHfrsAnis} suffices for our purposes.
\end{remark}

\begin{definition}
The subcategory of \emph{quasi-stable framed} objects in $\SHs(B)$ is the 
essential image of the subcategory spanned by quasi-stable framed presheaves of $S^1$-spectra 
under the direct image functor $\SHfrs(B)\to\SHs(B)$ induced by \eqref{eq:SmBtoFrstarB}.
\end{definition}

\begin{remark}
The essential image of $\SHfrs(B)\to\SHs(B)$ is contained in the essential image of 
$\SHs(\Fr_*(B))\to\SHs(B)$, see \eqref{eq:SmBtoFrstarBCorrtfrB}.
In fact,  
any object in the essential image of $\SHfrs(B)\to\SHs(B)$ is quasi-stable framed 
(for each $X\in\Sm_B$ the image of $\sigma_X$ along $\Fr_*(B)\to\Corr^\fr(B)$
belongs to the same connected component as the identity morphism $\mathrm{id}_X$).
\end{remark}

\section{Cousin and $\tf$-Cousin complexes}\label{sect:CousbitfComplexes}

Throughout this section we consider the category $\TCat(B)$ of presheaves on $\EssSm_B$ with values in a 
stable $\infty$-category $\TCat$. 
Let $\calF\mapsto \calF[1]$ be the suspension functor on $\TCat$.
We write $\hTCat$ for the (additive) homotopy category of $\TCat$, 
and $\KC$, $\biKC$ for the corresponding categories of chain complexes and chain bicomplexes.
The value of $\calF$ on $X\in\EssSm_B$ is denoted by $\calF(X)\in\TCat$ and we write 
$\hF(X)\in \hTCat$ for the image in $\hTCat$.
Furthermore, we denote by $\hTCat(B)$ the additive category of presheaves with values in $\hTCat$, 
and by $\hF\in \hTCat(B)$ the presheaf induced by $\calF$.

Recall that $\calF\in\TCat(B)$ is local with respect to a cd-structure $\tau$ on $\EssSm_B$ in the sense of \cite{VV:cd} 
if it takes each distinguished square to a Cartesian square in $\TCat$, 
\cite[Definition 6.2.2.6]{Lurie}, and
see \cite[Definition 1.4.1]{zbMATH01573275}.
Denote by $L_\tau\colon \TCat(B)\to \TCat(B)$ the respective localization functor that lands in the subcategory of local objects.

\begin{example}\label{ex:TCatisspectra}
    If $\TCat$ is the $\infty$-category of spectra, then $\hTCat=\SHtop$. 
    Any presheaf of spectra on $\Sm_B$ defines a presheaf on $\EssSm_B$ by continuity. 
    We denote both presheaves by the same symbol $\calF\in\SHs(B)$. 
\end{example}

\subsection{Cousin complexes}\label{sect:Cousincomplex}

\begin{definition}
\label{def:FYUFxU}
For $U\in\Et_X$ and any closed subscheme $Y\not\hookrightarrow U$, we set
\begin{equation*}
\calF_Y(U):=\fib( \calF(U)\to \calF(U-Y) ).
\end{equation*}
If $x\in U$, then
$\calF_x(U):=\calF_{x}(U_{(x)})$,
where $U_{(x)}$ is the local scheme of $U$ at $x$.
If $Y$ is not a subscheme of $U$, 
we set $\calF_Y(U):=\calF_{Y\cap U}(U)$.
\end{definition}

\Cref{def:FYUFxU} implies there is an
equivalence 
\[\calF_{Y_1}(U)\calsimeq \fib(\calF_Y(U)\to \calF_{Y-Y_1}(U-Y_1))\]
for any $U\in\Et_X$,
closed immersions $Y_1\not\hookrightarrow Y\not\hookrightarrow U$, 
and $\calF\in \TCat(B)$.
Thus the stability of $\TCat$ provides a
morphism
\begin{equation}\label{eq:partialYY1}
\partial^Y_{Y_1}
\colon 
\calF_{Y-Y_1}(U-Y_1)
\to \calF_{Y_1}(U)[1].
\end{equation}

\begin{lemma}\label{lm:Y2Y1YUtrivdifcomposite}
Let
$Y_2\not\hookrightarrow Y_1 \not\hookrightarrow Y\not\hookrightarrow U$ be closed immersions.
Then 
for any Nisnevich local $\calF\in \TCat(B)$,
the composite morphism 
\begin{equation}\label{eq:partYY1partY1Y2}
\hF_{Y}(U-Y_1)\xrightarrow{\partial^{Y}_{Y_1}} 
\hF_{Y_1-Y_2}(U-Y_2)[1]\xrightarrow{\partial^{Y_1}_{Y_2}[1]}
\hF_{Y_2}(U)[2]
\end{equation}
is trivial in $\hTCat$.
\end{lemma}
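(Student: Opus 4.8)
The plan is to reduce the vanishing of the composite in \eqref{eq:partYY1partY1Y2} to a statement about the triangulated structure of $\hTCat$ applied to a tower of fibrations, using that the boundary maps $\partial^Y_{Y_1}$ arise as connecting morphisms in a distinguished triangle. First I would record the fundamental input: for any closed immersions $Y_1 \hookrightarrow Y \hookrightarrow U$ and Nisnevich-local $\calF$, the localization sequence gives a (co)fiber sequence
\[
\calF_{Y_1}(U) \to \calF_Y(U) \to \calF_{Y-Y_1}(U-Y_1),
\]
and $\partial^Y_{Y_1}$ is precisely the connecting morphism $\calF_{Y-Y_1}(U-Y_1)\to \calF_{Y_1}(U)[1]$ of this sequence. (Nisnevich locality is what guarantees excision, i.e. that $\calF_{Y-Y_1}(U-Y_1)$ computes the fiber correctly, via the displayed equivalence just before \eqref{eq:partialYY1}.) The composite in question is then the connecting map of one triangle, shifted, precomposed with the connecting map of another triangle.

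The key step is to fit the three closed subschemes $Y_2 \hookrightarrow Y_1 \hookrightarrow Y \hookrightarrow U$ into a single commutative diagram of fiber sequences — an octahedral-type configuration. Concretely, I would consider the filtration of $\calF_Y(U)$ by supports: the three objects $\calF_{Y_2}(U)$, $\calF_{Y_1}(U)$, $\calF_{Y}(U)$ sit in a tower with successive cofibers $\calF_{Y_1 - Y_2}(U - Y_2)$ and $\calF_{Y - Y_1}(U - Y_1)$, and the \enquote{long} cofiber $\calF_{Y-Y_2}(U-Y_2)$. The octahedral axiom applied to the composable pair $\calF_{Y_2}(U)\to\calF_{Y_1}(U)\to\calF_Y(U)$ produces a distinguished triangle
\[
\calF_{Y_1-Y_2}(U-Y_2) \to \calF_{Y-Y_2}(U-Y_2) \to \calF_{Y-Y_1}(U-Y_1) \xrightarrow{\ \delta\ } \calF_{Y_1-Y_2}(U-Y_2)[1],
\]
and — this is the crucial compatibility — the map $\delta$ is exactly $\partial^{Y_1}_{Y_2}$, while the composite $\partial^{Y_1}_{Y_2}[1]\circ(\text{something})$ factors the map in \eqref{eq:partYY1partY1Y2} through two consecutive morphisms in a single distinguished triangle, hence through $g\circ f$ where $g\circ f = 0$ because any two consecutive maps in a triangle compose to zero. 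So the composite \eqref{eq:partYY1partY1Y2} equals $\partial^{Y_1}_{Y_2}[1] \circ (\text{connecting map factoring through }\calF_{Y-Y_2}(U-Y_2))$, and by the octahedron this is a composite of two consecutive arrows in a triangle, hence zero.

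The main obstacle, I expect, is bookkeeping the identification of the abstractly-produced octahedral connecting morphism with the \emph{specific} boundary map $\partial^{Y_1}_{Y_2}$ defined in \eqref{eq:partialYY1}: one must check that the fiber-sequence presentation of $\partial^{Y_1}_{Y_2}$ (built from $Y_2\hookrightarrow Y_1\hookrightarrow U$, after possibly replacing $U$ by $U-Y_2$ and using the convention $\calF_Y(U):=\calF_{Y\cap U}(U)$) agrees, up to the natural equivalences of Definition~\ref{def:FYUFxU}, with the edge of the octahedron coming from the $Y_2\hookrightarrow Y_1\hookrightarrow Y$ filtration. This is naturality of the connecting map in the pair $(U, \text{support})$, together with the excision equivalence $\calF_{Y_1-Y_2}(U-Y_2)\simeq \calF_{Y_1-Y_2}((U-Y_1)\cup\cdots)$ — purely formal from Nisnevich locality, but it is where all the Nisnevich hypotheses are actually spent. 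Once that identification is in hand, the vanishing is immediate from $d^2 = 0$ in the triangulated category $\hTCat$, or equivalently from the fact that consecutive maps in a cofiber sequence compose to zero; no further input about $B$, $U$, or the topology is needed. I would also remark that this is the statement that makes $\pi\ovCou(X,\calF)$ an honest complex, so it is worth stating the identification of differentials cleanly here for reuse in Section~\ref{sect:Cousincomplex}.
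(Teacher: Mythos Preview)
Your plan is essentially the paper's argument dressed in octahedral language, and the core idea is correct. The paper's execution is more direct: it simply factors the composite through $\calF_{Y_1}(U)[1]$ via the commutative triangle
\[
\calF_{Y-Y_1}(U-Y_1)\xrightarrow{\partial^{Y}_{Y_1}}\calF_{Y_1}(U)[1]\longrightarrow\calF_{Y_1-Y_2}(U-Y_2)[1]
\]
(this factorization is precisely the octahedral compatibility you allude to), and then observes that the remaining leg
\[
\calF_{Y_1}(U)[1]\longrightarrow\calF_{Y_1-Y_2}(U-Y_2)[1]\xrightarrow{\partial^{Y_1}_{Y_2}[1]}\calF_{Y_2}(U)[2]
\]
is the shift of two consecutive maps in the fiber sequence $\calF_{Y_2}(U)\to\calF_{Y_1}(U)\to\calF_{Y_1-Y_2}(U-Y_2)$, hence zero in $\hTCat$. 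No explicit invocation of the octahedral axiom or the auxiliary object $\calF_{Y-Y_2}(U-Y_2)$ is needed; the single waypoint $\calF_{Y_1}(U)[1]$ suffices.

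Two small corrections to your write-up. First, your identification ``the map $\delta$ is exactly $\partial^{Y_1}_{Y_2}$'' is a slip: the octahedral connecting map $\delta\colon\calF_{Y-Y_1}(U-Y_1)\to\calF_{Y_1-Y_2}(U-Y_2)[1]$ is the \emph{first} arrow in \eqref{eq:partYY1partY1Y2}, not the second. Second, your assertion that the composite is ``two consecutive arrows in a triangle'' is not literally true as stated: $\delta$ and $\partial^{Y_1}_{Y_2}[1]$ do not lie in the same distinguished triangle. You need the extra factorization through $\calF_{Y_1}(U)[1]$ before you can invoke $d^2=0$; this is exactly the content of the paper's diagram \eqref{eq:TcommdiagrFYU}. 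Finally, Nisnevich locality is not actually spent in this lemma: the fiber sequence $\calF_{Y_1}(U)\to\calF_Y(U)\to\calF_{Y-Y_1}(U-Y_1)$ holds for any presheaf by an iterated-fiber computation, and excision only enters later in the section.
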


\begin{proof}
There is the commutative diagram in $\TCat$
\begin{equation}
\xymatrix{
\calF_{Y}(U-Y_1)\ar[r]\ar[dr]& \calF_{Y_1-Y_2}(U-Y_2)[1]\ar[r]& \calF_{Y_2}(U)[2]\\
&\calF_{Y_1}(U)[1]\ar[u]\ar[ru]&
}
\label{eq:TcommdiagrFYU}
\end{equation}
and the morphisms in \eqref{eq:partYY1partY1Y2} fit into the commutative diagram in 
$\hTCat$ induced by \eqref{eq:TcommdiagrFYU}.
Since the right-most diagonal morphism in \eqref{eq:TcommdiagrFYU} corresponds to the trivial morphism in $\hTCat$, 
both the composite morphisms in \eqref{eq:TcommdiagrFYU} are trivially mapped to $\hTCat$.
\end{proof}

Suppose $x,x^\prime\in X$ are points such that $x$ is contained in the 
closure $Y^\prime$ of ${x^\prime}$ in $X$, 
and $\codim_{X} x=\codim_{X} x^\prime+1$.
Then the local scheme $(Y^\prime)_{(x)}$ has dimension one and $(Y^\prime)_{(x)}-x=x^\prime$.
Thus for any Nisnevich local object $\calF\in \TCat(B)$, 
we have an equivalence
\[
\calF_{(Y^\prime)_{(x)}-x}(X_{(x)}-x)
\xrightarrow{\calsimeq}
\calF_{x^\prime}(X) 
,\]
and hence a composite morphism
\begin{equation}\label{eq:partialxprimex}
\partial^{x^\prime}_x\colon 
\calF_{x^\prime}(X) \calsimeq \calF_{(Y^\prime)_{(x)}-x}(X_{(x)}-x)\to 
\calF_{x}(X)[1],
\end{equation}
obtained from \eqref{eq:partialYY1}.

\Cref{lm:Y2Y1YUtrivdifcomposite} implies the following corollary.

\begin{corollary}\label{cor:CoustfCousdifferntialsquaretrivial}
    For $0\leq c\leq \dim X$ the composite
    \[
    \bigoplus_{x\in X^{(c)}} \hF_{x}(X)[c]\to 
    \bigoplus_{x\in X^{(c+1)}} \hF_{x}(X)[c+1] \to 
    \bigoplus_{x\in X^{(c+2)}} \hF_{x}(X)[c+2]
    \]
    defined by \eqref{eq:partialxprimex}
    is trivial in $\hTCat$.
\end{corollary}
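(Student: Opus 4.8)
The plan is to reduce the vanishing of the degree-$(c+2)$ composite to a collection of individual triviality statements of the form handled by \Cref{lm:Y2Y1YUtrivdifcomposite}, one for each pair of composable differentials between the summands. Concretely, fix $x\in X^{(c)}$, $x'\in X^{(c+1)}$ and $x''\in X^{(c+2)}$, and consider the component of the composite that starts in $\hF_x(X)[c]$ and lands in $\hF_{x''}(X)[c+2]$. I would first observe that this component is zero unless $x$ lies in the closure $Y'$ of $x'$ and $x'$ lies in the closure $Y''$ of $x''$, since otherwise one of the two connecting maps $\partial^{x'}_x$, $\partial^{x''}_{x'}$ is zero by construction (the relevant closure fails to contain the point, so the corresponding $\calF_\bullet$ term vanishes or the map factors through a zero object). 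So the only nontrivial contributions come from flags $x\in Y'$, $x'\in Y''$ with the codimensions increasing by one at each step.

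Next, for such a flag I would work inside the local scheme $X_{(x)}$ and set up the chain of closed immersions to which \Cref{lm:Y2Y1YUtrivdifcomposite} applies. Writing $Y = (Y'')_{(x)}$, $Y_1 = (Y')_{(x)}$ (the closure of $x'$ localized at $x$, a one-dimensional local scheme inside $Y$), and $Y_2 = \{x\}$, one gets $Y_2 \hookrightarrow Y_1 \hookrightarrow Y \hookrightarrow X_{(x)}$. The key bookkeeping step is to identify the composite $\partial^{x''}_{x'}[1]\circ\partial^{x'}_x$, after the canonical identifications of \Cref{def:FYUFxU} (which let one replace $\calF_{x'}(X)$ by $\calF_{(Y')_{(x)}-x}(X_{(x)}-x)$ and likewise for $x''$ inside the appropriate localization), with the composite $\partial^{Y_1}_{Y_2}[1]\circ\partial^{Y}_{Y_1}$ from \eqref{eq:partYY1partY1Y2}. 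Both are built from the same boundary maps \eqref{eq:partialYY1} associated to fibers of restriction maps in the stable category, so this is a matter of checking that the localization-and-restriction identifications are compatible with the connecting morphisms — which follows from naturality of the fiber sequences and Nisnevich excision (used already to produce \eqref{eq:partialxprimex}).

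Once that identification is in place, \Cref{lm:Y2Y1YUtrivdifcomposite} applied to the flag $Y_2\hookrightarrow Y_1\hookrightarrow Y\hookrightarrow X_{(x)}$ gives that this component is trivial in $\hTCat$. Summing over all $x,x',x''$ — equivalently, noting that the differential of the Cousin complex is the matrix with these entries and that its square is the matrix whose entries are these (vanishing) composites — yields that the length-two composite in the statement is the zero morphism in $\hTCat$. I expect the main obstacle to be purely organizational rather than conceptual: carefully pinning down the identifications between the ``point-indexed'' boundary maps $\partial^{x'}_x$ and the ``subscheme-indexed'' boundary maps $\partial^Y_{Y_1}$ so that \Cref{lm:Y2Y1YUtrivdifcomposite} can be invoked verbatim, and making sure the case analysis (which flags contribute) is complete, in particular handling the possibility that $x'$ is not the generic point of a codimension-one subscheme of $Y''$ through $x$, in which case the corresponding matrix entry is again seen to vanish directly.
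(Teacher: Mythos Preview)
Your reduction to \Cref{lm:Y2Y1YUtrivdifcomposite} is the right idea and is what the paper does, but your execution has a real gap. The $(x,x'')$-component of the length-two composite is not a single term indexed by one intermediate $x'$: it is the \emph{sum} $\sum_{x'}\partial^{x'}_{x''}[1]\circ\partial^{x}_{x'}$ over all $x'\in X^{(c+1)}$ with $x''\in\overline{x'}\subset\overline{x}$, and these individual summands are in general nonzero. For instance, for algebraic $K$-theory on the local scheme of $\A^2_k$ at the origin, the composite through the single height-one prime $(s)$ sends the symbol $\{s,t\}\in K_2(k(s,t))$ to a unit in $K_0(k)$; only after summing over all height-one primes do the contributions cancel. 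Correspondingly, your proposed identification with the composite \eqref{eq:partYY1partY1Y2} for the flag $Y_2=\{x''\}\hookrightarrow Y_1=\overline{x'}_{(x'')}\hookrightarrow Y=\overline{x}_{(x'')}\hookrightarrow U=X_{(x'')}$ cannot succeed at the source: $\hF_Y(U-Y_1)$ is not $\hF_x(X)$, because $Y\setminus Y_1$ is strictly larger than $\{x\}$ whenever the $2$-dimensional local scheme $Y$ has more than one codimension-one point.

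The repair is to keep $U$, $Y$, $Y_2$ fixed but let $Y_1$ range over the filtered system of \emph{all} closed subschemes of $Y$ of positive codimension containing $Y_2$, and pass to the colimit. Using Zariski excision one identifies $\varinjlim_{Y_1}\hF_{Y-Y_1}(U-Y_1)\simeq\hF_x(X)$ and $\varinjlim_{Y_1}\hF_{Y_1-Y_2}(U-Y_2)\simeq\bigoplus_{x'}\hF_{x'}(X)$, so the filtered colimit of the (individually trivial) composites \eqref{eq:partYY1partY1Y2} is exactly the $(x,x'')$-entry of the squared Cousin differential. Equivalently, one may recognise the differential as arising from the coniveau filtration and observe that two consecutive maps in a fiber sequence compose to zero. (A minor point: your specialisation relations are inverted --- with $x\in X^{(c)}$ and $x''\in X^{(c+2)}$ it is $x''$ that lies in $\overline{x}$, not the other way around, and one must localise at $x''$ rather than at $x$.)
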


\begin{definition}\label{def:Cou}
For any Nisnevich local presheaf $\calF\in\TCat(B)$
and $X\in \EssSm_B$, 
\eqref{eq:partialxprimex} provides the sequence of morphisms in $\hTCat$
\begin{equation}\label{eq:seq:res:CXF}
\bigoplus_{x\in X^{(0)}} \hF_{x}(X) \to \dots \to 
\bigoplus_{x\in X^{(c)}} \hF_{x}(X)[c]\to
\bigoplus_{x\in X^{(c+1)}} \hF_{x}(X)[c+1] \to \dots \to 
\bigoplus_{x\in X^{(\dim X)}} \hF_{x}(X)[\dim B]
.\end{equation}
According to \Cref{cor:CoustfCousdifferntialsquaretrivial}, 
the sequence \eqref{eq:seq:res:CXF} defines a differential complex which we denote by
\[
\Cou(X,\calF)\in\KC.
\] 
\end{definition}

\begin{remark}
The morphisms \eqref{eq:partialYY1} are functorial with respect to arbitrary morphisms of schemes. 
The sequence \eqref{eq:seq:res:CXF} is functorial with respect to \'etale morphisms in $\EssSm_B$ since 
\'etale morphisms preserve codimension of points.
Thus $\Cou(-,\calF)$ defines a presheaf on the small \'etale site $\Et_X$ with values in $\KC$ for any $X\in\Sm_B$.
\end{remark}

For $X$ and $\calF$ as above, 
the morphism 
\begin{equation}\label{eq:calFXtocalFXzero}
\calF(X)\to \calF(X^{(0)})
\end{equation}
induces a morphism in $\KC$
\begin{equation}\label{eq:FtoCou}
\hF(X)\to \Cou(X,\calF), 
\end{equation}
where the left side is viewed as a complex concentrated in degree zero.

\begin{definition}
\label{def:ovCou}
For $X\in\EssSm_B$ and a Nisnevich local $\calF\in\TCat(B)$,
\emph{the Cousin complex} 
\[
\ovCou(X,\calF):= 
\operatorname{Cone}(\hF(X)\to 
\Cou(X,\calF))
\]
is the cone of the morphism in \eqref{eq:FtoCou} in $\KC$.
\end{definition}

Generalizing \eqref{eq:calFXtocalFXzero} and \eqref{eq:FtoCou}, 
for any $\calF\in\TCat(B)$,
the morphisms \[\calF(X)\to \calF(X^{(0)})\xrightarrow{\calsimeq} L_\nis\calF(X^{(0)})\]
induce the morphism in $\KC$
\begin{equation}\label{eq:FtoCouLnis}
\hF(X)\to \Cou(X,L_{\nis}\calF).
\end{equation}
We can, therefore, make the following generalization of \Cref{def:ovCou}:

\begin{definition}\label{def:calFTCatB:Cou}
For any $\calF\in\TCat(B)$, and any $X\in\EssSm_B$,
we define the associated Cousin complexes as the objects in $\KC$ given by 
\begin{align*}
\Cou(X,\calF) &:= \Cou(X,L_{\nis}\calF)\\
\ovCou(X,\calF) &:= \operatorname{Cone}(\hF(X)\to \Cou(X,L_{\nis}\calF)).
\end{align*}
\end{definition}

\subsection{$\tf$-Cousin complex}\label{sect:tfCousincomplexdefinition}

In this section, 
we define the $\tf$-Cousin complex associated to any $X\in\EssSm_B$ and 
$\tf$-local presheaf $\calF\in\TCat(B)$.
We begin by recalling the $\tf$-topology introduced in \cite[Definition 3.1]{DKO:SHISpecZ}.

\begin{definition}[$\tf$-topology]
The $\tf$-topology on $\Sm_B$ is the cd-topology generated by the Nisnevich squares 
\[\xymatrix{\widetilde{U} \ar[d]\ar[r]& \widetilde{X}\ar[d]\\ U \ar[r]& X}\]
such that $X\setminus U\cong X\times_B Z$, for some closed subscheme $Z$ in $B$.
\end{definition}

\begin{definition}\label{def:calFZU}
For $U\in\Et_X$ and any closed subscheme $Z\not\hookrightarrow B$, we set
\begin{equation*}
\calF_Z(U):=\calF_{U\times_B Z}(U).
\end{equation*}
If $z\in B$, we write $B_{(z)}$ for the local scheme of $B$ at $z$, and set
\begin{equation*}
\calF_z(U):=\calF_{z}(U\times_B B_{(z)}).
\end{equation*}
\end{definition}

Similarly to 
\eqref{eq:partialxprimex}, 
for any $\tf$-local object $\calF\in\TCat(B)$,
there are morphisms
\begin{equation}\label{eq:partialzprimez}
\partial^{z^\prime}_z\colon 
\calF_{z^\prime}(X) \calsimeq \calF_{(Z^\prime)_{(z)}-z}(X\times_B (B_{(z)}-z))\to 
\calF_{z}(X\times_B B_{(z)})[1]\calsimeq \calF_{z}(X)[1]
\end{equation}
for each pair of points $z,z^\prime\in B$  
such that $z$ is contained in the closure $Z^\prime$ of ${z^\prime}$ in $B$, 
and $\codim_{B} z=\codim_{B} z^\prime+1$.

\begin{definition}\label{def:tfCou}
For any $\tf$-local $\calF\in \TCat(B)$ and $X\in \EssSm_B$,
\eqref{eq:partialzprimez} defines the following sequence of morphisms
in $\hTCat$
\begin{equation}\label{eq:seq:res:CtfXF}
\bigoplus_{z\in B^{(0)}} \hF_{z}(X) \to \dots \to 
\bigoplus_{z\in B^{(i)}} \hF_{z}(X)[i]\to 
\bigoplus_{z\in B^{(i+1)}} \hF_{z}(X)[i+1] \to \dots \to 
\bigoplus_{z\in B^{(\dim B)}} \hF_{z}(X)[\dim B].
\end{equation}
Similarly to \Cref{def:Cou}, 
the sequence \eqref{eq:seq:res:CtfXF} defines a differential complex 
\[\Cou_\tf(X,\calF)\in\KC.\]
\end{definition}

\begin{remark}
The sequence \eqref{eq:seq:res:CtfXF} is functorial with respect to arbitrary morphisms in $\EssSm_B$.
Thus $\Cou_\tf(-,\calF)$ defines a presheaf on $\EssSm_B$ with values in $\KC$.
This does not hold when we replace $\EssSm_B$ with $\Sch_B$. 
\end{remark}

For every $\tf$-local $\calF\in\TCat(B)$ and $X\in \EssSm_B$,
the morphism \begin{equation}\label{eq:calFXcalFXBzero}\calF(X)\to \calF(X\times_B B^{(0)})\end{equation}
induces a morphism in $\KC$
\begin{equation}\label{eq:FtotfCou}
\hF(X)\to \Cou_\tf(X,\calF).
\end{equation}
The left side is viewed as a complex concentrated in degree zero.

\begin{definition}
\label{def:ovCoutf}
The $\tf$-Cousin complex of a $\tf$-local $\calF\in\TCat(B)$ and $X\in\EssSm_B$
is given by the cone 
\[\ovCou_\tf(X,\calF) = \operatorname{Cone}(\hF(X)\to \Cou_\tf(X,\calF))\]
of the morphism in \eqref{eq:FtotfCou}.
\end{definition}

\begin{remark}\label{rem:CoutfCou} 
For any $\calF\in \TCat(B)$ and $X\in\EssSm_B$, 
we have
\[
\Cou(X,\calF)\cong\Cou_\tf(X,p^*\calF),
\quad
\Cou_\tf(X,\calF)\cong \Cou(B,p_*(p^*\calF)),
\] 
where $p^*\colon\TCat(B)\to\TCat(X)$ and $p_*\colon\TCat(X)\to\TCat(B)$ are the inverse and direct image functors along the canonical morphism $p\colon X\to B$.
\end{remark}

For any $\calF\in\TCat(B)$,
\eqref{eq:FtotfCou} gives a morphism
\begin{equation*}
L_\tf\hF(X)\to \Cou_\tf(X,L_{\tf}\calF).
\end{equation*}
Composing with the canonical morphism $\hF(X)\to L_\tf \hF(X)$,
we get a morphism in $\KC$
\begin{equation}\label{eq:FtotfCouLtf}
\hF(X)\to \Cou_\tf(X,L_{\tf}\calF).
\end{equation}

\begin{definition}
\label{def:calFTCatB:tfCou}
For every $\calF\in\TCat(B)$ and $X\in\EssSm_B$,
we define objects of $\KC$ by
\begin{align*}
\Cou_\tf(X,\calF) &:= \Cou_\tf(X,L_{\tf}\calF)\\
\ovCou_\tf(X,\calF) &:= \operatorname{Cone}(\calF(X)\to\Cou_\tf(X,L_{\tf}\calF)).
\end{align*}
\end{definition}

For any Nisnevich local $\calF$, 
we will in \Cref{def:CoutftoCoubi_AND_CoutftoCou} construct in a canonical morphism 
\begin{equation}\label{eq:CoutftoCou_init}\Cou_\tf(X,\calF)\to \Cou(X,\calF)\end{equation} in $\KC$
and a commutative triangle 
\begin{equation*}
\label{eq:FCoutfCou}\xymatrix{
&\calF(X)\ar[ld]\ar[rd]&\\
\Cou_\tf(X,\calF)\ar[rr]&&\Cou(X,\calF)
}
\end{equation*}
To construct \eqref{eq:CoutftoCou_init},  
we will define a bicomplex $\Coubi(X,\calF)$ whose totalization is isomorphic to $\Cou(X,\calF)$, 
see \Cref{def:CoubiXF} and \Cref{lm:bicompexsimeqlcomlex}. 
Moreover, we will make use of the naturally induced commutative triangle in $\TCat$ 
\begin{equation*}
\label{eq:FCoutfzeroCouzero}\xymatrix{
&\calF(X)\ar[ld]\ar[rd]&\\
\bigoplus_{z\in B^{(0)}}\calF(X\times_B z)\ar[rr]&&\bigoplus_{x\in X^{(0)}}\calF(x)
}
\end{equation*}
and the adjunction 
\begin{equation*}\label{adj:SHtopKgeq0SHtop}
\TCat \rightleftarrows \Kom_{\geq 0}(\TCat)
\end{equation*}
(the right adjoint picks out the degree zero term of a chain complex, 
while the left adjoint takes values in chain complexes concentrated in degree zero).

\subsection{Cousin bicomplexes}\label{sect:CoubiXF}

\begin{definition}\label{def:CzbulletXF}
Suppose that $\calF\in \TCat(B)$ is Nisnevich local, $X\in \EssSm_B$, and $z\in B$. 
We set 
\begin{align*}
\Cou_{z}(X,\calF) :=
&C^{z}_0\to 
\dots \to
C^{z}_j[j]
\to 
C^{z}_{j+1}[j+1] \to 
\dots \to 
C^{z}_{\dim X_z}[\dim X_z]
,\end{align*}
where
\[
C^{z}_{j} :=
\bigoplus\limits_{
x\in X_z^{(j)}
}
\hF_{x}(X).
\]
\end{definition}

\begin{remark}
Recall that $B_{(z)}$ denotes the local scheme at $z\in B$.
There is a closed embedding 
$i_z\colon z\not\hookrightarrow B_{(z)}$ and an open immersion
$j_z\colon B_{(z)}\to B$.
For any $\calF\in\SHs_{\A^1,\nis}(B)$,
there is an isomorphism in $\Kom(\SHtop)$
\[\Cou_{z}(X,\calF) \cong \Cou(X_z, i_z^! j_z^* \calF).\]
\end{remark}

Suppose $z, z^\prime\in B$ are points such that $\overline{z}\supset \overline{z^\prime}$ and 
\begin{equation}
\label{eq:zzprimeiinciB} 
   \codim_B \overline{z}=\codim_B \overline{z^\prime}+1.
\end{equation}
Here, $\overline{z}$ and $\overline{z^\prime}$ denote the minimal closed subschemes in $B$ containing
$z$ and $z^\prime$, 
respectively.
If $\calF\in \TCat(B)$ is Nisnevich local and $X\in \EssSm_B$, 
we aim to define a morphism of complexes
\begin{equation}
\label{eq:CztoCzprime}
\Cou_{z^\prime}(X,\calF)\to \Cou_{z}(X,\calF)[1].
\end{equation}

\begin{lemma}\label{lm:yyprimezzprime}
Given $z$ and $z^\prime$ as above, 
let $y\in X_z$,
$y^\prime\in \overline{y}_{z^\prime}$ and suppose that $\codim_{X}y^\prime=\codim_{X}(y)+1$.
Then we have $\codim_{X_z}y = \codim_{X_{z^\prime}}y^\prime$.
\end{lemma}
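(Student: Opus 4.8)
\textbf{Proof plan for Lemma \ref{lm:yyprimezzprime}.}
The plan is to reduce the asserted equality of codimensions to a standard dimension count in the fibers of the morphism $X\to B$, using that $X$ is smooth over $B$ and that the relevant points sit in a specialization chain. First I would set up notation: write $p\colon X\to B$ for the structure morphism, so that $X_z=p^{-1}(z)$ and $X_{z^\prime}=p^{-1}(z^\prime)$ are the fibers, each smooth over the residue fields $k(z)$, $k(z^\prime)$ respectively, hence equidimensional on each connected component with $\dim_y X_z=\dim^y_B X$. The point $y^\prime$ lies in $\overline{y}_{z^\prime}=\overline{y}\times_B z^\prime$, where $\overline{y}$ is the closure of $y$ in $X$; since $y$ specializes into the fiber over $z$ and $y^\prime$ into the fiber over $z^\prime$ with $\overline{z}\supset\overline{z^\prime}$, the closed subscheme $\overline{y}$ dominates (a component of) $\overline{z}$, and $\overline{y}_{z^\prime}$ is nonempty of dimension $\dim\overline{y}-\codim_B\overline{z}+\codim_B\overline{z^\prime}=\dim\overline{y}-1$ by \eqref{eq:zzprimeiinciB} together with the fact that $p$ restricted to $\overline{y}$ is, after localizing, flat of relative dimension $\dim^y_B X$ along the chain (here one uses that $B$ is a noetherian base and the dimension formula for finite-type morphisms applies to the restriction $\overline{y}\to\overline{z}$).

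Next I would compute the two codimensions separately. On the one hand, $\codim_{X_z}y = \dim_y X_z - \dim\{y\}^{-}\cap X_z$; but $y$ is a generic point of its closure meeting $X_z$ in $y$ itself (as $y\in X_z$ already), so $\codim_{X_z}y = \dim_y X_z - 0$ needs care — more precisely, I would instead use the complementary formula in a component of $X_z$: $\codim_{X_z}y$ equals the codimension of the point $y$ in the fiber, which by smoothness of $X_z/k(z)$ equals $\dim_y X_z - \dim \overline{\{y\}}$ where the closure is taken \emph{inside $X_z$}, and since $y\in X_z$ this closure has dimension equal to $\dim(\overline{y}\cap X_z)$ at $y$. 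The input $\codim_X y^\prime = \codim_X y + 1$ then feeds in: working in $X$, the closed subscheme $\overline{y^\prime}\subset\overline{y}$ has codimension one in $\overline{y}$, and $\overline{y^\prime}$ lies over $\overline{z^\prime}$ while $\overline{y}$ lies over $\overline{z}$. Combining $\codim_B\overline{z^\prime}=\codim_B\overline{z}+1$ with $\codim_{\overline{y}}\overline{y^\prime}=1$, one sees that $\overline{y^\prime}$ is a component of $\overline{y}_{z^\prime}=\overline{y}\times_B z^\prime$ (the fiber drops dimension by exactly one, matching the codimension-one condition), hence $y^\prime$ is a generic point of $\overline{y}\cap X_{z^\prime}$.

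Finally I would assemble the equality. From the fiberwise picture, $\codim_{X_z}y = \dim_y X_z - \dim_y(\overline{y}\cap X_z) = \dim^y_B X - \dim_z\overline{z}$-relative-correction, and symmetrically $\codim_{X_{z^\prime}}y^\prime = \dim^{y^\prime}_B X - (\text{same relative correction})$; since $p$ is smooth the relative dimension $\dim^y_B X = \dim^{y^\prime}_B X$ is constant on the connected component of $X$ through the specialization chain $y^\prime\rightsquigarrow y$ (smooth morphisms have locally constant relative dimension, and $y^\prime\in\overline{y}$ forces them to lie in the same component), and the two ``corrections'' agree because both equal the transcendence-degree contribution $\dim\overline{y}-\dim\overline{y^\prime}=1$ offset against $\codim_B\overline{z}-\codim_B\overline{z^\prime}=-1$, which cancels. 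The cleanest route is probably to use the additivity $\codim_X(\text{point in }X_w) = \codim_B w + \codim_{X_w}(\text{point})$ valid for flat (in particular smooth) $p$ and any $w\in B$: applying this to $(w,\text{point})=(z,y)$ and to $(z^\prime,y^\prime)$ gives $\codim_X y = \codim_B\overline{z}+\codim_{X_z}y$ and $\codim_X y^\prime = \codim_B\overline{z^\prime}+\codim_{X_{z^\prime}}y^\prime$; subtracting and using both hypotheses $\codim_X y^\prime - \codim_X y = 1$ and $\codim_B\overline{z^\prime}-\codim_B\overline{z}=1$ yields $\codim_{X_{z^\prime}}y^\prime - \codim_{X_z}y = 0$ immediately.

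\emph{Main obstacle.} The delicate point is justifying the additivity formula $\codim_X(\text{pt}) = \codim_B(w) + \codim_{X_w}(\text{pt})$ in our generality: it requires knowing that the local ring of $X$ at the point is flat over the local ring of $B$ at $w$ with fiber the local ring of $X_w$, and that all the rings involved are catenary so codimensions add along the chain; smoothness of $p$ and noetherianness of $B$ give flatness and catenariness, but I would need to be careful that $y^\prime$ genuinely specializes within a single irreducible component of $X$ (handled by $y^\prime\in\overline{y}$) and that the closures $\overline{z},\overline{z^\prime}$ interact correctly with the fiber products — essentially checking that $\overline{y}\to\overline{z}$ remains dominant after passing to $z^\prime$, which is where the codimension-one hypotheses \eqref{eq:zzprimeiinciB} and $\codim_X y^\prime=\codim_X y+1$ are genuinely used rather than just bookkeeping.
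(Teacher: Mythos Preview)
Your final ``cleanest route'' is correct and is genuinely different from the paper's argument. You use the flat dimension formula: for $p\colon X\to B$ smooth (hence flat) and $x\in X$ mapping to $w\in B$, the local ring $\mathcal O_{X,x}$ is flat over $\mathcal O_{B,w}$ with fiber $\mathcal O_{X_w,x}$, so $\dim\mathcal O_{X,x}=\dim\mathcal O_{B,w}+\dim\mathcal O_{X_w,x}$, i.e.\ $\codim_X x=\codim_B w+\codim_{X_w}x$. Applying this to $(y,z)$ and $(y',z')$ and subtracting, the two hypotheses cancel and the claim falls out in one line. Your stated ``main obstacle'' is not actually an obstacle: the flat dimension formula for Noetherian local rings needs no catenariness assumption, and the hypothesis $y'\in\overline{y}_{z'}$ is used only to guarantee $p(y')=z'$.

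The paper takes a more geometric route. It localizes $\overline{z}$ at the other point to obtain a one-dimensional local base $Z$, sets $Y=\overline{y}\times_B Z$, observes that $Y$ is irreducible with nonempty generic fiber, and then invokes an equidimensionality result for finite-type schemes over one-dimensional bases \cite[Corollary A.3]{nonperfect-SHI} to conclude that the fibers of $Y\to Z$ over the two points have the same dimension; the codimension hypothesis on $y,y'$ is used to identify $y'$ as a generic point of the special fiber. Your approach bypasses this external input and works directly with the smoothness of $X$ over $B$; the paper's approach instead studies the singular closed subscheme $\overline{y}$ and leans on the one-dimensional base. Either is fine, but yours is shorter and self-contained. (Your first two paragraphs, by contrast, contain sign slips and muddled dimension counts; the third paragraph is the one to keep.)
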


\begin{proof}
Consider the $1$-dimensional scheme $Z=\overline{z}_{z^\prime}$
and set $Y=\overline{y}\times_{B}\overline{z}_{z^\prime}$.
Since $y^\prime\in \overline{y}_{z^\prime}$, $y^\prime$ is contained in $Y_{z^\prime}$. 
Moreover, it follows by the assumption on codimensions of $y$ and $y^\prime$ that $y^\prime$ is a generic point of $Y_{z^\prime}$. 
Since $Y$ is irreducible and $Y_z=Y\times_B z\neq\emptyset$, 
by \cite[Corollary A.3]{nonperfect-SHI} it follows that $Y$ is equidimensional over $Z$.
\end{proof}

Consider points $y^\prime\in X_{z^\prime}$ and $y\in \overline{y}_{z}$.
In view of \Cref{lm:yyprimezzprime} the construction of \eqref{eq:partialxprimex} gives a morphism
\[
\calF_{y^\prime}(X)\to \calF_{y}(X)[1].
\]
Summing over the set of points of codimension $j$,
we get a morphism
\begin{equation}\label{eq:vj}
v^{z,z^\prime}_j \colon 
C^{z^\prime}_{j}
\to 
C^{z}_{j}[1].
\end{equation}

\begin{lemma}\label{cor:ddtwoschemes}
Let $Y_1^\prime\not\hookrightarrow Y_1\cup Y^\prime \not\hookrightarrow Y \not\hookrightarrow X$ be closed immersions, $Y_1^\prime=Y_1\cap Y^\prime$.
Then
the square 
\[\xymatrix{
\hF_{Y}(X-Y_1\cup Y^\prime)\ar[r]^-{-\partial^Y_{Y^\prime}}\ar[d]_{\partial^Y_{Y_1}}& 
\bigoplus \hF_{Y^\prime-Y_1^\prime}(X-Y)[1]\ar[d]^{\partial^{Y^\prime}_{Y^\prime_1}[1]}\\ 
\hF_{Y_1-Y_1^\prime}(X-Y)[1]\ar[r]^-{\partial^{Y_1}_{Y^\prime_1}[1]} &
\hF_{Y_1^\prime}(X)[2]
}\]
commutes in $\hTCat$.
\end{lemma}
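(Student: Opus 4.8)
The plan is to deduce the commutativity of the square in \Cref{cor:ddtwoschemes} from the earlier triviality statement of \Cref{lm:Y2Y1YUtrivdifcomposite} together with the octahedral-type compatibilities in the stable homotopy category $\hTCat$, rather than attempting a direct computation with the connecting maps. First I would unwind the definitions of the four connecting maps $\partial^Y_{Y_1}$, $\partial^Y_{Y'}$, $\partial^{Y'}_{Y'_1}$, and $\partial^{Y_1}_{Y'_1}$ in terms of the fiber sequences of \Cref{def:FYUFxU}: each arises, via the stability of $\TCat$, as the boundary map associated to the iterated fiber description $\calF_{Z_1}(U)\simeq\fib(\calF_Z(U)\to\calF_{Z\setminus Z_1}(U\setminus Z_1))$. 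The key observation is that all four maps factor through the single object $\calF_{Y_1^\prime}(X-Y)[1]$ sitting over $X$ with support on the various closed subschemes $Y_1$, $Y'$, $Y_1^\prime$, so I would organize the relevant closed immersions $Y_1^\prime\hookrightarrow Y_1, Y'\hookrightarrow Y_1\cup Y'\hookrightarrow Y\hookrightarrow X$ and the complementary opens into a cube of fiber sequences in $\TCat$.

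The main step is to produce, in $\TCat$ itself (before passing to $\hTCat$), a commutative diagram interpolating the two composites around the square — concretely, a common refinement expressing both $\partial^{Y'}_{Y'_1}[1]\circ(-\partial^Y_{Y'})$ and $\partial^{Y_1}_{Y'_1}[1]\circ\partial^Y_{Y_1}$ in terms of the second boundary map of the three-step filtration $Y_1^\prime\hookrightarrow Y_1\cup Y'\hookrightarrow Y\hookrightarrow X$, up to the sign bookkeeping that a totalization/cone construction introduces. I would build this by applying the ``nine-lemma''/$3\times 3$-diagram of fiber sequences attached to removing $Y_1$, $Y'$, and then $Y_1^\prime = Y_1\cap Y'$ from $X - Y$; each of the two composites is the boundary of one of the two ``edges'' of this diagram, and the anticommutativity of the square is exactly the statement that the two edges of a bicartesian square of spectra induce boundary maps differing by a sign. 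This is essentially the bicomplex analog of \Cref{lm:Y2Y1YUtrivdifcomposite}: there, collinear supports gave a vanishing composite; here, ``transverse'' supports $Y_1, Y'$ with intersection $Y_1^\prime$ give an anticommuting square, and the diagram \eqref{eq:TcommdiagrFYU} used in that lemma is the degenerate case.

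The hard part will be getting the signs and the coherence right: the morphism $-\partial^Y_{Y'}$ appears with a minus sign precisely so that the square commutes on the nose in $\hTCat$ (this is the usual Koszul sign in forming the total complex of a bicomplex), so I must track how the suspension shift $[1]$ interacts with the connecting maps under the identification $\fib(f)[1]\simeq\mathrm{cofib}$-type rotations, and verify that the sign introduced by rotating the triangle for $(X-Y', Y'\setminus Y_1')$ past the one for $(X-Y_1, Y_1\setminus Y_1')$ is exactly the one recorded in the statement. I expect no genuinely new geometric input is needed beyond \Cref{lm:yyprimezzprime} (which guarantees the codimension bookkeeping so that all the relevant $\partial^{x'}_x$-type maps are defined) and the Nisnevich-locality of $\calF$ (used, as in \Cref{lm:Y2Y1YUtrivdifcomposite}, to identify $\calF_{(Y')_{(y)}-y}$ with $\calF_{y'}$ and so on); the whole content is the compatibility of boundary maps in a bicartesian square of a stable $\infty$-category, so I would state that compatibility as the one nontrivial diagram chase and then specialize.
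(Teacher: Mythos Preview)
Your proposal is correct and in fact contains the paper's argument as its ``main step'': you correctly single out the three-step filtration $Y_1'\hookrightarrow Y_1\cup Y'\hookrightarrow Y\hookrightarrow X$ and note that both composites around the square should be compared via the double boundary of this filtration. That is exactly what the paper does, and nothing more. The paper's proof is two lines: apply \Cref{lm:Y2Y1YUtrivdifcomposite} to that filtration to get $\partial^{Y_1\cup Y'}_{Y_1'}[1]\circ\partial^{Y}_{Y_1\cup Y'}=0$, then use that $(Y_1\cup Y')\setminus Y_1'=(Y_1\setminus Y_1')\amalg(Y'\setminus Y_1')$ is a disjoint union to split the middle term additively, so that the vanishing composite reads $\partial^{Y'}_{Y_1'}[1]\partial^Y_{Y'}+\partial^{Y_1}_{Y_1'}[1]\partial^Y_{Y_1}=0$.

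Everything else in your plan---the cube of fiber sequences, the $3\times3$ nine-lemma, the general anticommutativity of boundaries in bicartesian squares---is overkill here; the additive splitting via the disjoint decomposition already does all the work, and the sign is forced by the equation above rather than by a separate Koszul bookkeeping. Your citation of \Cref{lm:yyprimezzprime} is misplaced: that lemma concerns codimension matching for points in fibers and plays no role in this proof. Likewise, no Nisnevich-locality or excision argument beyond what is already built into \Cref{lm:Y2Y1YUtrivdifcomposite} and the definition of $\calF_Z(U)$ is required.
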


\begin{proof}
    Consider the morphisms
    \[\hF_{Y}(X-Y_1\cup Y^\prime)\xrightarrow{\partial^{X}_{Y_1\cup Y^\prime}}\hF_{Y_1\cup Y^\prime}(X-Y_1^\prime)[1]\xrightarrow{\partial^{Y_1\cup Y^\prime}_{Y^\prime_1}[1]}\hF_{Y_1^\prime}(X)[2].\]
    Applying \Cref{lm:Y2Y1YUtrivdifcomposite} to the composite
    $Y_1^\prime\to Y_1\cup Y^\prime\to X\xrightarrow{\cong}X$, 
    it follows that 
    \[\partial^{Y^\prime}_{Y^\prime_1}[1]\partial^Y_{Y^\prime}+\partial^{Y_1}_{Y^\prime_1}[1]\partial^Y_{Y_1}=
    \partial^{Y_1\cup Y^\prime}_{Y^\prime_1}[1]\partial^{X}_{Y_1\cup Y^\prime}
    =0.\]
\end{proof}

\begin{corollary}\label{cor:yypovyovyp}
    Suppose $\calF\in \TCat(B)$ is Nisnevich local, $X\in \EssSm_B$,
    and $z, z^\prime$ are as in \eqref{eq:zzprimeiinciB}.
    Then the square
\[\xymatrix{
C^{z^\prime}_j
\ar[r]^-{-v_j}\ar[d] &
C^{z}_j[1]\ar[d]
\\ 
C^{z^\prime}_{j+1}
\ar[r]^-{v_{j+1}}&
C^{z}_{j+1}[1]
}\]
    commutes in $\KC$.
\end{corollary}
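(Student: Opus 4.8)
The plan is to reduce Corollary \ref{cor:yypovyovyp} to Lemma \ref{cor:ddtwoschemes} by choosing, for each relevant point $y'\in X_{z'}$ of codimension $j$, an appropriate instance of the closed-immersion data $Y_1'\not\hookrightarrow Y_1\cup Y'\not\hookrightarrow Y\not\hookrightarrow X$ and then checking that the four $\partial$-maps in that lemma assemble, after summing over points, into exactly the four edges $-v_j$, $v_{j+1}$, and the two vertical Cousin differentials of $\Cou_{z'}(X,\calF)$ and $\Cou_z(X,\calF)[1]$. First I would fix $y'\in X_{z'}^{(j)}$ and a point $y\in X_z$ with $y'\in\overline{y}$ and $\codim_X y' = \codim_X y + 1$ (so that $v_j$ has a component from $y'$ to $y$); I would also fix $y_1'\in X_{z'}^{(j+1)}$ lying in $\overline{y'}$, and the point $y_1\in X_z$ with $y_1'\in\overline{y_1}$ produced by the same codimension bookkeeping as in Lemma \ref{lm:yyprimezzprime}. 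Passing to the local scheme $X_{(y_1)}$, the four points $y,y',y_1,y_1'$ become the generic point, a codimension-one point along the $\overline{z'}$-direction, a codimension-one point along the $\overline{y'}$-direction inside the fibre, and the closed point respectively; taking $Y=\overline{y}_{(y_1)}$, $Y'=\overline{y'}_{(y_1)}$ (so $Y_1 = \overline{y_1}$-type data in the fibre direction and $Y_1' = Y_1\cap Y'$ is the closed point $y_1$) puts us precisely in the situation of Lemma \ref{cor:ddtwoschemes}.

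Second, I would identify the maps. The composite $C^{z'}_j \xrightarrow{v_j} C^z_j[1] \to C^z_{j+1}[1]$ has, on the $(y',y_1)$-component, the map $\partial^{Y_1}_{Y_1'}[1]\circ\partial^{Y}_{Y_1}$ after the identifications $\calF_{y'}(X)\simeq \calF_{Y\setminus(Y_1\cup Y')}(X\setminus(Y_1\cup Y'))$ etc.\ coming from Definition \ref{def:FYUFxU} and the Nisnevich-local excision equivalences; likewise the composite $C^{z'}_j \to C^{z'}_{j+1}\xrightarrow{v_{j+1}} C^z_{j+1}[1]$ has $(y',y_1)$-component $\partial^{Y'}_{Y_1'}[1]\circ\partial^{Y}_{Y'}$. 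Then Lemma \ref{cor:ddtwoschemes} gives exactly $\partial^{Y'}_{Y_1'}[1]\partial^Y_{Y'} + \partial^{Y_1}_{Y_1'}[1]\partial^Y_{Y_1} = 0$, which is the commutativity of the square up to the sign already built into $-v_j$ versus $v_{j+1}$. Summing over all $y_1\in X_z^{(j)}$ (for fixed $y'$) and then over all $y'\in X_{z'}^{(j)}$ assembles these component identities into the asserted commuting square in $\KC$. One should also note that the vertical arrows in the statement are the Cousin differentials of $\Cou_{z'}$ and $\Cou_z$ by Definition \ref{def:CzbulletXF}, so that the identification of the two composites with the two composites in Lemma \ref{cor:ddtwoschemes} is complete.

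Third, there is a bookkeeping point that the component maps $\partial^{x'}_x$ of the Cousin differential, as defined in \eqref{eq:partialxprimex}, are only nonzero when $x\in\overline{x'}$ with codimensions differing by one; one must check that the "cross terms" in the double composite — where the intermediate point does not simultaneously lie over the right face — vanish, so that the only surviving contributions are precisely those covered by the chosen $(Y,Y',Y_1,Y_1')$. This is where Lemma \ref{lm:yyprimezzprime} is used a second time: it guarantees that the codimension relations in $X$ match those relative to $z,z'$, so the indexing sets $X_z^{(j)}$, $X_{z'}^{(j)}$ that appear in $v_j$ and in the Cousin differentials are mutually compatible. I expect this compatibility of indexing — making sure every nonzero matrix entry of the composite really is one of the four $\partial$'s of a single instance of Lemma \ref{cor:ddtwoschemes}, with no leftover terms and no sign discrepancy — to be the main obstacle; the homotopy-commutativity input itself is entirely supplied by Lemma \ref{cor:ddtwoschemes}, whose proof in turn rests on Lemma \ref{lm:Y2Y1YUtrivdifcomposite}.
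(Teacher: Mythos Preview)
Your proposal is correct and follows exactly the approach the paper intends: the paper states this result as a corollary of Lemma~\ref{cor:ddtwoschemes} with no further proof, and your plan of identifying each matrix entry of the two composites with one of the four $\partial$-maps in an instance of that lemma (after localizing and using the codimension bookkeeping of Lemma~\ref{lm:yyprimezzprime}) is precisely the implicit argument. The only content beyond Lemma~\ref{cor:ddtwoschemes} is the indexing compatibility you flag in your third paragraph, which is indeed handled by Lemma~\ref{lm:yyprimezzprime}.
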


\begin{lemma}
Suppose $\calF\in \TCat(B)$ is Nisnevich local and $X\in \EssSm_B$.
The morphisms $(-1)^{j}v_j$ from \eqref{eq:vj} for $j= 0, \dots ,\dim_B X$
define morphism of complexes \eqref{eq:CztoCzprime}.
\end{lemma}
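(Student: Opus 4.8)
The plan is to deduce the statement directly from \Cref{cor:yypovyovyp} by the standard device of inserting alternating signs so as to turn an anticommuting ladder of squares into an honest morphism of complexes. Write $d^{z'}$ and $d^{z}$ for the differentials of the complexes $\Cou_{z'}(X,\calF)$ and $\Cou_z(X,\calF)$ introduced in \Cref{def:CzbulletXF} (these are complexes for the same reason as in \Cref{cor:CoustfCousdifferntialsquaretrivial}); in degree $j$ these differentials are, up to the internal shift $[j]$, the raw maps assembled from the boundary morphisms \eqref{eq:partialxprimex}. By $\Cou_z(X,\calF)[1]$ we mean the complex obtained by applying the shift autoequivalence of $\hTCat$ to each term, so that in degree $j$ its differential is $d^z[1]$. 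With this understood, a morphism of complexes as in \eqref{eq:CztoCzprime} is exactly a family $\{f_j\}$ of maps in $\hTCat$, from the degree-$j$ term of $\Cou_{z'}(X,\calF)$ to that of $\Cou_z(X,\calF)[1]$, satisfying $d^z[1]\circ f_j = f_{j+1}\circ d^{z'}$ in $\hTCat$ for every $j$; I will take $f_j$ to be $(-1)^j$ times the appropriately shifted copy of the map $v^{z,z'}_j$ of \eqref{eq:vj}.

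The key input is already available. Reading off the square of \Cref{cor:yypovyovyp} (and absorbing the internal shifts), it says precisely that the maps $v^{z,z'}_j$ \emph{anticommute} with the differentials, i.e.\ $d^z\circ v^{z,z'}_j = -\,v^{z,z'}_{j+1}\circ d^{z'}$ in $\hTCat$ for every $j$. Inserting the signs then gives
\[
d^z[1]\circ f_j = (-1)^j\,d^z\circ v^{z,z'}_j = (-1)^j\bigl(-\,v^{z,z'}_{j+1}\circ d^{z'}\bigr) = (-1)^{j+1}\,v^{z,z'}_{j+1}\circ d^{z'} = f_{j+1}\circ d^{z'},
\]
which is exactly the chain-map identity. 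At the top degree $j=\dim_B X$ the target term on the right vanishes for dimension reasons (using \Cref{lm:yyprimezzprime} together with the comparison of relative dimensions over $B$), so that square is trivially commutative; hence $\bigl((-1)^j v^{z,z'}_j\bigr)_{j=0}^{\dim_B X}$ defines the morphism of complexes \eqref{eq:CztoCzprime}.

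I do not expect a real obstacle. The only point requiring care is the bookkeeping: keeping the several internal shifts $[j]$ separate from the single shift $[1]$ appearing in \eqref{eq:CztoCzprime}, and fixing once and for all the sign conventions for the differential of $\Cou_z(X,\calF)$ and for the shift of a chain complex. Conceptually there is nothing beyond the observation that an anticommuting grid of squares becomes a chain map after the alternating-sign twist — and, crucially, because we work in $\KC$, the category of chain complexes in the additive homotopy category $\hTCat$, only commutativity in $\hTCat$ is needed, which is exactly what \Cref{cor:yypovyovyp} supplies.
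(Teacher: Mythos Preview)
Your proof is correct and follows exactly the paper's approach: the paper's proof is the single line ``The claim holds by \Cref{cor:yypovyovyp},'' and you have simply unpacked this by spelling out the standard alternating-sign trick that converts the anticommuting squares of \Cref{cor:yypovyovyp} into a genuine chain map. The extra remarks about the top degree and the bookkeeping of shifts are fine but inessential.
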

\begin{proof}
The claim holds by \Cref{cor:yypovyovyp}.
\end{proof}

Similarly to \Cref{cor:CoustfCousdifferntialsquaretrivial}, 
the following result follows from \Cref{lm:Y2Y1YUtrivdifcomposite}.

\begin{corollary}
\label{cor:rowsdifferntialsquaretrivial}
 Suppose $\calF\in \TCat(B)$ is Nisnevich local and $X\in \EssSm_B$.
    For each $i$, $j$, 
    the composite
    \[
    \bigoplus\limits_{z^{\prime\prime}\in B^{(i)}}C^{z^{\prime\prime}}_j\to
    \bigoplus\limits_{z^\prime\in B^{(i+1)}}C^{z^\prime}_j[1]\to
    \bigoplus\limits_{z\in B^{(i+2)}}C^{z}_j[2]
    \]
    is trivial.
\end{corollary}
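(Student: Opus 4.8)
The plan is to reduce \Cref{cor:rowsdifferntialsquaretrivial} to \Cref{lm:Y2Y1YUtrivdifcomposite} by a careful unwinding of what the composite map does on each direct summand. Recall that the map $C^{z^{\prime\prime}}_j \to \bigoplus_{z^\prime} C^{z^\prime}_j[1]$ is assembled, up to the sign $(-1)^j$ built into \eqref{eq:CztoCzprime}, out of the maps $v^{z^\prime,z^{\prime\prime}}_j$ of \eqref{eq:vj}, which in turn are sums over points $y'\in X_{z'}$, $y''\in\overline{y'}_{z''}$ of codimension $j$ of the boundary maps $\partial^{y''}_{y'}\colon \hF_{y''}(X)\to\hF_{y'}(X)[1]$ from \eqref{eq:partialxprimex}. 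So the composite in question is, on the summand indexed by a point $w\in B^{(i)}$ and a point $y\in X_w$ of codimension $j$, a sum of composites
\[
\hF_{y}(X)\xrightarrow{\partial^{y}_{y'}} \hF_{y'}(X)[1]\xrightarrow{\partial^{y'}_{y''}[1]} \hF_{y''}(X)[2],
\]
indexed by chains $w\rightsquigarrow z'\rightsquigarrow z''$ in $B$ of successive codimension $i,i+1,i+2$ together with compatible chains $y\rightsquigarrow y'\rightsquigarrow y''$ of codimension $j$ in the respective fibers, where $y'$ lies in $\overline{y}_{z'}$ and $y''$ lies in $\overline{y'}_{z''}$.

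The key step is then to observe that for a fixed target summand $(z'',y'')$ the intermediate data $z'$, $y'$ are forced: $z'$ must be a codimension one point of the one-dimensional local scheme $\overline{z''}_{(w)}$, hence the unique generic point of $\overline{z''}_{(w)}-z''$... but in general there can be several such generic points, exactly mirroring the situation in \Cref{lm:Y2Y1YUtrivdifcomposite}, where the composite $\partial^{Y_1}_{Y_2}[1]\circ\partial^{Y}_{Y_1}$ is a sum over the codimension one points of the relevant local scheme and the lemma asserts this entire sum vanishes. Concretely, I would fix $w$, $y$, $z''$, $y''$, set $U=X_{(y)}\times_B \overline{z''}_{(w)}$ and take the closed subschemes to be $Y=\overline{y}\cap U$ (or rather its generic fiber over $\overline{z''}_{(w)}$), $Y_1$ the fiber over $z''$, and $Y_2$ a component of $\overline{y''}$; the codimension computations in \Cref{lm:yyprimezzprime} guarantee that the points $y'$, $z'$ appearing as intermediate indices are precisely the generic points of $Y_1$-complements of the appropriate localizations, so the sum over all $(z',y')$ with fixed endpoints is exactly the sum realized inside a single instance of \eqref{eq:partYY1partY1Y2}. \Cref{lm:Y2Y1YUtrivdifcomposite} then kills it.

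Thus the proof is a bookkeeping argument: write the composite on generators, use \Cref{lm:yyprimezzprime} to match the combinatorics of chains of points with chains of closed subschemes, and invoke \Cref{lm:Y2Y1YUtrivdifcomposite} summand by summand; the signs $(-1)^j$, $(-1)^{j+1}$ in the two differentials are the same on each summand and hence irrelevant to the vanishing. The main obstacle I anticipate is purely organizational rather than conceptual: making sure that the decomposition of the composite as a double sum over nested chains of points exactly exhausts, with no missing or repeated terms, the single sum occurring in \Cref{lm:Y2Y1YUtrivdifcomposite} for the one-dimensional local scheme $\overline{z''}_{(w)}$ (and the corresponding local scheme upstairs in $X$) — in particular one must check that a point $y'$ of the right codimension in some $X_{z'}$ with $z'$ a generic point of $\overline{z''}_{(w)}-z''$ does lie over such a generic point and conversely, which is where \Cref{lm:yyprimezzprime} together with the equidimensionality input from \cite[Corollary A.3]{nonperfect-SHI} does the work. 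Once that correspondence is set up, the statement is immediate, so I would keep the written proof short: "This follows from \Cref{lm:Y2Y1YUtrivdifcomposite} applied to the closed immersions $\overline{y''}\cap U\hookrightarrow X_{z''}\times_U U'\hookrightarrow \dots$, after rewriting the composite on each generator $\hF_y(X)$ as the corresponding composite \eqref{eq:partYY1partY1Y2}."
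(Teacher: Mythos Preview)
Your approach matches the paper's, which simply records that the result follows from \Cref{lm:Y2Y1YUtrivdifcomposite} in the same way as \Cref{cor:CoustfCousdifferntialsquaretrivial}. A couple of minor slips in your sketch: you should localize at the \emph{target} rather than the source (so $X_{(y'')}$ and $\overline{w}_{(z'')}$, not $X_{(y)}$ and the ill-formed $\overline{z''}_{(w)}$), and both horizontal differentials sit in row $j$ and hence carry the \emph{same} sign $(-1)^j$, not $(-1)^j$ and $(-1)^{j+1}$.
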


For all $0\leq j\leq\dim_B X$, $0\leq i\leq\dim B$,
we obtain a morphism in $\KC$ 
\begin{equation}
\label{eq:vij}v^i_j = \bigoplus_{z,z^\prime} v^{z,z^\prime}_{j}.
\end{equation}
Here $z,z^\prime\in B$ are points as in \eqref{eq:zzprimeiinciB}, 
and $v^{z,z^\prime}_j$ is defined in \eqref{eq:vj}.

\begin{corollary}
Suppose $\calF\in \TCat(B)$ is Nisnevich local and $X\in \EssSm_B$.
Then the composite $v^{i}_{j} v^{i+1}_j$ is trivial.
\end{corollary}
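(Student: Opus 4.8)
The plan is to deduce the vanishing of the composite $v^i_j v^{i+1}_j$ from \Cref{cor:rowsdifferntialsquaretrivial} by matching indices. First I would unwind the definition \eqref{eq:vij}: the morphism $v^i_j$ sends the summand indexed by $z'' \in B^{(i)}$ to the sum of summands indexed by $z' \in B^{(i+1)}$ with $\overline{z''} \supset \overline{z'}$ and codimension jumping by one, via the maps $v^{z'',z'}_j$ of \eqref{eq:vj}; so the composite $v^i_j v^{i+1}_j$ is, componentwise, exactly the composite
\[
\bigoplus_{z'' \in B^{(i)}} C^{z''}_j \to \bigoplus_{z' \in B^{(i+1)}} C^{z'}_j[1] \to \bigoplus_{z \in B^{(i+2)}} C^{z}_j[2]
\]
appearing in \Cref{cor:rowsdifferntialsquaretrivial} (after keeping track of the sign $(-1)^j$ that appears uniformly in each $v_j$ and hence cancels in checking triviality, or is absorbed into ``trivial''). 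Thus the triviality is precisely the content of that corollary.

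The key steps, in order, are: (1) recall that each $v^{z,z'}_j$ is assembled from the maps $\partial^{y'}_y \colon \calF_{y'}(X) \to \calF_y(X)[1]$ of \eqref{eq:partialxprimex} over pairs $y \in X_z$, $y' \in \overline{y}_{z'}$ with codimension in $X$ increasing by one, which is legitimate by \Cref{lm:yyprimezzprime}; (2) observe that composing $v^i_j$ with $v^{i+1}_j$ amounts, on each matrix entry $C^{z''}_j \to C^z_j[2]$ with $z'' \in B^{(i)}$, $z \in B^{(i+2)}$, to summing over intermediate $z' \in B^{(i+1)}$ the composites $v^{z',z}_j \circ v^{z'',z'}_j$; (3) identify this with the displayed composite of \Cref{cor:rowsdifferntialsquaretrivial} and conclude its triviality in $\hTCat$. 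One could also argue more directly from \Cref{cor:ddtwoschemes} and \Cref{lm:Y2Y1YUtrivdifcomposite} applied to towers $Y_2' \hookrightarrow Y_1 \cup Y' \hookrightarrow Y \hookrightarrow X$ coming from the closures $\overline{z''} \supset \overline{z'} \supset \overline{z}$ pulled back along $\overline{y}$, but invoking \Cref{cor:rowsdifferntialsquaretrivial} is cleaner since the bicomplex rows were built for exactly this purpose.

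The main obstacle is purely bookkeeping: making sure the summation indices line up and that the signs $(-1)^j$ built into the definition of the morphism of complexes \eqref{eq:CztoCzprime} do not interfere. Since the sign factor is the same on the source and target column of each $v_j$, it contributes an overall scalar to the composite and is irrelevant to triviality; there is no subtlety beyond confirming that \Cref{cor:rowsdifferntialsquaretrivial} is stated for the same indexing $B^{(i)} \to B^{(i+1)} \to B^{(i+2)}$ that \eqref{eq:vij} uses, which it is. Hence the proof is a one-line reduction: \emph{the claim is immediate from \Cref{cor:rowsdifferntialsquaretrivial}.}
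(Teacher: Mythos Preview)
Your proposal is correct and matches the paper's approach: the paper gives no proof for this corollary, indicating it is immediate from \Cref{cor:rowsdifferntialsquaretrivial}, which is exactly what you argue. One minor point: you worry about the sign $(-1)^j$, but note that \eqref{eq:vij} defines $v^i_j = \bigoplus_{z,z'} v^{z,z'}_j$ \emph{without} any sign (the signs enter only in the morphism of complexes \eqref{eq:CztoCzprime}), so the composite $v^i_j v^{i+1}_j$ is literally the composite displayed in \Cref{cor:rowsdifferntialsquaretrivial} and no sign bookkeeping is needed at all.
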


\begin{definition}
\label{def:CoubiXF}
If $\calF\in \TCat(B)$ is Nisnevich local and $X\in \EssSm_B$, we set
\begin{align*}
\Coubi(X,\calF)=
&\bigoplus_{z\in B^{(0)}} \Cou_{z}(X,\calF) \to \dots \to 
\bigoplus_{z\in B^{(i)}} \Cou_{z}(X,\calF)[i]\to \\ 
&\to\bigoplus_{z\in B^{(i+1)}} \Cou_{z}(X,\calF)[i+1] \to \dots \to
\bigoplus_{z\in B^{(\dim B)}} \Cou_z(X,\calF)[\dim B]
\end{align*}
The columns in this bicomplex are sums of shifts of the complexes $\Cou_{z}(X,\calF)$, for $z\in B$, 
and the horizontal morphisms are determined by \eqref{eq:vij}.
\end{definition}

\subsection{Relating the various Cousin complexes}
\label{sect:FXCoutfXFCoubiXF}

We consider the totalization functor given by taking coproducts
\begin{equation}
\label{eq:TotbiKomSHtoptoKomSHtop}
\Tot\colon \biKC\to \KC.
\end{equation}
 
\begin{lemma}\label{lm:bicompexsimeqlcomlex}
For any Nisnevich local $\calF\in \TCat(B)$ and $X\in \EssSm_B$, 
there is a natural isomorphism 
\begin{equation}
\label{eq:TotbiCousimeqCou}
\Tot(\Coubi(X,\calF))\cong \Cou(X,\calF).
\end{equation}
\end{lemma}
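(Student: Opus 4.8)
The plan is to match up the terms and differentials of the two complexes directly, using the indexing of points of $X$ by first recording the image point $z \in B$ and then the relative position in the fibre $X_z$. First I would set up the bijection of index sets: for every $x \in X^{(c)}$, let $z$ be its image in $B$ and let $j = \codim_{X_z} x$; then by \Cref{lm:yyprimezzprime} (applied inductively along a chain realizing the codimension) one has $c = i + j$ where $i = \codim_B \overline{z}$, so that the point $x$ contributes to $C^z_j$ sitting in the $i$-th column of $\Coubi(X,\calF)$. Conversely, a point $x$ appearing in $C^z_j$ with $z \in B^{(i)}$ has $\codim_X x = i+j$. This gives, in each total degree $c$, a canonical identification
\[
\Tot(\Coubi(X,\calF))^c = \bigoplus_{i+j=c} \ \bigoplus_{z \in B^{(i)}} C^z_j[\,i+j\,] = \bigoplus_{x \in X^{(c)}} \hF_x(X)[c] = \Cou(X,\calF)^c,
\]
where I should be slightly careful about the shift conventions: the column $\bigoplus_{z\in B^{(i)}} \Cou_z(X,\calF)[i]$ has its $j$-th term placed in homological degree $i+j$, matching \Cref{def:CoubiXF}, and this agrees with the placement of $\bigoplus_{x\in X^{(i+j)}}\hF_x(X)[i+j]$ in \Cref{def:Cou}.

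Next I would check that the differentials agree up to the usual sign. The totalization differential on $\Tot(\Coubi(X,\calF))$ is, in bidegree $(i,j)$, the sum of the vertical differential of $\Cou_z(X,\calF)$ (which by \Cref{def:CzbulletXF} is built from the maps $\partial^{x'}_x$ of \eqref{eq:partialxprimex} within a single fibre, i.e. for $x', x$ with the same image $z\in B$) and the horizontal differential $v^i_j$ of \eqref{eq:vij} (which by \eqref{eq:vj} is built from the maps $\partial^{y'}_y$ of \eqref{eq:partialxprimex} for $y',y$ with $\codim_X y = \codim_X y' + 1$ but with images $z, z'$ satisfying $\codim_B \overline{z} = \codim_B \overline{z'}+1$), together with the sign $(-1)^i$ that the totalization functor \eqref{eq:TotbiKomSHtoptoKomSHtop} inserts on horizontal maps. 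On the other side, the differential of $\Cou(X,\calF)$ is the sum over all pairs $x', x \in X$ with $\codim_X x = \codim_X x' + 1$ of the maps $\partial^{x'}_x$, and each such pair falls into exactly one of the two types above (either $x, x'$ have the same image in $B$, or their images differ in codimension by one, since the image of a specialization is a specialization and codimensions can only drop or stay equal). So the two differentials have the same matrix entries, and the sign $(-1)^i$ already built into \eqref{eq:CztoCzprime} via the $(-1)^j v_j$ convention together with the Koszul sign of $\Tot$ must be reconciled — this bookkeeping is routine but is where I would be most careful.

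The main obstacle is precisely this sign and shift bookkeeping: one must verify that the sign conventions chosen in \Cref{def:CoubiXF} (the $(-1)^j$ in the definition of the column maps \eqref{eq:CztoCzprime}) together with the Koszul sign of the totalization functor reproduce exactly the (unsigned, or consistently signed) differential of $\Cou(X,\calF)$ from \Cref{def:Cou}, rather than the differential of some complex isomorphic to it only after a further sign twist. I expect the isomorphism to be literally the identity on underlying graded objects under the index bijection above, with the sign conventions having been set up in the earlier definitions precisely so that this works; if a residual sign discrepancy appears in total degree $c$, it will be of the form $(-1)^{\binom{c}{2}}$ or $(-1)^{i}$ and can be absorbed into the isomorphism by rescaling each summand $C^z_j[i+j]$ by a sign depending only on $(i,j)$, which is then checked to be compatible with the differentials. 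Naturality in $\calF$ and in étale maps of $X$ is then immediate since every map involved is natural. Finally, the comparison of the two "augmentations" $\hF(X) \to \Cou(X,\calF)$ and the induced map on $\Tot(\Coubi(X,\calF))$ uses the commutative triangle displayed after \Cref{def:calFTCatB:tfCou} relating $\calF(X)$, $\bigoplus_{z\in B^{(0)}}\calF(X\times_B z)$, and $\bigoplus_{x\in X^{(0)}}\calF(x)$, which I would invoke at the end to see that \eqref{eq:TotbiCousimeqCou} is compatible with the maps from $\hF(X)$.
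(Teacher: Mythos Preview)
Your approach mirrors the paper's: identify the terms via the decomposition $\codim_X x=\codim_B z+\codim_{X_z}x$ (for $z$ the image of $x$) and then match up the differentials summand by summand. The genuine gap is your assertion that ``each such pair falls into exactly one of the two types above (either $x, x'$ have the same image in $B$, or their images differ in codimension by one).'' This dichotomy fails. Take $B=\Spec k[s,t]$ and $X=\A^1_B=\Spec k[s,t,u]$; let $x'$ be the generic point of the hypersurface $V(su-t)$ and $x$ the generic point of the line $V(s,t)$. Then $x\in\overline{x'}$ and $\codim_X x=2=\codim_X x'+1$, yet the image $z'$ of $x'$ is the generic point of $B$ (codimension $0$) while the image $z$ of $x$ is the origin (codimension $2$). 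The Cousin differential of \Cref{def:Cou} therefore contains a component $\partial^{x'}_x$ running from bidegree $(0,1)$ to bidegree $(2,0)$ under your index bijection, but the totalization of a bicomplex has no such component. So the identity on graded pieces is \emph{not} a chain map, and the verification you outline does not go through.

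The paper's own proof is equally terse here---it simply lists the vertical and horizontal components and asserts compatibility---so you are not missing an argument that is spelled out elsewhere; the point is genuinely delicate. What your argument (and the paper's sketch) does establish is that the two complexes have canonically identified graded pieces and that the bicomplex differential is a \emph{sub-sum} of the Cousin differential. To close the gap one must either show that such ``long'' specializations contribute trivially, or reformulate the comparison as one between filtrations (the single codimension filtration on $X$ versus the total filtration coming from the double filtration by $\codim_B$ and fibrewise codimension) and argue at the level of the resulting spectral sequences or Postnikov towers rather than at the level of explicit complexes in $\hTCat$.
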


\begin{proof}
The terms in \eqref{eq:TotbiCousimeqCou} are identified via the equivalences
\begin{equation}
\label{eq:diagonalSupbiCousinComplex}
\bigoplus\limits_{{i+j=c}, \,{z\in B^{(i)}, \,y\in X_z^{(j)} } } \calF_{y}(X)[j+i]  
\calsimeq 
\bigoplus\limits_{x\in X^{(c)}} \calF_{x}(X)[c]
\end{equation}
The differentials in $\Tot(\Coubi(X,\calF))$ are defined by the sums of the morphisms
\[\begin{array}{llll}
\calF_{y^\prime}(X)[j+i] \to \calF_{y}(X)[(j+1)+i], 
&y,y^\prime\in X_z, &\codim_B z=i, \\
& \codim_{X_z} y=j+1, &\codim_{X_z} y^\prime=j,\\
\calF_{y^\prime}(X)[j+i] \to \calF_{y}(X)[j+(i+1)], &y\in X_z, y^\prime\in X_{z^\prime}, & \codim_{X_z} y=\codim_{X_z} y^\prime=j, \\
&\codim_B z=i+1,&\codim_B z^\prime=i, 
\end{array}\]
where in both rows $y\in \overline {y^\prime}$, and $y,y^\prime\in X$, and $z\in B$, and in the second row $z\in \overline {z^\prime}$, for $z^\prime\in B$.

On the other hand, the differentials in $\Cou(X,\calF)$ are given by the sums of the morphisms
\[
\calF_{x^\prime}(X)[c] \to \calF_{x}(X)[c+1], x\in \overline{x^\prime}, \codim_{X} x=c+1, \codim_{X} x^\prime=c.
\]
Since the equivalences \eqref{eq:diagonalSupbiCousinComplex} commute with the above differentials, 
our claim follows.
\end{proof}

For all $z\in B$, there is a canonically induced morphism 
\begin{equation}\label{eq:FzCouzF}
\hF_{z}(X)\to \Cou_{z}(X,\calF)
\end{equation}
along with a commutative diagram in $\hTCat$
\[\xymatrix{
\hF_{z}(X)\ar[d]\ar[r]& \Cou_{z}(X,\calF)\ar[d]\\
\hF_{z^\prime}(X)[1]\ar[r]& \Cou_{z^\prime}(X,\calF)[1],
}\]
where the vertical morphisms are \eqref{eq:partialzprimez} and \eqref{eq:CztoCzprime}.
Thus, we have a canonical map of bicomplexes
\[
\Cou_\tf(X,\calF)\to \Coubi(X,\calF), 
\]
which, in view of \Cref{lm:bicompexsimeqlcomlex}, 
yields \eqref{eq:CoutftoCou_init}.

\begin{definition}
\label{def:Nis:CoutftoCoubi_AND_CoutftoCou}
If $\calF\in\TCat(B)$ is Nisnevich local, consider the canonical morphism 
\begin{equation}
\label{eq:Nis:CoutftoCoubi}
\Cou_\tf(X,\calF)\to \Coubi(X,\calF)
\end{equation}
where $\Cou_\tf(X,\calF)$ is viewed as a bicomplex
concentrated in the zeroth row. 
Let 
\begin{equation*}
\label{eq:Nis:CoutftoCou}
\Cou_\tf(X,\calF)\to 
\Cou(X,\calF)
\end{equation*} 
be the morphism in $\KC$ obtained by applying \eqref{eq:TotbiKomSHtoptoKomSHtop} to \eqref{eq:Nis:CoutftoCoubi},  
see the isomorphism \eqref{eq:TotbiCousimeqCou}.
\end{definition}

\begin{remark}
If $\calF\in \TCat(B)$ is Nisnevich local, 
$X\in \EssSm_B$, 
then in $\biKC$ there is an isomorphism $\Coubi(X,\calF)\cong \Cou_\tf(X,\Cou(\calF))$, 
where a similar formula gives the right side as in \Cref{def:tfCou}
applied to the presheaf $\Cou(\calF)=\Cou(-,\calF)$ on the small Zariski site on $X$.
\end{remark}

We generalize \Cref{def:Nis:CoutftoCoubi_AND_CoutftoCou} to any $\calF\in\TCat(B)$ as follows.
Given $\calF\in\TCat(B)$,
for all $z\in B$,
consider 
the canonical morphisms $L_{\tf}\calF_{z}(X)\to L_{\nis}\calF_{z}(X)$ in $\TCat$ and the sequence of morphisms in $\KC$
\begin{equation}\label{eq:FzCouzFLtfNis}
L_{\tf}\calF_{z}(X)\to L_{\nis}\calF_{z}(X)\to \Cou_{z}(X,L_{\nis}\calF)
\end{equation}
where two left-side terms are considered complexes concentrated in degree zero.

\begin{definition}\label{def:CoutftoCoubi_AND_CoutftoCou}
For any $\calF\in\TCat(B)$, consider the canonical morphism 
\begin{equation}\label{eq:CoutftoCoubi}
\Cou_\tf(X,L_\tf\calF)\to \Coubi(X,L_{\nis}\calF)
\end{equation}
where the left side is considered as an object in $\biKom(\SHtop)$ 
concentrated in the zeroth row. 
Then, we define the morphism 
\begin{equation}\label{eq:CoutftoCou}\Cou_\tf(X,L_{\tf}\calF)\to \Cou(X,L_{\nis}\calF)\end{equation} in $\KC$
applying the functor \eqref{eq:TotbiKomSHtoptoKomSHtop}
in view of the isomorphism \eqref{eq:TotbiCousimeqCou}.
The morphism \eqref{eq:CoutftoCou} defines the morphism \eqref{eq:CoutftoCou_init}
for any $\calF\in\TCat(B)$ with respect to \Cref{def:calFTCatB:Cou,def:calFTCatB:tfCou}
\end{definition}

\begin{definition}
\label{def:piCou}
We define
$\pi\Cou(X,\calF)=\bigoplus_{l\in\bbZ}\pi_l\Cou(X,\calF)$,
and similarly for $\pi\ovCou(X,\calF)$, $\pi\Cou_\tf(X,\calF)$, $\pi\ovCou_\tf(X,\calF)$, and $\pi\Coubi(X,\calF)$.
\end{definition}

\section{Extended compactified framed correspondences}\label{sect:FocCompCor}

In this section, 
we introduce some variations of Voevodsky's notion of framed correspondences, as in 
\Cref{def:VoevFr}.

\subsubsection{Compactified framed correspondences}

\begin{definition}[Affine correspondences]\label{def:affcorr}
Let $X$ be an affine $B$-scheme,
$Y$ be a $B$-scheme,
and $d\ge0$ be an integer.
A \emph{$d$-dimensional affine correspondence} from $X$ to $Y$ over $B$
is an affine $X$-scheme $S$ of relative dimension $d$ and a morphism of schemes $S\to Y$,
so there is a span 
\begin{equation}
\label{eq:spanXSY}
X\leftarrow S\rightarrow Y.
\end{equation}
The $X$-scheme $S$ is called the \emph{support} of the correspondence.
\end{definition}

\begin{definition}[Framings]\label{def:reldimFr}
Let $n\ge d\ge0$ be integers.
A \emph{level $n$ framing} of 
a given $d$-dimensional affine correspondence \eqref{eq:spanXSY} from $X\in\Aff_B$ to $Y\in\Sch_B$ over $B$
is the data of
\begin{itemize}
\item[(1)]
a closed embedding of $X$-schemes $S\not\hookrightarrow \A^n_X$
\item[(2)]
an $(n-d)$-tuple of regular functions $\varphi\in \mathcal O((\A^n_X)^{\oplus n-d}$,
such that 
the image of $S$ in $\A^n_X$
is a clopen subscheme
of
the vanishing locus $Z(\varphi)$,
\item[(3)]
a morphism of $B$-schemes $g\colon (\A^n_X)^h_{S}\to Y$,
defined on the henselization $(\A^n_X)^h_{S}$ 
for the image of $S$, 
lifting $S\to Y$, i.e., the triangle
\[\xymatrix{(\A^n_X)^h_{S}\ar[r] & Y\\ S\ar@{^(->}[u]\ar[ru]}\]
commutes.
\end{itemize}
A $d$-dimensional affine correspondence equipped with a level $n$ framing is also called a 
\emph{$d$-dimensional level $n$ framed correspondence} and denoted more succinctly as $\Phi=(S,\varphi,g)$. 
We let $\Fr_n^d(X,Y)$ denote the set of $d$-dimensional level $n$ framed correspondences from $X$ to $Y$.
\end{definition}

\begin{example}
The notion of \emph{quasi-finite framed correspondences of level $n$} is defined similarly by requiring 
that $f\colon S\to X$ is quasi-finite, 
see \cite[Definition 3.2]{rel-mot-sphere}.
By definition, $0$-dimensional level $n$ framed correspondences coincide with quasi-finite framed correspondences of level $n$.
\end{example}

\begin{definition}[Compactified correspondences]\label{def:compactifiedcorr}
A compactification of some \emph{$d$-dimensional affine correspondence} \eqref{eq:spanXSY} from $X\in\Aff_B$ to $Y\in\Sch_B$
is the data of
\begin{itemize}
\item[(1)]
a projective $X$-scheme $\ovS$ of relative dimension $d$,
and an ample bundle $\mathcal O(1)$ on $\ovS$ with a section $t_\infty\in \Gamma(\ovS, \mathcal O(1))$
such that the closed subscheme $S_\infty=Z(t_\infty)$ has positive relative codimension in $\ovS$ over $X$
\item[(2)]
an isomorphism
$S\cong(\ovS-S_\infty)$
\end{itemize}
We refer to
$(\ovS,\mathcal O(1),t_\infty)$
or
$(\ovS,S_\infty)$
as the \emph{compactified support}.
\end{definition}

\begin{definition}[Compactified framed correspondences]\label{def:compactifiedFrovS}
Suppose we have a $d$-dimensional affine correspondence \eqref{eq:spanXSY}
from $X\in\Aff_B$ to $Y\in\Sch_B$
equipped with
\begin{itemize}
\item[(1)] 
a framing $\Phi=(S, \varphi, g)$,
and 
\item[(2)] 
a compactification with compactified support
$(\ovS,S_\infty)$, see \Cref{def:compactifiedcorr}.
\end{itemize}
The data 
$\overline\Phi=(\ovS,S_\infty,\Phi)$
is called a \emph{compactified $d$-dimensional framed correspondence} from $X$ to $Y$.
We denote by $\cFr_n^d(X,Y)$ the set of $d$-dimensional level $n$ framed correspondences from $X$ to $Y$.
\end{definition}

\begin{remark}[Compactified framed correspondences]\label{rem:compactifiedFrovS}
\Cref{def:compactifiedcorr,def:compactifiedFrovS}
inform us that when $X\in\Aff_B$, $Y\in\Sch_B$,
an element in $\cFr_n^d(X,Y)$ is given by
\begin{itemize}
\item[(1)]
a $d$-dimensional framed correspondence 
$\Phi=(S, \varphi, g)$
from $X$ to $Y$, see \Cref{def:reldimFr}
\item[(2)] 
a projective scheme $\ovS$ over $B$ of relative dimension $d$,
and an ample bundle $\mathcal O(1)$ on $\ovS$ with a section $t_\infty\in \Gamma(\ovS, \mathcal O(1))$
such that the closed subscheme $S_\infty=Z(t_\infty)$ has positive relative codimension in $\ovS$ over $X$
\item[(3)]
an isomorphism
$S\cong(\ovS-S_\infty)$
\end{itemize}
\end{remark}

\begin{example}
\label{ex:cFrn0congFrn}
For any $c=(\overline{S},S_\infty,(S,\varphi,g))\in\cFr_n^0(X,Y)$, 
we have
$\dim_X \overline{S}=0$,
$S_\infty=\emptyset$,
and
$S\cong\overline{S}-S_\infty=\overline{S}$.
It follows that $S$ is a $0$-dimensional projective scheme over $X$,
and consequently, $S$ is finite over $X$.
Thus, 
we are entitled to the canonical morphism 
$$
\cFr_n^0(X,Y)\to \Fr_n(X,Y); c\mapsto c.
$$
\end{example}

\subsubsection{Extended correspondences}

For the definition of extended framed correspondences, 
we work over a local base scheme $B$ with closed point $z$ and generic point $\eta$. 

\begin{definition}[Extended compactified correspondences]\label{def:compactifiedfocusedcorr}
Suppose $X\in \Aff_B$, $Y\in \Sch_\eta$, and set $X_\eta=X\times_B \eta$.
An \emph{extended compactification} of 
a $d$-dimensional affine correspondence 
from $X_\eta$ to $Y$ over $\eta$ 
\begin{equation}\label{eq:def:compactifiedfocusedcorr:spanXetaSetaY}
X_\eta\leftarrow S_\eta\rightarrow Y
\end{equation}
is the data of
\begin{itemize}
\item[(1)]
a compactification of \eqref{eq:def:compactifiedfocusedcorr:spanXetaSetaY} over $\eta$, see \Cref{def:compactifiedcorr}, whose compactified support
is denoted by $(\ovS_\eta,\mathcal O(1)_\eta,t_{\infty,\eta})$
\item[(2)] 
a projective $X$-scheme $\ovS$ of relative dimension $d$,
and an ample bundle $\mathcal O(1)$ on $\ovS$ with a section $t_\infty\in \Gamma(\ovS, \mathcal O(1))$
such that for $S_\infty=Z(t_\infty)$, the closed subscheme $S_{\infty,\eta}=S_\infty\times_B\eta$ has positive relative codimension in $\ovS\times_B\eta$ over $X_\eta$
\item[(3)]
an isomorphism
$\ovS_\eta\cong\ovS\times_B \eta$
such that 
$(\mathcal O(1)_\eta,t_{\infty,\eta})$
is the base change of
$(\mathcal O(1),t_{\infty})$
\end{itemize}

We refer to $(\ovS,\mathcal O(1),t_{\infty})$ or $(\ovS,S_\infty)$
as the \emph{extended compactified support}.
\end{definition}

\begin{definition}[Extended compactified framed correspondence]\label{def:compactifiedfocusedFr}
Suppose that 
\begin{equation}\label{eq:spanXetaSetaY}
X_\eta\leftarrow S_\eta\rightarrow Y
\end{equation}
is a $d$-dimensional affine correspondence
from $X_\eta$ to $Y\in \Sch_\eta$ over $\eta$
equipped with 
\begin{itemize}
\item[(1)] 
a framing of \eqref{eq:spanXetaSetaY},
i.e., 
a $d$-dimensional framed correspondence
$\Phi=(S_\eta, \varphi, g)$ over $\eta$,
see \Cref{def:reldimFr},
and
\item[(2)] 
an extended compactification of \eqref{eq:spanXetaSetaY}, 
see \Cref{def:compactifiedfocusedcorr}, 
with extended compactified support $(\ovS,S_\infty)$
\end{itemize}
The data
$\overline\Phi=(\ovS,S_\infty,\Phi)$ is called an
\emph{extended compactified $d$-dimensional framed correspondence} from $X_\eta$ to $Y$.
We denote by $\cFr_n^d((X,X_\eta),Y)$ the set of extended $d$-dimensional level $n$ framed correspondences 
from $(X,X_\eta)$ to $Y$. 
\end{definition}

\begin{remark}[Extended compactified framed correspondence]\label{rem:compactifiedfocusedFr}
\Cref{def:compactifiedcorr,def:compactifiedFrovS,def:compactifiedfocusedcorr,def:compactifiedfocusedFr} 
informs us that when $X\in\Aff_B$, $Y\in\Sch_\eta$,
an element in $\cFr_n^d((X,X_\eta),Y)$ is given by
\begin{itemize}
\item[(1)] 
a $d$-dimensional framed correspondence 
$\Phi=(S_\eta, \varphi, g)$
from $X_\eta$ to $Y$ over $\eta$ (see also \Cref{def:reldimFr})
\item[(2)]
a projective scheme $\ovS$ over $X$ of relative dimension $d$,
and an ample bundle $\mathcal O(1)$ on $\ovS$ with a section $t_\infty\in \Gamma(\ovS, \mathcal O(1))$
such that for $S_\infty=Z(t_\infty)$, the closed subscheme $S_{\infty,\eta}=S_\infty\times_B\eta$ has positive relative codimension in $\ovS\times_B\eta$ over $X_\eta$
\item[(3)]
an isomorphism
$S_\eta\cong(\ovS-S_\infty)\times_B \eta$
\end{itemize}
\end{remark}

\begin{remark}
    By base change along $\eta\to B$,
    an extended compactified framed correspondence $c\in\cFr_n^d((X,X_\eta),Y)$ 
    defines a compactified framed correspondence $c_\eta\in\cFr_n^d(X_\eta,Y)$.
    We write ``extended" in the definitions above since $c$ extends $c_\eta$ 
    along the open immersion $X_\eta\to X$.
\end{remark}
\begin{example}\label{ex:defc}
\Cref{rem:compactifiedFrovS,rem:compactifiedfocusedFr} show that 
    every $d$-dimensional compactified framed correspondence from $X$ to $Y$ over $B$
    defines an extended $d$-dimensional compactified framed correspondence from 
    $X\times_B \eta$ to $Y\times_B \eta$.
\end{example}

\begin{definition}\label{def:smoothcontain}
Suppose $X\in \Aff_B$, $Y\in \Sch_\eta$, and $X_\eta=X\times_B \eta$.
Let $r\colon X_\eta\to Y$ be a morphism of schemes.
Given an extended compactified $d$-dimensional framed correspondence $(\ovS,S_\infty,\Phi)$ from $X_\eta$ to $Y$, 
we let $S$ denote the complement of $S_\infty$ in $\overline S$.
Furthermore, we say that $(\ovS,S_\infty,\Phi)$ \emph{smoothly contains} $r$ if
there is a closed subscheme $\Gamma$ in $S=\ovS-S_\infty$
such that the following hold:
\begin{itemize}
\item[(1)] the canonical morphism $S\to X$ induces an isomorphism $\gamma\colon \Gamma\cong X$
\item[(2)] the canonical morphism $S\to X$ is smooth at each point of $\Gamma$
\item[(3)] 
the morphism $r$ coincides with the composite 
\[
X_\eta\xrightarrow{\gamma^{-1}_\eta} \Gamma\times_B \eta\hookrightarrow 
S_\eta\to (\A^n_{X_\eta})^h_{S_\eta}\xrightarrow{g} Y
\]
Here $\gamma_\eta$ is the isomorphism $\gamma_\eta\colon \Gamma\times_B \eta\xrightarrow{\cong} X_\eta$, 
and $\overline\Phi=(\ovS,S_\infty,\Phi)$
\end{itemize}
We refer to $\Gamma$ as the \emph{compactified graph} of $r$ in $\ovS$.

\begin{example}
    Given a $d$-dimensional compactified framed correspondence $c$ from $X$ to $Y$ over $B$
    with support $S$ as in \Cref{def:compactifiedFrovS},
    along with a morphism $\overline{r}\colon X\to Y$ that passes through $S$,
    then the extended $d$-dimensional compactified framed correspondence from $X\times_B \eta$ to $Y$ from \Cref{ex:defc}
    smoothly contains the morphism $\overline{r}\times_B \eta$.
\end{example}

\end{definition}

\subsection{Construction of extended compactified framed correspondences}\label{sect:FocCompFr:Constr}

In the following lemma, we explain how an appropriate compactification of a scheme $X$ over a local base scheme $B$ defines an extended compactified framed correspondence from the generic point $\eta$ in $B$ to the generic fiber $X_\eta$.

\begin{lemma}\label{lm:fcFrofdimensiond=dimX}
Suppose the base scheme $B$ is local and irreducible with closed point $z\in B$ and generic point $\eta\in B$.
Let $X\in \Sm_B$, $x\in X\times_B z$, $d=\dim_B X_x$, $Z$ be a closed subscheme in $X_\eta=X\times_B \eta$, 
and set $X_{x,\eta}:=X_x\times_B \eta$.
Then, there exists an 
extended compactified $d$-dimensional framed correspondence 
$\overline \Phi=(\overline X,X_\infty,\Phi)$ from $(B,\eta)$ to $X_\eta$ such that the morphism $\overline X\to B$ 
is smooth at $x$, and $x\not\in X_\infty$. 
\end{lemma}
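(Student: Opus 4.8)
The plan is to produce the extended compactified framed correspondence by choosing a suitable projective compactification $\overline X$ of $X$ over $B$ together with a hyperplane section realizing $X$ as $\overline X - X_\infty$, and then equipping it with the framing data coming from the smoothness of $X$ over $B$ at $x$. Recall that an element of $\cFr_n^d((B,\eta),X_\eta)$ consists of: a $d$-dimensional framed correspondence $\Phi = (S_\eta, \varphi, g)$ from $\eta$ to $X_\eta$ over $\eta$; a projective $B$-scheme $\overline X$ of relative dimension $d$ with an ample line bundle $\mathcal O(1)$ and section $t_\infty$ cutting out $X_\infty = Z(t_\infty)$ of positive relative codimension over $B$ (equivalently, over $\eta$); and an identification $S_\eta \cong (\overline X - X_\infty) \times_B \eta$. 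Here the ``source'' $X$ of the affine correspondence is $B$ itself (with $X_\eta = \eta$), the ``support'' is $X$, and the ``target'' is $X_\eta$, with $S \to X_\eta$ the canonical morphism and $S \to B$ the structure map.

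First I would compactify. Since $X$ is smooth, hence finite type and separated, over the noetherian scheme $B$, by Nagata compactification there is a proper $B$-scheme $\overline X_0$ containing $X$ as a dense open; replacing $\overline X_0$ by the closure of $X$ we may assume $X$ is dense. By Chow's lemma and blowing up along a closed subscheme supported on $\overline X_0 - X$, we may arrange $\overline X_0 \to B$ to be projective while keeping $X$ untouched. Relative projectivity gives an ample line bundle; after twisting and passing to a high enough power, a generic global section $t_\infty$ of $\mathcal O(1)$ cuts out a divisor $X_\infty$ that avoids the finitely many points we care about — in particular avoids $x$ — and has positive relative codimension over $B$; we then replace $\overline X_0$ by the closure of $X \cup (\overline X_0 - Z(t_\infty))$, or simply note that $Z(t_\infty)$ may be taken to contain $\overline X_0 - X$ by a further blow-up, so that $X = \overline X - X_\infty$ exactly. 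This requires a little care to ensure $\overline X \to B$ has the correct relative dimension $d$ at $x$ and is smooth at $x$; but smoothness of $X \to B$ at $x$ and the fact that $x \in X \subseteq \overline X$ handle this automatically, since $\overline X$ and $X$ agree near $x$.

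Next I would supply the framing. This is where the smoothness hypothesis does the work: near $x$, the smooth morphism $X_x \to B$ of relative dimension $d$ admits, after passing to an affine Nisnevich (or even Zariski) neighborhood, an étale map to $\A^d_B$; equivalently, there is a closed immersion of the support $S_\eta = X_\eta$ (localized appropriately) into some $\A^n_{X_\eta}$ together with a complete intersection presentation — i.e. an $(n-d)$-tuple $\varphi$ of regular functions whose vanishing locus contains the support as a clopen piece — and a morphism $g$ on the henselization lifting $S_\eta \to X_\eta$, which one can take to be (induced by) the identity-type map. Concretely one can embed $X$ itself into some $\A^n_B$ locally around $x$, use the $d$ étale coordinates to produce the $n-d$ remaining equations, and base change to $\eta$; the compatibility with the compactification data (condition (3) of Definition~\ref{def:compactifiedfocusedcorr}, that $(\mathcal O(1)_\eta, t_{\infty,\eta})$ is the base change of $(\mathcal O(1), t_\infty)$) is automatic since everything is obtained by base change along $\eta \to B$.

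The main obstacle I anticipate is the bookkeeping in the compactification step: arranging simultaneously that (a) $\overline X$ is \emph{projective} over $B$ of relative dimension exactly $d$, (b) the hyperplane section $X_\infty = Z(t_\infty)$ has positive relative codimension over $B$ \emph{and} misses $x$, and (c) the open complement is precisely $X$, not merely a scheme containing $X$. The tension is between (b)/(c) — which push toward choosing $t_\infty$ to vanish on all of the ``boundary'' $\overline X_0 - X$ — and the requirement that $t_\infty$ be a section of an \emph{ample} bundle avoiding $x$. The standard resolution is to blow up $\overline X_0$ along a coherent ideal with support equal to the boundary so that the boundary becomes a Cartier divisor $D$, then take $\mathcal O(1)$ to be a large twist $\mathcal O(nH - D)$ for an ample $H$ and $n \gg 0$, which is again ample (or at least very ample relative to $B$), with a section vanishing on $D$ but not at $x$; then $X_\infty \supseteq D$ and $X_\infty \cap X$ is a genuine hyperplane section of $X$ which we may further demand avoids $x$ by genericity (using that $x$ is a single point and the residue field issues are harmless because we only need existence, not a count). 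Everything else — the framing, condition (3), smoothness at $x$ — is then essentially formal.
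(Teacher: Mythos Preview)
Your proposal is workable in outline, but it is far more elaborate than what is needed, and the elaboration hides the one move that makes the lemma easy. The paper's proof begins by observing that one may replace $X$ by any Zariski neighborhood of $x$; in particular one takes $X$ affine with trivial tangent bundle. With that reduction in hand, the compactification is trivial: embed $X\hookrightarrow\A^N_B$ (with trivial normal bundle), take $\overline X$ to be the closure in $\PP^N_B$, and let $t_\infty$ be the section of $\mathcal O(1)$ cutting out the hyperplane at infinity. Then automatically $X=\overline X - X_\infty$, $x\notin X_\infty$, and smoothness at $x$ is inherited from $X$. The framing comes for free from the trivial normal bundle: $N-d$ global functions on $\A^N_B$ cut out $X$ as a clopen subscheme of their vanishing locus, and the retraction $(\A^N_B)^h_X\to X$ supplies $g$. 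No Nagata, no Chow, no blow-ups.

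Your route---compactify $X$ as given via Nagata/Chow, blow up the boundary to a Cartier divisor $D$, then search for an ample bundle with a section vanishing on $D$ and missing $x$---can be made to work, but your concern (c), that the complement be \emph{exactly} $X$, is misplaced and is what drives the complexity. The definition only requires $S_\eta\cong(\overline X - X_\infty)\times_B\eta$ with $S_\eta$ the support of some framed correspondence to $X_\eta$; there is no requirement that $S=X$. Once $X_\infty$ is an ample divisor containing the boundary and avoiding $x$, the complement $\overline X - X_\infty$ is an affine open of $X$ containing $x$, and that is all you need. But at that point you have effectively shrunk $X$ to an affine neighborhood of $x$ anyway---so you may as well do that at the outset and skip the compactification gymnastics. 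Your framing sketch (``embed $X$ into some $\A^n_B$ locally around $x$, use étale coordinates'') is also implicitly shrinking $X$, so the two halves of your argument are not quite talking about the same scheme.
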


\begin{proof}
Without loss of generality, we can assume that $X$ is affine with a trivial tangent bundle, 
since there is such a Zariski neighborhood for all $x\in X$. 
Then there is a closed embedding $X\not\hookrightarrow \A^N_B$ with a trivial normal bundle.
Let $\overline X$ be the closure of $X$ in $\PP^N_B$. 
Then $\overline X\to B$ is smooth at $x$ because $X\in \Sm_B$, $x\in X$.
Let $t_\infty\in \Gamma(\PP^N_B,\mathcal O(1))$ be a section such that $Z(t_\infty)=\PP^N_B-\A^N_B$,
and denote by the same symbol the pullback of $t_\infty$ to $\overline X$. 
Set $X_\infty=Z(t_\infty)$ so that $X=\overline X-X_\infty$ and $x\not\in X_\infty$. 
Since the normal bundle $N_{X/\A^N_B}$ is trivial, 
there is a set of functions $(f_{d+1},\dots ,f_{N})\in \mathcal O(\A^N_B)$ such that $X$ is a 
clopen subscheme in $Z(f_{d+1},\dots,f_N)$ and a retraction $\mathrm{pr}\colon (\A^N_B)^h_X\to X$.
Then $(X,f_{d+1},\dots, f_N,\mathrm{pr})$ is a $d$-dimensional framed correspondence from $B$ to $X$.
Its image under the base change along $\eta\to B$ is the $d$-dimensional framed correspondence 
$(X_\eta,f_{d+1},\dots, f_N,\mathrm{pr})$ from $\eta$ to $X_\eta$.
In summary, we obtain the 
extended compactified framed correspondence
$(\overline X, X_\infty, (X_\eta,f_{d+1},\dots, f_N,\mathrm{pr}) )$ from $(B,\eta)$ to $X_\eta$.
\end{proof}

In the technical results 
\Cref{lm:ovcalXUk,lm:SectionsonCompactifiedobjecthorizotnalandverticleinductions,lm:EqDimoverDimOne,lm:onedimensionalfocusedcompactified},
we decrease the dimension of the correspondence provided by \Cref{lm:fcFrofdimensiond=dimX} down to $1$,
and obtain an element in $\cFr_n^{1}((X_{x,\eta},X_x), X_{\eta})$,
which smoothly contains the canonical morphism $X_x\to X$.

\begin{lemma}
\label{lm:ovcalXUk}
Let $U$ be a local scheme with closed point $\upsilon$.
Let $\ovcalX$ be a projective $U$ scheme with a very large line bundle $\calO(1)$.
Let $\calZ_j\not\hookrightarrow \ovcalX$ be a finite set of closed subschemes, $j\in J$, 
and suppose that $x\in\ovcalX_\upsilon$ is a $\upsilon$-rational point. 
Assume that the following conditions hold.
\begin{itemize}
\item[(s1)] $\ovcalX\to U$ is smooth at $x$
\item[(s2)] $\codim_{\ovcalX_\upsilon} {\calZ_j\times_U\upsilon}=c_j>0$ for all $j\in J$
\item[(s3)] $x\not\in \calZ_j$ for each $j$ such that $c_j>1$
\end{itemize}
Let $\{\mathcal L_r\}$ be a finite set of line bundles on $\overline\calX$ and $d=\dim_\upsilon \ovcalX_\upsilon>0$, 
where $\ovcalX_\upsilon=\ovcalX\times_U \upsilon$.
Then for $l_n\gg 0$, $n=2,\dots,d$,
there exist sections $s_{n}\in \Gamma(\ovcalX, \mathcal O(l_n))$ such that 
\begin{itemize}
\item[(t1)] $\ovcalC=Z(s_{2},\dots, s_{d})$ contains $x$ and is smooth at $x$ over $U$
\item[(t2)] $\codim_{\ovcalC\times_U\upsilon} ({\calZ_j\cap \ovcalC}\times_U\upsilon)=c_j$
\item[(t3)] the restricted line bundle $\mathcal L_r\big|_{\calZ\cap\ovcalC}$ is trivial for all $r$, 
where $\calZ=\bigcup_{j\in J} \calZ_j$
\end{itemize}
\end{lemma}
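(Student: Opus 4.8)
The plan is to construct the sections $s_2, \dots, s_d$ one at a time by a descending induction on the dimension of the intermediate complete intersections, using a Bertini-type argument adapted to the local base $U$ and the finitely many constraints (s1)--(s3). First I would set up the induction: suppose we have already produced sections $s_2, \dots, s_{k-1}$ (with $2 \le k \le d$) cutting out a closed subscheme $\ovcalX^{(k-1)} = Z(s_2, \dots, s_{k-1})$ that contains $x$, is smooth at $x$ over $U$, has fiber dimension $\dim_\upsilon \ovcalX^{(k-1)}_\upsilon = d-(k-2)$, and meets each $\calZ_j$ in the expected codimension $c_j$ over $\upsilon$; the base case $k=2$ is $\ovcalX^{(1)} = \ovcalX$ itself by (s1)--(s2). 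I then want to choose a single section $s_k \in \Gamma(\ovcalX, \calO(l_k))$ for $l_k \gg 0$ so that $\ovcalX^{(k)} = Z(s_k) \cap \ovcalX^{(k-1)}$ retains all these properties (and at the last step $k=d$ additionally arrange (t3)).

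The key point is to realize each desired property as a condition that holds for a section avoiding a proper closed subset of the relevant linear system, so that for $l_k$ large the generic section works. Concretely: (i) to keep $x \in \ovcalX^{(k)}$ I restrict attention to sections vanishing at $x$ — since $\calO(l_k)$ is very ample, the evaluation $\Gamma(\ovcalX, \calO(l_k)) \to \calO(l_k) \otimes k(x)$ is surjective, so this is a codimension-one linear subspace; (ii) to keep smoothness at $x$ over $U$, I need $ds_k|_x$ to be nonzero on the fiber tangent space, equivalently $s_k$ should not lie in the second-order vanishing ideal at $x$ relative to $U$ — again a proper linear subspace for $l_k$ large because $\calO(l_k)$ separates $2$-jets at $x$; (iii) to keep the codimension of $\calZ_j \cap \ovcalX^{(k)}$ over $\upsilon$ correct for those $j$ with $c_j > 1$, note $x \notin \calZ_j$ by (s3), so I may cut $\calZ_j$ with a hyperplane section through $x$ without any issue and the generic such section drops dimension by one on $\calZ_j \cap \ovcalX^{(k-1)}$ (Bertini over the reduced scheme $\calZ_j \cap \ovcalX^{(k-1)}$, fiberwise over $\upsilon$); for those $j$ with $c_j = 1$, the constraint $\codim_{\calZ_j \cap \ovcalX^{(k-1)}} = 1$ is already the maximal possible and is preserved automatically since passing to a closed subscheme cannot decrease codimension, so nothing needs to be done — here is where the hypothesis $c_j > 1$ in (s3) is exactly what is needed. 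For step (iii) I would invoke a Bertini theorem valid over the local (in particular, possibly finite residue field) base; the standard workaround when residue fields are small is to use sections of high degree $l_k \gg 0$, i.e. to replace $\calO(1)$ by $\calO(l_k)$ so that the relevant linear system is base-point free and separates enough jets, as is done in \cite{Morel-connectivity}, \cite{Pan19}, \cite{nonperfect-SHI}; alternatively one restricts to $\ovcalX^{(k-1)}$ and its closed subschemes and applies Bertini there.

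Finally, for the last section $s_d$ I additionally arrange (t3): the scheme $\calZ \cap \ovcalC$ where $\calZ = \bigcup_j \calZ_j$ has, by (t2), positive codimension $\ge 1$ in each fiber of $\ovcalC$ over $\upsilon$, hence by construction it is \emph{quasi-finite} — indeed after all $d-1$ hyperplane cuts its fiber over $\upsilon$ is $0$-dimensional, so $\calZ \cap \ovcalC$ is quasi-finite over $U$, and being closed in the projective scheme $\ovcalC$ it is finite over the local scheme $U$; a finite scheme over a local scheme is semi-local, in fact its underlying topological space is a finite set of closed points together with generizations, and on such a scheme every line bundle is trivial (a finite module that is locally free of rank one over a product of local rings is free). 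So $\mathcal L_r|_{\calZ \cap \ovcalC}$ is trivial for every $r$ automatically, once the codimension statement (t2) is in force; no further genericity is needed for (t3). Assembling the $s_2, \dots, s_d$ chosen this way gives $\ovcalC = Z(s_2, \dots, s_d)$ satisfying (t1)--(t3).

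The main obstacle I anticipate is carrying out the Bertini step (iii) uniformly: one must simultaneously preserve smoothness at the rational point $x$ (a ``positive'' open condition at one point) and force the correct fiberwise codimension drop along each $\calZ_j$ (a ``transversality'' condition away from $x$), while the residue field $k(\upsilon)$ may be finite. The resolution is the classical one — pass to $\calO(l_k)$ for $l_k$ large so the linear system is big enough to separate jets at $x$ and to give a Bertini statement over the (finite-field) fiber — but bookkeeping the finitely many closed subschemes $\calZ_j$ and the line bundles $\mathcal L_r$ together requires care; this is precisely the geometric-presentation/Bertini machinery refined in \cite{Morel-connectivity}, \cite{Pan19}, \cite{nonperfect-SHI}, which I would cite rather than reprove.
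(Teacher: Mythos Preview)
Your overall strategy---induction on the fibre dimension, producing one section at a time by a genericity argument, and deducing (t3) at the end from the fact that $\calZ\cap\ovcalC$ is finite over the local base $U$ and hence semi-local---matches the paper's proof closely. The paper, however, replaces your Bertini appeal by a direct application of Serre's theorem on ample bundles: in the inductive step it fixes a finite set $F$ of closed points meeting every irreducible component of $\ovcalX_\upsilon$ and of each $(\calZ_j)_\upsilon$, with $x\notin F$, prescribes $s_d$ to vanish to first but not second order at $x$ (forcing (t1$'$)) and to be nonzero at each point of $F$ (forcing (t2$'$)), and invokes Serre vanishing for $l_d\gg0$ to realise these finitely many pointwise constraints simultaneously. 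This sidesteps entirely the finite-residue-field issue you flag, since no Bertini theorem is needed---one is simply interpolating at finitely many closed points.

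There is one genuine slip in your argument: for $c_j=1$ you write that the codimension ``is preserved automatically since passing to a closed subscheme cannot decrease codimension''. This is backwards---intersecting with a hypersurface section can only \emph{decrease} the codimension of $\calZ_j$ in the ambient scheme (equality holds precisely when the section cuts $\calZ_j\cap\ovcalX^{(k-1)}$ properly), so the case $c_j=1$ requires the same transversality condition as $c_j>1$. The fix is immediate: demand that $s_k$ not vanish identically on any component of $(\calZ_j\cap\ovcalX^{(k-1)})_\upsilon$, which is a nonempty open condition since those components have positive dimension while $d>1$ in the inductive step. The paper's choice of the point set $F$ handles all $j$ uniformly in exactly this way.
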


\begin{proof}
We prove the claim by induction on $d$.

Let $d=1$.
Then the set of sections $\{s_2,\dots,s_d\}$ is empty and the properties (t1) and (t2) hold by (s1) and (s2).
Since $\ovcalX$ is a projective scheme, $\dim_{\upsilon} \ovcalX\times_U\upsilon=1$, $U$ is local, 
and using \cite[Corollary A.3]{nonperfect-SHI}
we conclude that $\dim_U \ovcalX\leq 1$.
Since $Z$ is a closed subscheme such that $Z\times_U\upsilon$ is of positive codimension in $\ovcalX\times_U\upsilon$,
it follows that $Z$ is finite over $U$. 
Consequently, $Z$ is a semi-local affine scheme.
Therefore, each line bundle $\mathcal L_r\big|_{Z}$ is trivial, so (t3) is valid.

If $\dim_\upsilon \ovcalX_\upsilon=d>1$, suppose that the claim is proven for all projective schemes 
$\ovcalX$ over $U$ such that $\dim_\upsilon \ovcalX\times_U\upsilon<d$.
For $l_d\gg 0$, we will construct a section $s_d\in \Gamma(\ovcalX, \calO(l_d))$ such that
\begin{itemize}
\item[(t1')]
$s_d\big|_x=0$, and $Z(s_d)$ is smooth at $x$
\item[(t2')]
$\codim_{\ovcalX\times_U\upsilon} (Z(s_d\big|_{\ovcalX\times_U\upsilon}))=\codim_{Z_j\times_U\upsilon} (Z(s_d\big|_{Z_j\times_U\upsilon}))=1$ for all $j\in J$
\end{itemize}
Then the claim follows by the inductive assumption applied to $\ovcalX^\prime=Z(s_d)$.
To obtain the required section $s_d$, 
we consider the cotangent vector space $(\Omega_{\overline \calX_\upsilon})\big|_x=\Omega_{\overline \calX_\upsilon}\otimes_{\mathcal O(\ovcalX)} \mathcal O(x)$, i.e., the vector space of sections of 
the cotangent bundle of $\overline\calX_\upsilon$ at $x$.
Note that this vector space coincides with the conormal space of $x$ in $\overline \calX_\upsilon$.
Choose a non-trivial cotangent vector $dx\in (\Omega_{\ovcalX_\upsilon})\big|_x$.
Consider
the closed subscheme in $\ovcalX_\upsilon$ that is the vanishing locus
$Z(I^2(x))$ of the square of the ideal $I(x)$,
i.e., the first-order thickening of $x$ in $\ovcalX_\upsilon$.
Then $dx$ defines a regular function on $Z(I^2(x))$, which we also denote by $dx$. Furthermore, for any lifting $f\in \mathcal O((\overline X_\upsilon)_x)$ on the local scheme of $\overline X_\upsilon$ at $x$ such that $f\big|_{Z(I^2(x))}=dx$, the vanishing locus $Z(f)$ is essentially smooth over $\upsilon$.
The line bundle $\mathcal O(1)$ is trivial on the local scheme $(\overline X_\upsilon)_x$ and $Z(I^2(x))$. 
Due to the trivialization,
we obtain a section $dx\in \Gamma(Z(I^2(x)), \mathcal O(1))$.
By (c3), it follows that there exists a finite set of closed points $F=\{z_1,\dots, z_L\}\subset \ovcalX$, 
such that $F$ intersects non-emptily each irreducible component of $\ovcalX$ and $Z_j$ for each $j$, 
and $x\not\in F$.
Serre's theorem on ample bundles \cite[III, Corollary 10.7]{Ha77} 
ensures that for $l_d\gg 0$, 
there is a section $s_d$ such that 
\[
s_d\big|_{x}=0,\quad s_d\big|_{Z(I^2(x))}=dx,\quad s_d\big|_{z}\neq 0\text{ for all $z\in F$}
\]
The first two equalities above imply (t1'), and the latter condition implies (t2').
\end{proof}

\begin{lemma}
\label{lm:SectionsonCompactifiedobjecthorizotnalandverticleinductions}
Suppose $B$ is local with closed point $z\in B$ and let $\eta\in B$. 
Let $U$ be an essentially smooth local scheme over $B$, where $\upsilon\in U$ is the closed point.
Let $\ovX$ be a projective scheme over $B$, $\dim_\eta \ovX_\eta=d$, where $\ovX_\eta=\ovX\times_B \eta$. 
We set $\ovcalX=\ovX\times_B U$, $U_\eta=U\times_B \eta$, and $\ovcalX_\eta=\ovcalX\times_B \eta$.

Let $Z_j\subset \ovX_{\eta}$ be a finite set of closed subschemes indexed by $j\in J$, 
and let $\overline{Z_j}$ be the closure of $Z_j$ in $\ovX\times_B {B_{\eta^\prime}}$.
Let $x$ be a $\upsilon$-rational point of $\ovcalX\times_U \upsilon$. 
Suppose 
\begin{itemize}
\item[(s1)] $\ovcalX\to U$ is smooth at $x$
\item[(s2)] $\codim_{X_\eta\to\eta} {Z_j}=c_j>0$
\item[(s3)] $x\not\in \ovcalZ_j$ for each $j$ such that $c_j>1$, where $\ovcalZ_j= \overline{Z_j}\times_B U$
\end{itemize}
Let $\mathcal L_r$ be a finite set of line bundles on $\ovcalX$, and $\calO(1)$ be an ample bundle on $\overline\calX$.

Then 
for $l_j\gg 0\in \mathbb Z$, $j=2,\dots,d$,
there is a set of sections $s_{j}\in \Gamma(\ovcalX, \mathcal O(l_j))$
such that 
\begin{itemize}
\item[(t1)] $\ovcalC=Z(s_{d},\dots, s_{2})$ contains $x$ and is smooth at $x$ over $U$
\item[(t2)] the relative codimension $\codim_{\ovcalC_\eta\to U_\eta} ({\calZ_j\cap \ovcalC_\eta})$ equals $c_j$, where $\ovcalC_\eta=\ovcalC\times_B \eta$
\item[(t3)] the restriction of line bundles $\mathcal L_r\big|_{\calZ\cap\ovcalC_\eta}$ are trivial for all $r$, where $\calZ=\bigcup_j \calZ_j$
\end{itemize}
\end{lemma}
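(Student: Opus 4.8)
The plan is to argue by induction on $d=\dim_\eta\ovX_\eta$, following closely the proof of \Cref{lm:ovcalXUk}. The new feature, compared with that lemma, is a separation of two directions: the smoothness of the cut‑down scheme is controlled at the closed point $\upsilon$ of $U$ (equivalently, relative to $U$), while the relative‑codimension bookkeeping (t2) and the triviality (t3) are imposed over the point $\eta\in B$. Since $x$ lies in the fibre over $\upsilon$, hence over a point of $B$ distinct from $\eta$ in the nondegenerate case, whereas the relevant components of the $\ovcalZ_j$ lie over $\eta$, the ``vertical'' prescription of the $1$‑jet of an auxiliary section at $x$ and the ``horizontal'' genericity of that section along $\ovcalX_\eta$ can be arranged simultaneously; this is the sense of the horizontal/vertical induction. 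For the base case $d=1$ there is nothing to construct, so $\ovcalC=\ovcalX$ and (t1) is (s1); since $0<c_j\le 1$ we have $c_j=1$ for every $j$, so each $Z_j$ is finite over $\eta$, hence $\overline{Z_j}$ is finite over $B$ and $\ovcalZ_j=\overline{Z_j}\times_B U$ is finite over the local scheme $U$, thus semi‑local affine. Then $\calZ\cap\ovcalX_\eta=\calZ\times_B\eta$ is a base change along $\eta\to B$ of the semi‑local affine scheme $\calZ=\bigcup_j\ovcalZ_j$, hence again semi‑local affine, so every $\mathcal L_r$ restricts trivially to it, giving (t3); and $\ovcalZ_j\cap\ovcalX_\eta$ has relative dimension $0$ over $U_\eta$ while $\ovcalX_\eta$ has relative dimension $1$ over $U_\eta$, giving (t2).

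For the inductive step, assume the lemma in relative dimensions $<d$. I would construct a single section $s_d\in\Gamma(\ovcalX,\calO(l_d))$, for $l_d\gg 0$, such that $Z(s_d)$, equipped with $\calO(1)|_{Z(s_d)}$, the subschemes $Z_j':=Z_j\cap Z(s_d)_\eta$, the line bundles $\mathcal L_r|_{Z(s_d)}$, and the same point $x$, satisfies (s1)--(s3) in relative dimension $d-1$. The section $s_d$ is required to satisfy: (a) $s_d(x)=0$ and $Z(s_d)\to U$ is smooth at $x$; and (b) $s_d$ vanishes identically on no irreducible component of $\ovcalX_\eta$ and cuts each $\ovcalZ_j\cap\ovcalX_\eta$ properly, so that the relative dimension over $U_\eta$ of the ambient and of each $\ovcalZ_j$ drops by exactly one (in particular $\ovcalZ_j\cap\ovcalC_\eta$ becomes empty when $c_j>1$, consistently with (t2), and any $Z_j'$ that becomes empty is dropped from the recursion). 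For (a) I would argue as in \Cref{lm:ovcalXUk}: as $x$ is $\upsilon$‑rational and lies over $\upsilon$, one has $\Omega_{\ovcalX/U}\otimes k(x)\cong\Omega_{\ovcalX_\upsilon}\otimes k(x)$, the conormal space of $x$ in $\ovcalX_\upsilon$; fix a nonzero cotangent vector $dx$ there, trivialize $\calO(l_d)$ on the local scheme of $\ovcalX_\upsilon$ at $x$, and note that any section vanishing at $x$ and restricting to $dx$ on the first‑order thickening $Z(I^2(x))$ cuts out a subscheme smooth at $x$ over $U$. For (b) I would require $s_d$ to be nonzero at a finite set $F$ of closed points of $\ovcalX$ meeting the closure of each component of $\ovcalX_\eta$ and of each positive‑dimensional component of every $\ovcalZ_j\cap\ovcalX_\eta$, with $x\notin F$ — legitimate because those components lie over $\eta$, hence away from $x$. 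Both requirements are realized for $l_d\gg 0$ by Serre's theorem on ample line bundles \cite[III, Corollary 10.7]{Ha77}.

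Then I would check (s1)--(s3) for $Z(s_d)$ and assemble the conclusion. (s1) is (a). By (b), $\dim_\eta Z(s_d)_\eta=d-1$ and $\codim_{Z(s_d)_\eta\to\eta}Z_j'=c_j$ for the surviving $j$, which is (s2); moreover $\calO(1)|_{Z(s_d)}$ is ample and $Z(s_d)$ is projective over $B$. For (s3): the closure of $Z_j'$ in $\ovX$ is contained in $\overline{Z_j}$, so $\overline{Z_j'}\times_B U\subseteq\ovcalZ_j$, which misses $x$ when $c_j>1$. Applying the inductive hypothesis to $Z(s_d)$ with the bundles $\{\mathcal L_r|_{Z(s_d)}\}$ yields sections $s_{d-1},\dots,s_2\in\Gamma(Z(s_d),\calO(l_j))$ satisfying (t1)--(t3) there; lifting each to a section in $\Gamma(\ovcalX,\calO(l_j))$ for $l_j\gg 0$ (restriction is surjective by Serre vanishing, using $0\to\calO(-l_d)\xrightarrow{s_d}\calO\to\calO_{Z(s_d)}\to 0$) produces $s_2,\dots,s_d$. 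Since $\ovcalC=Z(s_d,\dots,s_2)$ is exactly the scheme $\ovcalC$ produced inside $Z(s_d)$, and since $\ovcalZ_j\cap\ovcalC_\eta$ and the restrictions $\mathcal L_r|_{\calZ\cap\ovcalC_\eta}$ are unchanged on passing from $\ovcalX$ to $Z(s_d)$, properties (t1)--(t3) for $\ovcalX$ follow from those for $Z(s_d)$.

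The main obstacle is this simultaneous construction of $s_d$ in the inductive step: imposing the $1$‑jet at the $\upsilon$‑rational point $x$ so that the hyperplane section is smooth over $U$, while keeping $s_d$ generic with respect to the finitely many components of the $\ovcalZ_j$ lying over $\eta$, and then confirming that $Z(s_d)$ still satisfies (s1)--(s3) so that the induction closes. The key fact making the two constraints compatible is that $x$ and the $\ovcalZ_j$ sit over distinct points of $B$, so the jet condition at $x$ and the non‑vanishing along the generic fibre can be prescribed on disjoint loci; the degenerate case $\eta=z$ reduces directly to \Cref{lm:ovcalXUk}. A minor, purely point‑set, ingredient used in the base case is that fibres and localizations of schemes finite over a local scheme remain semi‑local affine, hence have trivial Picard group.
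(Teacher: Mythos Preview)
Your induction is on $d=\dim_\eta\ovX_\eta$, essentially redoing the proof of \Cref{lm:ovcalXUk} in this relative setting. The paper takes a genuinely different route: it inducts on $c=\codim_{\overline\eta}z$. The base case $c=0$ means $\eta=z$, which is exactly \Cref{lm:ovcalXUk}. For the inductive step the paper picks a specialization $\eta'\in\overline\eta$ with $\codim_{\overline\eta}\eta'=1$, uses \Cref{lm:EqDimoverDimOne} to check that $\overline{Z_j}\times_B\eta'$ still has codimension $c_j$ in $\ovX_{\eta'}$, applies the inductive hypothesis over the smaller base $\overline{\eta'}$ to produce sections $s'_n\in\Gamma(\ovcalX_{\overline{\eta'}},\calO(l_n))$, and then lifts each $s'_n$ to $\ovcalX$. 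So the paper's induction runs \emph{down the base} and invokes \Cref{lm:ovcalXUk} once as a black box, whereas you induct \emph{across the fibre} and re-prove that lemma inside the argument. Your route is more self-contained; the paper's is shorter and explains the title ``horizontal and vertical inductions.''

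Your argument has a real gap. You justify $x\notin F$ by saying ``those components lie over $\eta$, hence away from $x$.'' But $F$ must consist of \emph{closed} points of $\ovcalX$, and since $\ovcalX$ is projective over the local scheme $U$, every closed point lies over $\upsilon$---hence over $z$, not over $\eta$. The closures in $\ovcalX$ of components of $\ovcalX_\eta$ or of $\ovcalZ_j\cap\ovcalX_\eta$ can perfectly well pass through $x$. What you actually need is that each such closure has a closed point $\neq x$: for the $\ovcalZ_j$ with $c_j>1$ this follows from (s3), and in the remaining cases it follows because a component of positive relative dimension over $U_\eta$ has closure whose fibre over $\upsilon$ is positive-dimensional (upper semicontinuity of fibre dimension for proper morphisms), hence contains infinitely many closed points. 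A related slip appears in your base case: $\overline{Z_j}$ is \emph{not} finite over $B$ in general---e.g.\ with $B=\mathrm{Spec}\,k[s,t]_{(s,t)}$, $\ovX=\mathbb{P}^1_B$, $Z_j=\{[s:t]\}$, the closure $V(tX-sY)$ has fibre $\mathbb{P}^1$ over the closed point---so $\calZ$ need not be semi-local. Only $\calZ\cap\ovcalX_\eta=\bigcup_j Z_j\times_\eta U_\eta$ needs this, and that scheme \emph{is} finite over $U_\eta$; (t3) follows from that.
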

\begin{proof}

For any point $\theta\in B$, denote by $\overline{\theta}$ the closure of $\theta$ in $B$.
We prove the claim by induction on $c=\codim_{\overline{\eta}} z$. 
When $c=0$, then ${\eta}=z$, and the claim is proven by \Cref{lm:ovcalXUk}.

Suppose that the claim holds for all $\eta\in B$ and $\codim_{\overline{\eta}} z<c$.
Given $\eta\in B$ such that $\codim_{\overline{\eta}} z=c$,
let $\eta^\prime\in B$ be a point such that $\eta^\prime\in \overline{\eta}$ and $\codim_{\overline{\eta}}\eta^\prime=1$. 
Then $\codim_{B}\eta^\prime=\codim_{B} \eta+1$,
and
$\codim_{\overline{\eta^\prime}} z<c$.
Consider fibers $\ovcalX_{\eta^\prime}=\ovcalX\times_B \eta^\prime$ and
$\overline{Z_j}_{\eta^\prime}=\overline{Z_j}\times_B \eta^\prime$.
By 
\Cref{lm:EqDimoverDimOne} we have $\codim_{\ovX_{\eta^\prime}} \overline{Z_j}_{\eta^\prime}=c_j$.
The inductive hypothesis asserts that, for $l_n\gg 0$, $n=2,\dots ,d$, 
there exist appropriate sections $s^\prime_n\in \Gamma(\ovcalX_{\overline{\eta^\prime}},\calO(l_n))$,
where
$\ovcalX_{\overline{\eta^\prime}}=\ovcalX\times_B{\overline{\eta^\prime}}$.
To conclude, 
we choose sections $s_j\in \Gamma(\ovcalX,\calO(l_j))$ such that $s_j\big|_{\ovcalX_{\overline{\eta^\prime}}}=s_j^\prime$.
\end{proof}

\begin{lemma}\label{lm:EqDimoverDimOne}
Suppose that the base scheme $B$ is $1$-dimensional. 
Then, any smooth projective morphism $Y\to B$ is equidimensional.
\end{lemma}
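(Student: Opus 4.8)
The plan is to reduce the statement to two standard facts about smooth morphisms: that they are flat, and that the relative fibre dimension $y\mapsto\dim_y(Y_{f(y)})$ is a locally constant function on the source (EGA IV, 17.10.2; or [Stacks, Tag 02NM]).

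First I would dispose of the non-surjective part. Since $f\colon Y\to B$ is projective it is proper, so $f(Y)$ is closed in $B$; since $f$ is flat and of finite type it is also open, so $f(Y)$ is open. Hence $f(Y)$ is a union of connected components of $B$, and replacing $B$ by $f(Y)$ — which affects neither the hypotheses nor the conclusion — we may assume $f$ surjective. By local constancy, the relative dimension is then constant along each connected component of $Y$. Moreover, flatness gives going-down, so the generic point of any irreducible component of $Y$, having no proper generization, maps to a generic point of $B$; thus every component of $Y$ dominates a component of $B$, which by $\dim B\le 1$ has dimension $0$ or $1$.

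The core of the argument is fibrewise. For $b\in B$ the fibre $Y_b$ is smooth over the field $k(b)$, hence regular, hence its irreducible components are pairwise disjoint (they coincide with its connected components) and each is integral; writing such a component as $C=Y_0\cap Y_b$ for a connected component $Y_0$ of $Y$, local constancy of the relative dimension forces every point of $C$ to have $\dim_y(Y_b)$ equal to the relative dimension $d_0$ of $Y_0$, so $\dim C=d_0$. Therefore $Y_b$ is equidimensional, of dimension the relative dimension of $Y$ along $Y_b$; when $Y$ is connected — as it is in the applications, e.g. in \Cref{lm:SectionsonCompactifiedobjecthorizotnalandverticleinductions} — this value is a single integer $d$ independent of $b$, so $f$ is equidimensional of relative dimension $d$, and in particular $\dim Y_z=\dim Y_\eta$ for the closed and generic points of a local base. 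Feeding this back together with additivity of dimension for flat local homomorphisms ([Stacks, Tag 00ON]) yields the statement on the level of Krull dimensions as well: $\dim Y_0=\dim B_0+d$ with $\dim B_0\le 1$.

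I do not anticipate a genuine obstacle: the entire content is the local-constancy theorem for smooth morphisms plus the properness/flatness bookkeeping above. The one point requiring care is matching the notion of ``equidimensional'' used here with its use downstream — namely that the lemma is applied to connected (indeed irreducible) $Y$, or else read off one irreducible component of $Y$ at a time; in the irreducible case one may also simply invoke \cite[Corollary A.3]{nonperfect-SHI}.
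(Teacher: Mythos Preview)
Your argument is correct and takes a somewhat different route from the paper's. The paper first reduces to $B$ local irreducible and $Y$ irreducible, then observes that smoothness forces $Y\times_B\eta\neq\emptyset$ (else the image lies in a proper closed subscheme, contradicting flatness/openness), and finally invokes \cite[Corollary~A.3]{nonperfect-SHI} directly. You instead argue from first principles via flatness and the local constancy of $y\mapsto\dim_y Y_{f(y)}$ for smooth morphisms, which makes the argument self-contained modulo standard EGA/Stacks references and avoids any appeal to the cited corollary except as an optional shortcut. Your careful remark about matching the notion of ``equidimensional'' with its downstream use, and your reduction to connected components rather than irreducible components, is in fact slightly cleaner: the paper's reduction to an irreducible component $Y'$ with image-closure $B'$ silently requires that $Y'\to B'$ remain smooth, which is automatic only once one knows the components are clopen---exactly the regularity-of-fibres input you make explicit. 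Both approaches are short; yours trades one external citation for another and is marginally more robust.
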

\begin{proof}
Recall that $Y\to B$ is equidimensional if and only if 
the naturally induced morphism $Y\times_B{B_{(w)}}\to B_{(w)}$
is equidimensional for each point $w\in B$, 
or equivalently that $Y^\prime\to B^\prime$ is equidimensional
for each irreducible component $Y^\prime$ of $Y$, 
where $B^\prime$ is the closure of the image of $Y^\prime$ in $B$.   
Thus, we may assume that $B$ is local and that $B$ and $Y$ are irreducible.
If $Y\times_B\eta=\emptyset$, 
where $\eta\in B$ is the generic point, 
then 
$Y\to B$ passes through a proper closed subscheme of $B$, and consequently $Y\to B$ is non-smooth.
It follows that $Y\times_B\eta\neq\emptyset$. 
Since $Y\to B$ is projective and $B$ is $1$-dimensional, 
it is also equidimensional due to \cite[Corollary A.3]{nonperfect-SHI}.
\end{proof}

\begin{lemma}\label{lm:onedimensionalfocusedcompactified}
Suppose $B$ is local with a generic point $\eta\in B$.
Let $X\in \Sm_B$, $x\in X$, $X_\eta=X\times_B \eta$, and suppose that $Z\subset X_{\eta}$ is a closed subscheme.
Then for $X_{x,\eta}=X_x\times_X X_{\eta}$
there exists an extended compactified $1$-dimensional framed correspondence
$\overline{\Phi}=(\overline{C},C_\infty,\Phi)\in \cFr_n^{1}((X_{x,\eta},X_x), X_{\eta})$ such that 
 \begin{itemize}
\item[(c1)] $\overline{\Phi}$ smoothly contains the canonical morphism $\mathrm{can}\colon X_{x}\to X$
with the compactified graph $\Delta$ in $\overline{C}$, $\Delta\cap C_{\infty}=\emptyset$
\item[(c2)] 
$C_{\infty,\eta}\cap \overline{\mathcal Z}=\emptyset$
\item[(c3)]
the restrictions of line bundles 
$\mathcal O(1)$, and $\mathcal O(\Delta)$
are trivial
on ${C_\infty}$, and $\overline{\mathcal Z}\cup \Delta$,
where 
$\overline{\mathcal Z}$ is the closure in $\overline C$ of the subscheme
$\mathcal Z_\eta = C_\eta\times_{X_{\eta}} Z$,
$C_{\infty,\eta}=C_\infty\times_B \eta$ 
\end{itemize}
\end{lemma}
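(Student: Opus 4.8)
The plan is to build the required $1$-dimensional extended compactified framed correspondence by successively applying the reduction lemmas of this section, starting from a full-dimensional model and cutting down to relative dimension one, and then checking the additional triviality conditions on line bundles at the end. Concretely: since $B$ is local, if $B$ is not irreducible we may replace it by the closure $\overline{\eta}$ of a chosen generic point $\eta$ (the other components contribute nothing to $X_\eta$), so we reduce to $B$ local and irreducible with closed point $z$ and generic point $\eta$. First I would apply \Cref{lm:fcFrofdimensiond=dimX} to $X\in\Sm_B$, the point $x\in X$ (after replacing $x$ by a specialization in $X\times_B z$ if necessary, which is harmless since $X_x$ only sees the local ring), and the closed subscheme $Z\subset X_\eta$; this produces an extended compactified $d$-dimensional framed correspondence $\overline\Phi=(\overline X,X_\infty,\Phi)$ from $(B,\eta)$ to $X_\eta$ with $\overline X\to B$ smooth at $x$ and $x\notin X_\infty$, where $d=\dim_B X_x$. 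Here the projective $B$-scheme $\overline X$ is the closure of $X$ in some $\PP^N_B$, and the support is $X$ itself.

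Next I would pass from the base $B$ to the essentially smooth local base $U=X_x$ (note $X_x\in\EssSm_B$) and consider $\overline{\calX}=\overline X\times_B X_x$, which is projective over $X_x$, smooth over $X_x$ at the canonical point $x$, with the closed subschemes $\overline{Z_j}\times_B X_x$ coming from the irreducible components $Z_j$ of $Z$. The key step is to invoke \Cref{lm:SectionsonCompactifiedobjecthorizotnalandverticleinductions}: its hypotheses (s1)–(s3) are arranged to match exactly the output of \Cref{lm:fcFrofdimensiond=dimX} (smoothness at $x$, positive codimension of each $Z_j$ in $X_\eta$ over $\eta$, and $x\notin\overline{Z_j}\times_B X_x$ for components of codimension $>1$ — the last can be forced, as in the proof of \Cref{lm:ovcalXUk}, by first shrinking $X$ around $x$ so the problematic $Z_j$ avoid $x$). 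Applying that lemma with the line bundles $\calO(1)$ and $\calO(\Delta)$ — where $\Delta\subset\overline{\calX}$ is the compactified graph of $\mathrm{can}\colon X_x\to X$ inside the support, i.e. the closure of the diagonal-type section — among the $\mathcal L_r$, I obtain sections $s_2,\dots,s_d$ cutting out $\overline C=Z(s_d,\dots,s_2)$ which contains $x$, is smooth over $U=X_x$ at $x$, meets each $Z_j$ in the expected relative codimension over $U_\eta$, and on which the restrictions $\mathcal L_r|_{\calZ\cap\overline C_\eta}$ are trivial. Setting $C_\infty=X_\infty\cap\overline C$ and $\Phi$ the induced framing of the span $X_{x,\eta}\leftarrow C_\eta\rightarrow X_\eta$ (restricting $\varphi$ and $g$ from $\overline\Phi$), I get an element of $\cFr_n^1((X_{x,\eta},X_x),X_\eta)$.

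Finally I would verify (c1)–(c3). For (c1): the diagonal section of $X\to B$ through $x$ lifts to a section of the support of $\overline\Phi$ (since $\overline\Phi$ extends the framed correspondence $(X_\eta,f_{d+1},\dots,f_N,\mathrm{pr})$ which smoothly contains $\mathrm{can}$, by the example following \Cref{def:smoothcontain}), and its intersection $\Delta$ with $\overline C$ is again a copy of $X_x=U$ because the slicing sections $s_i$ were chosen (via the cotangent-space argument inside \Cref{lm:ovcalXUk}) to be transverse at $x$; smoothness of $\overline C\to U$ at points of $\Delta$ follows from generic smoothness along $\Delta$ after possibly further shrinking, and $\Delta\cap C_\infty=\emptyset$ because $x\notin X_\infty\supset C_\infty$ and $\Delta$ is proper over the local $U$ hence avoids $C_\infty$ once it does over the closed point. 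Condition (c2), $C_{\infty,\eta}\cap\overline{\calZ}=\emptyset$, is the relative-codimension statement (t2) of \Cref{lm:SectionsonCompactifiedobjecthorizotnalandverticleinductions} combined with finiteness over the $1$-dimensional locus: $\overline{\calZ}\cap\overline C_\eta$ has relative dimension $0$ over $U_\eta$ by (t2), and $\overline\calZ\cap C_\infty$ lies in positive codimension, forcing emptiness after restricting to the generic fiber — alternatively one arranges $C_\infty$ to avoid the finitely many relevant points as in the $d=1$ base case of \Cref{lm:ovcalXUk}. Condition (c3) is precisely (t3) applied to $\mathcal L_r\in\{\calO(1),\calO(\Delta)\}$, using that $\overline{\calZ}\cup\Delta$ restricted to the $1$-dimensional $\overline C$ is a semi-local affine scheme (finite over the local $U$, by \cite[Corollary A.3]{nonperfect-SHI}), on which every line bundle is trivial. \textbf{The main obstacle} I anticipate is bookkeeping the interaction between the \emph{two} compactifications — the fiberwise one over $\eta$ (the datum $t_{\infty,\eta}$) and the one over the source $U$ (the datum $t_\infty$) — through the slicing: one must ensure the sections $s_i$ produced over $U$ are compatible with base change to $\eta$ so that the isomorphism $\overline C_\eta\cong\overline C\times_B\eta$ of \Cref{def:compactifiedfocusedcorr}(3) persists, which is why the lemma is phrased over the base $U$ with the fiber over $\eta$ tracked separately, and why one checks codimensions on $\overline C_\eta$ rather than on $\overline C$.
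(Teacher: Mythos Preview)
Your proposal is correct and follows essentially the same route as the paper: apply \Cref{lm:fcFrofdimensiond=dimX}, base change along $X_x\to B$ to get a $d$-dimensional extended compactified framed correspondence with support $\overline S=\overline X\times_B X_x$ over $U=X_x$, then invoke \Cref{lm:SectionsonCompactifiedobjecthorizotnalandverticleinductions} to slice down to $\overline C=Z(s_2,\dots,s_d)$ of relative dimension one, set $C_\infty=\overline C\cap S_\infty$, and read off (c1)--(c3) from (t1)--(t3).

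Two small refinements worth making. First, $\Delta$ (the graph of $X_x\to X$) has codimension $d$ in $\overline S$, so $\mathcal O(\Delta)$ is \emph{not} a line bundle there and cannot be fed in among the $\mathcal L_r$; only after slicing does $\Delta$ become a divisor on the curve $\overline C$. The paper does not rely on (t3) for $\mathcal O(\Delta)$ at all---(c3) follows, as your final paragraph correctly says, from the fact that $C_\infty$ and $\overline{\mathcal Z}\cup\Delta$ are finite over the local $U$ and hence semi-local. Second, for (c2) the paper's device is cleaner than your codimension count: it feeds $\mathcal Z^S_\eta\cap S_{\infty,\eta}$ into \Cref{lm:SectionsonCompactifiedobjecthorizotnalandverticleinductions} as an \emph{additional} closed subscheme of codimension $\geq 2$; then (t2) forces its intersection with the one-dimensional $\overline C_\eta$ to be empty, which is precisely $C_{\infty,\eta}\cap\overline{\mathcal Z}=\emptyset$.
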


\begin{proof}
\Cref{lm:fcFrofdimensiond=dimX} shows that for $d=\dim^x_B X$ there is an extended compactified 
$d$-dimensional framed correspondence 
$\overline \Phi=(\overline X,X_\infty,\Phi)$ from $\eta$ to $X_\eta$ such that the morphism 
$\overline X\to V_\eta$ is smooth at $z$, 
and $z\not\in X_\infty$.
Base change along $X_x\to B$ gives an extended compactified $d$-dimensional framed correspondence 
\begin{equation}
\label{eq:ovSSinftySetavarphig}
(\overline S, S_\infty, (S_\eta,\varphi_{d+1},\dots, \varphi_n,g) )
\end{equation} 
from $(X_x,X_{x,\eta})$ to $X_\eta$.
Let $\Delta\not\hookrightarrow X_x\times_B X$ be the graph of the canonical map $X_x\to X$.
Since the morphism $X_x\times_B X\to X_x$ is smooth,
the framed correspondence \eqref{eq:ovSSinftySetavarphig}
smoothly contains the map $X_{x,\eta}\to X_\eta$, 
and $\Delta$ is its compactified graph in the sense of \Cref{def:smoothcontain}. 
Finally, the morphism $S=\overline S-S_\infty=X\times_B X_x\to X_x$ is smooth, because $X$ is smooth over $B$.
Therefore, 
we obtain a $d$-dimensional framed correspondence satisfying the properties (c1)-(c3); 
in addition, 
$\overline S=\overline X\times_B X_x$ for $\overline X\in \Sch_B$.

We set $\calZ^S_\eta = S\times_{X_\eta} Z$, $S_{\infty,\eta}=S_\infty\times \eta$
and apply \Cref{lm:SectionsonCompactifiedobjecthorizotnalandverticleinductions} to 
\begin{itemize}
\item $U=X_x$; here $x\in X_x$ plays the role of $\upsilon\in U$ 
\item $\overline X$ with $\calX = S$ and $\ovcalX = \overline S$
\item $x\in \calX$ equal to $\Delta\times_{X_x}x$ in $S$, 
and the closed subschemes $\calZ^{S}_\eta$ and $\calZ^{S}_\eta\cap S_{\infty,\eta}$
\end{itemize}
There exist sections $s_2,\dots, s_d$ on $\overline S$ satisfying (t1)-(t3) in  
\Cref{lm:SectionsonCompactifiedobjecthorizotnalandverticleinductions}.
For $C_\eta=C\times_B\eta$, we set
\[\begin{array}{lll}
\overline C&=& Z(s_2,\dots,s_{d})\subset \overline S \\
C_\infty &=& \overline C\cap S_\infty\\
\Phi &=& (C_\eta,s_2/t_\infty^{l_2},\dots,s_d/t_\infty^{l_d},\varphi_1,\dots,\varphi_n,g)
\end{array}\]
With these definitions, 
(t1)-(t3) in \Cref{lm:SectionsonCompactifiedobjecthorizotnalandverticleinductions}
imply the claims (c1)-(c3) of the lemma.
\end{proof}

In \Cref{th:MoveFropsubschgenfibsmoothloc},
we obtain a $0$-dimensional correspondence starting from 
the $1$-dimensional one provided by \Cref{lm:onedimensionalfocusedcompactified}.

\begin{theorem}
\label{th:MoveFropsubschgenfibsmoothloc}
Suppose $B$ is local with generic point $\eta$.
For $X\in \Sm_B$, $x\in X$, we set $X_{\eta}:=X\times_B \eta$, $X_{x,\eta}:=X_x\times_X X_{\eta}=X_x\times_B {\eta}$, 
and assume $Z\not\hookrightarrow X_{\eta}$ is a closed subscheme of positive codimension.
Then, for some $n>0$, 
there exist  
\[
c=(Z,\psi,g)\in \cFr^0_n({X_{x,\eta}}\times\A^1, X_{\eta}),
\]
and closed subschemes $\Delta$, $Z^+$, $Z^-$ in $Z$
satisfying
\begin{itemize}
\item[(1)] 
$Z=Z(\psi)$,
$Z\times_{\A^1}\{0\}= \Delta\amalg Z^-$,
$Z\times_{\A^1}\{0\}= Z^+$
\item[(2)] 
$Z^-\times_{g,X_\eta,i}Z=\emptyset$,
$Z^+\times_{g,X_\eta,i}Z=\emptyset$
\item[(3)]
the morphism $\mathrm{pr}\big|_{\Delta}\colon\Delta\to X_{x,\eta}$
induced by the projection $\mathrm{pr}\colon \A^n_{X_{x,\eta}}$
is an isomorphism
\item[(4)]
the morphism $g\big|_{\Delta}(\mathrm{pr}\big|_{\Delta})^{-1}$ equals
the canonical morphism $\mathrm{can}\colon X_{x,\eta}\to X_{\eta}$
\end{itemize}
Here 
$i_0,i_1\colon X_{x,\eta}\to X_{x,\eta}\times\A^1$ are the $0$- and $1$-sections.
\end{theorem}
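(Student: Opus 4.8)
The plan is to obtain $c$ by cutting down the one-dimensional extended compactified framed correspondence provided by \Cref{lm:onedimensionalfocusedcompactified} to relative dimension zero by means of a linear pencil, with the pencil parameter serving as the $\A^1$-coordinate. First I would apply \Cref{lm:onedimensionalfocusedcompactified} to $X$, $x$ and $Z$ to obtain $\overline\Phi=(\overline C,C_\infty,\Phi)\in\cFr_n^{1}((X_{x,\eta},X_x),X_\eta)$ which smoothly contains $\mathrm{can}\colon X_x\to X$ with compactified graph $\Delta\subset\overline C$, so that $\Delta\cap C_\infty=\emptyset$, $C_{\infty,\eta}\cap\overline{\calZ}=\emptyset$, and $\calO(1)$, $\calO(\Delta)$ are trivial on $C_\infty$ and on $\overline{\calZ}\cup\Delta$. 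Here $\overline C\to X_x$ is projective of relative dimension one and smooth at $\delta:=\Delta\times_{X_x}\{x\}$; the affine part $C=\overline C-C_\infty$ carries the span morphism $g_0\colon C_\eta\to X_\eta$ underlying $\Phi$; $\overline{\calZ}$ is the closure in $\overline C$ of $\calZ_\eta:=C_\eta\times_{X_\eta}Z$; and, by the use of \Cref{lm:SectionsonCompactifiedobjecthorizotnalandverticleinductions} inside the proof of \Cref{lm:onedimensionalfocusedcompactified}, $\overline{\calZ}$ is finite over $X_x$ (equivalently $\calZ_\eta=g_0^{-1}(Z)$ is finite over $X_{x,\eta}$), so that $\overline{\calZ}$ is affine semilocal and the triviality statements above are meaningful.

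Next I would build the pencil on $\overline C$. Set $\calM:=\calO(\Delta)\otimes\calO(1)^{\otimes N}$ for $N\gg0$; then $\calM$ is relatively very ample over $X_x$ and, by the above, trivial on $C_\infty\cup\overline{\calZ}\cup\Delta$, and I fix such trivializations. Let $s_\Delta\in\Gamma(\overline C,\calO(\Delta))$ be the canonical section with $Z(s_\Delta)=\Delta$. Imitating the Serre-theorem-with-prescribed-closed-points argument from the proof of \Cref{lm:ovcalXUk}, I choose $w\in\Gamma(\overline C,\calO(1)^{\otimes N})$ with $E_0:=Z(w)$ finite over $X_x$ and disjoint from $\Delta$, $C_\infty$ and $\overline{\calZ}$, and with $w\equiv1$ on $\Delta$ and on $C_\infty$ in the chosen trivializations; I set $s_0:=s_\Delta\cdot w$, so that $Z(s_0)=\Delta\amalg E_0$ and $s_0\equiv1$ on $C_\infty$. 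I choose $s_1\in\Gamma(\overline C,\calM)$ similarly, with $E_1:=Z(s_1)$ finite over $X_x$ and disjoint from $C_\infty$, $\Delta$ and $\overline{\calZ}$, and $s_1\equiv1$ on $C_\infty$ and on $\Delta$; enlarging $N$ once more I also arrange that $Z((1-\lambda)s_0+\lambda s_1)$ contains no fibre of $\overline C\to X_x$ for any $\lambda\in\A^1$, hence is finite over $X_x$. Now put $\psi_n:=(1-t)s_0+ts_1\in\Gamma(\overline C\times\A^1,\calM)$ with $t$ the coordinate on $\A^1$; via the trivialization of $\calM$ near the locus $\{\psi_n=0\}\cap(C\times\A^1)$, I regard $\psi_n$ as a regular function, adjoin it to the framing functions of $\Phi$ (after possibly a $\sigma$-suspension to accommodate the ambient embedding), let $Z$ be the clopen component of the resulting vanishing locus containing $\{\psi_n=0\}\cap(C_\eta\times\A^1)$, and let $g$ be the composite $Z\hookrightarrow C_\eta\times\A^1\xrightarrow{\mathrm{pr}_1}C_\eta\xrightarrow{g_0}X_\eta$. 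Because $\psi_n\equiv1$ on $C_\infty\times\A^1$, $Z$ lies in $C_\eta\times\A^1$ and is finite over $X_{x,\eta}\times\A^1$, so $c:=(Z,\psi,g)\in\cFr^0_n(X_{x,\eta}\times\A^1,X_\eta)$.

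It remains to check (1)--(4). Restricting $\psi_n$ to $t=0$ gives $Z\times_{\A^1}\{0\}=Z(s_0)\cap(C_\eta\times\{0\})=\Delta_\eta\amalg E_{0,\eta}$, genuinely a disjoint union since $E_0\cap\Delta=\emptyset$ and $s_\Delta$ vanishes to order one along $\Delta$ (smoothness of $\overline C\to X_x$ at $\delta$, together with $w|_\Delta$ a unit, enters here); put $\Delta:=\Delta_\eta$ and $Z^-:=E_{0,\eta}$. Restricting to $t=1$ gives $Z\times_{\A^1}\{1\}=E_{1,\eta}=:Z^+$. This yields (1). Since $E_0$ and $E_1$ were chosen disjoint from $\overline{\calZ}\supseteq\calZ_\eta=g_0^{-1}(Z)$, both $Z^-$ and $Z^+$ are carried by $g$ into $X_\eta-Z$, which is (2). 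Property (3) is the statement that $\mathrm{pr}|_{\Delta_\eta}\colon\Delta_\eta\to X_{x,\eta}$ is an isomorphism, which holds by \Cref{def:smoothcontain}(1) for the compactified graph $\Delta$; and (4) is precisely \Cref{def:smoothcontain}(3) applied to $\mathrm{can}$, both being part of condition (c1) of \Cref{lm:onedimensionalfocusedcompactified}.

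The main obstacle is the choice of $s_0$ and $s_1$: they must simultaneously vanish or be units along the prescribed closed subschemes $\Delta$ and $C_\infty$, be in general position with respect to $\overline{\calZ}$, be finite over the possibly imperfect local base $X_x$, and form a pencil all of whose members are fibre-free divisors. This is carried out by the same Serre-type argument already used in the proof of \Cref{lm:ovcalXUk}, but keeping track of all constraints at once---in particular that the pencil produces an honest correspondence over the whole of $\A^1$, not merely over $t=0$ and $t=1$---is the delicate part of the proof.
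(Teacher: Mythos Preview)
Your proposal is correct and takes essentially the same route as the paper: start from the relative curve $(\overline C,C_\infty,\Phi)$ supplied by \Cref{lm:onedimensionalfocusedcompactified}, then cut it down to relative dimension zero by a linear pencil of global sections obtained via Serre's theorem, with the pencil parameter furnishing the $\A^1$-coordinate; the decomposition $Z|_{t=0}=\Delta\amalg Z^-$ and $Z|_{t=1}=Z^+$ and properties (1)--(4) are read off exactly as you describe.

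Two implementation details differ from the paper and are worth noting. First, the paper carries out the Serre-type construction on $\overline C_\eta$ rather than on the integral model $\overline C$ over $X_x$; since the target statement only concerns $X_{x,\eta}\times\A^1$, this suffices and avoids having to control $\overline{\calZ}$ over the closed fibre. Second, the paper places the pencil in $\mathcal O(l)$ (a pure power of $\mathcal O(1)$): it takes $\tilde s_0\in\Gamma(\overline C_\eta,\mathcal O(l-\Delta))$ and $s_1\in\Gamma(\overline C_\eta,\mathcal O(l))$, sets $s=(1-\lambda)\tilde s_0\delta+\lambda s_1$, and then the additional framing function is just $s/t_\infty^{\,l}$, canonically regular on $C_\eta$. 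Your choice $\calM=\mathcal O(\Delta)\otimes\mathcal O(1)^{\otimes N}$ is not canonically trivial on $C$, so your phrase ``via the trivialization of $\calM$ near $\{\psi_n=0\}$'' hides a step that the paper's choice makes automatic. Finally, your extra condition that no pencil member contain an entire fibre is unnecessary: once $s_0$ and $s_1$ agree with a unit on $C_\infty$, every member of the pencil is disjoint from $C_\infty$, hence closed in $\overline C$ and contained in the affine $C$, and is therefore finite over the base automatically.
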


\begin{proof}
A $1$-dimensional extended compactified framed correspondence $(\overline C,C_\infty,\Phi)$ that satisfies the conditions (c1)-(c3) exists by \Cref{lm:onedimensionalfocusedcompactified}.
Since $(\overline C,C_\infty,\Phi)$ 
smoothly contains the map $X_{x,\eta}\to X_\eta$ according to (c1), 
it follows that $\Delta$ is a divisor on $\overline C$.
Next we consider an element
\[\delta\in \Gamma(\overline C, \mathcal O(\Delta)), \; Z(\delta)=\Delta.\]
By (c1) and \Cref{def:smoothcontain} there is a closed scheme
$\Delta\not\hookrightarrow \overline C$ which is the compactified graph of $X_{x,\eta}\to X_\eta$.
So,the generic fiber $\Delta_\eta=\Delta\times_B \eta$ is the graph of $X_{x,\eta}\to X_\eta$,
and the latter map decomposes as $X_{x,\eta}\cong \Delta_\eta\to C_\eta\to X_\eta$.
Consequently, it follows that 
\[
\Delta_\eta\subset C_\eta=\overline C_\eta-C_{\infty,\eta},\quad
\Delta_\eta\cap C_{\infty,\eta}=\emptyset.\]
By (c2) $\overline{\mathcal Z}_\eta\cap C_{\infty,\eta}=\emptyset$, hence $\overline{\mathcal Z}_\eta\subset C_\eta$.
Note that since $\mathcal Z_\eta$ is a closed subscheme in $C_\eta$, it follows that \[\overline{\mathcal Z}_\eta=\mathcal Z_\eta.\]
By (c3), there are invertible sections
\[
\gamma_{\overline{\mathcal Z}\cup \Delta}\in \Gamma({\overline{\mathcal Z}}\cup \Delta,\mathcal O(\Delta)),\quad
\nu_{C_\infty}\in \Gamma(C_\infty,\mathcal O(1)).\\
\]
Recall that $C_\infty = Z(t_\infty)$, $t_\infty\in \Gamma(\overline C,\mathcal O(1))$.
For $l\gg 0$ there are sections $s_1$, $s_0^\prime$
such that
\begin{center}
\begin{tabular}{l r}
\toprule
$s_1\in \Gamma(\overline C_\eta,\calO(l) )$ &
$\tilde s_0\in \Gamma(\overline C_\eta,\calO(l-\Delta) )$  \\
\midrule
\vspace{6pt}$s_1|_{\widetilde\calZ_\eta} = t^l_\infty$ &
$\tilde s_0|_{\overline\calZ_\eta\cup\Delta}=t^l_\infty\gamma^{-1}_{\overline\calZ\cup\Delta}$\\
$s_1|_{C_{\infty,\eta}} = \nu^l_{C_\infty}$ & $\tilde s_0|_{C_{\infty,\eta}} = \nu^l_{C_\infty}\delta|^{-1}_{C_\infty}$\\
\bottomrule
\end{tabular}
\end{center}
where $\calO(l-\Delta)$ denotes $\calO(\Delta)^{\otimes-1}(l)=\calO(\Delta)^{\otimes-1}\otimes\calO(l)$.
All four sections in the second and third rows of the table are invertible because $C_{\infty,\eta}\cap \overline{\mathcal Z}=\emptyset$, and $C_{\infty,\eta}\cap \Delta=\emptyset$. Consequently, $t_\infty\big|_{\overline{\mathcal Z}}$, $\delta\big|_{C_{\infty,\eta}}$ and $t_\infty\big|_{\Delta}$ are invertible.

Define $s=(1-\lambda)\tilde s_0\delta+\lambda s_1\in \Gamma(\overline C_{\eta}\times\A^1,\mathcal O(l))$.
Then the data
\[
c = (Z,\psi, g) \in \cFr^0_n(X_{x,\eta}\times\A^1,X_\eta),\quad
Z=Z(s),\, \psi=(s/t_\infty^l, \varphi),
\]
and $Z^-=Z(\tilde s_0)$, $Z^+=Z(s_1)$
satisfy properties (1)-(4). 
\end{proof}

\begin{theorem}\label{cor:InjectivityFrgenfibsmoothloc}
Suppose that the base scheme $B$ is local. Let $\eta\in B$ be a point.
Let $X\in \Sm_B$, $X_\eta=X\times_B \eta$, 
and $X_{x,\eta}=X_x\times_X X_{\eta}$, where $x\in X$ is a point.
Let $Z$ be a closed subscheme of positive codimension in $X_\eta$.
Then there is a framed correspondence $c\in \ZF_n(X_{x,\eta}\times\A^1, X_{\eta})$ such that
\begin{itemize}
\item[(1)] $c\circ i_0=\sigma^n\mathrm{can}$, 
where $\mathrm{can}\colon X_{x,\eta}\to X_{\eta}$ is the canonical morphism
\item[(2)] $c\circ i_1=j\circ \tilde c$ for some $\tilde c\in\ZF_n(X_{x,\eta}, X_{\eta}-Z)$, 
where $j\colon X_{\eta}-Z\to X_{\eta}$ is the canonical open immersion
\end{itemize}
Here, 
$i_j\colon X_{x,\eta}\to X_{x,\eta}\times\A^1$ denotes the $j$-section, $j=0,1$.
\end{theorem}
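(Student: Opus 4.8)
The plan is to deduce the statement from \Cref{th:MoveFropsubschgenfibsmoothloc}, by replacing the compactified correspondence produced there with an ordinary framed correspondence and then making a small correction inside the linear framed correspondence groups $\ZF_n$.

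First I would apply \Cref{th:MoveFropsubschgenfibsmoothloc} to the given $B$, $X$, $x$ and $Z$, obtaining a compactified correspondence $c_1 = (W,\psi,g) \in \cFr^0_n(X_{x,\eta}\times\A^1, X_\eta)$ — I rename its support to $W$ to keep it distinct from the closed subscheme $Z\subset X_\eta$ — together with closed subschemes $\Delta, Z^+, Z^-\subset W$ satisfying conditions (1)--(4) there. Since the support of a $0$-dimensional compactified framed correspondence is finite over the source, \Cref{ex:cFrn0congFrn} lets me view $c_1$ as an element of $\Fr_n(X_{x,\eta}\times\A^1, X_\eta)$, and hence of $\ZF_n(X_{x,\eta}\times\A^1, X_\eta)$ via the canonical map from framed correspondences to the linear framed correspondence groups.

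I would then restrict $c_1$ along the two $\A^1$-sections $i_0,i_1$. By (1) the fibre of $W$ over $0$ is the \emph{disjoint} union $\Delta\amalg Z^-$, while the fibre over $1$ is $Z^+$; the additivity relation defining $\ZF_n$ along disjoint supports therefore gives $c_1\circ i_0 = [c_1|_\Delta] + [c_1|_{Z^-}]$ and $c_1\circ i_1 = [c_1|_{Z^+}]$ in $\ZF_n(X_{x,\eta}, X_\eta)$. Conditions (3) and (4) say that $\Delta$ is a section of $\A^n_{X_{x,\eta}}$ over $X_{x,\eta}$ which the projection identifies with $X_{x,\eta}$ and on which $g$ restricts to $\mathrm{can}$, so $[c_1|_\Delta] = \sigma^n\mathrm{can}$ in the sense of \Cref{def:sigmasuspFr}. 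Condition (2) says $g(Z^\pm)\cap Z=\emptyset$, and since $X_\eta - Z$ is open the restriction of $g$ to a sufficiently small \'etale neighborhood of $Z^\pm$ factors through $j\colon X_\eta - Z\hookrightarrow X_\eta$; hence $[c_1|_{Z^-}] = j\circ a$ and $[c_1|_{Z^+}] = j\circ b$ for suitable $a, b\in\ZF_n(X_{x,\eta}, X_\eta - Z)$.

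Thus $c_1$ is an $\A^1$-homotopy from $\sigma^n\mathrm{can} + j\circ a$ to $j\circ b$, which is slightly stronger than required, and I would cancel the extra summand by subtracting the constant homotopy on it: setting $c := c_1 - (j\circ a)\circ\mathrm{pr}$, where $\mathrm{pr}\colon X_{x,\eta}\times\A^1\to X_{x,\eta}$ is the projection, the correction term $(j\circ a)\circ\mathrm{pr}$ restricts to $j\circ a$ along both $i_0$ and $i_1$, so $c\circ i_0 = \sigma^n\mathrm{can}$ and $c\circ i_1 = j\circ(b-a)$, and $\tilde c := b-a$ completes the proof. The one point that needs care is the identification $[c_1|_\Delta] = \sigma^n\mathrm{can}$: one must check that a level $n$ framed correspondence supported on a section of $\A^n_{X_{x,\eta}}$, cut out by its framing functions and mapping by $\mathrm{can}$, represents the $n$-fold $\sigma$-suspension of the graph of $\mathrm{can}$; this is a routine normalization (translate the section to the zero section, then adjust the framing to the coordinate functions by an automorphism of the henselian neighborhood), and everything with genuine content is already carried out in \Cref{th:MoveFropsubschgenfibsmoothloc} and the extended compactified framed correspondence machinery of \Cref{sect:FocCompCor}.
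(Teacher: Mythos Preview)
Your overall strategy is the same as the paper's, and everything up to the identification of the $\Delta$-piece is fine. The real gap is exactly the point you flag as ``routine normalization'': the equality $[c_1|_\Delta]=\sigma^n\mathrm{can}$ does \emph{not} hold in $\ZF_n$, and your proposed fix does not repair it. In $\Fr_n$ the embedding $\Delta\hookrightarrow\A^n_{X_{x,\eta}}$ and the framing functions $\psi$ are part of the data; two framed correspondences are identified only after refining the \'etale neighborhood, not after translating the support or precomposing the framing with an automorphism of the henselization. Translating $\Delta$ to the zero section is an $\A^1$-homotopy, not an equality, and an ``automorphism of the henselian neighborhood'' that sends $\psi$ to the coordinate functions is not a morphism over $\A^n_{X_{x,\eta}}$, so it does not witness an equivalence of framed correspondences. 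Worse, even in $\overline{\ZF}_n$ the class of $(\Delta,\psi,g)$ is governed by the Jacobian $\tau\in\mathrm{GL}_n(X_{x,\eta})$ of $\psi$ along $\Delta$: without correcting for $\tau$ one gets $[\langle\tau\rangle\circ\mathrm{can}]$ rather than $[\sigma^n\mathrm{can}]$, and these need not agree.

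The paper handles precisely this point: it first observes that it suffices to prove $[\sigma^n\mathrm{can}]=[j\circ\tilde c']$ in $\overline{\ZF}_n$ (from which the exact statement follows by adding a constant homotopy, as you do), and then replaces $\psi$ by $\tau^{-1}(\psi)$ so that the differential at $\Delta$ becomes the identity matrix. Only after this normalization does the standard $\A^1$-homotopy argument give $[d]=[\sigma^n\mathrm{can}]$ in $\overline{\ZF}_n$. So your argument becomes correct once you (i) work in $\overline{\ZF}_n$ rather than $\ZF_n$ for the $\Delta$-identification, and (ii) insert the $\tau^{-1}$-twist of the framing before making that identification; the subtraction of the constant homotopy then finishes as you wrote.
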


\begin{proof}
The claim follows for $X$ over $B$ provided that
it holds for $X\times_B{\overline{\eta}}$ over the closure $\overline{\eta}$ of $\eta$.
Thus, we may assume $B$ is irreducible and $\eta$ is the generic point of $B$.
It suffices to show the equality 
\begin{equation}
\label{eq:sigmancaneqjtildecprime} 
[\sigma^n\mathrm{can}]=[j\circ\tilde c^{\prime}]\in \overline{\ZF}_n(X_{x,\eta},X_\eta)
\end{equation}
for some $\tilde c^{\prime}\in\ZF_n(X_{x,\eta},X_\eta-Z)$.
Let 
\[c^\prime=(Z,\psi, g) \in \Fr_n(X_{x,\eta}\times\A^1,X_\eta)\]
denote the framed correspondence furnished by \Cref{th:MoveFropsubschgenfibsmoothloc} 
through \Cref{ex:cFrn0congFrn}.
Then the closed subscheme $\Delta$ in $\A^n_{X_{x,\eta}}$ from \Cref{th:MoveFropsubschgenfibsmoothloc}
is a clopen subscheme of $Z$ by \Cref{th:MoveFropsubschgenfibsmoothloc}(1).
Therefore, the differential of $\psi$ at $\Delta$ defines an invertible matrix 
$\tau\in\mathrm{GL}_n(\Delta)\cong\mathrm{GL}_n(X_{x,\eta})$. 
We note that the isomorphism $\mathrm{GL}_n(\Delta)\cong\mathrm{GL}_n(X_{x,\eta})$
is induced by the isomorphism
\begin{equation}
\label{eq:prDelta}
\mathrm{pr}\big|_{\Delta}\colon\Delta\to X_{x,\eta}
\end{equation} 
provided by \Cref{th:MoveFropsubschgenfibsmoothloc}(3). 
The element 
    \[\tau^{-1}(\psi)\in\mathcal O_{\A^n_{X_{x,\eta}}}(\A^n_{X_{x,\eta}})^{\oplus n}\]
    comprised of regular functions gives rise to framed correspondences
    \[c^{\prime\prime}:=(Z,\tau^{-1}(\psi), g) \in \Fr_n(X_{x,\eta}\times\A^1,X_\eta) \]
    and 
    \[
    \tilde c^{\prime,-}:=(Z^-, i_0^*(\tau^{-1}(\psi)), i_0^*(g)),
    \tilde c^{\prime,+}:=(Z^+, i_1^*(\tau^{-1}(\psi)), i_1^*(g))
    \in  \Fr_n(X_{x,\eta}\times\A^1,X_\eta-Z) \]
Using these definitions, 
we obtain equalities in $\ZF_n(X_{x,\eta},X_\eta)$, 
\[
c^{\prime\prime}\circ i_0=d+j\circ\tilde c^{\prime,-},
\quad 
c^{\prime\prime}\circ i_1=j\circ\tilde c^{\prime,1}, 
\]
where
    \[d=(\Delta,\tau^{-1}(\psi), g))\in\Fr_n(X_{x,\eta},X_\eta).\]
By setting $\tilde c^{\prime}:=-\tilde c^{\prime,-}+\tilde c^{\prime,+}$ the above implies 
\[
[d]=[j\circ\tilde c^{\prime}]\in \overline{\ZF}_n(X_{x,\eta},X_\eta).
\]
Due to the isomorphism \eqref{eq:prDelta} and since the differential of $\tau^{-1}(\psi)$ 
at the closed subscheme $\Delta$ in $\A^n_{X_{x,\eta}}$ equals the identity matrix in 
$\mathrm{GL}_{n}(X_{x,\eta})$, 
it follows that
    \[[d]=[\sigma^n\mathrm{can}]\in \overline{\ZF}_n(X_{x,\eta},X_\eta).\]
    This implies the equality \eqref{eq:sigmancaneqjtildecprime}.
\end{proof}

\section{The main results on motivic Cousin complexes}\label{sect:CousComplexProof}

In this section, we prove that the canonical morphism \eqref{eq:CoutftoCou} between the Cousin complex and $\tf$-Cousin complex is a quasi-isomorphism and investigate the acyclicity of the Cousin complex above $\dim B$;
see \Cref{th:CoutfsimeqCou,thm:trivcohhdimBCous} and \Cref{cor:CoutfsimeqCou}.
We note that, in general, the Cousin complex over essentially smooth local schemes is \emph{not} acyclic; see \Cref{section:counterex} for a counterexample.
However, we prove the acyclicity of the Cousin complexes of fibers over points in the base scheme and the acyclicity of the columns of the Cousin bicomplex of essentially smooth local schemes; see \Cref{th:FnisExactoffiber,th:LnisFExactonfiber}.

\subsection{Cousin complexes on the fibers}
In this section, we aim to prove the acyclicity of the Cousin complex of fibers over points of the base scheme; see \Cref{th:FnisExactoffiber} below, and deduce the acyclicity of the columns of the Cousin bicomplex, see \Cref{th:LnisFExactonfiber}.
\begin{theorem}\label{th:FnisExactoffiber}
For 
any quasi-stable framed $\calF\in\SHsAN(z)$,
$z\in B$, 
and any essentially smooth local scheme $U\in \EssSm_B$, 
for each $l\in\bbZ$,
the morphism \eqref{eq:FtoCou} induces a canonical quasi-isomorphism
\begin{equation}
\label{eq:F(Uz)Cou(Uz,F)}
\pi_l\calF({U_z})\xrightarrow{\cong} \pi_l\Cou({U_z},\calF)
\end{equation}
in $\Kom(\Ab)$.
\end{theorem}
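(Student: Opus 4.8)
The plan is to prove the equivalent statement that the cone $\pi_l\ovCou(U_z,\calF)$ is acyclic, by exhibiting the Cousin complex as a resolution by flasque sheaves on the small site of $U_z$ and then computing the resulting cohomology. First I would reduce, by continuity (\Cref{ex:TCatisspectra}) and a routine approximation, to the case $U=X_x$ for some $X\in\Sm_B$ and $x\in X$; after replacing $B$ by $\Spec\mathcal O_{B,b}$, where $b\in B$ is the image of the closed point of $U$, one may assume that $B$ is local and that $U_z=X_{x,z}$ is the fibre over $z$ of the local scheme $X_x$. Note that $X_{x,z}$ is essentially smooth over the residue field $k(z)$ but, being only a localisation of a local essentially smooth $k(z)$-scheme, is in general neither local nor semi-local---this is precisely the source of the difficulty. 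Since $\calF$ is a quasi-stable framed $\A^1$-invariant Nisnevich-local presheaf of $S^1$-spectra over $k(z)$, \Cref{citedth:SHI(k)} yields that $\calF=L_\nis\calF$ is strictly $\A^1$-invariant, so that each homotopy sheaf $\underline\pi_l\calF$ is a strictly $\A^1$-invariant, quasi-stable framed Nisnevich sheaf of abelian groups on $\Sm_{k(z)}$.

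By the discussion in \Cref{sect:Cousincomplex}, $\Cou(-,\calF)$ is a presheaf of complexes on the small \'etale site of $U_z$ with Zariski stalk $\Cou((U_z)_{(y)},\calF)$ at $y\in U_z$, and its terms $\bigoplus_{y\in(-)^{(c)}}\hF_y(-)[c]$ are flasque. For each $y$ the local scheme $(U_z)_{(y)}$ is essentially smooth over $k(z)$, so by strict homotopy invariance together with the framed geometric presentation lemma over fields (\cite{hty-inv,surj-etale-exc,nonperfect-SHI}), \eqref{eq:FtoCou} induces a quasi-isomorphism $\pi_l\calF((U_z)_{(y)})\xrightarrow{\ \simeq\ }\pi_l\Cou((U_z)_{(y)},\calF)$; equivalently, $\pi_l\Cou(-,\calF)$ is a flasque resolution of the Nisnevich sheaf $\underline\pi_l\calF$ on the small site of $U_z$. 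Consequently $H^p\bigl(\pi_l\Cou(U_z,\calF)\bigr)\cong H^p_\nis(U_z,\underline\pi_l\calF)$ for every $p$. It therefore suffices to show that $H^p_\nis(U_z,M)=0$ for all $p\ge1$ and every strictly $\A^1$-invariant framed sheaf $M$ over $k(z)$: granting this, the Nisnevich descent spectral sequence for $\calF$ on $U_z$ collapses, giving $\pi_l\calF(U_z)=\underline\pi_l\calF(U_z)=H^0_\nis(U_z,\underline\pi_l\calF)$, and since $\underline\pi_l\calF$ is unramified, the augmentation is an isomorphism onto $H^0$, so $\pi_l\ovCou(U_z,\calF)$ is acyclic.

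To establish the vanishing $H^p_\nis(U_z,M)=0$ for $p\ge1$---which, via the flasque resolution above, amounts to exactness above degree $0$ of the Gersten complex of $M$ on $X_{x,z}$---I would use \Cref{cor:InjectivityFrgenfibsmoothloc}, applied with its point \enquote{$\eta$} taken to be $z$. Given a positive-codimension closed subscheme $Z\subseteq X_z$, that theorem furnishes $c\in\ZF_n(X_{x,z}\times\A^1,X_z)$ with $c\circ i_0=\sigma^n\mathrm{can}$ and $c\circ i_1=j\circ\tilde c$ for some $\tilde c\in\ZF_n(X_{x,z},X_z-Z)$. Pulling back along $c$ (using that $M$, being a framed sheaf, carries transfers along linear framed correspondences) and using the strict $\A^1$-invariance of $M$---whence $i_0^*=i_1^*$ on $M(X_{x,z}\times\A^1)$, and $\sigma^n\mathrm{can}$ acts as $\mathrm{can}$ because $M$ is quasi-stable---one finds that the restriction $M(X_z)\to M(X_{x,z})$ factors through $M(X_z-Z)$ for every such $Z$. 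Running this moving argument over the coniveau filtration, as is done for framed presheaves in \cite{hty-inv} over fields and in \cite{nonperfect-SHI,DKO:SHISpecZ} over discrete valuation rings, then produces the contracting homotopy that kills $H^p_\nis(X_{x,z},M)$ for $p\ge1$, completing the proof.

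The main obstacle is this last step. Because $X_{x,z}$ is merely a localisation of a local scheme and can carry infinitely many points of any given codimension, the classical Gersten resolution---available for local and semi-local essentially smooth schemes over a field---does not by itself force the vanishing of $H^p_\nis(X_{x,z},M)$ in the range $1\le p\le\dim X_{x,z}$. What is needed is a moving lemma on the fibre $X_{x,z}$ that exploits the ambient local scheme $X_x$, and this is exactly the role of the extended compactified framed correspondences of \Cref{sect:FocCompCor}: they remember $X_x$ together with the closed fibre of a suitable compactification, which is what allows the inductive constructions feeding into \Cref{cor:InjectivityFrgenfibsmoothloc} to run over an arbitrary base $B$.
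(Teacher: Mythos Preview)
Your proposal is correct and rests on the same key input as the paper, namely \Cref{cor:InjectivityFrgenfibsmoothloc} applied with the point ``$\eta$'' taken to be $z$, but it is organized in an unnecessarily roundabout way. You set up a two-stage argument: (A) identify the cohomology of $\pi_l\Cou(U_z,\calF)$ with $H^*_\nis(U_z,\underline\pi_l\calF)$ via a flasque resolution, using the classical Gersten exactness at local points of $U_z$ over $k(z)$; and (B) prove $H^p_\nis(U_z,M)=0$ for $p\ge1$. But in (B) you yourself observe that this ``amounts to exactness above degree $0$ of the Gersten complex of $M$ on $X_{x,z}$'', which is exactly the original claim---so the detour through Nisnevich cohomology buys nothing, and your ``main obstacle'' is in fact the entire proof.

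The paper's argument is simply your final paragraph carried out directly and in detail. One writes $U=X_{(x)}$, and for each closed $Y\subset X_z$ of codimension $c>0$ uses the framed correspondence $r\in\Fr_n(U_z\times\A^1,X_z)$ supplied by \Cref{cor:InjectivityFrgenfibsmoothloc} to show that the extension-of-support morphisms
\[
\varinjlim_{\codim Y=c}\calF_{Y}(X_z)\ \longrightarrow\ \varinjlim_{\codim Y'=c-1}\calF_{Y'\times_{X_z}U}(U_z)
\]
are trivial on homotopy groups (here one defines $Y'$ as the image of the support of $r$ restricted to $g^{-1}(Y)$, checks $\codim Y'\le\codim Y-1$, and uses $\A^1$-invariance of $\calF$ with support via \Cref{citedth:SHI(k)} together with quasi-stability). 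It follows that the long exact sequences associated to the coniveau triangles split into short exact sequences, so the Cousin complex is a resolution of $\pi_l\calF(U_z)$. No passage through Nisnevich cohomology or the flasque-resolution formalism is needed.
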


\begin{proof}
Let $U=X_{(x)}$ for $X\in \Sm_B$.
For a closed subscheme $Y\not\hookrightarrow X_z$, consider the spectra
$\calF_{Y}(X_z)$ and $\calF_{Y\times_{X_z}U}(U_z)$, and for immersions $Y\to Y^\prime$ of closed subschemes the morphisms
\begin{equation}\label{eq:extsupYYprime}\calF_{Y}(X_z)\to \calF_{Y^\prime\times_{X_z}U}(U_z).\end{equation}
We will show that the morphism
\begin{equation}\label{eq:extsupcodimccodimc-1}
\varinjlim_{\codim Y=c}\calF_{Y}(X_z)\to \varinjlim_{\codim Y^\prime=c-1}\calF_{Y^\prime\times_{X_z} U}(U_z)
\end{equation}
induces trivial
maps on homotopy groups.
Given $Y$ with $\codim Y=c>0$, then by \Cref{cor:InjectivityFrgenfibsmoothloc} there is a framed correspondence 
$r\in \Fr_n(U_z\times\A^1, X_z)$ such that 
\[
r_0= \sigma^n\mathrm{can},\quad r_1=e\circ r^\prime,
\]
where 
\[
\mathrm{can}\in \Fr_0(U_z, X_z), r^\prime\in \Fr_n(U_z, X_z-Y).
\]
We now aim to define a closed subscheme $Y^\prime\not\hookrightarrow X_z$ as the ``preimage'' of $Y\times_{X_z} U_z$ 
along the homotopy $r$.
To this end, write the framed correspondence $r$ as a span
\[
U_z\times\A^1\xleftarrow{f} Z\xrightarrow{g} X_z.
\] 
Let $W^\prime\subseteq U_z$ be the Zariski closure of the image of 
\[
f(g^{-1}(Y\times_{X_z} U_z))\subset U_z\times\A^1
\] 
along the projection $U_z\times\A^1\to U_z$. 
We define $Y^\prime$ as the minimal closed subscheme in $X_z$ such that $Y^\prime\times_{X_z} U_z=W^\prime$. 

Now, using that the morphism $f$ is finite, we have 
\[
\dim Y^\prime=\dim W^\prime =
\dim f(g^{-1}(Y\times_{X_z} U_z))= 
\dim g^{-1}(Y\times_{X_z} U_z),
\]
and hence
\begin{align*}
\codim_{X_z} Y^\prime &=
(\dim X_z-\dim(U_z\times\A^1)) + \codim_{U_z\times\A^1} f(g^{-1}(Y\times_{X_z} U_z))\\ 
&=-1 + \codim_{U_z\times\A^1} f(g^{-1}(Y\times_{X_z} U_z))\\
&=-1 + \codim_{Z} g^{-1}(Y\times_{X_z} U_z)\\
&=-1 + \codim_{U_z}(Y\times_{X_z} U_z)\leq \codim_{X_z} Y - 1.
\end{align*}
To show that \eqref{eq:extsupcodimccodimc-1} is trivial, it is enough to show that, 
for each $Y$ as above, 
the morphism \eqref{eq:extsupYYprime} is trivial.
Since $r$ induces a morphism
\[
\calF_{Y}(X_z)\xrightarrow{r^*} \calF_{(Y^\prime\times_{X_z}U)\times\A^1}(U_z\times\A^1),
\]
by $\A^1$-homotopy invariance of $\calF$ on $\Sm_z$ (see \Cref{citedth:SHI(k)}) we see that the morphism \eqref{eq:extsupYYprime} is equivalent to 
the morphism
\begin{equation}\label{eq:invimager1}
\calF_{Y}(X_z)\xrightarrow{r_1^*} \calF_{Y^\prime\times_{X_z}U}(U_z)
\end{equation}
induced by $r_1$.
Since $r_1=j\circ r^\prime$, the morphism \eqref{eq:invimager1} coincides with the composite 
\begin{equation*}
\calF_{Y}(X_z) \xrightarrow{j^*} \calF_{Y}(X_z-Y)\xrightarrow{(r^\prime)^*} \calF_{Y^\prime\times_{X_z}U}(U_z)
\end{equation*}
which is trivial since the first morphism is trivial.
Thus, \eqref{eq:extsupcodimccodimc-1} induces
trivial morphisms on homotopy groups,
and consequently the morphisms 
\begin{equation}\label{eq:extsupcodimccodimc-1U}
\varinjlim_{\codim Y=c+1, Y\subset U_z}\calF_{Y}(U_z)\to 
\varinjlim_{\codim Y^\prime=c, Y^\prime\subset U_z}\calF_{Y^\prime}(U_z)\end{equation}
for all $c=0,\dots ,\dim U_z-1$,
are trivial on homotopy groups.

Now we see that the differentials of the complex $\Cou({U_z},\calF)$ 
equal the composite morphisms
\[{\small \xymatrix{
\underset{y\in U_z^{(c-1)}}{\bigoplus}\calF_{y}(U_z)\ar[rr]\ar[rd]^{[1]} &&
\underset{y\in U_z^{(c)}}{\bigoplus}\calF_{y}(U_z)\ar[rr]\ar[rd]^{[1]} &&
\underset{y\in U_z^{(c+1)}}{\bigoplus}\calF_{y}(U_z)
\\
&
\underset{\codim_{U_z} Y^\prime=c}{\varinjlim}\calF_{Y^\prime}(U_z)\ar[ru] &&
\underset{\codim_{U_z} Y=c+1}{\varinjlim}\calF_{Y}(U_z)\ar[ru]\ar[ll]
&
}}\]

The horizontal arrow in the second row equals \eqref{eq:extsupcodimccodimc-1U} and induces trivial maps on homotopy groups.
The long exact sequences of homotopy groups associated with the triangles in $\SHtop$ 
\[
\underset{\codim_{U_z} Y=c+1}{\varinjlim}\calF_{Y}(U_z)
\to \underset{y\in U_z^{(c)}}{\bigoplus}\calF_{y}(U_z) \to 
\underset{\codim_{U_z} Y^\prime=c}{\varinjlim}\calF_{Y^\prime}(U_z)[1],\]
for $\codim_{U_z} Y=c+1$, $\codim_{U_z} Y^\prime= c$, split into short exact sequences in $\Ab$
\[
\underset{\codim_{U_z} Y=c+1}{\varinjlim}\pi_l\calF_{Y}(U_z)
\to \underset{y\in U_z^{(c)}}{\bigoplus}\pi_l\calF_{y}(U_z) \to 
\underset{\codim_{U_z} Y^\prime=c}{\varinjlim}\pi_l\calF_{Y^\prime}(U_z)[1].\]
Hence the sequence in $\Ab$
\[
\dots\to
\underset{y\in U_z^{(c+1)}}{\bigoplus}\pi_l\calF_{y}(U_z)\to 
\underset{y\in U_z^{(c)}}{\bigoplus}\pi_l\calF_{y}(U_z)\to
\underset{y\in U_z^{(c-1)}}{\bigoplus}\pi_l\calF_{y}(U_z)\to
\cdots
\]
is a long exact sequence.
Thus,
the morphism of complexes of abelian groups
\[
\pi_l\calF({U_z})\to \pi_l\Cou({U_z},\calF)
\]
is a quasi-isomorphism.
\end{proof}

\begin{lemma}\label{lm:InjectivitSpectrapresheaves}
Let $\calF\in\SHsA(z)$ be quasi-stable framed, $z\in B$,
and $U\in \EssSm_B$ be an essentially smooth local scheme.
If $\calF(\eta)\simeq 0$, 
where $\eta\in U_z$ is the generic point, 
then $\calF({U_z})\simeq 0$.
\end{lemma}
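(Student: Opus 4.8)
The plan is to deduce the lemma from the injectivity theorem \Cref{cor:InjectivityFrgenfibsmoothloc}, applied over the residue field $\kappa(z)$ as base scheme, together with the $\A^1$-invariance of $\calF$. Since $\calF(\eta)\calsimeq 0$, it is enough to show that the restriction map $\pi_l\calF(U_z)\to\pi_l\calF(\eta)$ is injective for every $l\in\bbZ$, as this forces all homotopy groups of $\calF(U_z)$ to vanish. First I would reduce to the case $U_z=X_{(x)}$, the local scheme of a finite type $X\in\Sm_{\kappa(z)}$ at a point $x$ (note that $U_z$, being essentially smooth and local over a field, is a regular local scheme, hence integral, with unique generic point $\eta$); the general essentially smooth local case follows from this one by continuity of $\calF$. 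So assume $U_z=X_{(x)}$ with $\eta$ the generic point of $X$.

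Next, given a class $a\in\pi_l\calF(U_z)$ with vanishing image in $\pi_l\calF(\eta)$, I would use continuity of $\calF$ to write, after shrinking $X$ to an open neighbourhood of $x$, $a=\mathrm{can}^*(a')$ for some $a'\in\pi_l\calF(X)$, where $\mathrm{can}\colon U_z\to X$ is the canonical morphism. Shrinking $X$ once more, the vanishing of $a|_\eta$ yields a closed subscheme $Z\subsetneq X$ of positive codimension (the case $Z=\emptyset$ being trivial) with $j^*(a')=0$ for the open immersion $j\colon X-Z\hookrightarrow X$. I would then apply \Cref{cor:InjectivityFrgenfibsmoothloc} with base $B=\Spec\kappa(z)$ — so that, in the notation there, $X_\eta=X$ and $X_{x,\eta}=U_z$ — to produce a linear framed correspondence $c\in\ZF_n(U_z\times\A^1,X)$ with $c\circ i_0=\sigma^n\mathrm{can}$ and $c\circ i_1=j\circ\tilde c$ for some $\tilde c\in\ZF_n(U_z,X-Z)$, where $i_0,i_1\colon U_z\to U_z\times\A^1$ are the two sections.

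To finish, I would apply $c^*$ to $a'$: as $\calF$ is a quasi-stable framed presheaf of $S^1$-spectra, linear framed correspondences act on $\pi_l\calF$, and as $\calF$ is $\A^1$-invariant the two sections induce the same homomorphism $i_0^*=i_1^*\colon\pi_l\calF(U_z\times\A^1)\to\pi_l\calF(U_z)$ (both inverse to the projection isomorphism). Comparing the images of $c^*(a')$ under $i_0$ and $i_1$ gives $(\sigma^n\mathrm{can})^*(a')=(j\circ\tilde c)^*(a')=\tilde c^*\bigl(j^*(a')\bigr)=0$ in $\pi_l\calF(U_z)$, and quasi-stability of $\calF$ allows one to replace $\sigma^n\mathrm{can}$ by $\mathrm{can}$ here, so that $a=\mathrm{can}^*(a')=0$; this yields the required injectivity and hence $\calF(U_z)\calsimeq 0$. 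The entire geometric input — the construction of the $\A^1$-homotopy $c$ retracting $U_z\times\A^1$ off of $Z$ — is already contained in \Cref{cor:InjectivityFrgenfibsmoothloc} and its preparation through the extended compactified framed correspondences of \Cref{sect:FocCompCor}, so the hard part is done; the only remaining point that needs care is the formal bookkeeping of the $\ZF$-action and the $\sigma$-suspensions used to pass from $(\sigma^n\mathrm{can})^*(a')=0$ to $\mathrm{can}^*(a')=0$, which should be routine given the quasi-stability hypothesis.
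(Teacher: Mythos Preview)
Your proposal is correct and follows essentially the same approach as the paper: both reduce to showing that $\pi_l\calF(U_z)\to\pi_l\calF(\eta)$ is injective by invoking \Cref{cor:InjectivityFrgenfibsmoothloc} (over the field $\kappa(z)$) to produce the framed $\A^1$-homotopy, and then use $\A^1$-invariance and quasi-stability of $\calF$ to conclude. The paper's proof is considerably more terse---it simply asserts the injectivity after citing \Cref{cor:InjectivityFrgenfibsmoothloc}---whereas you have spelled out the continuity reduction and the bookkeeping with $\sigma$-suspensions that the paper leaves implicit.
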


\begin{proof}
By \Cref{cor:InjectivityFrgenfibsmoothloc}, 
there exists a framed correspondence $r\in \Fr_n(U_z\times\A^1, X_z)$ such that 
\[
r_0= \sigma^n\mathrm{can}, r_1=e\circ r^\prime.
\]
Here 
\[
\mathrm{can}\in \Fr_0(U_z, X_z), r^\prime\in \Fr_n(U_z, X_z-Y)
\]
is given by the canonical morphism of schemes $U_z\to X_z$.
Since, 
for all $i\in \mathbb Z$,
the presheaf $\pi_i(\calF)$ is a framed $\A^1$-invariant presheaf because the same holds for $\calF$, 
the map of abelian groups
\[
\pi_i \calF(U_z)\to \pi_i \calF(\eta)
\]
is injective.
Since $\pi_i\calF(\eta)=0$ by assumption, the group $\pi_i\calF(U_z)$ vanishes.
\end{proof}

\begin{lemma}\label{lm:F(Uz)Fnis(Uz)}
For a quasi-stable framed $\calF\in\SHsA(z)$, $z\in B$, 
and any essentially smooth local scheme $U\in \EssSm_B$,
there is a canonical equivalence
\begin{equation}\label{eq:F(Uz)Fnis(Uz)}
\calF({U_z})\xrightarrow{\simeq} L_\nis\calF({U_z}).
\end{equation}
\end{lemma}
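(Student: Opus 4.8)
The plan is to pass to the fibre of the unit map and reduce to an application of the injectivity lemma \Cref{lm:InjectivitSpectrapresheaves}. Write $U=X_{(x)}$ for some $X\in\Sm_B$ and $x\in X$, so that $U_z=X_{(x)}\times_B z$ is an essentially smooth (a priori semi-)local scheme over $z$, and set $\mathcal{A}:=\fib(\calF\to L_\nis\calF)$, regarded as a presheaf of $S^1$-spectra on $\Sm_z$ extended to $\EssSm_z$ by continuity. It then suffices to show $\mathcal{A}(U_z)\simeq 0$, since this exhibits the canonical map $\calF(U_z)\to L_\nis\calF(U_z)$ as an equivalence.

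First I would verify that $\mathcal{A}$ satisfies the hypotheses of \Cref{lm:InjectivitSpectrapresheaves}, i.e.\ that it is a quasi-stable framed object of $\SHsA(z)$. The single nonformal ingredient is that $L_\nis\calF$ is again $\A^1$-invariant: this is the strict homotopy invariance theorem over the field $k(z)$, namely part (2) of \Cref{citedth:SHI(k)}, applied after regarding the quasi-stable framed object $\calF$ as an $\A^1$-invariant object of $\SHfrs(z)$. The framed transfers and quasi-stability of $L_\nis\calF$ are inherited from $\calF$, because $L_\nis$ is a localization of framed presheaves and $\sigma$-suspensions act naturally (indeed, in the $\Corr^\fr$-model of \Cref{subsect:Corrtfr} quasi-stability is automatic). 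Consequently $\mathcal{A}$, being the fibre of a map of $\A^1$-invariant framed presheaves of $S^1$-spectra, is again one — $\A^1$-invariance is a limit condition and evaluation preserves fibre sequences — hence a quasi-stable framed object of $\SHsA(z)$.

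Next I would evaluate $\mathcal{A}$ on the generic point $\eta$ of $U_z$. Its residue field is essentially smooth over $z$, and a field admits no nontrivial Nisnevich covers, so the unit $\calF(\eta)\to L_\nis\calF(\eta)$ is an equivalence — the same elementary observation already used in \Cref{def:calFTCatB:Cou} — whence $\mathcal{A}(\eta)\simeq 0$. Applying \Cref{lm:InjectivitSpectrapresheaves} with $\mathcal{A}$ in place of $\calF$ then gives $\mathcal{A}(U_z)\simeq 0$, as desired.

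The content of the lemma is carried entirely by the two cited inputs — framed strict homotopy invariance over $k(z)$ and the injectivity lemma \Cref{lm:InjectivitSpectrapresheaves} (which itself rests on \Cref{cor:InjectivityFrgenfibsmoothloc}) — so there is no genuinely hard new step. The only points needing care are bookkeeping: checking that the adjectives ``$\A^1$-invariant'', ``framed'', and ``quasi-stable'' survive both $L_\nis$ and the passage to the fibre $\mathcal{A}$, so that \Cref{lm:InjectivitSpectrapresheaves} really applies; and, if one does not wish to assume $U_z$ irreducible, running the generic-point argument at each generic point of $U_z$ using the component-wise form of the injectivity statement.
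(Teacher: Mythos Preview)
Your proposal is correct and follows essentially the same route as the paper: take the (co)fibre of the unit map $\calF\to L_\nis\calF$, use strict homotopy invariance over the field $k(z)$ (\Cref{citedth:SHI(k)}) to see it is again an $\A^1$-invariant quasi-stable framed object, and then apply \Cref{lm:InjectivitSpectrapresheaves} after observing that its value on the generic point vanishes. The only cosmetic difference is that the paper uses the cofibre rather than the fibre, which is immaterial in the stable setting; your write-up is in fact more careful about verifying the hypotheses than the paper's terse argument.
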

\begin{proof}
The $\A^1$-invariant presheaf $C=\cofib(\calF\to L_\nis\calF)$ satisfies the assumptions in 
\Cref{lm:InjectivitSpectrapresheaves}, 
because $C(U_z)\simeq0$, and because of \Cref{citedth:SHI(k)}.
\end{proof}

\begin{remark}\label{rem:lm:F(Uz)Fnis(Uz)}
\Cref{lm:F(Uz)Fnis(Uz)} equivalently claims
the equivalence
\begin{equation*}\label{eq:rem:calFXhczsimeqLniscalFXhxz}\calF((X^h_x)_z)\to L_\nis \calF((X_{(x)})_z).\end{equation*}
for any $X\in \Sm_B$, $z\in B$, $\calF\in \SH^{\fr,s}_{\A^1}(z)$, or equivalently,
the isomorphisms
\[
H^n_{\nis}((X^h_x)_z,\calF)\cong 
\begin{cases} \calF_\nis((X_{(x)})_z), & n=0\\
0,& n>0.
\end{cases}\]
for any $X\in \Sm_B$, $z\in B$, and $\calF\in \Pre^{\Ab}_{\A^1}(\Corr^\fr(z))$, where $\Pre^{\Ab}_{\A^1}(\Corr^\fr(z))$ is the $\infty$-category of $\A^1$-invariant presheaves of abelian groups on $\Corr^\fr(z)$.
\end{remark}

\begin{theorem}
\label{th:LnisFExactonfiber}
For a quasi-stable framed $\calF\in\SHsA(z)$, $z\in B$, 
and any essentially smooth local scheme $U\in \EssSm_B$,
for each $l\in\bbZ$,
there is a canonical quasi-isomorphism
\begin{equation}
\pi_l\calF({U_z})
\xrightarrow{\simeq}
\pi_l\Cou({U_z},L_\nis\calF).
\end{equation}
\end{theorem}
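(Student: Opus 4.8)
The plan is to deduce this from \Cref{th:FnisExactoffiber} applied to the Nisnevich localization $L_\nis\calF$ in place of $\calF$, combined with the comparison of sections over $U_z$ furnished by \Cref{lm:F(Uz)Fnis(Uz)}. First I would verify that $L_\nis\calF$ still belongs to the class of presheaves to which \Cref{th:FnisExactoffiber} applies, namely that it is a quasi-stable framed object of $\SHsAN(z)$: it is Nisnevich local by construction; it is $\A^1$-invariant by \Cref{citedth:SHI(k)}(2), since $\calF\in\SHsA(z)$ is an $\A^1$-invariant framed $S^1$-spectrum; and it remains quasi-stable because the $\sigma$-suspension morphism $\sigma_X^*$ is already an isomorphism at the level of presheaves, and hence stays an isomorphism after Nisnevich sheafification.

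Granting this, \Cref{th:FnisExactoffiber} supplies, for every $l\in\bbZ$, a quasi-isomorphism
\[
\pi_l(L_\nis\calF)(U_z)\xrightarrow{\cong}\pi_l\Cou(U_z,L_\nis\calF)
\]
in $\Kom(\Ab)$, induced by the morphism \eqref{eq:FtoCou} for the Nisnevich local presheaf $L_\nis\calF$, with the source regarded as a complex concentrated in degree zero. It then remains to identify this source with $\pi_l\calF(U_z)$, and this is exactly what \Cref{lm:F(Uz)Fnis(Uz)} gives: it provides a canonical equivalence $\calF(U_z)\xrightarrow{\simeq}L_\nis\calF(U_z)$ in $\SHtop$, hence canonical isomorphisms $\pi_l\calF(U_z)\cong\pi_l(L_\nis\calF)(U_z)$ compatible with the morphisms \eqref{eq:FtoCou} and \eqref{eq:FtoCouLnis} to the Cousin complex. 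Composing the two, one obtains the asserted canonical quasi-isomorphism $\pi_l\calF(U_z)\xrightarrow{\simeq}\pi_l\Cou(U_z,L_\nis\calF)$, and one should check that under this composite the comparison map is precisely the one induced by \eqref{eq:FtoCouLnis}.

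I do not expect a genuine obstacle here. The substantive input — the vanishing on homotopy groups of the differentials after passing to the colimit over supports, which rests on the movement result \Cref{cor:InjectivityFrgenfibsmoothloc} together with strict homotopy invariance \Cref{citedth:SHI(k)} over the residue field $k(z)$ — is already packaged inside \Cref{th:FnisExactoffiber}. The only care needed is the bookkeeping about which localization the ambient framed presheaf lives in (passing from $\SHsA(z)$ to $\SHsAN(z)$), and the routine fact that quasi-stability is preserved by Nisnevich sheafification; so the proof is essentially a formal concatenation of \Cref{th:FnisExactoffiber} and \Cref{lm:F(Uz)Fnis(Uz)}.
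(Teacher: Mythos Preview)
Your proposal is correct and follows essentially the same approach as the paper's own proof: apply \Cref{th:FnisExactoffiber} to $L_\nis\calF$ after invoking \Cref{citedth:SHI(k)} to ensure $\A^1$-invariance, and then use \Cref{lm:F(Uz)Fnis(Uz)} to pass from $L_\nis\calF(U_z)$ back to $\calF(U_z)$. You supply a bit more detail than the paper (checking quasi-stability survives Nisnevich localization and that the comparison maps match up), but the structure is identical.
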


\begin{proof}
By
strict homotopy invariance over $z$ (\Cref{citedth:SHI(k)}), 
the sheaf $L_\nis\calF$ is $\A^1$-invariant.
Therefore, 
\Cref{th:FnisExactoffiber} yields an quasi-isomorphism $\pi_l L_\nis\calF(U_z)\simeq \pi_l\Cou(U_z,L_\nis\calF)$, 
and our claim follows by \Cref{lm:F(Uz)Fnis(Uz)}.
\end{proof}

\subsection{Cousin complexes with support}
Suppose that $i\colon Z\not\hookrightarrow B$ is a closed immersion.
We let $\Sm_{B,Z}$ (resp.~$\SmAff_{B,Z}$)
denote the category whose objects are schemes of the form $X^h_Z$ for $X\in\Sm_B$ (resp.~$X\in\SmAff_B$), 
see \cite[§4]{DKO:SHISpecZ},
Moreover, 
see \cite[§5]{DKO:SHISpecZ} for the category $\Smat_{B,Z}$ spanned by schemes with trivial tangent bundles 
in $\SmAff_{B,Z}$.
We have the following adjunctions:
\begin{align}
\label{eq:BZ-adj1}
i^*_{B,Z}\colon \SHsA(\mathcal{S}_{B,Z})&\rightleftarrows \SHsA(\mathcal{S}_B):i^{B,Z}_*\\
\tilde i_* \colon \SHsA(\mathcal{S}_{B,Z})&\rightleftarrows \SHsA(\mathcal{S}_B):\tilde i^! 
\label{eq:BZ-adj2}
\end{align}
that restricts to the subcategories of quasi-stable framed objects.
Here $\mathcal{S}_{B,Z}$ is $\Sm_{B,Z}$, $\SmAff_{B,Z}$, or $\Smat_{B,Z}$, 
and similarly for $\mathcal{S}_{B}$.
We refer to \cite[§6,§9]{DKO:SHISpecZ} for more details on these adjunctions.

\begin{lemma}\label{lm:arrowiustarpreserveA1invariance}
The functor $i_{B,z}^*\colon \SHs(\Sm^\mathrm{cci}_{B,z})\to \SHs(\Sm^\mathrm{cci}_{z})$ preserves $\A^1$-invariant quasi-stable framed objects.
\end{lemma}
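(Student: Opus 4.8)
The plan is to identify $i^*_{B,z}$ with a left Kan extension along the closed-fiber functor and then to isolate the one point that is not formal, namely that the left Kan extension of an $\A^1$-invariant presheaf is again $\A^1$-invariant, not merely $\A^1$-locally equivalent to one.

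First I would recall from \cite{DKO:SHISpecZ} (the construction underlying \eqref{eq:BZ-adj1}, with $z$ in place of a general closed subscheme) that $i^*_{B,z}$ is the left adjoint of the functor $i^{B,z}_*$ obtained by precomposition with the closed-fiber functor
\[
\rho\colon \Sm^\mathrm{cci}_{B,z}\longrightarrow \Sm^\mathrm{cci}_z,\qquad X^h_{X_z}\mapsto X_z=X\times_B z;
\]
equivalently, on presheaves of $S^1$-spectra $i^*_{B,z}$ is the left Kan extension $\rho_!$, so that it preserves colimits and $i^*_{B,z}(X^h_{X_z})\simeq X_z$ on representables. The framed and quasi-stable conditions are preserved by $i^*_{B,z}$ because $\rho$ is compatible with framed correspondences and with $\sigma$-suspension (the same argument as for the adjunctions \eqref{eq:BZ-adj1}--\eqref{eq:BZ-adj2}). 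Moreover $\rho$ commutes with $-\times\A^1$, so $i^*_{B,z}$ sends the generating $\A^1$-local equivalences $(X\times\A^1)_+\to X_+$ to $\A^1$-local equivalences, and, being colimit-preserving, it sends all $\A^1$-local equivalences to $\A^1$-local equivalences. Hence the only genuine issue is that $i^*_{B,z}\mathcal F$ is honestly $\A^1$-local when $\mathcal F$ is.

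For that I would argue that $\rho$ induces an equivalence on the homotopy categories of $\A^1$-invariant quasi-stable framed presheaves of $S^1$-spectra, so that through this equivalence $i^*_{B,z}$ becomes the identity. The functor $\rho$ is essentially surjective: a smooth affine $z$-scheme with trivial tangent bundle lifts to one over $B$ by lifting a regular sequence cutting it out of affine space and henselizing along the closed fiber. It is full, because a morphism of closed fibers lifts along the henselian pair $(X^h_{X_z},X_z)$ by formal smoothness of the target over $B$. It is not faithful on the nose, but two lifts of a given morphism of closed fibers are joined by a framed $\A^1$-homotopy over $B$ --- again by the henselian-pair lifting property, applied to $X^h_{X_z}\times\A^1$ henselized along $X_z\times\A^1$ --- so they coincide after passing to $\A^1$-invariant quasi-stable framed presheaves. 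Feeding essential surjectivity, fullness and this ``faithfulness up to framed $\A^1$-homotopy'' into the left Kan extension formula $(i^*_{B,z}\mathcal F)(Y)=\operatorname*{colim}_{\rho\downarrow Y}\mathcal F$, together with the correspondence-moving results of \Cref{sect:FocCompCor} (in particular \Cref{cor:InjectivityFrgenfibsmoothloc}) and strict homotopy invariance over the field $z$ (\Cref{citedth:SHI(k)}), one shows that the comma category $\rho\downarrow Y$ is cofinally controlled by a chosen lift $\widehat Y$ of $Y$, so the colimit collapses to $\mathcal F(\widehat Y)$; in particular $(i^*_{B,z}\mathcal F)(-)$ inherits $\A^1$-invariance from $\mathcal F$.

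The main obstacle is exactly this last step. A left adjoint between presheaf categories does not preserve local objects in general, so the left Kan extension colimit must be analyzed directly, and the delicate input is the uniqueness-up-to-framed-$\A^1$-homotopy of lifts of morphisms of closed fibers, together with making it interact correctly with the framed transfer structure so that the homotopy colimit --- not merely the naive colimit --- collapses. This is where strict homotopy invariance over $z$ and the machinery of \Cref{sect:FocCompCor} carry the argument.
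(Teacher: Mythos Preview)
Your approach is substantially more complicated than the paper's, and the complication is not merely stylistic: it rests on inputs that are irrelevant to the statement and on a step that is not justified.

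The paper's proof is essentially one line. It observes that $\Sm^{\mathrm{cci}}_z$ is a subcategory of $\Sm^{\mathrm{cci}}_{B,z}$ (a $z$-scheme is its own henselization along its $z$-fiber), and that, via \cite[Lemma~6.9]{DKO:SHISpecZ}, the functor $i^*_{B,z}$ is the restriction along this inclusion. Restriction of presheaves along a subcategory inclusion trivially preserves $\A^1$-invariance, since the projection $Y\times\A^1\to Y$ for $Y\in\Sm^{\mathrm{cci}}_z$ already lives in the subcategory. No Kan extension computation, no cofinality, no moving lemmas are needed.

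Your identification of $i^*_{B,z}$ as the left Kan extension $\rho_!$ along the closed-fiber functor $\rho$ may match the original definition in \cite{DKO:SHISpecZ}, but you then miss the crucial simplification that this functor agrees with restriction along the inclusion $\Sm^{\mathrm{cci}}_z\hookrightarrow\Sm^{\mathrm{cci}}_{B,z}$. Instead you try to analyze the comma categories $\rho\downarrow Y$ directly. Two problems arise. First, you import the framed-correspondence moving results of \Cref{sect:FocCompCor} (in particular \Cref{cor:InjectivityFrgenfibsmoothloc}) and strict homotopy invariance over fields (\Cref{citedth:SHI(k)}); these concern moving supports of framed correspondences off closed subsets of smooth schemes and have nothing to do with the combinatorics of $\rho\downarrow Y$. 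Their appearance here is a red flag. Second, the assertion that ``$\rho\downarrow Y$ is cofinally controlled by a chosen lift $\widehat Y$'' is exactly the hard point and is not argued: uniqueness of lifts \emph{up to framed $\A^1$-homotopy} does not by itself give the $\infty$-categorical contractibility needed for a cofinality statement, and to leverage $\A^1$-homotopies you would already need $\calF$ to be $\A^1$-invariant after left Kan extension, which is what you are trying to prove.

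In short, the lemma is meant to be almost formal once one knows the right description of $i^*_{B,z}$; your proposal replaces a one-line observation by a delicate and unfinished argument that also pulls in unrelated machinery.
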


\begin{proof}
The claim follows by \cite[Lemma 6.9]{DKO:SHISpecZ}, because $\Sm^\mathrm{cci}_{z}$
is a subcategory of $\Sm^\mathrm{cci}_{B*z}$ and the restriction functor preserves $\A^1$-invariant objects.
\end{proof}

\begin{lemma} 
\label{lm:ovis}
The restriction of the functor $i^*_{B,z}\colon \SHsA(\Smat_{B,z})\to \SHsA(\Smat_z)$ 
to the subcategories of 
quasi-stable framed objects
is an equivalence and
commutes with Nisnevich localization.
\end{lemma}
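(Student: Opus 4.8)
The plan is to produce an explicit quasi-inverse to $i^*_{B,z}$ and to track Nisnevich descent through it. Recall from \cite[\S5,\S6]{DKO:SHISpecZ} that $i^*_{B,z}$ is restriction of presheaves along a functor $\iota\colon\Smat_z\to\Smat_{B,z}$: it lifts an object $Y\in\Smat_z$, presented as a closed subscheme $Z(\overline f_1,\dots,\overline f_c)\subset\A^N_z$ with trivial conormal bundle, to $\widetilde Y=Z(f_1,\dots,f_c)\subset\A^N_B$ for a chosen lift of the defining sequence, and then henselizes $\widetilde Y$ along its $z$-fiber; this is functorial after the identifications made in loc.\ cit. Base change to the closed fiber gives a functor $\pi\colon\Smat_{B,z}\to\Smat_z$, $X\mapsto X_z=X\times_B z$, and $\pi\circ\iota$ is canonically the identity, so writing $\pi^*$ for restriction along $\pi$ we get $i^*_{B,z}\circ\pi^*\simeq\id$ already at the level of presheaves. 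It therefore remains to prove that on the subcategory of $\A^1$-invariant quasi-stable framed objects the other composite $\pi^*\circ i^*_{B,z}$ is the identity, and that $i^*_{B,z}$ commutes with $L_\nis$ there.

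Fix a quasi-stable framed $\calF\in\SHsA(\Smat_{B,z})$. By \Cref{lm:arrowiustarpreserveA1invariance}, and by the analogous, easier statement for $\pi^*$ (restriction along a functor that plainly respects $\A^1$-cylinders and $\sigma$-suspensions), both $i^*_{B,z}\calF$ and $\pi^*i^*_{B,z}\calF$ are again $\A^1$-invariant quasi-stable framed. For $X\in\Smat_{B,z}$, the canonical comparison places $\calF(X)$ next to $(\pi^*i^*_{B,z}\calF)(X)=\calF(\iota(X_z))$, the values of $\calF$ on two henselian $B$-schemes with the same closed fiber $X_z$. Both $X$ and $\iota(X_z)$ are smooth over $B$ in a neighborhood of that fiber and henselian along it, so the identity of $X_z$ lifts to morphisms $X\to\iota(X_z)$ and $\iota(X_z)\to X$ over $B$, and any two lifts of a given map into a smooth $B$-scheme that agree on the closed subscheme of a henselian pair are $\A^1$-homotopic relative to it (the deformation argument underlying \cite[\S6]{DKO:SHISpecZ}); hence the two composites are $\A^1$-homotopic to the identities. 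Since $\calF$ is $\A^1$-invariant, this yields a natural equivalence $\calF(X)\simeq\calF(\iota(X_z))$, i.e.\ $\calF\simeq\pi^*i^*_{B,z}\calF$. Thus $i^*_{B,z}$ and $\pi^*$ restrict to mutually inverse equivalences between the subcategories of $\A^1$-invariant quasi-stable framed objects of $\SHsA(\Smat_{B,z})$ and $\SHsA(\Smat_z)$.

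For compatibility with Nisnevich localization, note that $\iota$ and $\pi$ both send elementary Nisnevich squares to elementary Nisnevich squares and preserve the fiber products entering their formation: for $\pi$ this is stability of such squares under the base change $(-)\times_B z$, and for $\iota$ one lifts a Nisnevich square over $Y$ to one over $\iota(Y)$ after choosing \'etale neighborhoods of the relevant fibers. Hence $i^*_{B,z}=\iota^*$ and $\pi^*$ are inverse images of morphisms of Nisnevich sites: each preserves Nisnevich-local objects, and, being a left adjoint that carries covering sieves to covering sieves, also preserves Nisnevich-local equivalences. Consequently, for $\calF$ in the subcategory the map $i^*_{B,z}\calF\to i^*_{B,z}L_\nis\calF$ is a Nisnevich localization, so $i^*_{B,z}L_\nis\calF\simeq L_\nis i^*_{B,z}\calF$; here $L_\nis$ preserves the subcategory over $z$ by strict homotopy invariance (\Cref{citedth:SHI(k)}), hence, via the equivalence just established, over $B,z$ as well.

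The step I expect to be the main obstacle is the naturality in $X$ of the $\A^1$-homotopies relating the two lifts between $X$ and $\iota(X_z)$: one must choose the lifts and connecting homotopies coherently enough that they assemble into an actual map of presheaves (a morphism in the relevant $\infty$-category), not merely a levelwise equivalence -- together with the bookkeeping required when $z$ is not a closed point of $B$, so that ``henselian along the $z$-fiber'' has to be read via the henselization along the closure $\overline z$. Both are taken care of by the functorial formalism for $\Smat_{B,z}$ developed in \cite{DKO:SHISpecZ}, which is why I would import the construction of $\iota$ and the deformation lemma from there rather than redo them.
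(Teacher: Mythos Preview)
Your approach is correct and conceptually the same as the paper's: both rely entirely on the machinery of \cite{DKO:SHISpecZ}. The difference is presentational. The paper's proof is a sequence of citations: after reducing to affine $B$, it invokes \cite[Proposition 5.8]{DKO:SHISpecZ} to pass from $\Smat$ to $\SmAff$, then \cite[Lemma 6.12]{DKO:SHISpecZ} for the equivalence on quasi-stable framed objects, and \cite[Lemmas 10.5, 10.8, 10.9]{DKO:SHISpecZ} to conclude that $i^*_{B,z}$ preserves and detects Nisnevich local equivalences (hence commutes with $L_\nis$). You instead sketch the content of those cited results: the lifting functor $\iota$, the closed-fiber functor $\pi$, the $\A^1$-homotopy between $X$ and $\iota(X_z)$, and the site-theoretic argument for Nisnevich compatibility. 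Your final paragraph correctly identifies the coherence of the $\A^1$-homotopies as the main technical point, and correctly observes that this is exactly what the formalism in \cite{DKO:SHISpecZ} is built to handle. One minor remark: for the Nisnevich compatibility, the paper's route via ``preserves and detects local equivalences'' is somewhat cleaner than arguing that $\iota$ preserves elementary squares, since the latter requires checking that the lift of a square is again a square, which is not entirely automatic.
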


\begin{proof}
Without loss of generality, we may assume that $B$ is affine since $\Smat_{B,z}\simeq \Smat_{B^\prime,z}$.
Here $B^\prime$ is any Zariski neighborhood of $z$.
Note
that by \cite[Proposition 5.8]{DKO:SHISpecZ}
we have
\[
\SHsA(\Smat_{B,z})\simeq \SHsA(\SmAff_{B,z}),
\quad 
\SHsA(\Smat_z)\simeq \SHsA(\SmAff_z),
\]
and similarly for the subcategories of quasi-stable framed objects.
We refer to
\cite[Lemma 6.12]{DKO:SHISpecZ}
for the equivalence of 
the subcategories of quasi-stable framed objects induced by the functor 
\[
\SHsA(\Smat_{B,z})\to \SHsA(\Smat_z).
\]
By \cite[Lemmas 10.5, 10.8, 10.9]{DKO:SHISpecZ} $i^*_{B,z}$ preserves and detects Nisnevich local equivalences.
Therefore, $i^*_{B,z}$ commutes with $L_\nis$.
\end{proof}

\begin{proposition}\label{pr:CousinSmzB}
For any quasi-stable framed 
$\calF\in \SHsA(\Sm_{B,z})$ 
and essentially smooth local scheme $U\in \EssSm_B$,
for each $l\in\bbZ$, 
there is a canonical quasi-isomorphism
\[
\pi_l\calF(U^h_z)\simeq \pi_l\Cou(U^h_z,L_\nis\calF).
\]
\end{proposition}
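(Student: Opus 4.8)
The plan is to deduce this from \Cref{th:LnisFExactonfiber}, the corresponding statement for quasi-stable framed motivic spectra over the point $z\in B$. By \cite[Proposition 5.8]{DKO:SHISpecZ} together with Zariski descent — as in the proof of \Cref{lm:ovis} — there are equivalences $\SHsA(\Sm_{B,z})\simeq\SHsA(\SmAff_{B,z})\simeq\SHsA(\Smat_{B,z})$ restricting to the subcategories of quasi-stable framed objects, so I may assume $\calF\in\SHsA(\Smat_{B,z})$. Applying the restriction functor $i^*_{B,z}$ of \eqref{eq:BZ-adj1} and using \Cref{lm:ovis}, the object $\mathcal G:=i^*_{B,z}\calF\in\SHsA(\Smat_z)\simeq\SHsA(z)$ is again quasi-stable framed, it is $\A^1$-invariant by \Cref{lm:arrowiustarpreserveA1invariance}, and $i^*_{B,z}$ commutes with Nisnevich localization, so $i^*_{B,z}L_\nis\calF\simeq L_\nis\mathcal G$. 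Moreover, since $i^*_{B,z}$ is the restriction functor along the inclusion $\Smat_z\hookrightarrow\Smat_{B,z}$ (see the proof of \Cref{lm:arrowiustarpreserveA1invariance}), evaluation of $\calF$ on the henselization $U^h_z$ is identified with evaluation of $\mathcal G$ on $U_z=U\times_B z$; combined with the previous identification this gives $\pi_l\calF(U^h_z)\cong\pi_l\mathcal G(U_z)$ in $\Kom(\Ab)$, and similarly after taking supports at any point of $U_z$.

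Next I would compare the two Cousin complexes. The étale $U^h_z$-schemes occurring in the small site underlying $\Cou(U^h_z,-)$ of \Cref{def:Cou} lie in $\Sm_{B,z}$ (as étale base changes of a henselian pair), and passing to special fibres along $z$ relates this site to the small Nisnevich site underlying $\Cou(U_z,-)$ in a manner compatible with codimension of points and hence with the Cousin differentials; together with $i^*_{B,z}L_\nis\calF\simeq L_\nis\mathcal G$ and continuity for $U\in\EssSm_B$, I expect this to yield a natural isomorphism
\[
\pi_l\Cou(U^h_z,L_\nis\calF)\;\cong\;\pi_l\Cou(U_z,L_\nis\mathcal G)
\]
in $\Kom(\Ab)$ compatible with the morphism \eqref{eq:FtoCou}. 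Granting this, \Cref{th:LnisFExactonfiber} applied to the quasi-stable framed object $\mathcal G\in\SHsA(z)$ and the essentially smooth local scheme $U\in\EssSm_B$ supplies a quasi-isomorphism $\pi_l\mathcal G(U_z)\xrightarrow{\simeq}\pi_l\Cou(U_z,L_\nis\mathcal G)$, and composing it with the identifications of the previous paragraph yields the desired quasi-isomorphism $\pi_l\calF(U^h_z)\simeq\pi_l\Cou(U^h_z,L_\nis\calF)$. Naturality in $\calF$ of all the functors and morphisms involved shows it is canonical.

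The step I expect to be the main obstacle is the comparison of Cousin complexes: one must check that forming the henselization along $U_z$, together with Nisnevich sheafification, does not alter the relevant coniveau data — concretely, that for $x$ in $U_z$ the local-cohomology spectrum $(L_\nis\calF)_x(U^h_z)$ entering \Cref{def:Cou} is computed by $(L_\nis\mathcal G)_x((U_z)_{(x)})$, while the points of $U^h_z$ not lying over $z$ contribute nothing to $\Cou(U^h_z,L_\nis\calF)$ after sheafification. This is exactly the regime of the henselian-invariance, Nisnevich-localization and restriction results of \cite[\S\S 5, 6, 10]{DKO:SHISpecZ} already invoked for \Cref{lm:ovis}; compare also \Cref{lm:F(Uz)Fnis(Uz)} and \Cref{rem:lm:F(Uz)Fnis(Uz)}, which treat the analogous vanishing over $z$.
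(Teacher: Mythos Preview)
Your proposal is correct and follows essentially the same route as the paper's proof: reduce to $\Smat_{B,z}$, transport $\calF$ to a quasi-stable framed $\A^1$-invariant object $\mathcal G=i^*_{B,z}\calF$ over $z$ using \Cref{lm:arrowiustarpreserveA1invariance} and \Cref{lm:ovis}, and then invoke \Cref{th:LnisFExactonfiber}. The paper additionally first replaces $B$ by $B_{(z)}$ so that $z$ becomes closed, but otherwise the arguments coincide.

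The one place where you hedge and the paper does not is the identification $\Cou(U^h_z,L_\nis\calF)\cong\Cou(U_z,L_\nis\mathcal G)$. The paper obtains this directly from \Cref{lm:ovis}: since $i^*_{B,z}$ is an equivalence on quasi-stable framed objects and commutes with $L_\nis$, the value of $L_\nis\calF$ on any object of $\Smat_{B,z}$ (in particular on local schemes $(U^h_z)_{(x)}$ and their punctured versions) is computed by $L_\nis\mathcal G$ on the closed fibre; this immediately matches the terms and differentials of the two Cousin complexes. So your ``main obstacle'' is in fact already handled by the equivalence in \Cref{lm:ovis}, and you need not invoke the finer henselian-invariance results from \cite[\S 10]{DKO:SHISpecZ} separately.
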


\begin{proof}
Without loss of generality, 
we may assume that $z\in B$ is a closed point, 
$U\in \Sm^\mathrm{cci}_B$, 
and restrict $\calF$ to $\Sm^\mathrm{cci}_B$. 
To wit, 
$\Sm^\mathrm{cci}_{B,z}\simeq \Sm^\mathrm{cci}_{B_{(z)},z}$, 
where $B_{(z)}$ is the local scheme of $B$ at $z$, 
and by appeal to \Cref{lm:ovis}
the functors $i^*_{B,z}$ and ${i}_*^{B,z}$ yield equivalences
\begin{align*}
\SHfrsAnis(\Sm^\mathrm{cci}_{B,z}) &\simeq \SHfrsAnis(\Sm^\mathrm{cci}_{z}),\\
L_\nis\calF(U^h_z)&\simeq L_\nis i_{B,z}^*\calF(U_z),\\ \Cou(U^h_z,L_\nis\calF)&\simeq 
\Cou(U_z,L_\nis i^*_{B,z}\calF),
\end{align*}
and the subcategories of quasi-stable framed objects in $\SHsAN(-)$.
By applying \Cref{th:LnisFExactonfiber} to the $\A^1$-invariant quasi-stable framed object 
$i^*_{B,z}\calF\in \SHsA(\Sm^\mathrm{cci}_z)$, 
see \Cref{lm:arrowiustarpreserveA1invariance}, 
we get the equivalence $i_{B,z}^*\calF(U_z)\simeq \Cou(U_z,L_\nis i_{B,z}^*\calF)$.
\end{proof}

\begin{lemma}\label{lm:tis}
For any point $z\in B$,
the restriction of the functor $\tilde{i}^!j^*\colon \SHs_\tf(\Smat_{B})\to \SHs_\tf(\Smat_{B,z})$
to the subcategories of quasi-stable framed objects
preserves Nisnevich local and $\A^1$-invariant objects.
\end{lemma}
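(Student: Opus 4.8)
The plan is to split the composite $\tilde i^!j^*$ into its two factors and, before touching either, reduce to the case of a local base scheme. Since the henselization $X^h_z$ of $X\in\Smat_B$ along its fibre over $z$ depends only on a Zariski neighbourhood of $z$ in $B$, there is an equivalence $\Smat_{B,z}\simeq\Smat_{B_{(z)},z}$ under which $\tilde i^!j^*$ is identified with the analogous functor formed over the local base $B_{(z)}$. The functor $j^*\colon\SHs_\tf(\Smat_B)\to\SHs_\tf(\Smat_{B_{(z)}})$ is restriction along the pro-open immersion $B_{(z)}\to B$; I would check directly that it preserves $\A^1$-invariance (the value on a product with $\A^1$ is unchanged) and Nisnevich locality (the cd-structure on $\Smat_{B_{(z)}}$ is induced from that on $\Smat_B$ and $j^*$ carries distinguished squares to distinguished squares), as well as $\tf$-locality and the quasi-stable framed condition. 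Thus it remains to show that $\tilde i^!\colon\SHs_\tf(\Smat_B)\to\SHs_\tf(\Smat_{B,z})$, with $B$ now local with closed point $z$, preserves Nisnevich-local and $\A^1$-invariant quasi-stable framed objects.

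Next I would use that $\tilde i^!$ is the sections-with-support functor attached to the closed immersion $z\not\hookrightarrow B$: it is the right adjoint in \eqref{eq:BZ-adj2}, so it commutes with all limits, and it sends $\calF$ to the presheaf on $\Smat_{B,z}$ whose value on $X^h_z$ is the fibre of $\calF(X^h_z)\to\calF(X^h_z - X^h_z\times_B z)$. Because $\tilde i^!$ is a right adjoint, the statement that it restricts to the $\A^1$-invariant (resp.\ Nisnevich-local) subcategories is equivalent to the statement that its left adjoint $\tilde i_*$ carries $\A^1$-local (resp.\ Nisnevich-local) equivalences to equivalences of the same kind, i.e.\ that $\tilde i^!$ commutes with $L_{\A^1}$ and with $L_\nis$. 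Both commutations are available from the analysis of these adjunctions in \cite[\S 6, \S 9, \S 10]{DKO:SHISpecZ}; this mirrors how \Cref{lm:ovis} and \Cref{lm:arrowiustarpreserveA1invariance} were obtained for $i^*_{B,z}$.

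Concretely, for $\A^1$-invariance I would note that when $\calF$ is $\A^1$-invariant, the open complement of the closed fibre is again (pro-)smooth over $B$, $\calF$ takes $\A^1$-invariant values on it, and forming this open complement is compatible with multiplication by $\A^1$; so the displayed fibre is an $\A^1$-invariant presheaf on $\Smat_{B,z}$, equivalently $\tilde i_*$ preserves $\A^1$-local equivalences as in \cite[Lemma 6.9]{DKO:SHISpecZ}. For Nisnevich descent I would argue that a distinguished Nisnevich square in $\Smat_{B,z}$ is the henselization along $z$ of a distinguished Nisnevich square in $\Smat_B$, that removing the closed fibre is compatible with such squares, and hence that applying the Nisnevich-local object $\calF$ and taking the fibres above preserves cartesianness; equivalently $\tilde i^!$ preserves and detects Nisnevich-local equivalences by \cite[Lemmas 10.5, 10.8, 10.9]{DKO:SHISpecZ}, so it commutes with $L_\nis$. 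Finally, since the adjunctions \eqref{eq:BZ-adj1} and \eqref{eq:BZ-adj2} restrict to the subcategories of quasi-stable framed objects, $\tilde i^!j^*\calF$ remains quasi-stable framed, and the three conclusions together give the lemma.

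The step I expect to be the real obstacle is checking cleanly that $\tilde i^!$ (equivalently $\tilde i_*$) interacts correctly with the two localizations while staying inside the restricted categories $\Smat_B$, $\Smat_{B,z}$ of schemes with trivial tangent bundle and their $\tf$-variants — that is, matching the cd-structure on $\Smat_{B,z}$ with that on $\Smat_B$ and controlling the open-complement-of-the-closed-fibre operation on these categories and on the henselizations $X^h_z$. This is exactly the bookkeeping carried out in \cite[\S\S 6, 9, 10]{DKO:SHISpecZ}; everything else in the argument is formal manipulation of adjunctions and of the fibre sequence defining $\tilde i^!$.
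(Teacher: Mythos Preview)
Your overall strategy is sound and matches the paper's approach: the paper's proof is a one-line citation to \cite[Propositions 9.7--9.9]{DKO:SHISpecZ}, and what you have written is essentially an unpacking of those results --- split $\tilde i^! j^*$ into its two factors, reduce to a local base, and verify that each factor preserves $\A^1$-invariance and Nisnevich locality, using the explicit fibre description of $\tilde i^!$. Your direct arguments (compatibility of forming the open complement with $-\times\A^1$ and with Nisnevich squares) are the right ones.

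One point to clean up: the specific citations you invoke from \cite{DKO:SHISpecZ} are misplaced. Lemma 6.9 and Lemmas 10.5, 10.8, 10.9 there concern the functor $i^*_{B,z}$ of the adjunction \eqref{eq:BZ-adj1}, and are what the present paper uses for \Cref{lm:arrowiustarpreserveA1invariance} and \Cref{lm:ovis}. The functor $\tilde i^!$ belongs to the other adjunction \eqref{eq:BZ-adj2}, and the relevant statements in \cite{DKO:SHISpecZ} are Propositions 9.7--9.9, exactly as the paper cites. Your direct arguments already carry the proof without those citations, so this is only a matter of pointing to the correct place in the reference.
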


\begin{proof}
This follows from \cite[Propositions 9.7-9.9]{DKO:SHISpecZ}.
\end{proof}

\begin{lemma}\label{lm:tisFC}
Let $j\colon B_{(z)}\to B$ be the canonical morphism from the local scheme of $B$ at $z\in B$, 
and let $i\colon z\not\hookrightarrow B_{(z)}$ be the corresponding closed immersion.
For any essentially smooth local scheme $U\in \EssSm_B$ and quasi-stable framed $\calF\in \SHs(\Sm_{B})$, 
there is a canonical equivalence
\[
\tilde i^!j^*\calF(U^h_z)\simeq \calF_z(X).
\]
Moreover,
for any quasi-stable framed $\calF\in \SHs_\nis(\Sm_{B})$,
for each $l\in\bbZ$, there is a canonical quasi-isomorphism
\[
\pi_l\Cou(U^h_z,\tilde i^!j^*\calF)\simeq \pi_l\Cou_z(U,\calF).
\]
\end{lemma}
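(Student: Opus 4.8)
The plan is to read off both assertions from the structure of the recollement functors $\tilde i_*\dashv\tilde i^!$ and the pullback $j^*$ recalled in \Cref{lm:tis}, together with \Cref{def:calFZU}; the second assertion is then the framed counterpart, for henselian models, of the isomorphism $\Cou_z(X,\calF)\cong\Cou(X_z,i_z^!j_z^*\calF)$ recorded in the Remark after \Cref{def:CzbulletXF}. First I would establish $\tilde i^!j^*\calF(U^h_z)\simeq\calF_z(U)$. This is an unwinding of the definitions from \cite[\S\S 6, 9]{DKO:SHISpecZ}: $j^*$ is pullback along the pro-open immersion $j\colon B_{(z)}\to B$, and $\tilde i^!$, being the right adjoint of the closed pushforward $\tilde i_*$ in the recollement of $B_{(z)}$ along its closed point $z$ and open complement $B_{(z)}-z$, is computed on a henselization $U^h_z$ (of a model of $U$ along its closed fiber $U_z$) by the fiber of the restriction map $\calF(U\times_B B_{(z)})\to\calF(U\times_B(B_{(z)}-z))$; by \Cref{def:calFZU} this fiber is $\calF_z(U)$. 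Here I would use \Cref{lm:tis} to record that $\tilde i^!j^*\calF$ stays quasi-stable framed, and stays Nisnevich local (resp.\ $\A^1$-invariant) when $\calF$ is, so that the Cousin construction applies to it.

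For the second equivalence, put $\mathcal G:=\tilde i^!j^*\calF$ with $\calF\in\SHs_\nis(\Sm_B)$. The recollement triangle $\tilde i_*\tilde i^!\calF\to\calF\to(\text{restriction to }B_{(z)}-z)$ shows $\mathcal G$ is supported on the closed fiber: it vanishes on every essentially smooth local scheme over $B_{(z)}$ whose closed point does not lie over $z$, so $\mathcal G_w(U^h_z)\simeq 0$ for every point $w$ of $U^h_z$ outside $(U^h_z)_z=U_z$, and only the points of $U_z$ contribute to $\Cou(U^h_z,\mathcal G)$. Since henselizing along $U_z$ is an isomorphism over a Nisnevich neighborhood of $U_z$ and preserves codimension, the codimension-$j$ points of $U^h_z$ lying in $U_z$ are exactly the points of $U_z^{(j)}$; and applying the first part of the lemma to the local schemes $U_{(y)}$ for $y\in U_z$, together with $(U^h_z)_{(y)}=(U_{(y)})^h_z$ and $U_{(y)}\times_B B_{(z)}=U_{(y)}$, identifies $\mathcal G_y(U^h_z)$ with $\calF_y(U)$. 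Hence the $j$-th term of $\Cou(U^h_z,\mathcal G)$ is $\bigoplus_{y\in U_z^{(j)}}\calF_y(U)$, which is the $j$-th term of $\Cou_z(U,\calF)$ in \Cref{def:CzbulletXF}; the differentials agree because both are assembled from the boundary maps \eqref{eq:partialxprimex} of the one-dimensional local schemes $\overline{\{y'\}}_{(y)}$. Applying $\pi_l$ yields the asserted quasi-isomorphism.

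The step I expect to be the main obstacle is verifying that $\mathcal G=\tilde i^!j^*\calF$ is genuinely concentrated on the closed fiber, so that the indexing set of the Cousin complex collapses from the points of $U^h_z$ to those of $U_z$ compatibly with codimension and with the differentials; here \Cref{lm:tis} and the recollement formalism of \cite{DKO:SHISpecZ} do the essential work, while the term-by-term identification and the differential compatibility are then formal.
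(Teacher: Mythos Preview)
Your proposal is correct and follows essentially the same approach as the paper: both arguments unwind the definition of $\tilde i^!j^*$ as a fiber for the first equivalence, and both derive the second equivalence from the first by identifying the terms of the two Cousin complexes. The paper's own proof is extremely terse---for the second claim it literally says ``follows from the first claim''---so your elaboration (support of $\tilde i^!j^*\calF$ on the closed fiber, codimension-preserving bijection between contributing points, and compatibility of differentials via \eqref{eq:partialxprimex}) is a faithful expansion of what the paper leaves implicit. One small imprecision: in your first paragraph you write the fiber as $\fib\bigl(\calF(U\times_B B_{(z)})\to\calF(U\times_B(B_{(z)}-z))\bigr)$, whereas the definition of $\tilde i^!$ evaluates on the henselization itself, giving $\fib\bigl(\calF(U^h_z)\to\calF(U^h_z\times_B(B-z))\bigr)$ as in the paper; the two agree once you note $U^h_z\times_B B_{(z)}=U^h_z$, so this does not affect the argument.
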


\begin{proof}
The first equivalence holds since, by the definition of $\tilde i^!j^*\calF(U^h_z)$, there are equivalences
\[
\fib(j^*\calF(U^h_z)\to j^*\calF(U^h_z\times_{B_{(z)}}(B_{(z)}-z)))\simeq 
\fib(\calF(U^h_z)\to \calF(U^h_z\times_B(B-z)))\simeq 
\calF_z(X).
\]
The second claim follows from the first claim.
\end{proof}

\begin{lemma}\label{lm:preiushrikcommuteLnis}
For any $z\in B$, the following holds:
\begin{itemize}
\item[(a)]
For any essentially smooth local henselian $U$ over $B$, and any quasi-stable framed $\calF\in \SHsAtf(\Sm_{B})$ 
such that $L_\nis \calF=0$, 
there is an equivalence
$\calF(U\times_B (B_{(z)}-z))\simeq0$.
\item[(b)]
For any essentially smooth local henselian $U$ over $B$, and any quasi-stable framed $\calF\in \SHsAtf(\Sm_{B})$ such that $L_\nis \calF=0$, there is an equivalence $\calF_z(U)\simeq0$.
\end{itemize}
\end{lemma}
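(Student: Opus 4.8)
The plan is to reduce both parts to \Cref{pr:CousinSmzB} by means of the support functor $\tilde i^!j^*$, together with one elementary observation: an essentially smooth henselian local $B$-scheme $V$ is a point of the Nisnevich $\infty$-topos, so that $\calF(V)\simeq(L_\nis\calF)(V)=0$ since $L_\nis\calF=0$; in particular $\calF(U)\simeq 0$.

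I would prove (b) first. Let $j\colon B_{(z)}\to B$ and $i\colon z\not\hookrightarrow B_{(z)}$ be as in the statement. By \Cref{lm:tisFC} there is an equivalence $\calF_z(U)\simeq\tilde i^!j^*\calF(U^h_z)$, and $\tilde i^!j^*\calF$ is a quasi-stable framed $\A^1$-invariant object on $\Sm_{B,z}$: this follows from \Cref{lm:tis} (applicable since $j^*\calF$ is $\tf$-local, $\calF$ being so) together with the fact that the adjunctions \eqref{eq:BZ-adj1}--\eqref{eq:BZ-adj2} restrict to quasi-stable framed objects. Crucially $L_\nis(\tilde i^!j^*\calF)\simeq 0$: the pro-open restriction $j^*$ commutes with Nisnevich localization, so $L_\nis j^*\calF\simeq j^*L_\nis\calF=0$, and $\tilde i^!$ commutes with $L_\nis$ on quasi-stable framed $\A^1$-invariant presheaves. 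This last compatibility is the analogue for $\tilde i^!$ of the statement for $i^*_{B,z}$ in \Cref{lm:ovis}; it follows from \cite[\S 9, \S 10]{DKO:SHISpecZ}, concretely from the gluing fibre sequence $\tilde i_*\tilde i^!\mathcal G\to\mathcal G\to\bar j_*\bar j^*\mathcal G$ for the complementary open immersion $\bar j\colon B_{(z)}-z\to B_{(z)}$, the full faithfulness of $\tilde i_*$, and the compatibility of $\bar j^*$ and $\bar j_*$ with $L_\nis$. Now \Cref{pr:CousinSmzB}, applied to $\tilde i^!j^*\calF$ and $U$, gives for every $l$ a quasi-isomorphism $\pi_l\tilde i^!j^*\calF(U^h_z)\simeq\pi_l\Cou(U^h_z,L_\nis\tilde i^!j^*\calF)=\pi_l\Cou(U^h_z,0)=0$; hence $\tilde i^!j^*\calF(U^h_z)\simeq 0$, that is, $\calF_z(U)\simeq 0$.

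For (a): by \Cref{def:calFZU}, $\calF_z(U)$ is the fibre of the morphism $\calF(U\times_B B_{(z)})\to\calF(U\times_B(B_{(z)}-z))$. If the fibre $U\times_B z$ is empty, both claims are immediate --- then (a) reads $\calF(U)\simeq 0$ and (b) reads $0\simeq 0$ --- so I may assume the closed point of $U$ lies over $z$; replacing $B$ by $B_{(z)}$, which preserves all hypotheses on $\calF$ (restriction along the pro-open immersion $B_{(z)}\to B$ is compatible with $L_\nis$), we then have $U\times_B B_{(z)}=U$, whence $\calF(U\times_B B_{(z)})\simeq\calF(U)\simeq 0$ by the first paragraph. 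Combined with $\calF_z(U)\simeq 0$ from (b), the fibre sequence forces $\calF(U\times_B(B_{(z)}-z))\simeq 0$.

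The one non-formal ingredient, and hence the main obstacle, is the vanishing $L_\nis(\tilde i^!j^*\calF)\simeq 0$, i.e.\ the compatibility of the framed ``sections with support along $z$'' functor with Nisnevich localization; everything else is assembling \Cref{lm:tisFC}, \Cref{lm:tis}, \Cref{pr:CousinSmzB}, and the vanishing of $\calF$ on henselian local schemes.
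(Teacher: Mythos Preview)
Your argument has a genuine circularity at the step you yourself flag as ``the main obstacle'': the claim that $\tilde i^!$ (equivalently, $\bar j_*$ for the complementary open immersion $\bar j\colon B_{(z)}-z\to B_{(z)}$) commutes with $L_\nis$. You propose to deduce this from the gluing fibre sequence together with ``the compatibility of $\bar j^*$ and $\bar j_*$ with $L_\nis$'', but the $\bar j_*$ half of that compatibility is \emph{exactly} the content of part (a). Indeed, $\bar j_*$ automatically preserves Nisnevich-local objects (since $\bar j^*$ preserves Nisnevich squares), so commuting with $L_\nis$ amounts to preserving Nisnevich-acyclic objects; but $(\bar j_*\mathcal H)(U)=\mathcal H(U\times_B(B_{(z)}-z))$, and $U\times_B(B_{(z)}-z)$ is \emph{not} henselian local, so the vanishing you need is precisely the assertion $\calF(U\times_B(B_{(z)}-z))\simeq 0$ of (a). In the paper's logical order this is made explicit: \Cref{lm:iushrikcommuteLnis}, which establishes that $\tilde i^!j^*$ commutes with $L_\nis$, is proved \emph{using} the present lemma, not the other way around; so your appeal to this compatibility begs the question.

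The paper breaks the circle by a direct induction on $\dim B$. One first shows formally that (a) $\Rightarrow$ (b) via the fibre sequence (you do this too, in the reverse direction). The base case $\dim B=0$ is trivial. For the inductive step one sets $V=B_{(z)}-z$, notes $\dim V<\dim B$, and uses that $\calF$ is $\tf$-local to filter $\calF(U\times_B V)$ by the codimension-of-support filtration on $V$, yielding a finite, convergent spectral sequence whose $E_1$-terms are sums of $\pi_*\calF_{z'}(U)$ for $z'\in V$. By the inductive hypothesis, (b) over the base $V$ gives vanishing of these terms, whence $\calF(U\times_B V)\simeq 0$. This is where the $\tf$-locality and finite-dimensionality hypotheses actually do work; your argument never uses them, which is another sign that something essential is missing.
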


\begin{proof}
We prove that (a) implies (b).
Let $U$ be an essentially smooth local henselian scheme.
Then $\calF(U)\simeq 0$, 
since $L_\nis \calF=0$ by the assumption in (b).
Therefore, $\calF(U\times_B (B-z))\simeq 0$ by (a).
It follows that $\calF_z(U)=\fib(\calF(U)\to \calF(U\times_B (B-z))\simeq 0$.

Part (a) holds when $\dim B=0$.
We proceed by induction, assuming that (a) holds for all $B$ of dimension $<d$. 
Then, 
as shown above, 
(b) holds for all such $B$. 
Let us show that (a) holds for any scheme $B$ of dimension $d$.
We set $V=(B_{(z)}-z)$, so that $\dim V=d-1$.
Denote by $\overline{V^{(i)}}$ the minimal closed subscheme of $V$ that contains $V^{(i)}$.
Since $\calF$ is $\tf$-local, 
the finite length filtration on the $S^1$-spectrum $\calF(U\times_B  V)$
given by the spectra $\calF(U\times_B  (V-\overline{V^{(i)}}))$,
leads to a convergent spectral sequence
\begin{equation}
\label{eq:F(fib_V)specseq}\bigoplus_{z\in V^{(i)}} 
\pi_{l+i}\calF_z(U) 
\Rightarrow 
\pi_{l}\calF(U\times_B  V).
\end{equation}
Applying (b) to $V$, 
viewed as a base scheme, 
it follows that the left side of \eqref{eq:F(fib_V)specseq} is trivial,
and hence $\calF(U\times_B  V)\simeq 0$.
\end{proof}

\begin{lemma}\label{lm:iushrikcommuteLnis}
For any $z\in B$, 
$\tilde i^!j^*$ commutes with the endofunctor $L_\nis$ on the subcategory of quasi-stable framed objects in
$\SHsAtf(\Sm_{B})$.
\end{lemma}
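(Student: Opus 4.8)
The plan is to invoke the standard criterion for a functor $F$ between (stable) categories to commute with Nisnevich localization, namely $L_{\nis}F\simeq FL_{\nis}$, which holds as soon as $F$ is exact, $F$ preserves Nisnevich‑local objects, and $F$ takes Nisnevich‑locally trivial objects to Nisnevich‑locally trivial objects. Here $F=\tilde i^!j^*$ is a composite of exact functors: $j^*$ is restriction along a base change and hence exact, and $\tilde i^!$ is a right adjoint by \eqref{eq:BZ-adj2}, hence exact between the stable $\infty$‑categories in play; therefore $F$ carries cofiber sequences to cofiber sequences. Fix a quasi‑stable framed $\calF\in\SHsAtf(\Sm_B)$, let $\calF\to L_{\nis}\calF$ be the canonical map, and let $C$ be its cofiber, so $L_{\nis}C\simeq 0$ (apply the exact $L_{\nis}$ to $\calF\to L_{\nis}\calF\to C$).

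First I would check that $\tilde i^!j^*(L_{\nis}\calF)$ is Nisnevich local. By strict homotopy invariance over $B$ (the $S^1$‑spectrum specialization of \Cref{th:strhominv::SHIsepnfdB}; over a field this is \Cref{citedth:SHI(k)}), $L_{\nis}\calF$ is again $\A^1$‑invariant, and since the $\tf$‑topology is coarser than the Nisnevich topology it is also $\tf$‑local and obviously still quasi‑stable framed. Hence $L_{\nis}\calF$ lies in the subcategory of quasi‑stable framed $\A^1$‑invariant Nisnevich‑local objects, and \Cref{lm:tis} shows that $\tilde i^!j^*(L_{\nis}\calF)$ is Nisnevich local. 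In particular $C$, being the cofiber of a map in $\SHsAtf(\Sm_B)$ between quasi‑stable framed objects, is itself a quasi‑stable framed object of $\SHsAtf(\Sm_B)$ with $L_{\nis}C\simeq 0$, so it satisfies the hypotheses of \Cref{lm:preiushrikcommuteLnis}.

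Next I would prove $L_{\nis}\bigl(\tilde i^!j^*C\bigr)\simeq 0$ by showing all its Nisnevich stalks vanish. The Nisnevich points of $\Smat_{B,z}$ are the henselian local schemes $U^h_z$ with $U\in\EssSm_B$ (as developed in \cite{DKO:SHISpecZ}); by \Cref{lm:tisFC} one has $\tilde i^!j^*C(U^h_z)\simeq C_z(U)$, and \Cref{lm:preiushrikcommuteLnis}(b) applied to $C$ gives $C_z(U)\simeq 0$ for every essentially smooth local henselian $U$ over $B$. Therefore $L_{\nis}\bigl(\tilde i^!j^*C\bigr)\simeq 0$. Applying the exact functor $L_{\nis}$ to the cofiber sequence $\tilde i^!j^*\calF\to\tilde i^!j^*(L_{\nis}\calF)\to\tilde i^!j^*C$ now yields $L_{\nis}\bigl(\tilde i^!j^*\calF\bigr)\simeq L_{\nis}\bigl(\tilde i^!j^*(L_{\nis}\calF)\bigr)\simeq\tilde i^!j^*(L_{\nis}\calF)$, the last equivalence because $\tilde i^!j^*(L_{\nis}\calF)$ was already shown to be Nisnevich local. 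These equivalences are natural in $\calF$, which is the asserted commutation $L_{\nis}\circ\tilde i^!j^*\simeq\tilde i^!j^*\circ L_{\nis}$ on quasi‑stable framed objects.

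The main obstacle is the very first step: knowing that $L_{\nis}\calF$ (equivalently the cofiber $C$) still lies in $\SHsAtf(\Sm_B)$, i.e.\ that Nisnevich localization of quasi‑stable framed presheaves preserves $\A^1$‑invariance over the general base $B$. Over a field this is \Cref{citedth:SHI(k)}, but over $B$ it is exactly the $\tf/\nis$‑strict homotopy invariance statement; to avoid circularity with \Cref{th:strhominv::SHIsepnfdB} one runs the present lemma inside the induction on $\dim B$ that establishes that theorem, using the field case as the base and the lower‑dimensional instances of strict homotopy invariance in the inductive step. A secondary technical point is the identification of the conservative family of Nisnevich points of $\Smat_{B,z}$ with the schemes $U^h_z$, which is what lets one upgrade the pointwise vanishing coming from \Cref{lm:tisFC} and \Cref{lm:preiushrikcommuteLnis}(b) to the statement $L_{\nis}\bigl(\tilde i^!j^*C\bigr)\simeq 0$; this is supplied by the results of \cite{DKO:SHISpecZ} used already in \Cref{lm:ovis}.
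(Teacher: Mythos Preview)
Your argument is correct and follows essentially the same route as the paper: show that $\tilde i^!j^*$ preserves Nisnevich local objects and Nisnevich acyclic objects, the latter via \Cref{lm:preiushrikcommuteLnis}. Two minor differences are worth noting. First, the paper treats $j^*$ and $\tilde i^!$ separately: $j^*$ commutes with $L_{\nis}$ by the usual base-change argument, and $\tilde i^!$ preserves Nisnevich local objects because $X\mapsto X^h_z$ preserves Nisnevich squares; you instead cite \Cref{lm:tis} for the composite, which is fine. Second, your detour through strict homotopy invariance to place the cofiber $C$ in $\SHsAtf(\Sm_B)$ is unnecessary: inspecting the proof of \Cref{lm:preiushrikcommuteLnis} shows it uses only $\tf$-locality (for the filtration/spectral sequence) and never $\A^1$-invariance, so it applies to $C$ directly since $C$ is $\tf$-local as a cofiber of a map between $\tf$-local objects. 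In particular the circularity you worry about does not arise, and even if you did want to invoke \Cref{th:strhominv::SHIsepnfdB}, that theorem relies only on \Cref{lm:F(Uz)Fnis(Uz)} and not on the present lemma, so there is no logical loop.
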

\begin{proof}
If $B^\prime\to B$ is a map of base schemes, 
the base change functor $\Pre(\Sm_B)\to \Pre(\Sm_{B^\prime})$ commutes with the Nisnevich localization. 
By \cite[Proposition 3.2.14]{five-authors}, the same holds for framed presheaves.
Consequently, 
the claim holds for $\SHsAtf(\Sm_{B})\to \SHsAtf(\Sm_{B^\prime})$, and thus for $j^*$.

Since the functor $\Sm_B\to \Sm_{B,z}$ given by $X\mapsto X^h_z$ 
preserves Nisnevich squares, it follows that
$\tilde i^!$ preserves Nisnevich local objects.
To conclude that $\tilde i^!$ commutes with $L_\nis$, 
it remains to prove that $\tilde i^!$ preserves Nisnevich equivalences, 
or equivalently, Nisnevich acyclic objects.
Suppose $L_\nis(\calF)\simeq 0$ for some quasi-stable framed $\calF\in \SHsAtf(\Sm_{B})$.
Then $\calF(U)\simeq 0$ for any essentially smooth local henselian $U$ over $B$.
\Cref{lm:preiushrikcommuteLnis} shows that $\calF(U-U_z)\simeq 0$.
Thus we have $\tilde i^!\calF(U)=\fib(\calF(U)\to \calF(U-U_z))\simeq 0$, 
and $L_\nis(\tilde i^!\calF)\simeq 0$.
\end{proof}

\begin{proposition}\label{pr:LnispreComumnsExact}
For any quasi-stable framed $\calF\in \SHsAtf(\Sm_{B})$, 
$z\in B$,
and essentially smooth local scheme $U\in \EssSm_B$,
for each $l\in\bbZ$,
the morphism \eqref{eq:FtoCou} induces a canonical quasi-isomorphism
\[
\pi_l\calF_z(U)\simeq 
\pi_l\Cou_z(U,L_\nis\calF)
\]
in $\Kom(\Ab)$.
\end{proposition}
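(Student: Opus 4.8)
The plan is to reduce the assertion over $B$ to the already-established statement over the category $\Sm_{B,z}$ by means of the base-change functor $\tilde i^!j^*$, where $j\colon B_{(z)}\to B$ is the localization at $z$ and $i\colon z\not\hookrightarrow B_{(z)}$ is the corresponding closed point. Concretely, I would set $\mathcal G:=\tilde i^!j^*\calF$ and first check that $\mathcal G$ lies in the input category of \Cref{pr:CousinSmzB}. By \Cref{lm:tis} the functor $\tilde i^!j^*$ carries a quasi-stable framed object of $\SHsAtf(\Smat_B)$ to a quasi-stable framed \emph{$\A^1$-invariant} object of $\SHs_\tf(\Smat_{B,z})$; this is precisely the step that uses the hypothesis that $\calF$ is $\tf$-local rather than merely $\A^1$-invariant and Nisnevich local, since $\tilde i^!$ is only well-behaved on $\tf$-local input. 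After the harmless reductions used in the proof of \Cref{pr:CousinSmzB} (restrict $\calF$ and $\mathcal G$ to $\Smat$, and replace $B$ by $B_{(z)}$ so that $z$ is closed; none of the terms in the statement change), \Cref{pr:CousinSmzB} applies to $\mathcal G$ and yields, for each $l$, a canonical quasi-isomorphism $\pi_l\mathcal G(U^h_z)\xrightarrow{\simeq}\pi_l\Cou(U^h_z,L_\nis\mathcal G)$ in $\Kom(\Ab)$, induced by the canonical morphism \eqref{eq:FtoCou}.

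The remaining work is to transport both sides back over $B$ using the transfer lemmas. On the source side, the first equivalence of \Cref{lm:tisFC} identifies $\mathcal G(U^h_z)=\tilde i^!j^*\calF(U^h_z)$ with $\calF_z(U)$. On the target side, \Cref{lm:iushrikcommuteLnis} gives an equivalence $L_\nis\mathcal G=L_\nis\tilde i^!j^*\calF\simeq\tilde i^!j^*L_\nis\calF$ of Nisnevich-local quasi-stable framed objects; since $\Cou(U^h_z,-)$ is functorial in Nisnevich-local presheaves, this promotes to an isomorphism $\Cou(U^h_z,L_\nis\mathcal G)\cong\Cou(U^h_z,\tilde i^!j^*L_\nis\calF)$, and then the second equivalence of \Cref{lm:tisFC}, applied to the Nisnevich-local quasi-stable framed object $L_\nis\calF$, identifies $\pi_l$ of this with $\pi_l\Cou_z(U,L_\nis\calF)$. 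Splicing these identifications onto the quasi-isomorphism furnished by \Cref{pr:CousinSmzB} produces the desired $\pi_l\calF_z(U)\xrightarrow{\simeq}\pi_l\Cou_z(U,L_\nis\calF)$.

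Since every ingredient has already been proven, I do not expect a genuine obstacle; the care is entirely in the bookkeeping, and the one point I would be most careful about is \emph{naturality}. I need the quasi-isomorphism from \Cref{pr:CousinSmzB} for $\mathcal G$, transported through the equivalences of \Cref{lm:tisFC}, to be the map induced by the canonical augmentation $\hF_z(U)\to\Cou_z(U,L_\nis\calF)$ of \eqref{eq:FzCouzF}, which under the identification $\pi_l\Cou(U^h_z,\tilde i^!j^*L_\nis\calF)\simeq\pi_l\Cou_z(U,L_\nis\calF)$ corresponds to \eqref{eq:FtoCou} on $\Sm_{B,z}$. Concretely, I would record that both equivalences of \Cref{lm:tisFC} are natural in $\calF$ and carry the augmentation $\tilde i^!j^*\calF(U^h_z)\to\Cou(U^h_z,\tilde i^!j^*L_\nis\calF)$ to the augmentation $\calF_z(U)\to\Cou_z(U,L_\nis\calF)$, which is visible from their construction. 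A secondary point is to verify that passing to $\Smat$ and replacing $B$ by $B_{(z)}$ leaves all the objects, their values on henselian local schemes, and their Cousin complexes unchanged — the same reduction as in the proof of \Cref{pr:CousinSmzB}.
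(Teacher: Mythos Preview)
Your proposal is correct and follows essentially the same approach as the paper: set $\mathcal G=\tilde i^!j^*\calF$, use \Cref{lm:tis} to verify the hypotheses of \Cref{pr:CousinSmzB}, apply that proposition, and then transport both sides back via \Cref{lm:tisFC} and \Cref{lm:iushrikcommuteLnis}. The paper's proof is terser and does not spell out the naturality check you flag, but the logical content is identical.
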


\begin{proof}
Owing to \Cref{lm:tisFC,lm:iushrikcommuteLnis} there are equivalences
\[
\calF_z(U)
\simeq
\tilde i^!j^*\calF(U^h_z), 
\,\,\,
\Cou_z(U,L_\nis\calF)
\simeq 
\Cou(U^h_z,L_\nis\tilde i^!j^*\calF).
\]
\Cref{lm:tis} shows that $\tilde i^!j^*\calF$ is Nisnevich local and $\A^1$-invariant.
Thus, 
\Cref{pr:CousinSmzB} yields the quasi-isomorphism
\[
\pi_l\tilde i^!j^*\calF(U^h_z)\simeq \pi_l\Cou(U^h_z,L_\nis\tilde i^!j^*\calF).
\]
\end{proof}

\subsection{Cousin complexes, and $\tf$-Cousin complexes}
\begin{theorem}\label{th:CoutfsimeqCou}
For any
quasi-stable framed $\calF\in \SHsAtf(B)$, and
essentially smooth local scheme $U\in \EssSm_B$, 
for each $l\in\bbZ$,
the canonical morphism \eqref{eq:CoutftoCou}, see \Cref{def:CoutftoCoubi_AND_CoutftoCou},
induces a quasi-isomorphism
\[
\pi_l\Cou_\tf(U,L_\tf\calF)\simeq \pi_l\Cou(U,L_\nis\calF)
\]
in $\Kom(\Ab)$.
\end{theorem}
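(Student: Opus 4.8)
The plan is to realise the cone of the canonical morphism \eqref{eq:CoutftoCou} as the totalisation of a bicomplex whose columns are direct sums of the cones of the comparison maps treated in \Cref{pr:LnispreComumnsExact}, and then to combine that proposition with a standard boundedness argument for bicomplexes.

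First I would unwind the constructions of \Cref{sect:FXCoutfXFCoubiXF}. By \Cref{lm:bicompexsimeqlcomlex} there is a natural isomorphism $\Cou(U,L_\nis\calF)\cong\Tot(\Coubi(U,L_\nis\calF))$, and by \Cref{def:CoutftoCoubi_AND_CoutftoCou} the morphism \eqref{eq:CoutftoCou} is the image under the totalisation functor \eqref{eq:TotbiKomSHtoptoKomSHtop} of the morphism \eqref{eq:CoutftoCoubi}, which views $\Cou_\tf(U,L_\tf\calF)$ as a bicomplex concentrated in the zeroth row whose structure map, on the summand indexed by a point $z\in B^{(i)}$, is the composite \eqref{eq:FzCouzFLtfNis}
\[
L_\tf\calF_z(U)\longrightarrow L_\nis\calF_z(U)\longrightarrow\Cou_z(U,L_\nis\calF).
\]
Since $\Tot$ carries a cone of bicomplexes to the corresponding cone of totalisations, it sends the cone of \eqref{eq:CoutftoCoubi} to the cone of \eqref{eq:CoutftoCou}; and by inspection the $i$-th column of $\operatorname{Cone}(\eqref{eq:CoutftoCoubi})$ is the direct sum over $z\in B^{(i)}$ of an appropriate shift of $\operatorname{Cone}(L_\tf\calF_z(U)\to\Cou_z(U,L_\nis\calF))$.

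Next I would show that each column becomes acyclic after applying $\pi_l$. Since $\calF$ is $\tf$-local we have $L_\tf\calF\simeq\calF$, hence $L_\tf\calF_z(U)\simeq\calF_z(U)$. Applying $\pi_l$ termwise to a morphism $f$ of complexes of spectra is compatible with forming cones, $\pi_l\operatorname{Cone}(f)\cong\operatorname{Cone}(\pi_l f)$ in $\Kom(\Ab)$, as $\pi_l$ is additive; combined with \Cref{pr:LnispreComumnsExact}, which asserts that $\pi_l\calF_z(U)\to\pi_l\Cou_z(U,L_\nis\calF)$ is a quasi-isomorphism for every $z\in B$ and every $l\in\bbZ$, this shows that $\pi_l\operatorname{Cone}(L_\tf\calF_z(U)\to\Cou_z(U,L_\nis\calF))$ is acyclic. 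A finite direct sum of acyclic complexes being acyclic, every column of the bicomplex $\pi_l\operatorname{Cone}(\eqref{eq:CoutftoCoubi})$ is acyclic.

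Finally I would pass to the totalisation. The bicomplex $\operatorname{Cone}(\eqref{eq:CoutftoCoubi})$ is bounded: it has $\dim B+1$ columns and each column is a finite-length complex, so $\pi_l$ commutes with its totalisation, and the totalisation of a bounded bicomplex of abelian groups with acyclic columns is itself acyclic (by the spectral sequence of its column filtration, or by induction on the number of columns). Therefore the cone of \eqref{eq:CoutftoCou} has vanishing homotopy in every degree, which is exactly the asserted quasi-isomorphism $\pi_l\Cou_\tf(U,L_\tf\calF)\xrightarrow{\simeq}\pi_l\Cou(U,L_\nis\calF)$. I expect the only genuinely delicate point to be the bookkeeping in the second paragraph: one must keep track of the shift conventions of \Cref{def:CoubiXF,def:CoutftoCoubi_AND_CoutftoCou} precisely enough to identify the row-zero complement of the column differentials of $\Coubi(U,L_\nis\calF)$ with the augmentations \eqref{eq:FzCouzFLtfNis}, so that \Cref{pr:LnispreComumnsExact} applies verbatim; after that the argument is formal homological algebra and requires no further input about framed correspondences.
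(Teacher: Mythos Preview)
Your proof is correct and follows essentially the same approach as the paper: reduce to the bicomplex via \Cref{lm:bicompexsimeqlcomlex}, then apply \Cref{pr:LnispreComumnsExact} columnwise together with the identification $\calF_z\simeq(L_\tf\calF)_z$ coming from $\tf$-locality of $\calF$. The paper's version is terser and leaves the bounded-bicomplex/totalisation argument implicit, whereas you spell out the cone and spectral-sequence bookkeeping explicitly.
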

\begin{proof}
By \Cref{lm:bicompexsimeqlcomlex}, 
we reduce the claim to showing that the canonical morphism of bicomplexes \eqref{eq:CoutftoCoubi}
\[\pi_l\Cou_\tf(U,L_\tf\calF)\to \pi_l\Coubi(U,L_\nis\calF)\]
is a quasi-isomorphism.
By \Cref{pr:LnispreComumnsExact}
the morphism of columns \eqref{eq:FzCouzF} induces a quasi-isomorphism
\[
\pi_l\calF_{z}(U)\simeq \pi_l\Cou_{z}(U,L_\nis\calF)
\]
for all $z\in B$.
Our claim follows from 
the equivalences $\calF_{z}\simeq L_{\tf}(\calF_{z})\simeq (L_{\tf}\calF)_{z}$.
\end{proof}

\begin{corollary}\label{cor:CoutfsimeqCou}
For any $\calF\in \SH_{\A^1,\nis}(B)$ and essentially smooth local scheme $U\in \EssSm_B$, 
the canonical morphism \eqref{eq:CoutftoCou}, see \Cref{def:CoutftoCoubi_AND_CoutftoCou},
induces for all $l\in\bbZ$ a quasi-isomorphism
\[
\pi_l\Cou_\tf(U,\calF)\simeq \pi_l\Cou(U,\calF)
\]
in $\Kom(\Ab)$.
\end{corollary}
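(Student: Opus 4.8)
The plan is to reduce the corollary directly to \Cref{th:CoutfsimeqCou}. The point to exploit is that a motivic spectrum $\calF\in\SH_{\A^1,\nis}(B)$, when viewed as a presheaf of spectra on $\EssSm_B$, is already $\A^1$-invariant, Nisnevich local, and quasi-stable framed, so that the localizations $L_\tf$ and $L_\nis$ occurring in \Cref{th:CoutfsimeqCou} act as the identity on it and the statement collapses to the present one.

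First I would make the framed transfers on $\calF$ explicit. By \Cref{cth:SHfreqSH} the unit $\calF\xrightarrow{\simeq}\gamma_*\gamma^*\calF$ is an equivalence in $\SH_{\A^1,\nis}(B)$, and, arguing exactly as in the proof of \Cref{lm:SHAnistoSHfrsAnis} (commuting $\Omega^\infty_\Gm$ past $\gamma_*$), one identifies the presheaf of spectra $U\mapsto\underline{\Map}_{\SH_{\A^1,\nis}(B)}(\Sigma^\infty_{\PP^1}U_+,\calF)$ underlying $\calF$ — the presheaf whose support fibers compute the groups $A^n_Z$ of \eqref{eq:AYXpicalF}, hence the complexes $\Cou(U,\calF)$ and $\Cou_\tf(U,\calF)$ — with $\mathcal G:=\gamma_*\Omega^{\infty,\fr}_{\Gm}\gamma^*(\calF)$. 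This identification is natural in $U$ and compatible with forming fibers along open immersions, so it induces isomorphisms $\Cou(U,\calF)\cong\Cou(U,\mathcal G)$ and $\Cou_\tf(U,\calF)\cong\Cou_\tf(U,\mathcal G)$ in $\Kom(\SHtop)$ that fit into commutative squares with the comparison morphism \eqref{eq:CoutftoCou} on either side.

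Next I would record that $\mathcal G$ meets the hypotheses of \Cref{th:CoutfsimeqCou} with both localizations trivial. It lies in the essential image of $\SHfrs(B)\to\SHs(B)$, hence is quasi-stable framed; it is $\A^1$-invariant and Nisnevich local since $\calF$ is; and because the $\tf$-topology is the cd-topology generated by a subclass of the Nisnevich distinguished squares, any Nisnevich local presheaf of spectra is automatically $\tf$-local. Thus $L_\nis\mathcal G\simeq\mathcal G\simeq L_\tf\mathcal G$, so $\mathcal G$ is a quasi-stable framed object of $\SHsAtf(B)$ and \Cref{th:CoutfsimeqCou} yields, for every $l$, the quasi-isomorphism $\pi_l\Cou_\tf(U,\mathcal G)=\pi_l\Cou_\tf(U,L_\tf\mathcal G)\xrightarrow{\simeq}\pi_l\Cou(U,L_\nis\mathcal G)=\pi_l\Cou(U,\mathcal G)$ in $\Kom(\Ab)$, induced by \eqref{eq:CoutftoCou}. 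Transporting along the isomorphisms of the previous step gives the assertion.

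The only genuinely delicate point is the first step: checking that the plain presheaf of spectra underlying $\calF$ is precisely the one obtained by forgetting the framed transfers from $\Omega^{\infty,\fr}_{\Gm}\gamma^*\calF$, and that this equivalence is natural enough to carry over to the Cousin and $\tf$-Cousin bicomplexes together with the morphism \eqref{eq:CoutftoCou}. This is where \Cref{cth:SHfreqSH} and the bookkeeping behind \Cref{lm:SHAnistoSHfrsAnis} are essential; everything else is formal.
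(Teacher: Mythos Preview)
Your proposal is correct and follows essentially the same route as the paper: the paper's proof is the single line ``The claim follows from \Cref{th:CoutfsimeqCou} by \Cref{lm:SHAnistoSHfrsAnis},'' and you have unpacked exactly that, identifying the underlying presheaf of spectra of $\calF$ with $\gamma_*\Omega^{\infty,\fr}_{\Gm}\gamma^*\calF$ via \Cref{cth:SHfreqSH}/\Cref{lm:SHAnistoSHfrsAnis}, observing it is quasi-stable framed, $\A^1$-invariant and Nisnevich (hence $\tf$-)local, and then invoking \Cref{th:CoutfsimeqCou}. Your identification of the only delicate point---the naturality needed to transport the comparison map \eqref{eq:CoutftoCou}---is on target as well.
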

\begin{proof}
The claim follows from \Cref{th:CoutfsimeqCou} by \Cref{lm:SHAnistoSHfrsAnis}.
\end{proof}

\begin{theorem}\label{thm:trivcohhdimBCous}
For any
quasi-stable framed $\calF\in \SHsAtf(B)$ (resp. $\calF\in \SH_{\A^1,\nis}(B)$) and for any
essentially smooth local scheme $U\in \EssSm_B$ and any integer $l\in\bbZ$,
the cohomology of 
$\pi_l\Cou(U,L_\nis\calF)$ (resp. $\pi_l\Cou(U,\calF)$)
is trivial above the degree $\dim B$.
\end{theorem}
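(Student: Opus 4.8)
The strategy is to reduce the statement to the $\tf$-Cousin complex, where the claim becomes almost tautological because of length considerations. First I would treat the framed case $\calF\in\SHsAtf(B)$; the case $\calF\in\SH_{\A^1,\nis}(B)$ then follows formally via \Cref{lm:SHAnistoSHfrsAnis} exactly as in the proof of \Cref{cor:CoutfsimeqCou}, by replacing $\calF$ with $\Omega^{\infty,\fr}_{\Gm}\gamma^*\calF$ (this does not change the homotopy groups in weight zero, which is what the complex $\pi_l\Cou$ records). So it suffices to prove: for quasi-stable framed $\calF\in\SHsAtf(B)$ and $U\in\EssSm_B$ essentially smooth local, the complex $\pi_l\Cou(U,L_\nis\calF)$ has vanishing cohomology in degrees $> \dim B$.

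The key input is \Cref{th:CoutfsimeqCou}, which gives a quasi-isomorphism
\[
\pi_l\Cou_\tf(U,L_\tf\calF)\xrightarrow{\simeq}\pi_l\Cou(U,L_\nis\calF)
\]
in $\Kom(\Ab)$. Hence the cohomology of $\pi_l\Cou(U,L_\nis\calF)$ agrees with that of $\pi_l\Cou_\tf(U,L_\tf\calF)$ in every degree. Now I would simply invoke the construction of the $\tf$-Cousin complex: by \Cref{def:tfCou} the complex $\Cou_\tf(X,\calF)$ is the sequence
\eqref{eq:seq:res:CtfXF}, indexed by the points $z\in B^{(i)}$ for $i=0,\dots,\dim B$, so it is concentrated in (chain) degrees $0,\dots,\dim B$. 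Consequently $\pi_l\Cou_\tf(U,L_\tf\calF)$ is a complex of abelian groups concentrated in degrees $0$ through $\dim B$, and therefore its cohomology vanishes in all degrees $>\dim B$ for the trivial reason that the underlying groups are zero there. Transporting this back along the quasi-isomorphism from \Cref{th:CoutfsimeqCou} yields $H^i(\pi_l\Cou(U,L_\nis\calF))=0$ for $i>\dim B$, which is the assertion. (If one uses the convention where $\ovCou$ rather than $\Cou$ appears, one gets length $\dim B+2$, consistent with the introduction; the degree bound is unchanged.)

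I do not expect a serious obstacle here, since all the real work has already been done in \Cref{th:FnisExactoffiber,pr:LnispreComumnsExact,th:CoutfsimeqCou}: the point of isolating the $\tf$-Cousin complex is precisely that it has length controlled by $\dim B$ rather than by $\dim U$ (which may be much larger for essentially smooth local $U$). The only mild care needed is the passage between the framed and non-framed formulations and making sure the weight-zero homotopy groups are the ones tracked by $\pi_l$ in \Cref{def:piCou}; this is handled by \Cref{lm:SHAnistoSHfrsAnis} and its following \Cref{rem}, just as in \Cref{cor:CoutfsimeqCou}. Thus the proof is: reduce to the framed case, apply \Cref{th:CoutfsimeqCou} to replace $\Cou$ by $\Cou_\tf$ up to quasi-isomorphism, and observe that $\Cou_\tf(U,-)$ has no terms in degrees above $\dim B$.
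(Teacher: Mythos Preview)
Your proposal is correct and follows essentially the same approach as the paper: invoke \Cref{th:CoutfsimeqCou} (and, for the $\SH_{\A^1,\nis}$ case, the reduction via \Cref{lm:SHAnistoSHfrsAnis}, which is exactly what \Cref{cor:CoutfsimeqCou} packages), then observe that the $\tf$-Cousin complex is concentrated in degrees $0,\dots,\dim B$. The paper's own proof is the one-sentence version of what you wrote.
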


\begin{proof}
The claim follows from \Cref{th:CoutfsimeqCou} and \Cref{cor:CoutfsimeqCou} because the length of the 
complex of abelian groups $\pi_l\Cou_\tf(U,L_\tf\calF)$ is equal to $\dim B$.
\end{proof}

\begin{remark}
We note that $C^\bullet_\tf(-,\calF)$ is a complex of presheaves on $\Sm_B$.
\end{remark}

\begin{corollary}
There is a functor
\[\Sm_B\to D_\mathrm{Zar}(\Sm_B,\mathbb Z); X\mapsto C^\bullet(-,\calF)\] 
such that for any open immersion $j\colon U\to X$ the morphism $j^*$ agrees with 
the termwise morphisms of complexes $C^\bullet(X,\calF)\to C^\bullet(U,\calF)$ induced by morphisms
$\calF_x(X)\to \calF_x(U)$.
\end{corollary}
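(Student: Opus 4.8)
The plan is to \emph{construct} the functor using the $\tf$-Cousin complex, which — unlike the Cousin complex, functorial only for étale morphisms by the remark after \Cref{def:Cou} — is, as recorded in the remark preceding the statement, a complex of presheaves on all of $\Sm_B$. Sheafifying each of its terms for the Zariski topology and passing to the derived category yields an object representing $\Cou_\tf(-,\calF)$ in $D_\mathrm{Zar}(\Sm_B,\mathbb Z)$, hence a functor $\Sm_B\to D_\mathrm{Zar}(\Sm_B,\mathbb Z)$; the task is to identify it with the assignment $X\mapsto\Cou(X,\calF)$ and to compute its effect on open immersions.

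For the identification I would show that the canonical morphism $\Cou_\tf(-,\calF)\to\Cou(-,\calF)$ of \Cref{def:CoutftoCoubi_AND_CoutftoCou}, defined degreewise on the small Zariski (indeed étale) site of every $X\in\Sm_B$, is a Zariski-local quasi-isomorphism. This may be checked on stalks. For $\xi\in X$, the Zariski stalk of $\Cou(-,\calF)$ at $\xi$ is $\Cou(\Spec\mathcal O_{X,\xi},\calF)$: the terms of $\Cou(-,\calF)$ are direct sums of the spectra $\calF_x(-)$, and since open immersions preserve codimension and $\calF$ is continuous (\Cref{ex:TCatisspectra}), the filtered colimit $\varinjlim_{\xi\in V\subseteq X}\Cou(V,\calF)$ agrees term-by-term with the expression defining the Cousin complex of the local scheme; likewise for $\Cou_\tf$. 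Now $\Spec\mathcal O_{X,\xi}$ is an essentially smooth local $B$-scheme, so by \Cref{cor:CoutfsimeqCou} (respectively \Cref{th:CoutfsimeqCou} in the framed setting) the stalk of our morphism is the quasi-isomorphism $\pi_l\Cou_\tf(\Spec\mathcal O_{X,\xi},\calF)\to\pi_l\Cou(\Spec\mathcal O_{X,\xi},\calF)$, for each $l$. Thus the map of complexes of Zariski sheaves is a levelwise quasi-isomorphism, and $X\mapsto\Cou(X,\calF)$ inherits from $\Cou_\tf(-,\calF)$ the structure of the desired functor.

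Finally, for an open immersion $j\colon U\to X$ one must check that this functor sends $j$ to the termwise restriction $\Cou(X,\calF)\to\Cou(U,\calF)$ — the map which on the summand indexed by $x\in X^{(c)}$ is the tautological identification $\calF_x(X)=\calF_x(X_{(x)})=\calF_x(U_{(x)})=\calF_x(U)$ when $x\in U$, and is zero otherwise. Since the functor is transported from $\Cou_\tf$, this amounts to commutativity in $\Kom(\hTCat)$ of the square with horizontal arrows the comparison maps $\Cou_\tf(X,\calF)\to\Cou(X,\calF)$, $\Cou_\tf(U,\calF)\to\Cou(U,\calF)$ and vertical arrows the restrictions along $j$. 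By construction (\Cref{def:CoutftoCoubi_AND_CoutftoCou}, \Cref{lm:bicompexsimeqlcomlex}) the comparison map embeds $\Cou_\tf$ into the zeroth row of the Cousin bicomplex $\Coubi$ and totalizes along the natural identification $\Tot\Coubi\cong\Cou$ of \eqref{eq:diagonalSupbiCousinComplex}; the columns $\Cou_z(-,\calF)$, the horizontal differentials \eqref{eq:vij}, and this identification are each natural for étale morphisms, in particular for $j$, while restriction of $\Cou(-,\calF)$ along $j$ is exactly the termwise map. This yields the square, and with it the corollary. The main obstacle is precisely this last step: because the comparison $\Cou_\tf\to\Cou$ is built only indirectly, through the bicomplex and its totalization, one has to be careful that it restricts along open immersions strictly in $\Kom(\hTCat)$, so that the functoriality transported from $\Cou_\tf$ really recovers the concrete termwise maps; the rest is formal, given \Cref{cor:CoutfsimeqCou} and continuity of $\calF$.
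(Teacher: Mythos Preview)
Your approach is correct and matches the paper's intended argument: the corollary is stated without explicit proof, but is meant to follow immediately from the preceding remark (that $\Cou_\tf(-,\calF)$ is a complex of presheaves on all of $\Sm_B$) together with \Cref{cor:CoutfsimeqCou}, which supplies the Zariski-local quasi-isomorphism on stalks. Your verification that the comparison $\Cou_\tf\to\Cou$ is natural for \'etale (hence open) immersions via the bicomplex construction fills in exactly the detail the paper leaves implicit.
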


\section{Motivic localization and infinite loop spaces}
\label{sect:dimBgeneralisty}

As an application, we extend the generality of 
the main results in \cite{DKO:SHISpecZ} 
for 1-dimensional base schemes to arbitrary ones.
We begin with the fundamental
strict homotopy invariance theorem,
see \cite[Theorem 12.2, Theorem 16.6]{DKO:SHISpecZ}.
\begin{theorem}[$\tf/\nis$-strict homotopy invariance]\label{th:SHIsepnfdB}
If $\calF\in \SH^{\fr,s}_{\A^1,\tf}(B)$,
then $L_\nis\calF$ is $\A^1$-invariant.
The same holds for $\calF\in \SH^{\fr,s,t}_{\A^1,\tf}(B)$.
\end{theorem}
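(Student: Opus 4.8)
The plan is to bootstrap from the fiberwise strict homotopy invariance over the residue fields $k(z)$, $z \in B$, via the $\tf$-Cousin bicomplex machinery developed in \Cref{sect:CousbitfComplexes}. First I would reduce the $(s,t)$-spectrum case to the $S^1$-spectrum case: given $\calF \in \SH^{\fr,s,t}_{\A^1,\tf}(B)$, each bigraded piece $\Gm^{\wedge -l} \wedge \calF$ underlies an $S^1$-spectrum in $\SH^{\fr,s}_{\A^1,\tf}(B)$, and $\A^1$-invariance of $L_\nis\calF$ follows from $\A^1$-invariance of $L_\nis(\Gm^{\wedge -l}\wedge\calF)$ for all $l$; so it suffices to treat $\calF \in \SH^{\fr,s}_{\A^1,\tf}(B)$. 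Next I would observe, following the pattern of \Cref{citedth:SHI(k)}(2)--(3), that it is enough to prove: for every essentially smooth local henselian $U \in \EssSm_B$, the map $L_\nis\calF(U) \to L_\nis\calF(U\times\A^1)$ is an equivalence; and since $L_\nis\calF$ evaluated on henselian local schemes agrees with $\calF$ itself (Nisnevich-locally), this reduces to showing $\calF(U) \to \calF(U\times\A^1)$ is an equivalence on such $U$ once we know $\calF$ is already a Nisnevich sheaf—but $\calF$ is only $\tf$-local, so the actual content is comparing $\calF$ with $L_\nis\calF$ through the Cousin apparatus.

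The core step is to invoke \Cref{th:CoutfsimeqCou}: for quasi-stable framed $\calF \in \SHsAtf(B)$ and essentially smooth local $U$, the canonical map $\pi_l\Cou_\tf(U, L_\tf\calF) \to \pi_l\Cou(U, L_\nis\calF)$ is a quasi-isomorphism for every $l$. Combined with \Cref{th:FnisExactoffiber} (acyclicity of the fiber Cousin complexes, which already bakes in $\A^1$-invariance of $L_\nis\calF|_z$ over each residue field $z$ via the field case \Cref{citedth:SHI(k)}), and \Cref{pr:LnispreComumnsExact} (the columns of the bicomplex compute $\pi_l\calF_z(U) \simeq \pi_l\Cou_z(U,L_\nis\calF)$), one obtains that $\calF(U) \simeq L_\nis\calF(U)$ for essentially smooth local $U$: the augmented Cousin complex $\ovCou(U, L_\nis\calF)$ is acyclic because its totalized bicomplex has acyclic columns. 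Concretely, I would run the argument: $\calF(U) \simeq \Cou_\tf(U, L_\tf\calF)$ in the derived sense is \emph{not} what we want—rather, $\tf$-invariance of $\calF$ gives $\calF(U) \simeq \calF_\tf$-data, and \Cref{th:CoutfsimeqCou} then transports this to $L_\nis\calF(U)$. Replacing $U$ by $U\times\A^1$ throughout and using that $\A^1$-invariance holds fiberwise over each $z$ (so each column, being built from $\pi_l\calF_z(-)$ which is $\A^1$-invariant, is unchanged under $-\times\A^1$), the comparison map $\calF(U) \to \calF(U\times\A^1)$ becomes identified with $L_\nis\calF(U) \to L_\nis\calF(U\times\A^1)$, and the former is an equivalence because $\calF$ itself is $\A^1$-invariant by hypothesis ($\calF \in \SH^{\fr,s}_{\A^1,\tf}(B)$). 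Hence $L_\nis\calF$ is $\A^1$-invariant on henselian local schemes, and by Nisnevich descent on all of $\Sm_B$.

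The main obstacle I anticipate is verifying that the comparison in \Cref{th:CoutfsimeqCou} is \emph{natural} in $U$ for the structural maps we need—specifically, that the projection $U\times\A^1 \to U$ induces a map of $\tf$-Cousin bicomplexes compatible with \eqref{eq:CoutftoCou}, and that the fiberwise $\A^1$-invariance (\Cref{citedth:SHI(k)}, \Cref{th:FnisExactoffiber}) really does kill the relative term $\ovCou(U\times\A^1/U, L_\nis\calF)$ column by column. This requires checking that $(L_\tf\calF)_z \simeq L_\tf(\calF_z)$ and $(L_\nis\calF)_z \simeq L_\nis(\calF_z)$—the latter is exactly \Cref{lm:iushrikcommuteLnis}—so that the columns of the bicomplex over $B$ are genuinely the Cousin complexes of fibers to which the field-level strict homotopy invariance applies. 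A secondary technical point is the passage from quasi-stable framed $S^1$-spectra to the form stated for $\SH^{\fr,s}_{\A^1,\tf}(B)$ and $\SH^{\fr,s,t}_{\A^1,\tf}(B)$, which should follow formally as in the field case using \Cref{lm:SHAnistoSHfrsAnis} and the equivalence $\SHfrsAnis \simeq \SHsAnis$ of \Cref{cth:SHfreqSH}, together with the observation that every object in the essential image of $\SHfrs(B) \to \SHs(B)$ is automatically quasi-stable framed.
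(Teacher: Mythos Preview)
Your approach diverges substantially from the paper's and contains a genuine gap. The paper's proof is almost entirely a black-box citation: as recorded in \cite[Remark 1.22]{DKO:SHISpecZ}, the argument for \cite[Theorem 12.2]{DKO:SHISpecZ} establishes the $\tf/\nis$ strict homotopy invariance over an arbitrary base $B$ as soon as one knows, for every $z\in B$, every $X\in\Sm_B$, $x\in X$, and every $\calF\in\SH^{\fr,s}_{\A^1}(z)$, the single equivalence
\[
\calF\bigl((X^h_x)_z\bigr)\xrightarrow{\ \simeq\ } L_\nis\calF\bigl((X^h_x)_z\bigr).
\]
This is precisely \Cref{lm:F(Uz)Fnis(Uz)} (cf.\ \Cref{rem:lm:F(Uz)Fnis(Uz)}), which in turn rests on the framed-correspondence injectivity of \Cref{cor:InjectivityFrgenfibsmoothloc}. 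The bispectrum case is handled exactly as you suggest, by reducing to the rows. So the paper does not run the Cousin-bicomplex comparison at all for this theorem; \Cref{th:CoutfsimeqCou} and \Cref{pr:LnispreComumnsExact} are downstream applications, not inputs.

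Your direct argument breaks at the step where you assert that ``the augmented Cousin complex $\ovCou(U, L_\nis\calF)$ is acyclic because its totalized bicomplex has acyclic columns.'' Acyclicity of the columns (\Cref{pr:LnispreComumnsExact}) only yields the quasi-isomorphism $\pi_l\Cou_\tf(U,\calF)\simeq\pi_l\Cou(U,L_\nis\calF)$ of \Cref{th:CoutfsimeqCou}; it does \emph{not} make the augmented complex $\ovCou(U,L_\nis\calF)$ acyclic, and indeed \Cref{th:nontrivtermsBBz} exhibits explicit $\calF\in\SHsAN(B)$ and essentially smooth local $U$ for which $H^{\codim_B z}(\pi_l\ovCou(U,\calF))\neq 0$. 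Consequently you cannot extract $\calF(U)\simeq L_\nis\calF(U)$ for merely local $U$ from the bicomplex. Even granting this for henselian local $U$ (which is automatic anyway), your plan to ``replace $U$ by $U\times\A^1$ throughout'' fails because $U\times\A^1$ is not local, so none of \Cref{th:FnisExactoffiber}, \Cref{pr:LnispreComumnsExact}, or \Cref{th:CoutfsimeqCou} apply to it; the fiberwise $\A^1$-invariance over $z$ does not by itself control $L_\nis\calF(U\times\A^1)$. The framework of \cite{DKO:SHISpecZ} circumvents this by an inductive $\tf$-gluing argument in which the only analytic input needed is the fiberwise comparison \Cref{lm:F(Uz)Fnis(Uz)}; that is the route you should take.
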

\begin{proof}
    As noted in \cite[Remark 1.22]{DKO:SHISpecZ} 
    the proof of \cite[Theorem 12.2]{DKO:SHISpecZ} 
    verifies our claim
provided for all $X\in\Sm_B$, $z\in B$, $\calF\in \SH^{\fr,s}_{\A^1}(z)$, 
    there is an equivalence
    \begin{equation}\label{eq:calFXhczsimeqLniscalFXhxz}\calF((X^h_x)_z)\xrightarrow{\calsimeq} L_\nis \calF((X^h_x)_z),\end{equation}
    or, equivalently,
    there are isomorphisms
    \begin{equation}\label{eq:HXhxzF}
    H^n_{\nis}((X^h_x)_z,\calF)\cong 
    \begin{cases} \calF_\nis((X^h_x)_z), & n=0\\
    0, & n>0.
    \end{cases}\end{equation}
    for all $X\in \Sm_B$, $z\in B$, and $\calF\in \Pre^{\Ab}_{\A^1}(\Corr^\fr(z))$.
    The proofs of \eqref{eq:calFXhczsimeqLniscalFXhxz} and \eqref{eq:HXhxzF}
    in \cite[\S 8]{DKO:SHISpecZ}
    use that $\dim B=1$.
    For a general 
    base scheme 
    we conclude using
    \Cref{lm:F(Uz)Fnis(Uz)}, see also \Cref{rem:lm:F(Uz)Fnis(Uz)}.
    
    Since each row of a bispectrum $\calF\in \SH^{\fr,s,t}_{\A^1,\tf}(B)$ defines an object of 
    $\SH^{\fr,s}_{\A^1,\tf}(B)$,
    the claim regarding $\SH^{\fr,s,t}_{\A^1,\tf}(B)$ follows as well.
\end{proof}

\begin{remark}
\Cref{lm:F(Uz)Fnis(Uz)} proves \eqref{eq:HXhxzF}, 
and holds for any essentially smooth local $B$-scheme
as mentioned in \Cref{rem:lm:F(Uz)Fnis(Uz)};
\eqref{eq:calFXhczsimeqLniscalFXhxz} and \eqref{eq:HXhxzF} hold for local schemes of the form $X_{(x)}$ instead of 
the local henselian scheme $X^h_x$ as required.
\end{remark}

\begin{theorem}\label{th:LnisOmegaGmLA1tfFrSGmXgp}
For any $X\in \Sm_B$, there is a natural equivalence
\[\Omega^\infty_{\PP^1}\Sigma^\infty_{\PP^1}X_{+}\simeq 
L_\nis \varinjlim_l \Omega^l_{\Gm}(L_{\A^1,\tf} \Fr(-, X_{+}\wedge\Gm^{\wedge l} ))^\gp.\]
\end{theorem}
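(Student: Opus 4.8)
The plan is to reduce the statement to the $\tf/\nis$-strict homotopy invariance theorem \Cref{th:SHIsepnfdB} together with the framed reconstruction of the stable motivic homotopy category. First I would pass to framed motives: by \Cref{cth:SHfreqSH} and the framed infinite loop space theorem of \cite{five-authors} (in the form available over a general base, cf.\ \cite{Hoyois-localization}), under the equivalence $\SHfrANis(B)\simeq\SHANis(B)$ the spectrum $\Sigma^\infty_{\PP^1}X_+$ corresponds to the framed $\PP^1$-spectrum freely generated by $X_+$; writing $\PP^1\simeq S^1\wedge\Gm$ and regarding this as a $\Gm$-spectrum of framed $S^1$-spectra, its $l$-th level is the framed motive $L^{\fr,s}_{\mot}\bigl(\Fr(-,X_+\wedge\Gm^{\wedge l})^{\gp}\bigr)$, where $L^{\fr,s}_{\mot}$ denotes localization at $\A^1$- and Nisnevich-local equivalences of framed presheaves of $S^1$-spectra and $(-)^{\gp}$ denotes group completion. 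Passing to $\Omega^\infty_{\PP^1}$ and using that each level is already motivically local, this gives a natural equivalence
\[
\Omega^\infty_{\PP^1}\Sigma^\infty_{\PP^1}X_+\;\simeq\;\varinjlim_l\Omega^l_{\Gm}\,L^{\fr,s}_{\mot}\bigl(\Fr(-,X_+\wedge\Gm^{\wedge l})^{\gp}\bigr).
\]

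Next I would split the motivic localization as $L^{\fr,s}_{\mot}\simeq L_\nis\circ L_{\A^1,\tf}$. For any framed presheaf of $S^1$-spectra $\calF$, the object $L_{\A^1,\tf}\calF$ lies in $\SHfrsAtf(B)$, so by \Cref{th:SHIsepnfdB} the presheaf $L_\nis L_{\A^1,\tf}\calF$ is $\A^1$-invariant; being a Nisnevich sheaf, it is therefore motivically local. Moreover the composite $\calF\to L_{\A^1,\tf}\calF\to L_\nis L_{\A^1,\tf}\calF$ is a motivic equivalence, since the first map is an $\A^1$- and $\tf$-local equivalence, the second is a Nisnevich-local equivalence, and the $\tf$-topology is generated by Nisnevich squares, so all of these are motivic equivalences. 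Hence $L_\nis L_{\A^1,\tf}\calF\simeq L^{\fr,s}_{\mot}\calF$; applied to $\calF=\Fr(-,X_+\wedge\Gm^{\wedge l})^{\gp}$ this rewrites the previous display as
\[
\Omega^\infty_{\PP^1}\Sigma^\infty_{\PP^1}X_+\;\simeq\;\varinjlim_l\Omega^l_{\Gm}\,L_\nis L_{\A^1,\tf}\bigl(\Fr(-,X_+\wedge\Gm^{\wedge l})^{\gp}\bigr).
\]

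It then remains to pull $L_\nis$ past the colimit, the $\Gm$-loops, and the group completion. On the noetherian finite-dimensional site $\Sm_B$ the Nisnevich sheafification is finitary, hence commutes with the filtered colimit over $l$; it is left exact, hence commutes with the finite limit $\Omega_\Gm$; and group completion is built from colimits and finite limits, so $L_\nis$ commutes with it as well. Combined with the compatibility of $L_{\A^1,\tf}$ with group completion of framed presheaves (established over Dedekind bases in \cite{DKO:SHISpecZ} and proved the same way here), one obtains
\[
\Omega^\infty_{\PP^1}\Sigma^\infty_{\PP^1}X_+\;\simeq\;L_\nis\varinjlim_l\Omega^l_{\Gm}\bigl(L_{\A^1,\tf}\Fr(-,X_+\wedge\Gm^{\wedge l})\bigr)^{\gp},
\]
which is the asserted equivalence, naturally in $X$.

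The main obstacle is the identification $L^{\fr,s}_{\mot}\simeq L_\nis\circ L_{\A^1,\tf}$ in the second step: it rests entirely on \Cref{th:SHIsepnfdB}, and hence on the results on $\tf$-Cousin complexes and extended compactified framed correspondences developed earlier in the paper. A secondary technical point is to ensure that the framed reconstruction and the recognition principle invoked in the first step are available over an arbitrary base scheme $B$, and not merely over a field.
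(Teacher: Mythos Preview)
Your proposal is correct and follows essentially the same approach as the paper. The paper's own proof is a one-line reduction: ``In view of the $\tf/\Nis$-strict homotopy invariance in \Cref{th:SHIsepnfdB} the claim follows by the argument for \cite[Theorem 16.6]{DKO:SHISpecZ}''; you have in effect unpacked that argument---framed reconstruction, the factorization $L_{\mathrm{mot}}\simeq L_\nis\circ L_{\A^1,\tf}$ via \Cref{th:SHIsepnfdB}, and the commutation of $L_\nis$ with the relevant (co)limits and group completion---and correctly identified \Cref{th:SHIsepnfdB} as the single new input needed to make the Dedekind-base argument go through over a general $B$.
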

\begin{proof}
 In view of the $\tf/\Nis$-strict homotopy invariance in \Cref{th:SHIsepnfdB}
 the claim follows by the argument for \cite[Theorem 16.6]{DKO:SHISpecZ}.
\end{proof}

\section{Zariski stalks of motivic homotopy groups}\label{sect:Zarstalks}

We will now improve the results of the computations of the motivic infinite loop spaces
and connectivity results from \cite{DKO:SHISpecZ},
obtaining results for motivic infinite loop spaces and stable motivic homotopy groups
on semi-local essentially smooth schemes.

To begin, we collect a few observations on semi-local schemes. 
For a finite set of points $P$ of a scheme $X$, denote by $X_{(P)}$ the semi-local scheme of $X$ concerning $P$.

\begin{lemma}\label{lm:upXP}
\par    (1)
    Let $X$ and $Y$ be schemes and $P$, $Q$ be finite sets of points of $X$ and $Y$, 
    respectively.
    If $f\colon X\to Y$ is a morphism that takes points in $P$ to points in $Q$, then $f$ induces a morphism $X_{(P)}\to Y_{(Q)}$.
\par    (2)
    Let $S$ be a semi-local scheme with set of closed points $P$.
    Then the canonical morphism $S_{(P)}\to S$ is an isomorphism.
\par    (3)
    Any morphism $f\colon S\to X$, for a semi-local scheme $S$,
    factors through the semi-local scheme $X_{f(P)}$,
    where $P$ is the set of closed points of $S$.
\end{lemma}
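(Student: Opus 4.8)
The plan is to prove the three assertions in turn, using throughout the description of the semi-local scheme $X_{(P)}$ as the cofiltered limit $\varprojlim_{U} U$ taken over the open neighbourhoods $U$ of $P$ in $X$ (for affine $X = \Spec A$ this is $\Spec$ of the localisation of $A$ at the multiplicative set of elements lying in none of the primes of $P$). For part~(1), given any open $V\subseteq Y$ with $Q\subseteq V$, the hypothesis $f(P)\subseteq Q$ gives $P\subseteq f^{-1}(V)$, so $f^{-1}(V)$ occurs in the diagram defining $X_{(P)}$; composing the canonical morphism $X_{(P)}\to f^{-1}(V)$ with the restriction $f^{-1}(V)\to V$ of $f$ yields a morphism $X_{(P)}\to V$, and these are compatible as $V$ shrinks, so passing to the limit over all such $V$ produces the desired morphism $X_{(P)}\to\varprojlim_{V}V=Y_{(Q)}$.

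For part~(2), the point to isolate is that every point of a semi-local scheme $S$ specialises to a closed point: $S$ is quasi-compact, hence the nonempty closed subset $\overline{\{s\}}$ contains a point closed in $S$, which necessarily lies in $P$. Consequently, if $U\subseteq S$ is open with $P\subseteq U$, then for each $s\in S$ the closure $\overline{\{s\}}$ meets the open set $U$ (at a closed point of $S$), whence $s\in U$; thus $U=S$. This shows that $S$ is a terminal object of the system of open neighbourhoods of $P$ in $S$, so $S_{(P)}=\varprojlim_{U}U\cong S$ and the canonical morphism is an isomorphism. (In ring-theoretic terms: an element of a semi-local ring lying outside every maximal ideal is a unit, so the corresponding localisation is the identity.)

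Part~(3) then follows by combining the first two parts. Apply~(1) to $f\colon S\to X$ with the point sets $P$, the set of closed points of $S$, and $Q:=f(P)$; this produces a morphism $S_{(P)}\to X_{(f(P))}$ compatible with the projections to $X$. By~(2) the canonical morphism $S_{(P)}\to S$ is an isomorphism, so composing gives $S\xrightarrow{\;\cong\;}S_{(P)}\to X_{(f(P))}$, and tracing through the construction in~(1) shows that the composite $S\to X_{(f(P))}\to X$ is precisely $f$, which is the asserted factorisation. All three statements are essentially formal; the only steps requiring genuine care are the topological argument in~(2) that an open subset of a semi-local scheme containing all closed points is the whole scheme, and, in~(3), checking that the morphism one writes down really recovers $f$ and is compatible with the structure maps to $X$ — so I expect no serious obstacle, only careful bookkeeping with the universal properties.
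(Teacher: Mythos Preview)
Your proof is correct and follows exactly the same outline as the paper's: the paper declares parts~(1) and~(2) ``evident'' and says part~(3) follows from (1) and (2), while you have simply supplied the details the paper omits. The only point worth noting is that your argument in~(2) assumes $S$ is quasi-compact so that every point specialises to a closed point; in the paper's setting semi-local schemes are affine (spectra of semi-local rings), so this is automatic.
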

\begin{proof}
    Parts (1) and (2) are evident.
    Part (3) follows from (1) and (2).
\end{proof}

\begin{definition}\label{def:surjsmol}
A morphism of schemes $\widetilde X\to X$ is \emph{surjective on semi-local schemes} if
for any semi-local scheme $S$, the map
\[\mathrm{Map}(S,\widetilde X)\to \mathrm{Map}(S,X)\]
is surjective,
where
$\mathrm{Map}(-,-)$ denotes the set of morphisms of schemes.
Let $\mathrm{SSL}$ be
the class of morphisms of schemes
that is surjective on semi-local schemes.
\end{definition}

\begin{remark}\label{rem:SSL}
By \Cref{def:surjsmol}
the class $\mathrm{SSL}$ is equal to the class of morphisms that have the right lifting property with respect to 
the morphisms of the form $\emptyset\to S$ for all semi-local $S$.
\end{remark}

\begin{lemma}\label{lm:surjsloc}
A morphism of schemes $\widetilde X\to X$
belongs to $\mathrm{SSL}$ if and only if
for each finite set of points $P$ in $X$ there is a morphism $S\to\widetilde X$ such that the diagram \begin{equation}\label{eq:VPwidetildeVV}\xymatrix{& \widetilde X\ar[d]\\S\ar[r]\ar[ru] & X} \end{equation}
commutes,
where $S=X_{(P)}$.
\end{lemma}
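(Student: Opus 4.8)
The plan is to deduce both implications directly from \Cref{def:surjsmol} (equivalently, from the right-lifting-property reformulation in \Cref{rem:SSL}) together with the factorization property recorded in \Cref{lm:upXP}(3); nothing beyond these inputs is needed.

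For the ``only if'' direction I would simply specialize the defining lifting property of $\mathrm{SSL}$ to the test scheme $S=X_{(P)}$ and the canonical morphism $X_{(P)}\to X$. By definition this produces a morphism $X_{(P)}\to\widetilde X$ lying over $X$, which is precisely the commutative triangle \eqref{eq:VPwidetildeVV}.

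For the ``if'' direction, let $S$ be an arbitrary semi-local scheme with (finite) set of closed points $P$, and let $f\colon S\to X$ be any morphism of schemes. By \Cref{lm:upXP}(3), $f$ factors as $S\xrightarrow{g}X_{(f(P))}\xrightarrow{c}X$, where $c$ is the canonical morphism and $f(P)\subseteq X$ is a finite set of points. Applying the hypothesis to the finite set $f(P)$ yields a morphism $h\colon X_{(f(P))}\to\widetilde X$ with $(\widetilde X\to X)\circ h=c$. Then $h\circ g\colon S\to\widetilde X$ satisfies $(\widetilde X\to X)\circ(h\circ g)=c\circ g=f$, so it is a lift of $f$. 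Since $S$ and $f$ were arbitrary, $\widetilde X\to X$ lies in $\mathrm{SSL}$.

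The argument is purely formal, and I do not expect any genuine obstacle: the only real content is the reduction, via \Cref{lm:upXP}(3), of an arbitrary morphism out of a semi-local scheme to one out of the ``universal'' semi-local scheme $X_{(P)}$, and once that lemma is in hand the rest is a short diagram chase with nothing to compute.
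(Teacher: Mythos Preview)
Your proof is correct and follows essentially the same approach as the paper: the ``only if'' direction is immediate by specializing the defining lifting property to $S=X_{(P)}$, and the ``if'' direction is obtained from \Cref{lm:upXP}(3) exactly as you describe. The paper's proof is terser but the content is identical.
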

\begin{proof}
    \Cref{def:surjsmol} claims in an equivalent form that for each semi-local scheme $S$
    there is a morphism $S\to\widetilde X$ such that the diagram \eqref{eq:VPwidetildeVV} commutes.
    So, one implication is immediate.
    The converse implication
    follows by \Cref{lm:upXP}(3).
\end{proof}

\begin{lemma}\label{lm:sLocliftpropertyfamilyclousnessprop}
    The following holds for $\mathrm{SSL}$.
    \begin{itemize}
        \item[(1)] The class $\mathrm{SSL}$ is closed under composition.
        \item[(2)] If $v_2\circ v_1\in \mathrm{SSL}$ for morphisms $V_0\xrightarrow{v_1}V_1\xrightarrow{v_2}V_2$, then $v_2\in \mathrm{SSL}$.
        \item[(3)] The class $\mathrm{SSL}$ is closed under pullbacks.
        \item[(4)] If $v^\prime,{v^\prime}^*(\tilde v)\in \mathrm{SSL}$ 
            for morphisms $v^\prime\colon V^\prime\to V$, $\tilde v\colon \widetilde V\to V$,
            then $\tilde v\in \mathrm{SSL}$.
    \end{itemize}
    Here we denote by ${v^\prime}^*(\tilde v)\colon V^\prime\times_V \widetilde V\to V^\prime$
    the pullback of $\tilde v$ along $v^\prime$.
\end{lemma}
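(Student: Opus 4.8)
The plan is to prove the four closure properties of $\mathrm{SSL}$ by reducing, via \Cref{lm:surjsloc}, to the concrete lifting condition against maps $\emptyset\to S$ with $S$ semi-local, and then pushing around commutative squares. The observations in \Cref{lm:upXP} — especially that a morphism from a semi-local scheme factors through the semi-local scheme of the target on the image of the closed points — will do most of the bookkeeping. I will treat the parts in an order that exploits dependencies: (1), then (3), then (2), then (4) (which combines (2) and (3)).

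For (1), given $\widetilde V_0\xrightarrow{v_1}\widetilde V_1\xrightarrow{v_2}V_2$ both in $\mathrm{SSL}$ and a semi-local $S$ with a map $S\to V_2$, first lift along $v_2$ to get $S\to\widetilde V_1$, and then lift that along $v_1$ to get $S\to\widetilde V_0$; the composite is the desired lift, so $v_2v_1\in\mathrm{SSL}$. For (3), let $\tilde v\colon\widetilde V\to V$ be in $\mathrm{SSL}$ and $v'\colon V'\to V$ arbitrary, forming the pullback ${v'}^*(\tilde v)\colon V'\times_V\widetilde V\to V'$. Given $S\to V'$ with $S$ semi-local, compose with $v'$ to get $S\to V$, lift along $\tilde v$ to $S\to\widetilde V$; the two maps $S\to V'$ and $S\to\widetilde V$ agree over $V$, hence by the universal property of the fibre product they induce $S\to V'\times_V\widetilde V$, which is the required lift over $V'$. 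For (2), suppose $v_2v_1\in\mathrm{SSL}$ for $V_0\xrightarrow{v_1}V_1\xrightarrow{v_2}V_2$, and take $S\to V_2$ with $S$ semi-local; lifting along $v_2v_1$ yields $S\to V_0$, and postcomposing with $v_1$ gives $S\to V_1$ lifting $S\to V_2$ along $v_2$, so $v_2\in\mathrm{SSL}$. Finally (4): given $v'\colon V'\to V$ and $\tilde v\colon\widetilde V\to V$ with $v'\in\mathrm{SSL}$ and ${v'}^*(\tilde v)\colon V'\times_V\widetilde V\to V'$ in $\mathrm{SSL}$, note that the projection $p\colon V'\times_V\widetilde V\to\widetilde V$ satisfies $\tilde v\circ p = v'\circ {v'}^*(\tilde v)$, so $\tilde v\circ p\in\mathrm{SSL}$ by (1); then (2) applied to $V'\times_V\widetilde V\xrightarrow{p}\widetilde V\xrightarrow{\tilde v}V$ gives $\tilde v\in\mathrm{SSL}$.

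The only genuinely delicate point is making sure the lifting condition is literally usable in each step, i.e.\ that "there exists a lift $S\to\widetilde X$ of a given $S\to X$ for every semi-local $S$" is both the definition and what the fibre-product argument produces; here \Cref{def:surjsmol,rem:SSL} and \Cref{lm:surjsloc} already package this, and the fact that $V'\times_V\widetilde V$ receives a map from $S$ is just the ordinary mapping-into-a-fibre-product property, with no finiteness or noetherian hypotheses needed. I do not expect any real obstacle; the main thing to be careful about is that in (2) one must feed an \emph{arbitrary} semi-local $S$ (not merely one of the form $X_{(P)}$) into the hypothesis $v_2v_1\in\mathrm{SSL}$, which is fine since $\mathrm{SSL}$ is defined by the lifting property against all semi-local schemes.
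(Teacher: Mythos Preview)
Your proposal is correct and follows essentially the same approach as the paper. The paper dispatches (1)--(3) by invoking the right-lifting characterization in \Cref{rem:SSL} (equivalently, the surjectivity properties of maps of sets), and proves (4) directly by lifting $s\colon S\to V$ first along $v'$ and then along ${v'}^*(\tilde v)$, then projecting to $\widetilde V$; your argument for (4) via (1) and (2) applied to the factorization $\tilde v\circ p = v'\circ {v'}^*(\tilde v)$ is precisely the same computation packaged modularly.
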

\begin{proof}
    The claims (1)-(3) hold by \Cref{rem:SSL},
    or alternatively, 
    claims (1) and (2) follow from the similar properties for the class of surjective morphisms of sets,
    and claim (3) is straightforward.
    Next, we prove (4).
    Let $S$ be a semi-local scheme and $s\in \mathrm{Map}(S,V)$.
    Then there exist some $s^\prime\in \mathrm{Map}(S,V^\prime)$ such that $v^\prime s^\prime = s$, 
    and also $\tilde s^\prime\in \mathrm{Map}(S,V^\prime\times_V \widetilde V)$ such that 
    ${v^\prime}^*(\tilde v) \tilde s^\prime = s^\prime$.
    Hence, we have $\tilde v \tilde s = s$ for $\tilde s={\tilde v}^*(v^\prime)\tilde s^\prime\in \mathrm{Map}(S,\widetilde V)$.
    Thus, the map $\mathrm{Map}(S,\widetilde V)\to\mathrm{Map}(S,V)$ is surjective.
\end{proof}

\Cref{lm:sLocliftpropertyfamilyclousnessprop} implies that the class $\mathrm{SSL}$ 
defines a site on the category of schemes with enough points given by semi-local schemes.
We consider the intersection of this topology with the Zariski topology.

\begin{definition}\label{def:sZar}
Define the \emph{semi-Zariski topology} on $\Sch_B$ 
over a base scheme $B$
as the subtopology of the Zariski topology generated by the Zariski coverings $v\colon \widetilde X\to X$ in $\Sch_B$ that are surjective on semi-local schemes.
Denote the semi-Zariski topology by $\szar$.
\end{definition}
\begin{lemma}
A Zariski covering
$v\colon \widetilde X\to X$
is a semi-Zariski covering
if and only if
for each finite set of points $P$ in $X$, 
there exists a morphism $X_{(P)}\to\widetilde X$ such that the diagram \eqref{eq:VPwidetildeVV} 
commutes for $S=X_{(P)}$ --- the semi-local scheme of $X$ with respect to $P$.
The schemes $X_{(P)}$, for $X\in\Sm_B$, form a set of points for the $\szar$-topology on $\Sm_B$.
\end{lemma}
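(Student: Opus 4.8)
The plan is to treat the two assertions of the lemma separately. For the characterization of semi-Zariski coverings, I would first note that the Zariski coverings which lie in $\mathrm{SSL}$ already form a pretopology: they contain all isomorphisms and, by \Cref{lm:sLocliftpropertyfamilyclousnessprop}(1),(3) together with \Cref{lm:upXP}(2),(3) (which allow one to transport a lifting $X_{(P)}\to U_i$ along the factorizations $X_{(Q)}\to X_{(P)}\to X$ when forming fibre products and composites), they are stable under composition and base change. Hence the topology generated in \Cref{def:sZar} has precisely these families as its coverings, so for a Zariski covering $v\colon\widetilde X\to X$ being a semi-Zariski covering is equivalent to $v\in\mathrm{SSL}$, which by \Cref{lm:surjsloc} is exactly the stated lifting condition against the maps $X_{(P)}\to X$.

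For the second assertion I would show that the family $\{X_{(P)}\}$, indexed by $X\in\Sm_B$ and finite $P\subset X$, is a conservative family of points of the $\szar$-topos, via the two usual checks. That each $X_{(P)}$ is a point: the stalk functor $\calF\mapsto\calF(X_{(P)})=\varinjlim_{V\supseteq P}\calF(V)$ (colimit over Zariski-open neighbourhoods of $P$) is a filtered colimit of finite limits, hence left exact, so one only needs that every $\szar$-covering splits over $X_{(P)}$. Given $\widetilde X\to X$ in $\szar$ and a morphism $X_{(P)}\to X$, base change yields by the first assertion a $\szar$-covering $\widetilde X\times_X X_{(P)}\to X_{(P)}$, and applying the lifting criterion to the semi-local scheme $X_{(P)}$ itself --- which by \Cref{lm:upXP}(2) equals its own semi-localization at its closed points --- produces the required section $X_{(P)}\to\widetilde X\times_X X_{(P)}$.

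It remains to prove conservativity: if $\phi\colon\calF\to\mathcal G$ is a morphism of $\szar$-sheaves on $\Sm_B$ inducing an isomorphism on every stalk $\calF(X_{(P)})\to\mathcal G(X_{(P)})$, then $\phi_X\colon\calF(X)\to\mathcal G(X)$ is an isomorphism for all $X$. The argument mirrors the Zariski case but with open covers replaced by families of semi-local neighbourhoods. For injectivity, if $\phi_X(a)=0$ then for each finite $P\subset X$ there is an open $V_P\ni P$ with $a|_{V_P}=0$; since every finite subset of $X$ lies in the corresponding $V_P$, the family $\{V_P\to X\}$ is in $\mathrm{SSL}$ by \Cref{lm:surjsloc}, hence a $\szar$-covering on which $a$ vanishes, so $a=0$ by $\szar$-separatedness of $\calF$. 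For surjectivity one lifts a given section of $\mathcal G(X)$ to sections of $\calF$ over such a family, checks agreement on overlaps using the injectivity just established (applied on the semi-local stalks in the intersections), and glues by the $\szar$-sheaf axiom for $\calF$; that $\phi$ carries the glued section to the original one follows from $\szar$-separatedness of $\mathcal G$. The main obstacle is exactly this gluing: because $\szar$ is strictly coarser than the Zariski topology one cannot reduce to a finite subcover, so the whole (infinite) family of semi-local neighbourhoods --- and each of its pairwise intersections --- must be shown to remain a $\szar$-covering throughout.
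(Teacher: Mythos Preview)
Your proposal is correct. For the first assertion both you and the paper invoke \Cref{lm:surjsloc}; you additionally make explicit why the Zariski coverings lying in $\mathrm{SSL}$ already form a pretopology (so that the topology they generate contains no further coverings), a point the paper leaves tacit.

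For the second assertion your route differs from the paper's. The paper argues that if $f$ is an isomorphism on every stalk $X_{(P)}$, then for each $P$ there exists a single Zariski open $X^P\ni P$ on which $f_{X^P}$ is already a bijection, and concludes because $\coprod_P X^P\to X$ is a $\szar$-cover. You instead separate injectivity and surjectivity: for each of these you only need, for each $P$, an open on which that one property holds --- which follows immediately from the filtered-colimit description of the stalk --- and you handle the gluing on overlaps by re-applying the injectivity argument there. This is the standard topos-theoretic verification and is more robust: it avoids the assertion that a bijection on a filtered colimit of sets comes from a bijection at some finite stage, a claim that is not automatic for arbitrary presheaves of sets. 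You also verify explicitly that each $X_{(P)}$ defines a point of the $\szar$-topos (left exactness of the stalk functor and splitting of covers), which the paper's proof does not address. The trade-off is length: the paper's argument is a few lines, while yours has to manage the infinite cover and its overlaps, exactly the obstacle you flag at the end.
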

\begin{proof}
    The first claim follows by \Cref{lm:surjsloc}. To proceed with the second claim, 
    suppose that $f\colon\calF\to\calF^\prime$ is a morphism of presheaves of sets on $\Sm_B$ 
    inducing isomorphisms $\calF(X_{(P)})\cong\calF^\prime(X_{(P)})$ for all $X\in\Sm_B$ 
    and finite set of points $P\subset X$.
    Then, for $X$ and $P$ as above, 
    there exists a Zariski neighborhood $X^P$ of $P$ in $X$
    such that $f$ induces an isomorphism $\calF(X^P)\cong\calF^\prime(X^P)$.
    Since $\coprod_{P\subset X}X^P\to X$ is a $\szar$-covering by the above,
    $f$ induces an isomorphism $\calF(X)\cong\calF^\prime(X)$.
    Thus, $f$ is an isomorphism.
\end{proof}

We identify the $\infty$-category $\cPre(B)=\cPre(\Sm_B)$ 
with the subcategory of continuous presheaves in $\cPre(\EssSm_B)$.
Recall the functor $\gamma\colon\Sm_B\to \Corr^\fr(B)$
from \Cref{subsect:Corrtfr} and
denote by the same symbol the functor
$\gamma\colon\EssSm_B\to\Corr^\fr(\EssSm_B)$.
The following lemma justifies the notation $\gamma_*$ and $\gamma^*$
for the corresponding direct and inverse image functors.

\begin{lemma}\label{lm:gammalsuscont}
The functors 
$\gamma_*\colon \cPrefr(\EssSm_B)\to\cPre(\EssSm_B)$ and $\gamma^*\colon \cPre(\EssSm_B)\to\cPrefr(\EssSm_B)$ 
preserve continuous presheaves.
\end{lemma}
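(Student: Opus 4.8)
The plan is to use the standard description of continuity: a presheaf of spaces on $\EssSm_B$ lies in the essential image of $\cPre(\Sm_B)\hookrightarrow\cPre(\EssSm_B)$ if and only if it carries every cofiltered limit $X=\varprojlim_i X_i$ of schemes $X_i\in\Sm_B$ with affine \'etale transition maps to the filtered colimit $\varinjlim_i\calF(X_i)$ along the maps induced by the projections $X\to X_i$, and likewise for framed presheaves with $\Sm_B$ replaced by $\Corr^\fr$. Two elementary facts are used throughout. First, for $X\in\Sm_B$ the representable $h_X$ is continuous, since $X$ is of finite presentation over $B$, so $\Map_B(\varprojlim_i W_i,X)\simeq\varinjlim_i\Map_B(W_i,X)$. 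Second, the continuous presheaves are closed under colimits in $\cPre(\EssSm_B)$ (and in $\cPrefr(\EssSm_B)$): colimits are computed pointwise, and filtered colimits commute with all colimits. Consequently the continuous extension functor $E\colon\cPre(\Sm_B)\xrightarrow{\ \sim\ }\cPre(\EssSm_B)^{\mathrm{cont}}\hookrightarrow\cPre(\EssSm_B)$, inverse to restriction on its image, preserves colimits.

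For $\gamma_*$ the statement is essentially formal. Since $\gamma$ is the identity on objects, $(\gamma_*\calG)(Y)=\calG(Y)$ for all $Y\in\EssSm_B$, and $\gamma$ sends the projections of a cofiltered system $\{X_i\}$ to the corresponding morphisms of $\Corr^\fr(\EssSm_B)$; hence for $X=\varprojlim_i X_i$ the comparison map $\varinjlim_i(\gamma_*\calG)(X_i)\to(\gamma_*\calG)(X)$ is precisely the one expressing continuity of $\calG$. So $\gamma_*\calG$ is continuous whenever $\calG$ is.

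For $\gamma^*$, recall that $\gamma^*=\operatorname{Lan}_\gamma$ is left adjoint to $\gamma_*$, hence colimit-preserving, and that $\gamma^*h_X\simeq h^\fr(X):=\Map_{\Corr^\fr(\EssSm_B)}(-,X)$. The key input is the continuity of the tangentially framed correspondence presheaves: for $X\in\Sm_B$ one has $\Map_{\Corr^\fr(\EssSm_B)}(\varprojlim_i W_i,X)\simeq\varinjlim_i\Map_{\Corr^\fr(\Sm_B)}(W_i,X)$, which follows by unwinding \Cref{def:tgfr}, since the defining data of such a correspondence --- a finite syntomic $Z\to W$, a map $Z\to X$, and a trivialization of $\calL_{Z/W}$ in $K(Z)$ --- are all of finite presentation over the base; cf.\ \Cref{lm:VoevFrLemmaShvbullet} and \cite{five-authors}. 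Thus $h^\fr(X)$ is continuous for $X\in\Sm_B$. Now given continuous $\calF\in\cPre(\EssSm_B)$ with $G=\calF|_{\Sm_B}$, write $G\simeq\varinjlim_{(X\to G)}h_X$ as the canonical colimit of representables over $X\in\Sm_B$; since $\calF\simeq E(G)$ and $E$ preserves colimits with $E(h_X)=h_X$, we obtain $\calF\simeq\varinjlim_{(X\to G)}h_X$ in $\cPre(\EssSm_B)$. Applying $\gamma^*$ gives $\gamma^*\calF\simeq\varinjlim_{(X\to G)}h^\fr(X)$, a colimit of continuous framed presheaves, hence continuous.

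The one point that requires genuine input is the continuity of the spaces of tangentially framed correspondences over essentially smooth bases; once this is granted (or reduced, via \Cref{lm:VoevFrLemmaShvbullet} and the graded model $\Fr_*$, to the evident continuity of each $\Fr_n(-,X)$ from \Cref{def:VoevFr}), everything else is formal.
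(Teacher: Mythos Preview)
Your overall strategy matches the paper's: both reduce to showing that $h^\fr(X)$ is continuous for $X\in\Sm_B$, then use that $\gamma^*$ preserves colimits and that continuous presheaves are closed under colimits. Your treatment of $\gamma_*$ is also essentially the paper's; the paper simply observes that continuity of a framed presheaf is \emph{defined} via $\gamma_*$.

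The weak point is your justification of the continuity of $h^\fr(X)$. Asserting that the defining data ``are all of finite presentation over the base'' glosses over the only nontrivial ingredient: the trivialization datum is a path in the $K$-theory space $K(Z)$, and one needs the (nontrivial) fact that algebraic $K$-theory is continuous on affine schemes. Your proposed reduction to the continuity of each $\Fr_n(-,X)$ via \Cref{lm:VoevFrLemmaShvbullet} does not work as stated: Voevodsky's Lemma identifies $\Fr_n$ with mapping sets in Nisnevich sheaves of pointed sets, not with mapping spaces in $\Corr^\fr$; the functor $\Fr_*(B)\to\Corr^\fr(B)$ in \eqref{eq:SmBtoFrstarBCorrtfrB} is neither full nor faithful at the level of mapping spaces, so continuity of its source does not formally yield continuity of its target. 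The paper's argument is more careful: it restricts to affine sources, invokes \cite[Proposition 2.3.23]{five-authors} to realize every tangentially framed span over an affine scheme by a normally framed correspondence, cites continuity of the normally framed presheaves $h^{\mathrm{nfr}}(-,X)$ on affine schemes, and uses continuity of the $K$-theory presheaf on affine schemes separately. With that argument substituted for your handwave, your proof coincides with the paper's.
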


\begin{proof}
    By the definition
    a presheaf $\calF$ in $\cPrefr(\EssSm_B)$ is continuous if and only if
    $\gamma_*(\calF)$ is continuous.
    To prove the claim on $\gamma^*$ we note that $h^\fr(X)\in\cPrefr(\EssSm_B)$ 
    is continuous for all $X\in\Sm_B$: 
    It suffices to prove the continuity of $h^\fr(X)$ on affine $B$-schemes.
    Indeed, 
    as shown in \cite[Proposition 2.3.23]{five-authors} 
    each span (as in \Cref{def:tgfr}) 
    \[X\leftarrow Z\rightarrow Y\] 
    for affine $X$
    equals the span
    for some normally framed correspondence 
    \cite[Definition 2.2.2]{five-authors}.
    Since 
    the presheaves of normally framed correspondences $h^\mathrm{nfr}(-,X)$ are continuous on affine $B$-schemes,  
    see \cite[Theorem 5.1.5]{hty-inv},
    and the $K$-theory space presheaf $K(-)$ is continuous on the category of affine schemes,
    the claim follows.
    The representable presheaves generate the category $\cPre(\Sm_B)$ via colimits.
    Hence, the subcategory of continuous presheaves in $\cPre(\EssSm_B)$ is the image of the inverse image functor $\cPre(\Sm_B)\to\cPre(\EssSm_B)$, while the latter functor preserves colimits.
    The subcategories of continuous presheaves in $\cPre(\EssSm_B)$ and $\cPrefr(\EssSm_B)$ are closed with respect to colimits and are generated by presheaves $h(X)\in\cPre(\EssSm_B)$ and $h^\fr(X)\in\cPrefr(\EssSm_B)$ for all $X\in\Sm_B$.
    This finishes the proof.
\end{proof}

By definition,
a presheaf $\calF\in\cPre^{\fr}(B)$ is a \emph{semi-Zariski sheaf} 
if $\gamma_*\calF$ is a semi-Zariski sheaf, 
while the class of semi-Zariski local isomorphisms in $\cPre^{\fr}(B)$  
is generated by the image of the class of semi-Zariski isomorphisms under $\gamma^*$.

\begin{proposition}\label{prop:gammaLZarfrLZargammaHHfrAtf}
For any $\calF\in\cPre^{\fr}(B)$, there is an isomorphism
\[\gamma_* L_\szar^\fr \calF\simeq L_{\szar}\gamma_* \calF.\]   
The same holds for $\calF\in\cSpt^{\fr}(B)$.
\end{proposition}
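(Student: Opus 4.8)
The plan is to reduce the claim about framed presheaves to the already-understood statement for presheaves on $\Sm_B$ (or $\EssSm_B$) by systematically using the compatibility of $\gamma_*$ and $\gamma^*$ with the relevant localizations. First I would recall the two basic facts at our disposal: by definition a framed presheaf $\calF$ is a semi-Zariski sheaf precisely when $\gamma_*\calF$ is one, and the class of semi-Zariski local equivalences in $\cPre^\fr(B)$ is generated by $\gamma^*$ applied to the semi-Zariski local equivalences downstairs. Since $L_\szar^\fr$ is the localization functor for this class, the canonical map $\calF\to L_\szar^\fr\calF$ is a semi-Zariski local equivalence with semi-Zariski-sheaf target. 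Applying $\gamma_*$ and using that $\gamma_*$ sends semi-Zariski-sheaf framed presheaves to semi-Zariski sheaves (by definition) and sends the generating semi-Zariski local equivalences to semi-Zariski local equivalences, one sees that $\gamma_*\calF\to\gamma_*L_\szar^\fr\calF$ is a semi-Zariski local equivalence into a semi-Zariski sheaf; hence it realizes the semi-Zariski localization, giving the asserted isomorphism $\gamma_* L_\szar^\fr\calF\simeq L_\szar\gamma_*\calF$.

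The step requiring care is verifying that $\gamma_*$ carries \emph{all} semi-Zariski local equivalences (not merely the generating ones) to semi-Zariski local equivalences — equivalently that $\gamma_*$ preserves semi-Zariski-acyclic objects, i.e.\ that if $L_\szar^\fr\calG\simeq 0$ then $L_\szar\gamma_*\calG\simeq 0$. I would handle this by checking the statement on the set of points of the $\szar$-topology, namely the semi-local schemes $X_{(P)}$ for $X\in\Sm_B$ and finite $P\subset X$: a map of presheaves is a semi-Zariski local equivalence iff it induces equivalences on all such $X_{(P)}$. This is where \Cref{lm:gammalsuscont} enters, ensuring everything stays within continuous presheaves so that evaluating on $\EssSm_B$-objects such as $X_{(P)}$ is legitimate, and where one uses that $h^\fr(X)$ is generated under colimits by representables together with the fact that $\gamma_*$ commutes with the relevant colimits. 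The content here is essentially that the semi-Zariski topology pulled back along $\gamma$ to $\Corr^\fr(B)$ agrees with the one generated as above — a standard descent/localization bookkeeping argument analogous to \cite[Proposition 3.2.14]{five-authors} for the Nisnevich topology.

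For the final sentence of the statement, the claim for $\calF\in\cSpt^\fr(B)$ follows formally: $\szar$-localization of spectra (framed or not) is computed levelwise from the space-level localization, and $\gamma_*$ commutes with the loop/suspension structure maps, so the equivalence $\gamma_*L_\szar^\fr\calF\simeq L_\szar\gamma_*\calF$ on each space-level term assembles into the desired equivalence of spectra. The main obstacle, as indicated, is the middle step: confirming that $\gamma_*$ detects and preserves semi-Zariski acyclicity. I expect this to follow cleanly once one has set up the comparison of topologies via the points $X_{(P)}$, but it is the one place where a genuine (if routine) argument, rather than a formal manipulation of localization functors, is needed.
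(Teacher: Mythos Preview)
Your proposal is correct and takes essentially the same approach as the paper: show that $\gamma_*$ preserves semi-Zariski sheaves (immediate from the definition) and semi-Zariski local equivalences (by checking on the generators $\gamma^*(\alpha)$ via the points $X_{(P)}$ and then invoking strong saturation), and deduce the spectrum case from compatibility with $\Omega$ and $\Omega^\infty$. The paper pins down the points-check you leave slightly informal by using the commutative square of categories over $\EssSm^{sLoc}_B$, so that $\gamma^*(\alpha)$ restricts to an isomorphism on semi-local schemes simply because $\gamma^*_{sloc}$ preserves isomorphisms.
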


\begin{proof}
    Consider the commutative diagram of $\infty$-categories
    \[\xymatrix{\Corr^\fr(\EssSm^{sLoc}_B)\ar[r]& \Corr^\fr(\EssSm_B)\\ 
    \EssSm^{sLoc}_B\ar[u]^-{\gamma_{sloc}}\ar[r] & \EssSm_B\ar[u]_-\gamma.}\]
    Let
    $\alpha$ be a semi-Zariski local isomorphism in $\cPre(B)$.
    Since $\gamma^*_{sloc}$ preserves isomorphisms, it follows that  
    $\gamma^*(\alpha)$ is an isomorphism on semi-local schemes.
    So
    $\gamma_*\gamma^*(\alpha)$
    is a semi-Zariski local isomorphism. 
    Thus 
    since the class of semi-Zariski local isomorphisms in $\cPre^{\fr}(B)$ is generated by 
    morphisms of the from $\gamma^*(\alpha)$ as above,
    it follows that $\gamma_*$ preserves semi-Zariski isomorphisms.
    Then
    since $\gamma_*$ preserves semi-Zariski sheaves, 
    it follows that $\gamma_*$ commutes with $L_{\szar}$.
    
    The claim for $\cSpt^{\fr}(B)$
    follows from the one for $\cPre^{\fr}(B)$
    because 
    $L_\szar^\fr$ and $\gamma^\fr_*$ commute with 
    $\Omega\colon\cSpt^{\fr}(B)\to\cSpt^{\fr}(B)$
    and
    $\Omega^\infty\colon\cSpt^{\fr}(B)\to\cPre^{\fr}(B)$,
    and similarly
    for $L_\szar$ and $\gamma_*$.
\end{proof}

The following result 
holds for an infinite field $k$
and a local scheme $U=X_{(x)}$
according to \cite[Theorem 3.15(3)]{hty-inv}, 
while 
for semi-local schemes 
the claim is proven in \cite[Theorem 7.0.1]{ccorrs} 
for presheaves with cohomological correspondence transfers.
\begin{theorem}\label{th:slinj}
For any semi-local essentially smooth scheme $U$ over a field $k$
and quasi-stable framed radditive abelian presheaf $\calF$ on $\Sm_k$, 
the homomorphism $\calF(U)\to\calF(U^{(0)})$ is injective.
\end{theorem}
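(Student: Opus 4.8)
The plan is to derive the injectivity from a single framed $\A^1$-homotopy that moves the structure map of $U$ off any prescribed closed subset of positive codimension in an ambient smooth scheme, and then to transport a section that vanishes at the generic point along this homotopy using the $\A^1$-invariance and quasi-stability of $\calF$.

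\emph{Reduction.} Since $\calF$ is radditive and an essentially smooth semi-local $k$-scheme is a finite disjoint union of connected, hence irreducible, ones, it suffices to treat a connected $U$, whose set of generic points $U^{(0)}$ is then a single point $\eta$. By a standard presentation (and, as in \Cref{lm:fcFrofdimensiond=dimX}, shrinking) we may take $U = X_{(P)}$ for $X \in \Sm_k$ affine with trivial tangent bundle and $P \subset X$ a finite set of closed points. Fix $a \in \calF(U)$ with $a|_\eta = 0$. Since $U = \varprojlim_V V$ over the Zariski open neighbourhoods $V$ of $P$ in $X$, continuity gives $\calF(U) = \varinjlim_V \calF(V)$, so $a$ is the restriction of some $\tilde a \in \calF(V)$; likewise $\calF(\eta) = \varinjlim_W \calF(W)$ over dense open $W \subseteq V$, so the hypothesis $a|_\eta = 0$ furnishes a dense open $V \setminus Z \subseteq V$, with $Z$ closed of positive codimension, on which $\tilde a$ restricts to $0$.

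\emph{The moving lemma.} I claim there are $n \ge 0$ and $c \in \ZF_n(U \times \A^1, V)$ with $c \circ i_0 = \sigma^n\,\mathrm{can}$ and $c \circ i_1 = j \circ \tilde c$, where $\mathrm{can}\colon U \to V$ is the canonical morphism, $j\colon V \setminus Z \hookrightarrow V$, $\tilde c \in \ZF_n(U, V \setminus Z)$, and $i_0, i_1\colon U \to U \times \A^1$ are the sections. Taking $B = \Spec k$ in \Cref{cor:InjectivityFrgenfibsmoothloc} (so that the scheme $X_{x,\eta}$ there is the local scheme $X_{(x)}$ and $X_\eta$ is just $X$), this is exactly that statement with the single point $x$ replaced by the finite set $P$. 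I would prove it by running the constructions of \Cref{sect:FocCompFr:Constr} --- \Cref{lm:fcFrofdimensiond=dimX}, \Cref{lm:ovcalXUk}, \Cref{lm:SectionsonCompactifiedobjecthorizotnalandverticleinductions}, \Cref{lm:onedimensionalfocusedcompactified}, \Cref{th:MoveFropsubschgenfibsmoothloc} --- with $U = X_{(P)}$ in place of a local source: the smoothness and codimension conditions imposed there at the closed point are now imposed simultaneously at each of the finitely many closed points of $U$, the auxiliary finite sets of closed points meeting every irreducible component are chosen disjoint from $P$, and the arguments are otherwise unchanged, since they rely only on Serre vanishing for ample line bundles and on the triviality of Picard groups of the affine semi-local loci that occur --- neither of which is affected by passing from one point to finitely many. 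Equivalently, the claim is the evident semi-local strengthening of \cite[Theorem 3.15(3)]{hty-inv}.

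\emph{Conclusion.} Apply $\calF$ to $c$. Since $c \circ i_0$ and $c \circ i_1$ are the restrictions of $c$ along the zero- and one-sections, $\A^1$-invariance of $\calF$ yields $(c \circ i_0)^* = (c \circ i_1)^*\colon \calF(V) \to \calF(U)$. On the one hand $(c \circ i_1)^* \tilde a = \tilde c^*(j^* \tilde a) = \tilde c^*(\tilde a|_{V \setminus Z}) = 0$. On the other hand, by the naturality of the $\sigma$-suspension with respect to morphisms of schemes (cf. \Cref{def:sigmasuspFr}) one has $\sigma^n\,\mathrm{can} = \mathrm{can} \circ \sigma^n_U$, so $(c \circ i_0)^* \tilde a = (\sigma^*_U)^n(\mathrm{can}^* \tilde a) = (\sigma^*_U)^n(a)$. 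Hence $(\sigma^*_U)^n(a) = 0$, and since $\calF$ is quasi-stable the endomorphism $\sigma^*_U$ of $\calF(U)$ is invertible, whence $a = 0$.

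\emph{Main obstacle.} The crux is the semi-local moving lemma of the second step: one must verify that the chain of geometric constructions in \Cref{sect:FocCompCor}, which were set up over a local source, carries over verbatim when the source is semi-local --- i.e. that smoothness and the codimension bounds can be arranged at all closed points of $U$ simultaneously, and that the line bundles trivialised in those proofs remain trivial on the semi-local loci encountered. This is bookkeeping rather than a new idea, since the analytic input (Serre's theorem on ampleness) is insensitive to enlarging a single point to a finite set of points.
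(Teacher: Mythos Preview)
Your overall strategy matches the paper's: reduce to a semi-local moving lemma in $\overline{\ZF}$, then conclude via $\A^1$-invariance and quasi-stability. The difference is in how the moving lemma is obtained. The paper does \emph{not} adapt its own \Cref{sect:FocCompCor} to a semi-local source; instead, for infinite $k$, it imports a ready-made relative curve $\overline\calC\to U$ from \cite[Lemma~7.0.5]{ccorrs} (already formulated over a semi-local base), constructs sections $s,\widetilde s,s',\delta$ following \cite[Proof of Lemma~7.0.3]{ccorrs}, and then applies a correction $\langle\nu^{-1}\rangle$ for some unit $\nu\in k[U]^\times$ so that the $0$-fiber becomes $[\sigma^n\mathrm{can}]$ rather than $[\langle\nu\rangle\mathrm{can}]$; the finite-field case is treated separately via the transfer argument of \cite[Lemma~6.19]{nonperfect-SHI}. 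Your route---specializing \Cref{cor:InjectivityFrgenfibsmoothloc} to $B=\Spec k$ and replacing the single point $x$ by the finite set $P$---is a legitimate alternative and would make the argument internal to the paper; the semi-local bookkeeping in \Cref{lm:ovcalXUk} (impose smoothness and the prescribed jets at all closed points simultaneously; in the base case $d=1$ the scheme $Z$ is still semi-local affine, so $\mathrm{Pic}$ vanishes) goes through as you say. Two points you should not leave implicit: first, the matrix $\tau$ at the end of the proof of \Cref{cor:InjectivityFrgenfibsmoothloc} now lies in $\mathrm{GL}_n(U)$ rather than over a local ring, and reducing to $[\sigma^n\mathrm{can}]$ still requires an $\langle\nu\rangle$-type $\A^1$-homotopy---exactly the step the paper performs; second, you do not mention finite fields, whereas the paper handles them separately because \cite{ccorrs} assumes $k$ infinite---if your adaptation of \Cref{sect:FocCompCor} truly carries no such hypothesis (and indeed the Serre-vanishing arguments there do not), you should say so, since that would actually streamline the proof relative to the paper's.
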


\begin{proof}[Proof]
It suffices to consider semi-local schemes of the form $U=X_{(x_0,\dots,x_l)}$ for $X\in\Sm_k$, 
$x_0,\dots,x_l\in X$, 
by taking filtered limits.
By the argument of \cite[Theorem 3.15(3)]{hty-inv} it suffices to construct, 
for any closed subscheme $Y\not\hookrightarrow X$ of positive codimension,
and for $N\gg 0$, 
a framed correspondence \[c\in\ZF_N(U, X-Y)\] such that the $\A^1$-homotopy class in $\ZF(U,X)$ 
of the composite of $c$ with the open immersion $j\colon X-Y\to X$ equals the class of the $\sigma$-suspension 
of the canonical morphism $\mathrm{can}\colon U\to X$, i.e.,
\begin{equation}\label{eq:cj}[j\circ c]=[\mathrm{can}]\in \overline\ZF(U,X)\end{equation}
where $\ZF(U,X)=\varinjlim_{n}\ZF_n(U,X)$ and $\overline\ZF(U,X)=\ZF(U,X)/\ZF(U\times\A^1,X)$.
Suppose that $k$ is infinite. Let us start with the diagram 
\[
X\xleftarrow{v}\calC\xrightarrow{j}\overline\calC\xrightarrow{p}
U\]
and an ample line bundle $\calO_{\overline\calC}(1)$ on $\overline\calC$
provided by \cite[Lemma 7.0.5]{ccorrs} that satisfies the respective list of properties (1)-(6).
So, 
according to the properties (1),(4) $p$ is a projective morphism of dimension one that is smooth over $\calC$, 
and $D=\overline\calC\setminus\calC$ is finite over $U$.
The argument from \cite[Proof of Lemma 7.0.3]{ccorrs}
gives us sections
\begin{center}
\begin{tabular}{@{}>{$}l<{$}>{$}l<{$}>{$}l<{$}>{$}l<{$}@{}}\toprule
s\in \Gamma(\overline\calC,\calO(n) ) &
\widetilde s\in \Gamma(\overline\calC\times\stackrel{\lambda}{\A^1},\calO(l) ) &
s^\prime \in \Gamma(\overline\calC,\calO(l)\otimes\mathscr L(\Delta)^{-1}) &
\delta\in\Gamma(\overline\calC,\mathscr L(\Delta))  \\
\midrule
Z(s|_{\calZ\amalg D})=\varnothing & \widetilde s|_{\overline\calC\times 0}=s & Z(s'|_{\calZ\amalg D\amalg\Delta})=\varnothing & Z(\delta)=\Delta \\
 & \widetilde s|_{\overline\calC\times1}=s'\otimes\delta &  & \\
& \widetilde s|_{D\times\A^1}=s & &\\
\bottomrule
\end{tabular}
\end{center}
Here $\calZ=v^{-1}(Y)$, $\Delta$ is the graph of $\mathrm{can}$, and $\mathscr L(\Delta)$ is the line bundle on $\overline \calC$ corresponding to the divisor $\Delta$.
Consider a closed immersion $\overline\calC\not\hookrightarrow\PP^N_U$ provided by the ampleness of 
$\mathcal O_{\overline\calC}(1)$,
and a retraction $r\colon V\to\calC$ for an \'etale neighborhood $V$ of ${\calC}$ in $\A^N_U$ provided by smoothness of $\calC$, see \cite[Theorem I.8]{Gru72}.
Let $\phi\in \mathcal O_{V}(V)$, $\widetilde\phi \in \mathcal O_{V\times\A^1}(V\times\A^1)$ 
be such that \[\phi\big|_{\calC}=s/d^l, \quad \widetilde\phi\big|_{\calC}=\widetilde s/d^l,\]
and define $\phi^\prime$ as the inverse image of $\widetilde\phi$ along the composite 
$\A^N_U\cong\A^N_U\times\{1\}\hookrightarrow\A^N_U\times\A^1$.
Denote by $\mathrm{pr}_V\colon {V\times\A^1}\to{V}$ and $\mathrm{pr}_U\colon {U\times\A^1}\to{U}$ the canonical projections.
We write $e$ for $v\circ r$ and its restrictions on open subschemes of $V$, and $\widetilde e$ for $v \circ  r\circ\mathrm{pr}_V$ and its restrictions on open subschemes of $V\times\A^1$.
We define the framed correspondences
\begin{align*}
\Upsilon& = (Z(s'), V-\Delta, \phi^\prime ,e)\in\ZF_{N}(U,X-Y),\\ 
\Phi'&= (Z(s), V, \phi ,e)-\Upsilon\in\ZF_{N}(U,X-Y),\\
\Theta'&= (Z(\widetilde s), \widetilde\phi ,\widetilde e)-\Upsilon\circ \mathrm{pr}_U\in\ZF_N(U\times\A^1,X).
\end{align*}
Similarly to \cite[Proof of Lemma  7.0.3]{ccorrs}, the properties of the sections above imply that
\[
\Theta^\prime\circ i_0=j\circ \Phi^\prime
\in \ZF(U,X)
,
\quad
[\Theta^\prime\circ i_1]=
[\langle\nu\rangle^*(\mathrm{can})]\in \overline\ZF(U,X)
\]
and
for some $\nu\in k[U]^\times$ (see \Cref{ex:langleurangle} for $\langle\nu\rangle$).
Letting
$\Phi=\langle\nu^{-1}\rangle^*(\Phi^\prime)$, 
$\Theta= \langle\nu^{-1}\rangle^*(\Theta')$,
we obtain \eqref{eq:cj} 
since
\[
\Theta\circ i_0=j\circ \Phi,\quad
[j]=[\Theta\circ i_1].
\]

For a finite field $k$, the claim follows from the result over infinite extensions of $k$ by the argument in
\cite[Proof of Lemma 6.19]{nonperfect-SHI}.
\end{proof}

\begin{theorem}
\label{th:Prefr(k)SGA1nis_sZar}

There are equivalences
of endofunctors
\begin{equation}\label{eq:gammaOmegaSigmazar:k}
\OmegaSigma_{\A^{1},\nis} 
\calsimeq  
L_{\nis} L_{\A^1}\gamma_*\gamma^*
\calsimeq
L_{\szar} L_{\A^1} \gamma_*\gamma^*
\end{equation}
on the $\infty$-categories $\cPre(k)$ and $\cSpts(k)$, 
and
equivalences
of endofunctors
\begin{equation}\label{eq:OmegaSigmazf:k}
\OmegaSigma^{\fr}_{\A^{1},\nis} 
\calsimeq
L_{\nis}^\fr L_{\A^1}
\calsimeq
L_{\szar}^\fr L_{\A^1} 
\end{equation}
on the $\infty$-categories $\cPrefr(k)$ and $\cSptsfr(k)$.

\end{theorem}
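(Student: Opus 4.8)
The plan is to reduce the whole statement to the two framed identities in \eqref{eq:OmegaSigmazf:k} and then transport them to \eqref{eq:gammaOmegaSigmazar:k} via the forgetful functor $\gamma_*$. First I would establish the first equivalence in \eqref{eq:OmegaSigmazf:k}. Given $\calF\in\cPrefr(k)$ (resp.\ $\cSptsfr(k)$), the object $L_{\A^1}\calF$ is an $\A^1$-invariant framed presheaf, so by strict homotopy invariance over $k$ (\Cref{citedth:SHI(k)}, in the grouplike resp.\ spectral form) the presheaf $L^\fr_\nis L_{\A^1}\calF$ is again $\A^1$-invariant; being a Nisnevich sheaf by construction, it lies in the subcategory of $\A^1$-invariant Nisnevich sheaves, i.e.\ it is motivically local. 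Since the canonical map $\calF\to L^\fr_\nis L_{\A^1}\calF$ factors as the $\A^1$-localization map followed by the Nisnevich localization map --- hence is a motivic equivalence --- the universal property of the motivic localization $\OmegaSigma^\fr_{\A^1,\nis}$ yields a natural equivalence $\OmegaSigma^\fr_{\A^1,\nis}\calF\xrightarrow{\simeq}L^\fr_\nis L_{\A^1}\calF$.

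For the second equivalence $L^\fr_\szar L_{\A^1}\simeq L^\fr_\nis L_{\A^1}$ I would show that the natural comparison $L^\fr_\szar L_{\A^1}\calF\to L^\fr_\nis L_{\A^1}\calF$ (which exists because $L^\fr_\nis L_{\A^1}\calF$ is a Nisnevich sheaf, hence a $\szar$-sheaf, as $\szar$ is coarser than the Nisnevich topology) is an equivalence. As both sides are $\szar$-sheaves, it suffices to check that $L_{\A^1}\calF\to L^\fr_\nis L_{\A^1}\calF$ is a $\szar$-local equivalence, i.e.\ that its cofiber $\calC$ has trivial $\szar$-stalks. Writing $\calG=L_{\A^1}\calF$ and $\calH=L^\fr_\nis\calG$, the object $\calC=\cofib(\calG\to\calH)$ is again $\A^1$-invariant and framed, continuous (as $L_{\A^1}$ and $L^\fr_\nis$ preserve continuity, cf.\ \Cref{lm:gammalsuscont}), and satisfies $L^\fr_\nis\calC\simeq 0$; in particular $\calC$ vanishes on every Henselian local essentially smooth scheme, so $\calC(\Spec k(\xi))\simeq 0$ for every residue field $k(\xi)$. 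Since the semi-local schemes $X_{(P)}$, $X\in\Sm_k$, form a set of points for the $\szar$-topology and $\calC$ is continuous, the $\szar$-stalk of $\calC$ at $X_{(P)}$ is $\calC(X_{(P)})$; and by \Cref{th:slinj} applied to each $\pi_n\calC$ (a quasi-stable framed radditive abelian presheaf on $\Sm_k$) there is an injection $\pi_n\calC(X_{(P)})\hookrightarrow\pi_n\calC(X_{(P)}^{(0)})=\bigoplus_\xi\pi_n\calC(k(\xi))=0$. Hence $\calC(X_{(P)})\simeq0$ for all semi-local essentially smooth schemes, so $\calG\to\calH$ is a $\szar$-equivalence, and the desired equivalence of endofunctors follows. (For presheaves of spaces one argues with the grouplike objects, or invokes unstable strict homotopy invariance, so that \Cref{th:slinj} still applies; the argument is otherwise unchanged.)

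Finally, the non-framed equivalences \eqref{eq:gammaOmegaSigmazar:k} follow by applying $\gamma_*$ to \eqref{eq:OmegaSigmazf:k}. Here I would use that $\gamma_*$ commutes with $L_{\A^1}$ and with $L_\nis$ (see \cite[Proposition 3.2.14]{five-authors}) and with $L_\szar$ (\Cref{prop:gammaLZarfrLZargammaHHfrAtf}), while $\gamma_*,\gamma^*$ preserve continuous presheaves (\Cref{lm:gammalsuscont}); this gives $\gamma_* L^\fr_\nis L_{\A^1}\gamma^*\simeq L_\nis L_{\A^1}\gamma_*\gamma^*$ and likewise with $L_\szar$ in place of $L_\nis$. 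The identification of $\gamma_*\OmegaSigma^\fr_{\A^1,\nis}\gamma^*$ with $\OmegaSigma_{\A^1,\nis}$ is then the recognition principle (\Cref{cth:SHfreqSH} together with \Cref{lm:SHAnistoSHfrsAnis}). I expect the main obstacle to be the second equivalence in \eqref{eq:OmegaSigmazf:k}: the delicate point is to convert the Nisnevich-local triviality of $\calC$ into pointwise triviality on semi-local schemes, which is precisely where the semi-local injectivity \Cref{th:slinj} --- rather than mere Nisnevich descent --- is indispensable, and where continuity of framed presheaves must be used to identify the $\szar$-stalks with the values on the semi-local schemes.
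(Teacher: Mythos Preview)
Your proposal is correct and follows essentially the same route as the paper: both arguments reduce the second equivalence to the semi-local injectivity result \Cref{th:slinj}, by showing that the comparison map $L_{\A^1}\calF\to L^\fr_\nis L_{\A^1}\calF$ is an equivalence on semi-local essentially smooth $k$-schemes (the paper phrases this as agreement of values, you phrase it via the cofiber, but the content is identical). The only organizational difference is that the paper cites \cite[Theorems 3.5.14, 3.5.16]{five-authors} directly for the first equivalences in both \eqref{eq:gammaOmegaSigmazar:k} and \eqref{eq:OmegaSigmazf:k}, then deduces the second equivalence in \eqref{eq:gammaOmegaSigmazar:k} first and passes to \eqref{eq:OmegaSigmazf:k} via \Cref{prop:gammaLZarfrLZargammaHHfrAtf}; you go in the opposite order.

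One small point to correct: your invocation of \Cref{cth:SHfreqSH} and \Cref{lm:SHAnistoSHfrsAnis} for the identification $\gamma_*\OmegaSigma^\fr_{\A^1,\nis}\gamma^*\simeq\OmegaSigma_{\A^1,\nis}$ is not quite apt, since those results concern the $\PP^1$-stable category rather than $\cPre(k)$ or $\cSpts(k)$. The relevant input is precisely \cite[Theorems 3.5.14, 3.5.16]{five-authors}, which is what the paper cites; this also covers the unstable case without the grouplike hypothesis you flag in your parenthetical.
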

\begin{proof}
    The first equivalences in \eqref{eq:gammaOmegaSigmazar:k} and \eqref{eq:OmegaSigmazf:k} are provided by \cite[Thereoms 3.5.14, 3.5.16]{five-authors}.
    Hence
    \[\OmegaSigma^{\fr}_{\A^{1},\nis}\calF(E)\calsimeq L_{\A^1}\calF(E)\] 
    for any $\calF\in\cPrefr(k)$ and $0$-dimensional reduced schemes $E\in\EssSm_k$
    because the Nisnevich topology on the small \'etale site over $E$ is trivial.
    Since the functors $\OmegaSigma^{\fr}_{\A^{1},\nis} $ and $L_{\A^1}$ land in $\A^1$-invariant objects, 
    \Cref{th:slinj} implies there is an equivalence
    \[\OmegaSigma^{\fr}_{\A^{1},\nis}\calF(U)\calsimeq L_{\A^1}\calF(U)\]
    for any semi-local $U\in\EssSm_k$.
    This implies the second isomorphism in \eqref{eq:gammaOmegaSigmazar:k}.
    The second isomorphism in \eqref{eq:OmegaSigmazf:k} follows by \Cref{prop:gammaLZarfrLZargammaHHfrAtf}.
\end{proof}

The $\zf$-topology is generated by the Zariski topology and the $\tf$-topology introduced in \cite{SHfrzf}.
We also consider the $\szf$-topology generated by the semi-Zariski topology and the $\tf$-topology.
That is, we have 
\[
\zf=\zar\cup\tf,\quad \szf=\szar\cup\tf.
\]
Next, we recall the reconstruction theorem \cite[Theorem 3.5.12]{five-authors} and extend to the $\szf$-topology 
an equivalence shown for the $\zf$-topology in \cite{SHfrzf}.

\begin{theorem}[\protect{\cite[Theorem 3.5.12]{five-authors}, \cite[Theorem 4.42]{SHfrzf}}]\label{th:SHfrszar}
There are equivalences
\[
\SH_{\A^1,\nis}(B)\simeq
\SH^\fr_{\A^1,\nis}(B)\simeq
\SH^\fr_{\A^1,\zf}(B)\simeq
\SH^\fr_{\A^1,\szf}(B)
\]
\end{theorem}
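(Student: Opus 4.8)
The plan is to obtain the statement from the first two equivalences, which are quoted from the literature: $\SH_{\A^1,\nis}(B)\simeq\SH^\fr_{\A^1,\nis}(B)$ is the reconstruction theorem, see \Cref{cth:SHfreqSH}, and $\SH^\fr_{\A^1,\nis}(B)\simeq\SH^\fr_{\A^1,\zf}(B)$ is \cite[Theorem 4.42]{SHfrzf}. Only the last link $\SH^\fr_{\A^1,\zf}(B)\simeq\SH^\fr_{\A^1,\szf}(B)$ needs a proof, and it suffices to treat framed $S^1$-spectra, the $\PP^1$-stable (and $(s,t)$-bispectrum) statements following by passing to rows exactly as in the proof of \Cref{th:SHIsepnfdB}. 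Since $\szar\subseteq\zar$ by \Cref{def:sZar}, the topologies $\szar$, $\tf$, and hence $\szf=\szar\cup\tf$, are all coarser than the Nisnevich topology; thus $\SH^{\fr,s}_{\A^1,\nis}(B)$ and $\SH^{\fr,s}_{\A^1,\zf}(B)$ sit as reflective subcategories of $\SH^{\fr,s}_{\A^1,\szf}(B)$, and I would reduce to showing that every $\A^1$-invariant $\szf$-local framed $S^1$-spectrum $\calF$ on $\Sm_B$ is Nisnevich local. Granting this, $\calF$ is automatically $\zf$-local by \cite[Theorem 4.42]{SHfrzf}, so both remaining arrows become equivalences.

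To prove that such an $\calF$ is Nisnevich local, I would first note that $\calF$ is $\tf$-local, so $L_\nis\calF$ is $\A^1$-invariant by the $\tf/\nis$-strict homotopy invariance theorem \Cref{th:SHIsepnfdB}. Set $\mathcal{G}=\mathrm{Cone}(\calF\to L_\nis\calF)$, formed in the stable $\infty$-category of framed $S^1$-spectra. Then $\mathcal{G}$ is $\A^1$-invariant, $L_\nis\mathcal{G}\simeq 0$, and $\mathcal{G}$ is both $\tf$-local and $\szar$-local, being a cofibre of a map between the $\szar$-local object $\calF$ and the Nisnevich local object $L_\nis\calF$ (Nisnevich local implies $\zar$-local implies $\szar$-local). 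The goal becomes $\mathcal{G}\simeq 0$. Since $\mathcal{G}$ is $\szar$-local and the essentially smooth semi-local $B$-schemes form a conservative family of points for the $\szar$-topology (the remark following \Cref{def:sZar}), it is enough to show $\mathcal{G}(S)\simeq 0$ for every essentially smooth semi-local $S$ over $B$. I emphasise that the $\szar$-locality of $\calF$ is genuinely used here: a merely $\tf$-local $\mathcal{G}$ with $L_\nis\mathcal{G}\simeq0$ need not vanish on semi-local schemes, so this is precisely the extra input carried by the $\szf$-topology.

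For the vanishing on a fixed semi-local $S$ I would run the $\tf$-descent argument of \Cref{lm:preiushrikcommuteLnis}: the $\tf$-locality of $\mathcal{G}$ equips $\mathcal{G}(S)$ with a finite filtration whose associated convergent spectral sequence reads
\[
\bigoplus_{z\in B^{(i)}}\pi_{l+i}\mathcal{G}_z(S)\Rightarrow\pi_l\mathcal{G}(S),
\]
so it suffices that $\mathcal{G}_z(S)\simeq 0$ for all $z\in B$. By \Cref{lm:tisFC,lm:iushrikcommuteLnis}, $\mathcal{G}_z(S)$ is computed by the framed presheaf $\tilde i^!j^*\mathcal{G}$ over the residue field $k(z)$, which is $\A^1$-invariant, quasi-stable, and has vanishing Nisnevich sheafification because $\tilde i^!j^*$ commutes with $L_\nis$ on quasi-stable framed objects. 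Hence $\tilde i^!j^*\mathcal{G}$ vanishes on $0$-dimensional reduced $k(z)$-schemes, and then on every essentially smooth semi-local $k(z)$-scheme by applying the semi-local injectivity theorem \Cref{th:slinj} to each homotopy presheaf $\pi_l(\tilde i^!j^*\mathcal{G})$, exactly as in \Cref{lm:InjectivitSpectrapresheaves} but in semi-local generality (equivalently, one invokes \Cref{th:Prefr(k)SGA1nis_sZar} over $k(z)$). This forces $\mathcal{G}_z(S)\simeq 0$, and the spectral sequence then gives $\mathcal{G}(S)\simeq 0$, completing the argument.

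The main obstacle I expect is the bookkeeping around the fibre functors $\tilde i^!j^*$ and the category $\SHsAtf(\Sm_{B,z})$: one must make sure that $\tilde i^!j^*\mathcal{G}$ really is an $\A^1$-invariant quasi-stable framed presheaf over $k(z)$ to which \Cref{th:slinj} and strict homotopy invariance over fields apply, and that it commutes with $L_\nis$ in the $\tf$-local (not just Nisnevich-local) setting — this is precisely what \Cref{lm:tis,lm:tisFC,lm:iushrikcommuteLnis} are designed to supply, so the real work is checking that their hypotheses hold here. A secondary, routine point is extending the finite $\tf$-descent filtration of \Cref{lm:preiushrikcommuteLnis} from local henselian $B$-schemes to arbitrary essentially smooth (in particular semi-local) $B$-schemes, which uses only the $\tf$-locality of $\mathcal{G}$. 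Finally, I note that this whole argument is structurally the proof of \cite[Theorem 4.42]{SHfrzf} with the Zariski topology replaced by the semi-Zariski topology and the field-level Zariski descent statement replaced by \Cref{th:Prefr(k)SGA1nis_sZar}; one could therefore also present it more briefly as ``the same argument as in \cite[Theorem 4.42]{SHfrzf}.''
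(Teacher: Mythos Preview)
Your handling of the first two equivalences matches the paper exactly. For the third equivalence $\SH^\fr_{\A^1,\zf}(B)\simeq\SH^\fr_{\A^1,\szf}(B)$, however, the paper takes a different and considerably shorter route: it argues by induction on $\dim B$, applying the localization theorem \cite[Theorem 3.18]{SHfrzf} to the $\szf$-topology so as to reduce to the case where $B$ is a field, and then concludes directly from \Cref{th:Prefr(k)SGA1nis_sZar}. In other words, the paper never unwinds a $\tf$-descent spectral sequence or passes through the fibre functors $\tilde i^!j^*$; all of that is packaged into the localization machinery already established in \cite{SHfrzf}.

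Your approach is essentially the strategy behind \cite[Theorem 4.42]{SHfrzf} transported from $\zar$ to $\szar$, as you yourself observe, and it is morally correct. What it buys is a self-contained argument that does not invoke the abstract localization theorem. What it costs is the bookkeeping you flag: the lemmas \Cref{lm:tisFC,lm:iushrikcommuteLnis,lm:preiushrikcommuteLnis} are stated for essentially smooth \emph{local} (henselian) schemes, and you need their semi-local analogues to conclude $\mathcal G_z(S)\simeq 0$; likewise one must pass from $\Smat_{B,z}$ to $\Sm_z$ via \Cref{lm:ovis} before \Cref{th:slinj} applies. None of these extensions is deep, but the paper's induction avoids them entirely by reducing to fields in one step.
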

\begin{proof} 
The first equivalence is \cite[Theorem 3.5.12]{five-authors}, and the second one is \cite[Theorem 4.42]{SHfrzf}. 
To prove the third equivalence, we use induction on the Krull dimension of $B$. 
Applying the localization theorem \cite[Theorem 3.18]{SHfrzf} to the $\szf$-topology, 
we may assume that $B$ is a field and conclude by appealing to \Cref{th:Prefr(k)SGA1nis_sZar}.
\end{proof}

Consider the subcategories of $\tf$-local objects $\cPrefrtf(B)$ and $\cSptsfrtf(B)$ in
the $\infty$-categories of presheaves of spaces $\cPrefr(B)$ and of spectra $\cSptfr(B)$ 
on the $\infty$-category $\Corr^\fr(B)$.
Consider 
the
$\infty$-category of respective bispectra
$\cSpt^{\fr,s,t}_\tf(B)=\cSptsfrtf(B)[\Gm^{\wedge 1}]$,
and the adjunctions of $\infty$-categories
\begin{equation}\label{eq:SigmaOmega_tf_GmANis}
\cSptsfrtf(B) 
\rightleftarrows
\cSptstfrtf(B)
\rightleftarrows
\cSptstfrAN(B).
\end{equation}
We denote by $\OmegaSigma^{\tf,\fr}_{\A^{1},\nis}$
the unit of the composite adjunction of \eqref{eq:SigmaOmega_tf_GmANis}
and use the same notation for the adjunction
\[
\cPrefrtf(B) 
\rightleftarrows
\cSptsfrtf(B).
\]

\begin{proposition}
\label{prop:Pretffr(B)SGA1nis_Zar}
There is a natural isomorphism of endofunctors
\begin{equation}\label{eq:OmegaSigmazf}
\OmegaSigma^{\tf,\fr}_{\A^{1},\nis} 
\calsimeq 
L_{\szf}^\fr\OmegaSigma^{\tf,\fr}_{\A^{1},\tf} 
\end{equation}
on the $\infty$-categories $\cPrefrtf(B)$ and $\cSptsfrtf(B)$.

For any 
$\calF\in\cPrefrtf(B)$ (resp. $\cSptsfrtf(B)$)
there is an equivalence
\begin{equation}\label{eq:gammaOmegaSigmazar}
\gamma_*\OmegaSigma^{\tf,\fr}_{\A^{1},\nis}\calF 
\calsimeq
L_{\szar}\gamma_*\OmegaSigma^{\tf,\fr}_{\A^{1},\tf}\calF 
\end{equation}
in $\cPretf(B)$ (resp. $\cSptstf(B)$). 
\end{proposition}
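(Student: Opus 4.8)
The plan is to establish \eqref{eq:OmegaSigmazf} first and to deduce \eqref{eq:gammaOmegaSigmazar} from it by applying $\gamma_*$ and invoking \Cref{prop:gammaLZarfrLZargammaHHfrAtf}; the statement \eqref{eq:OmegaSigmazf} should be viewed as the $\tf$-local analogue over a general base of the field-level identity \eqref{eq:OmegaSigmazf:k} in \Cref{th:Prefr(k)SGA1nis_sZar}, and its proof runs parallel to that of \Cref{th:SHfrszar}.

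For \eqref{eq:OmegaSigmazf} I would set $H=\OmegaSigma^{\tf,\fr}_{\A^{1},\tf}\calF$, which by construction is an $\A^1$-invariant $\tf$-local framed object, i.e.\ an object of $\SHfrsAtf(B)$ (resp.\ $\SHfrstAtf(B)$), and whose Nisnevich localization is $\OmegaSigma^{\tf,\fr}_{\A^{1},\nis}\calF$. By the $\tf/\nis$-strict homotopy invariance theorem \Cref{th:SHIsepnfdB}, $L_\nis H$ is again $\A^1$-invariant, so $\OmegaSigma^{\tf,\fr}_{\A^{1},\nis}\calF\simeq L_\nis H$. Since a $\tf$-square is a Nisnevich square and $\szar\subseteq\zar\subseteq\nis$, we have $\szf\subseteq\nis$, so $L_\nis H$ is $\szf$-local and there is a canonical comparison map $L^\fr_\szf H\to L_\nis H$. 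It then remains to show this map is an equivalence, equivalently that every $\szf$-local object of $\SHfrsAtf(B)$ (resp.\ $\SHfrstAtf(B)$) is already Nisnevich-local. I would prove this by induction on $\dim B$. The base case $\dim B=0$ is the second equivalence of \eqref{eq:OmegaSigmazf:k} in \Cref{th:Prefr(k)SGA1nis_sZar} (over a field the $\tf$-topology is trivial, so $\szf=\szar$). For the inductive step I would use the Cousin-complex computations of \Cref{sect:CousComplexProof}: for an essentially smooth local $U\in\EssSm_B$, \Cref{th:CoutfsimeqCou} together with \Cref{pr:LnispreComumnsExact} identifies the Nisnevich cohomology of $H$ on $U$ with the cohomology of the $\tf$-Cousin complex $\pi_l\Cou_\tf(U,H)$, whose terms are the fibrewise groups $\pi_l H_z(U)$, $z\in B$; by \Cref{th:LnisFExactonfiber} (applied over the residue fields, using \Cref{th:slinj} for semi-local injectivity) these groups are unchanged by $L_\nis$, so $H\to L_\nis H$ is an $\szf$-local equivalence. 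Equivalently, one may run the reduction to fibres through the localization theorem for the $\tf$-topology exactly as in the proof of \Cref{th:SHfrszar}, keeping $\A^1$-invariance along the restriction functors via \Cref{th:SHIsepnfdB}. The case of $\cSptsfrtf(B)$ then follows from that of $\cPrefrtf(B)$ row by row, as in \Cref{th:SHIsepnfdB} and \Cref{prop:gammaLZarfrLZargammaHHfrAtf}.

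For \eqref{eq:gammaOmegaSigmazar} I would first observe that the semi-Zariski localization of a $\tf$-local $\A^1$-invariant framed presheaf is again $\tf$-local — the Zariski counterpart of \Cref{th:SHIsepnfdB}, cf.\ \cite{DKO:SHISpecZ} — so that $L^\fr_\szf H\simeq L^\fr_\szar H$. Applying $\gamma_*$ to \eqref{eq:OmegaSigmazf} and using \Cref{prop:gammaLZarfrLZargammaHHfrAtf} then gives
\[
\gamma_*\OmegaSigma^{\tf,\fr}_{\A^{1},\nis}\calF
\simeq
\gamma_* L^\fr_\szf H
\simeq
\gamma_* L^\fr_\szar H
\simeq
L_\szar\gamma_* H
=
L_\szar\gamma_*\OmegaSigma^{\tf,\fr}_{\A^{1},\tf}\calF,
\]
and all terms lie in $\cPretf(B)$ (resp.\ $\cSptstf(B)$); the spectral case again reduces to the unstable one. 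The hard part will be the inductive Cousin-complex step in \eqref{eq:OmegaSigmazf}: showing that over an arbitrary finite-dimensional base scheme, semi-Zariski-and-$\tf$ sheafification has the same effect as Nisnevich sheafification on $\A^1$-invariant $\tf$-local framed presheaves. This is precisely the statement that the acyclicity of the Cousin bicomplex columns (\Cref{th:FnisExactoffiber}, \Cref{th:LnisFExactonfiber}) and the injectivity result \Cref{cor:InjectivityFrgenfibsmoothloc} for framed correspondences over local bases were built to supply, and the delicate point throughout is propagating $\A^1$-invariance through the various localization functors and through the reduction to fibres, which ultimately rests on \Cref{th:SHIsepnfdB}.
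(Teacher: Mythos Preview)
Your overall architecture---prove \eqref{eq:OmegaSigmazf} first, then deduce \eqref{eq:gammaOmegaSigmazar} by applying $\gamma_*$ and \Cref{prop:gammaLZarfrLZargammaHHfrAtf}---matches the paper. So does the identification $\OmegaSigma^{\tf,\fr}_{\A^{1},\nis}\calF\simeq L_\nis H$ with $H=\OmegaSigma^{\tf,\fr}_{\A^{1},\tf}\calF$; the paper cites \cite[Theorem~15.10]{DKO:SHISpecZ} for this, while you derive it from \Cref{th:SHIsepnfdB}, which is fine.

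The divergence is in how you establish $L^\fr_{\szf}H\simeq L_\nis H$. The paper does \emph{not} invoke the Cousin-complex machinery of \Cref{sect:CousComplexProof} here. Instead it argues formally: \cite[Theorem~15.4]{DKO:SHISpecZ} shows $\OmegaSigma^{\tf,\fr}_{\A^{1},\tf}$ preserves Nisnevich (hence $\szf$-)local equivalences, yielding $\OmegaSigma^{\tf,\fr}_{\A^{1},\szf}\simeq L^\fr_{\szf}\OmegaSigma^{\tf,\fr}_{\A^{1},\tf}$; then the equivalence $\SH^\fr_{\A^1,\nis}(B)\simeq\SH^\fr_{\A^1,\szf}(B)$ from \Cref{th:SHfrszar} immediately gives $\OmegaSigma^{\tf,\fr}_{\A^{1},\nis}\simeq\OmegaSigma^{\tf,\fr}_{\A^{1},\szf}$. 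All the induction on $\dim B$ and the reduction to the field case \Cref{th:Prefr(k)SGA1nis_sZar} is already packaged inside \Cref{th:SHfrszar}. This is precisely the ``localization theorem'' alternative you mention in passing; that throwaway remark is in fact the paper's entire argument.

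Your primary line via Cousin complexes, by contrast, is not sharp enough as stated. The results \Cref{th:CoutfsimeqCou} and \Cref{pr:LnispreComumnsExact} compare $\pi_l\Cou_\tf(U,H)$ with $\pi_l\Cou(U,L_\nis H)$ for \emph{local} $U$; they do not compute $H(U)$ or $L_\nis H(U)$ themselves, nor do they address semi-local $U$, which is what $\szf$-locality demands. You invoke \Cref{th:slinj} for semi-local injectivity, but that is a statement over a field about $\calF(U)\to\calF(U^{(0)})$, and you have not explained how it yields $H(U)\simeq L_\nis H(U)$ on semi-local $U$ over a general base. The passage from acyclicity of Cousin bicomplex columns to an $\szf$/$\nis$ comparison on sections would need additional argument that is absent from your sketch. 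Finally, for the spectral case the paper does not go ``row by row'' but instead uses semi-additivity of $\Corr^\fr(B)$ to identify $\cSptsfrtf(B)$ with group-like objects in $\cPrefrtf(B)$, reducing immediately to the unstable statement.
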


\begin{proof}
Consider the case of \eqref{eq:OmegaSigmazf} for the $\infty$-category $\cPrefrtf(B)$.
By \cite[Theorem 15.10]{DKO:SHISpecZ}  we have
\[
\OmegaSigma^{\tf,\fr}_{\A^{1},\nis} 
\calsimeq 
L_{\nis}^\fr\OmegaSigma^{\tf,\fr}_{\A^{1},\tf}. 
\]
Then, by \cite[Theorem 15.4]{DKO:SHISpecZ},
$\OmegaSigma^{\tf,\fr}_{\A^{1},\tf}$ preserves Nisnevich local equivalences.
Therefore, $\OmegaSigma^{\tf,\fr}_{\A^{1},\tf}$ preserves Zariski local equivalences,
and hence also $\szf$-local equivalences.
Using \cite[Theorem 15.4]{DKO:SHISpecZ} 
we conclude that
\[
\OmegaSigma^{\tf,\fr}_{\A^{1},\szf} 
\simeq
L_{\szf}^\fr\OmegaSigma^{\tf,\fr}_{\A^{1},\tf} 
\]
Finally, since $\SH^\fr_{\nis}(B)\simeq \SH^\fr_{\szf}(B)$ by \Cref{th:SHfrszar},
there is an equivalence
\[\OmegaSigma^{\tf,\fr}_{\A^{1},\nis}\simeq
\OmegaSigma^{\tf,\fr}_{\A^{1},\szf}.\]
This implies \eqref{eq:OmegaSigmazf} for $\cPrefrtf(B)$.
We use this to prove our claim for $\cSptsfrtf(B)$:
Since the $\infty$-category $\Corr^\fr(B)$ is semi-additive, see \cite[Lemma 3.2.5]{five-authors},
$\cSptsfr(B)$ is equivalent to the subcategory $\cPrefr(B)^\mathrm{gp}$ of group-like objects in $\cPrefr(B)$,
and, consequently,
$\cSptsfrtf(B)\simeq\cPrefrtf(B)^\mathrm{gp}$.

The equivalence \eqref{eq:gammaOmegaSigmazar} follows from \eqref{eq:OmegaSigmazf}
by \Cref{prop:gammaLZarfrLZargammaHHfrAtf}.
\end{proof}

Denote by $\OmegaSigma^{}_{\A^{1},\nis}$
the unit of the composite adjunction of the sequence
\[
\cPre(B) 
\rightleftarrows
\cSpts(B)
\rightleftarrows
\cSptst(B)
\rightleftarrows
\cSptstAN(B).
\]
For a scheme $X\in\Sm_B$, 
we denote by the same symbol
the representable presheaf in $\cPre(B)$.
The standard loop-suspension adjoint functors $\Sigma^\infty_{\PP^1}$ and $\Omega^\infty_{\PP^1}$ yield the equivalence
\[
\OmegaSigma^{}_{\A^{1},\nis}X\calsimeq\Omega^\infty_{\PP^1}\Sigma^\infty_{\PP^1}X_+.\]
\begin{theorem}\label{th:LzarOmegaGmLA1tfFrSGmXgp}
For any $X\in \Sm_B$, there are natural equivalences in $\catPre(B)$
\begin{equation}\label{eq:ThetaLzarLsZar}
\begin{array}{lcl}
\OmegaSigma^{}_{\A^{1},\nis}X&\simeq& 
L_\szar \varinjlim_l \Omega^l_{\Gm}(L_{\A^1,\tf} \Fr(-, X_{+}\wedge\Gm^{\wedge l} ))^\gp\\
&\simeq&L_\zar \varinjlim_l \Omega^l_{\Gm}(L_{\A^1,\tf} \Fr(-, X_{+}\wedge\Gm^{\wedge l} ))^\gp.
\end{array}\end{equation}
For any semi-local scheme $U\in\EssSm_B$, there is an equivalence of spaces
\begin{equation}\label{eq:ThetasemilclimGmLatfFr}
(\OmegaSigma^{}_{\A^{1},\nis}X)(U)\simeq
\varinjlim_l \Omega^l_{\Gm}(L_{\A^1,\tf} \Fr(U, X_{+}\wedge\Gm^{\wedge l} ))^\gp.\end{equation} 
\end{theorem}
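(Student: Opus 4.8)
The plan is to run the proof of \Cref{th:LnisOmegaGmLA1tfFrSGmXgp} with the semi-Zariski localization in place of the Nisnevich one, and then obtain the Zariski statement and the semi-local evaluation formally. Write
\[
G \;:=\; \varinjlim_l \Omega^l_{\Gm}\bigl(L_{\A^1,\tf}\Fr(-, X_{+}\wedge\Gm^{\wedge l})\bigr)^{\gp}\in\catPre(B),
\]
regarded on $\EssSm_B$ by continuity. First I would recall the framed-motives presentation used in the proof of \Cref{th:LnisOmegaGmLA1tfFrSGmXgp} (the argument of \cite[Theorem 16.6]{DKO:SHISpecZ} together with the framed-motives and cancellation theorems of \cite{hty-inv,SHfrzf}), which identifies $G$ with the image under the forgetful functor of the $\tf$-$\A^1$-local framed $(S^1,\Gm)$-loop space of $X$; precisely, $G\simeq\gamma_*\OmegaSigma^{\tf,\fr}_{\A^1,\tf}\calF_X$ with $\calF_X:=L_{\tf}\gamma^*h(X)\in\cPrefrtf(B)$. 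In particular $G$ lies in the essential image of the constructions to which \Cref{prop:Pretffr(B)SGA1nis_Zar} applies.

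Next I would feed $\calF_X$ into \eqref{eq:gammaOmegaSigmazar} of \Cref{prop:Pretffr(B)SGA1nis_Zar} to get
\[
\gamma_*\OmegaSigma^{\tf,\fr}_{\A^1,\nis}\calF_X\;\simeq\;L_{\szar}\gamma_*\OmegaSigma^{\tf,\fr}_{\A^1,\tf}\calF_X\;\simeq\;L_{\szar}G.
\]
The left-hand side is a Nisnevich sheaf: the functor $\OmegaSigma^{\tf,\fr}_{\A^1,\nis}$ takes values in Nisnevich-local framed $S^1$-spectra, since the last adjunction in \eqref{eq:SigmaOmega_tf_GmANis} has target $\cSptstfrAN(B)$, and $\gamma_*$ preserves Nisnevich sheaves. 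Hence $L_{\szar}G$ is already Nisnevich-local, and therefore also Zariski-local. Since the unit $G\to L_{\szar}G$ is a semi-Zariski local equivalence, and since the Zariski and Nisnevich topologies both refine the semi-Zariski topology, this map is a Zariski and a Nisnevich local equivalence as well; as its target is respectively Zariski- and Nisnevich-local, this identifies $L_{\szar}G\simeq L_{\zar}G\simeq L_{\nis}G$ by the universal property of the localizations. Combined with \Cref{th:LnisOmegaGmLA1tfFrSGmXgp}, which gives $\OmegaSigma_{\A^1,\nis}X\simeq\Omega^\infty_{\PP^1}\Sigma^\infty_{\PP^1}X_+\simeq L_{\nis}G$, this yields the two equivalences of \eqref{eq:ThetaLzarLsZar}.

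For the value on a semi-local $U\in\EssSm_B$, I would first reduce by continuity to the case $U=X_{(P)}$ with $X\in\Sm_B$ and $P\subset X$ a finite set of points, every semi-local essentially smooth $B$-scheme being such a cofiltered limit with affine \'etale transition maps. As recorded in the lemma preceding \Cref{prop:gammaLZarfrLZargammaHHfrAtf}, the schemes $X_{(P)}$ form a set of points for the $\szar$-topology on $\Sm_B$, and the $\szar$-stalk of a presheaf at $X_{(P)}$ is its value there (a filtered colimit over Zariski neighborhoods of $P$, computed by continuity). Since $\szar$-sheafification preserves stalks, $(L_{\szar}G)(X_{(P)})\simeq G(X_{(P)})$, and because $\varinjlim$, $(-)^{\gp}$ and $\Omega_{\Gm}$ are all computed objectwise in presheaves, $G(X_{(P)})=\varinjlim_l\Omega^l_{\Gm}(L_{\A^1,\tf}\Fr(X_{(P)},X_{+}\wedge\Gm^{\wedge l}))^{\gp}$; together with the previous paragraph this gives \eqref{eq:ThetasemilclimGmLatfFr}.

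The hard part does not lie in this argument but upstream: the whole strength of the statement is concentrated in \Cref{prop:Pretffr(B)SGA1nis_Zar}, which itself rests --- through \Cref{th:Prefr(k)SGA1nis_sZar} and \Cref{th:SHfrszar} --- on \Cref{th:slinj}, the injectivity of $\calF(U)\to\calF(U^{(0)})$ for semi-local essentially smooth $U$ over a field. Given that input, the only step in the present proof demanding real care is the compatibility bookkeeping of the first paragraph: verifying that the concrete colimit $G$ genuinely computes $\gamma_*\OmegaSigma^{\tf,\fr}_{\A^1,\tf}\calF_X$, so that \Cref{prop:Pretffr(B)SGA1nis_Zar} can be applied to it.
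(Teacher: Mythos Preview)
Your argument is correct and follows the same strategy as the paper: both proofs reduce everything to \Cref{prop:Pretffr(B)SGA1nis_Zar} applied to the framed presheaf underlying $G$, together with the identification of $G$ with $\gamma_*\OmegaSigma^{\tf,\fr}_{\A^1,\tf}\calF_X$ inherited from the proof of \cite[Theorem 16.6]{DKO:SHISpecZ}. The only difference is in the order of the deductions: the paper first obtains the $L_{\szar}$-equivalence, then the semi-local evaluation \eqref{eq:ThetasemilclimGmLatfFr}, and finally specializes the latter to local $U$ to deduce the $L_{\zar}$-equivalence; you instead observe directly that $L_{\szar}G$ is already Nisnevich-local (hence Zariski-local), so that $L_{\szar}G\simeq L_{\zar}G\simeq L_{\nis}G$ follows at once from the fact that $\szar\subset\zar\subset\nis$. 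Your route is marginally cleaner for the Zariski statement but is not substantively different.
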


\begin{proof}
\Cref{prop:Pretffr(B)SGA1nis_Zar} shows the first equivalence in \eqref{eq:ThetaLzarLsZar}, 
cf. \cite[Theorem 16.6]{DKO:SHISpecZ}), 
which in turn implies the equivalence in \eqref{eq:ThetasemilclimGmLatfFr}.
Moreover, 
the second equivalence in \eqref{eq:ThetaLzarLsZar}
follows from \eqref{eq:ThetasemilclimGmLatfFr} for local schemes $U\in\EssSm_B$.
\end{proof}

Recall that $\pi^{\A^1,\nis}_{i,j}(\Sigma^\infty_{\PP^1} X_+)$
denotes the presheaf of stable motivic homotopy groups on $\Sm_B$ for $X\in\Sm_B$,
see \eqref{eq:piijANisCalFU}.
\begin{theorem}\label{cor:ShifteddimBConnectivityZar}
    Suppose that $X\in\Sm_B$, 
    $i<-\dim B+j$, and $j\in\bbZ$.
    Then, for any essentially smooth semi-local
     $B$-scheme $U$,
    we have
    \[\pi^{\A^1,\nis}_{i,j}(\Sigma^\infty_{\PP^1} X_+)(U)=0.\]
    Equivalently, 
    the presheaf
    $\pi^{\A^1,\nis}_{i,j}(\Sigma^\infty_{\PP^1} X_+)$
    is semi-Zariski locally trivial.
\end{theorem}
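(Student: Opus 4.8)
The plan is to combine the semi-local computation of the infinite motivic loop space in \Cref{th:LzarOmegaGmLA1tfFrSGmXgp} with the connectivity bound encoded by the $\tf$-Cousin complex; along the way this recovers and sharpens the henselian-local vanishing of \cite{ConnBase}. Since $\Sigma^\infty_{\PP^1}X_+$ is compact in $\SH_{\A^1,\nis}(B)$, the presheaf $\pi^{\A^1,\nis}_{i,j}(\Sigma^\infty_{\PP^1}X_+)$ is continuous, and the semi-local schemes $X'_{(P)}$ of smooth $B$-schemes $X'$ form a conservative family of points for the semi-Zariski topology (the lemma following \Cref{def:sZar}); so it suffices to prove $\pi^{\A^1,\nis}_{i,j}(\Sigma^\infty_{\PP^1}X_+)(U)=0$ for $U$ essentially smooth semi-local, and establishing this is the same as the asserted semi-Zariski local triviality.

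Fix such a $U$. By invertibility of $\PP^1$ in $\SH_{\A^1,\nis}(B)$ and the definition \eqref{eq:piijANisCalFU},
\[
\pi^{\A^1,\nis}_{i,j}(\Sigma^\infty_{\PP^1}X_+)(U)\;\cong\;\pi_{i-j}\bigl(\Map_{\SH_{\A^1,\nis}(B)}(\Sigma^\infty_{\PP^1}U_+,\,\Gm^{\wedge -j}\wedge\Sigma^\infty_{\PP^1}X_+)\bigr).
\]
By the reconstruction equivalence \Cref{cth:SHfreqSH}, the motivic spectrum $\Gm^{\wedge -j}\wedge\Sigma^\infty_{\PP^1}X_+$ is obtained by applying $\Omega^{j}_\Gm$ to the framed $\Gm$-spectrum underlying $\Sigma^\infty_{\PP^1}X_+$, whose $l$-th level is the framed $S^1$-spectrum of $X_+\wedge\Gm^{\wedge l}$. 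Running the argument behind \Cref{prop:Pretffr(B)SGA1nis_Zar} (equivalently \cite[Theorem 16.6]{DKO:SHISpecZ}), which applies to an arbitrary framed motivic spectrum rather than only to $\Sigma^\infty_{\PP^1}X_+$, and commuting $\Omega^{j}_\Gm$ past the resulting filtered colimit, one obtains on the semi-local scheme $U$ an identification
\[
\pi^{\A^1,\nis}_{i,j}(\Sigma^\infty_{\PP^1}X_+)(U)\;\cong\;\varinjlim_{l}\ \pi_{i-j}\Bigl(\Omega^{l+j}_\Gm\bigl(L_{\A^1,\tf}\Fr(-,\,X_+\wedge\Gm^{\wedge l})\bigr)^{\gp}(U)\Bigr),
\]
\emph{without} Zariski or Nisnevich sheafification. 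This absence of sheafification is exactly the content of the ``Moreover'' clause of \Cref{th:LzarOmegaGmLA1tfFrSGmXgp}, i.e.\ \eqref{eq:ThetasemilclimGmLatfFr}, and of the strict homotopy invariance \Cref{th:SHIsepnfdB} underlying it; it is the crux of the argument, for over an arbitrary scheme the colimit would have to be $L_\zar$-sheafified, acquiring Zariski cohomology of $U$ in degrees up to $\dim U$, whereas over a semi-local scheme only the $\tf$-direction contributes.

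It remains to apply a connectivity estimate. For every $Y\in\Sm_B$ the presheaf of spectra $\bigl(L_{\A^1,\tf}\Fr(-,Y)\bigr)^{\gp}$ is $(-\dim B)$-connective on all of $\EssSm_B$: the group-completed framed presheaf $\Fr(-,Y)^{\gp}$ is connective, $\A^1$-localization preserves connectivity, and $\tf$-localization lowers connectivity by at most $\dim B$, since the $\tf$-topology has cohomological dimension $\le\dim B$ --- the same finiteness that yields the length-$(\dim B+1)$ $\tf$-Cousin complex of \Cref{def:tfCou} and the vanishing above degree $\dim B$ in \Cref{thm:trivcohhdimBCous}, cf.\ \cite{DKO:SHISpecZ}. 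Because $\Gm$-(de)suspension preserves $(-\dim B)$-connectivity (each $\Omega^{m}_\Gm\mathcal{H}(U)$ is built by finitely many (co)fiber sequences from the values of $\mathcal{H}$ on $U\times\Gm^{\times|m|}$), every term in the colimit above is $\pi_{i-j}$ of a $(-\dim B)$-connective spectrum, hence trivial once $i-j<-\dim B$, that is $i<-\dim B+j$. A filtered colimit of trivial abelian groups is trivial, so $\pi^{\A^1,\nis}_{i,j}(\Sigma^\infty_{\PP^1}X_+)(U)=0$; together with the first paragraph this proves both assertions.

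The step I expect to be the main obstacle is making the colimit formula of \Cref{th:LzarOmegaGmLA1tfFrSGmXgp} precise for the $\Gm^{\wedge -j}$-twisted spectrum over semi-local (as opposed to local henselian) schemes: one must verify that the construction underlying \Cref{prop:Pretffr(B)SGA1nis_Zar} and \cite{DKO:SHISpecZ} is insensitive to a negative $\Gm$-twist, that the interchanges of $\Omega_\Gm$ with filtered colimits and with the localization functors $L_{\A^1}$, $L_\tf$, $L_\szar$ are legitimate, and that the ``Moreover'' clause of \Cref{th:LzarOmegaGmLA1tfFrSGmXgp} persists for this twisted spectrum (its proof is written for $\Sigma^\infty_{\PP^1}X_+$ but passes through the general \Cref{prop:Pretffr(B)SGA1nis_Zar}). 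The $\tf$-connectivity bound is the other point requiring care, although it amounts to the $\tf$-Cousin-complex input already developed above.
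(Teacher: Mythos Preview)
Your proposal is correct and follows essentially the same route as the paper: both arguments combine the semi-local description of the infinite loop space from \Cref{th:LzarOmegaGmLA1tfFrSGmXgp} with the fact that $L_{\A^1,\tf}$ drops connectivity by at most $\dim B$ (the paper cites this as \cite[Proposition~16.10]{DKO:SHISpecZ}, which is exactly the $\tf$-cohomological-dimension input you describe). Your treatment of the $\Gm^{\wedge -j}$-twist via \Cref{prop:Pretffr(B)SGA1nis_Zar} and reindexing of the colimit is more explicit than the paper's two-line proof, but the underlying mechanism is identical; your closing caveats about commuting $\Omega_{\Gm}$ with the various localizations are legitimate bookkeeping points rather than genuine obstacles, since for the connectivity estimate one may restrict to $l$ with $l+j\geq 0$ and use that $\Omega_{\Gm}$ is a finite limit of evaluations.
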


\begin{proof}
    Since by \cite[Proposition 16.10]{DKO:SHISpecZ} 
    the functor $L_{\A^1,\tf}$ takes connective objects to $(-\dim B)$-connective ones,
    the claim follows from \Cref{th:LzarOmegaGmLA1tfFrSGmXgp}.
\end{proof}

\section{A family of non-acyclic Cousin complexes}\label{section:counterex}
In this section, we aim to show that the Cousin complex over a positive dimensional base is generally non-acyclic. 
Throughout,   
the base scheme $B$ has dimension $d>0$,
and except for in \Cref{th:nontrivtermsBBz}
we assume that $B$ is local with closed point $z$.
We will work with 
\begin{equation}\label{eq:calFOmegaSimeqBBz}
\calF = \Omega^\infty_{\Gm}\SigmainftyT(B/(B-z))\in\SHsAN(B).
\end{equation}
We aim to show that the associated Cousin complex $\pi_l C^\bullet(B,\calF)$ is non-acyclic for $l=\codim_B z$.

Given $U\in\EssSm_B$,
a non-empty closed immersion $Z\not\hookrightarrow B$, 
with $U_Z:=U\times_B Z$,
we consider the canonical map of motivic spaces
\[U/(U-U_Z)\to B/(B-Z)\to B/(B-z).\]
We denote by $c_Z$ the corresponding element in 
\begin{equation*}
\label{eq:nontrivgroup}
[\SigmainftyT U/(U-U_Z),\SigmainftyT(B/(B-z))]_{\SHPAN(B)}
\cong\pi_0\calF_{U_Z}(U) =\pi_0\calF_{Z}(U).
\end{equation*}

The isomorphism holds by \eqref{eq:calFOmegaSimeqBBz}, 
and the equality holds because $\calF_Z=\calF_{U_Z}$ by \Cref{def:calFZU}.

\begin{lemma}\label{lm:nontrivclass}
For any $U\in\EssSm_B$ such that $U\times_B z\neq\emptyset$,
and non-empty closed subscheme $Z$ in $B$,
the element $c_Z\in \pi_0\calF_{Z}(U)$ is non-trivial.
\end{lemma}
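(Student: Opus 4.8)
The plan is to detect the class $c_Z$ after pulling back along the closed immersion $i\colon z\hookrightarrow B$ of the (reduced) closed point, where the relevant motivic spaces degenerate. I would use the inverse image functor $i^*\colon\SHPAN(B)\to\SHPAN(z)$: it is a colimit-preserving symmetric monoidal functor commuting with $\SigmainftyT$, and $i^*\SigmainftyT X_+\simeq\SigmainftyT(X\times_B z)_+$ for all $X\in\Sm_B$. Since $i^*$ is a functor, it suffices to show that $i^*(c_Z)\neq 0$.

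The first observation is that, as $B$ is local and $Z\hookrightarrow B$ is a non-empty closed subscheme, the closed point $z$ lies in $Z$; hence $(B-Z)\times_B z=\emptyset$ and $(U-U_Z)\times_B z=\emptyset$. Applying $i^*$ to the defining composite $U/(U-U_Z)\to B/(B-Z)\to B/(B-z)$ and using that $i^*$ preserves cofibres, one obtains
\[
i^*\SigmainftyT\bigl(U/(U-U_Z)\bigr)\simeq\SigmainftyT(U_z)_+,\qquad
i^*\SigmainftyT\bigl(B/(B-Z)\bigr)\simeq i^*\SigmainftyT\bigl(B/(B-z)\bigr)\simeq\mathbf 1_z,
\]
where $\mathbf 1_z\simeq\SigmainftyT(z)_+$ is the unit of $\SHPAN(z)$; moreover $i^*$ carries the first map of the composite to $\SigmainftyT(p)$ for the structure morphism $p\colon U_z\to z$, and the second map to $\id_{\mathbf 1_z}$ (because $B-Z\subseteq B-z$, so over $z$ both quotients become $(z)_+$ and the map is the identity). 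Thus $i^*(c_Z)$ equals the class of $\SigmainftyT(p)\colon\SigmainftyT(U_z)_+\to\mathbf 1_z$.

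It then remains to check that $\SigmainftyT(p)\neq 0$ in $\SHPAN(z)$. Reducing to the finite-type case by continuity — write $U=\varprojlim_\alpha U_\alpha$ with $U_\alpha\in\Sm_B$ and affine étale transition maps, so that each $(U_\alpha)_z$ is non-empty because $U_z\neq\emptyset$ — the morphism $\SigmainftyT(p)$ is the counit of the adjunction $p_\sharp\dashv p^*$ attached to the smooth morphism $p$, evaluated at $\mathbf 1_z$. Under the bijection
\[
[\SigmainftyT(U_z)_+,\mathbf 1_z]_{\SHPAN(z)}=[p_\sharp\mathbf 1_{U_z},\mathbf 1_z]_{\SHPAN(z)}\;\cong\;[\mathbf 1_{U_z},\mathbf 1_{U_z}]_{\SHPAN(U_z)}
\]
this counit corresponds to $\id_{\mathbf 1_{U_z}}$, which is non-zero since $U_z\neq\emptyset$ forces $\mathbf 1_{U_z}\neq 0$ (for instance its $(0,0)$-homotopy ring $\mathrm{GW}(U_z)$ is non-zero). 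Hence $i^*(c_Z)\neq 0$, and therefore $c_Z\neq 0$. The only genuine bookkeeping is the identification of $i^*$ on the two maps of the composite; as an equivalent alternative one may work directly over $B$, using the localization identification $\SigmainftyT(B/(B-z))\simeq i_*\mathbf 1_z$ together with the adjunction $(i^*,i_*)$, which gives the same reduction. I do not anticipate any serious obstacle beyond this.
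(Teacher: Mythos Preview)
Your proof is correct and follows essentially the same approach as the paper: both arguments detect $c_Z$ by applying base change along $i\colon z\hookrightarrow B$ and identifying $i^*(c_Z)$ with the class of the structure morphism $\SigmainftyT(U_z)_+\to\SigmainftyT z_+$, which is non-zero since $U_z\neq\emptyset$. You supply more detail than the paper does (the computation of $i^*$ on the quotients and the adjunction argument for non-triviality), but the strategy is identical.
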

\begin{proof}
Base change along the closed embedding $z\not\hookrightarrow B$
takes $c_Z$ to the class in \[[\Sigma^\infty_{\PP^1}(U_z)_+, \Sigma^\infty_{\PP^1}z_+]_{\SHsAN(z)}\] defined by the canonical morphism of schemes $U_z\to z$, 
which is non-trivial because $U_z\neq\emptyset$ by assumption.
\end{proof}

\begin{lemma}\label{lm:trivofclasses}
For any $U\in\EssSm_B$
and
non-empty closed subscheme $Z$ in $B$,
there are stable equivalences of spectra
\[\calF_{Z-z}(U-U_z)\simeq 0,\quad \calF_{z}(U)\simeq \calF_{Z}(U).\]
\end{lemma}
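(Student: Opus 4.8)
The plan is to deduce both equivalences from one observation: $\calF(V)\simeq 0$ for every $V\in\EssSm_B$ that factors through the open complement $B-z$ (equivalently, with $V\times_B z=\emptyset$). From the description \eqref{eq:calFOmegaSimeqBBz} of $\calF$ and the $(\Sigma^\infty_{\Gm},\Omega^\infty_{\Gm})$-adjunction one has $\calF(V)\simeq\Map_{\SHPAN(B)}\bigl(\SigmainftyT V_+,\SigmainftyT(B/(B-z))\bigr)$, the spectrum-level refinement of the isomorphism used just before \Cref{lm:nontrivclass}. If $V$ lies over $B-z$, then $\SigmainftyT V_+=j_{\#}\SigmainftyT V_+$, where $j\colon B-z\hookrightarrow B$ is the open immersion and $j_\#$ is the left adjoint of $j^*$, so the $(j_\#,j^*)$-adjunction rewrites the mapping spectrum as $\Map_{\SHPAN(B-z)}\bigl(\SigmainftyT V_+,\,j^*\SigmainftyT(B/(B-z))\bigr)$. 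But pulling the cofiber sequence $(B-z)_+\to B_+\to B/(B-z)$ back along $j$ turns its first map into the identity of $(B-z)_+$ (using $(B-z)\times_B(B-z)=B-z$), whence $j^*(B/(B-z))\simeq\ast$ and the mapping spectrum vanishes. By continuity (\Cref{ex:TCatisspectra}) the conclusion extends to all $V\in\EssSm_{B-z}$.

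Granting this, the first asserted equivalence is immediate. Unwinding \Cref{def:calFZU,def:FYUFxU}, and using $(U-U_z)-(U-U_z)\times_B Z=U-U_Z$ with $U_Z:=U\times_B Z$, we get $\calF_{Z-z}(U-U_z)=\fib\bigl(\calF(U-U_z)\to\calF(U-U_Z)\bigr)$. Both $U-U_z$ and $U-U_Z$ miss the fiber over $z$, hence factor through $B-z$, so by the vanishing above this is the fiber of a map of zero spectra; thus $\calF_{Z-z}(U-U_z)\simeq 0$.

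For the second equivalence, note that $B$ being local with closed point $z$ and $Z$ being non-empty and closed force $z\in Z$, so $U_z\not\hookrightarrow U_Z\not\hookrightarrow U$ is a chain of closed immersions. The iterated-fiber identity recorded right after \Cref{def:FYUFxU}, applied with $Y_1=U_z$ and $Y=U_Z$, gives
\[\calF_z(U)=\calF_{U_z}(U)\simeq\fib\bigl(\calF_{U_Z}(U)\to\calF_{U_Z-U_z}(U-U_z)\bigr).\]
Here $\calF_{U_Z}(U)=\calF_Z(U)$ by \Cref{def:calFZU}, and $U_Z-U_z=(U-U_z)\times_B Z$, so the second term is $\calF_{Z-z}(U-U_z)\simeq 0$ by the first part; therefore $\calF_z(U)\simeq\calF_Z(U)$.

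The one point requiring care is the continuity bookkeeping in the vanishing step: one proves the vanishing for $V\in\Sm_{B-z}$, then writes the pro-schemes $U-U_z$ and $U-U_Z$ as cofiltered limits along affine \'etale maps that may be chosen over $B-z$, and passes to the colimit of spectra. I expect this --- together with keeping track of the scheme-theoretic identifications $U_Z-U_z=(U-U_z)\times_B Z$ and $(U-U_z)-(U-U_z)\times_B Z=U-U_Z$ --- to be the only, and entirely routine, obstacle; the rest is formal manipulation of the fiber sequences defining $\calF_Y(-)$.
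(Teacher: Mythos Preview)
Your proof is correct and follows essentially the same approach as the paper: the key input is that $u^*\calF\simeq 0$ for $u\colon B-z\hookrightarrow B$ (which you spell out via $j^*(B/(B-z))\simeq\ast$, whereas the paper simply asserts it), and the second equivalence is deduced from the first via the same fiber sequence $\calF_{z}(U)\simeq\fib(\calF_{Z}(U)\to\calF_{Z-z}(U-U_z))$. Your derivation of the first equivalence by directly computing $\calF_{Z-z}(U-U_z)$ as a fiber of two vanishing spectra is in fact cleaner than the paper's colimit expression, which is superfluous for the stated claim.
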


\begin{proof}
The first equivalence follows because
\[\varinjlim_{\codim_B Z=d-1}\calF_{Z-z}(U-U_z)\calsimeq
\varinjlim_{\codim_B Z=d-1}(u^*\calF)_{Z-z}(U-U_z)\calsimeq 0,\]
where $u\colon B-z\to B$,
because
$u^*\calF \simeq 0\in \SHs(B-z)$.
The second equivalence follows from the first
because 
\[\calF_{z}(U)\calsimeq\fib(\calF_{Z}(U)\to\calF_{Z-z}(U-U_z)).\]
\end{proof}

\begin{proposition}
\label{prop:differentialtoclp_ld}
For any $U\in\EssSm_B$ such that $U\times_B z\neq\emptyset$,
consider the morphisms
\begin{equation}\label{eq:differentialtoclp_ld}
\partial\colon\bigoplus\limits_{\codim_B x=d-1}\pi_{l-(d-1)}\calF_{x}(U)\to \bigoplus\limits_{\codim_B x=d}\pi_{l-d}\calF_{x}(U)
\end{equation}
defined in \eqref{eq:partialzprimez}.
Then, for $l=d$, there is an isomorphism
\begin{equation}
\label{eq:cokerpartialdFzU}
\operatorname{Coker}(\partial)\cong\pi_{0}\calF_{z}(U).
\end{equation}
The element $c_z\in \pi_{0}\calF_{z}(U)$ is non-trivial.
\end{proposition}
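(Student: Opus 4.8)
The plan is first to identify the target of $\partial$ with $\pi_0\calF_z(U)$, then to show that the source of $\partial$ already vanishes — so that $\operatorname{Coker}(\partial)$ is automatically the whole target — and finally to read off the non-triviality of $c_z$ from \Cref{lm:nontrivclass}.

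Since $B$ is local of Krull dimension $d$ with closed point $z$, every point $x\neq z$ corresponds to a prime strictly contained in the maximal ideal and hence has $\codim_B x\leq d-1$; thus $z$ is the unique point of $B$ of codimension $d$, and for $l=d$ the target $\bigoplus_{\codim_B x=d}\pi_{l-d}\calF_x(U)$ of \eqref{eq:differentialtoclp_ld} is precisely $\pi_0\calF_z(U)$. For the source, I would fix $x\in B$ with $\codim_B x=d-1$ and let $Z_x$ denote its closure in $B$; since $z$ is the closed point of $B$ it lies in $Z_x$, so $Z_x$ is one-dimensional and local with closed point $z$, and in particular $(Z_x)_{(z)}=Z_x$. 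The equivalence appearing in the construction \eqref{eq:partialzprimez} of $\partial^x_z$ then reads $\calF_x(U)\simeq\calF_{Z_x-z}(U-U_z)$ (using $B_{(z)}=B$ and $U\times_B(B-z)=U-U_z$), and the right-hand side is stably contractible by \Cref{lm:trivofclasses} applied with $Z=Z_x$ — equivalently, because $u^*\calF\simeq 0$ for the open immersion $u\colon B-z\hookrightarrow B$. Hence $\calF_x(U)\simeq 0$ for all such $x$, the source $\bigoplus_{\codim_B x=d-1}\pi_{l-(d-1)}\calF_x(U)$ of \eqref{eq:differentialtoclp_ld} is zero, and therefore $\operatorname{Coker}(\partial)=\pi_0\calF_z(U)$, which is the isomorphism \eqref{eq:cokerpartialdFzU}.

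It remains to note that $c_z$ is the class $c_Z$ of \Cref{lm:nontrivclass} for $Z$ the reduced closed point $\{z\}$, a non-empty closed subscheme of $B$ because $B$ is local; since $U\times_B z\neq\emptyset$ by hypothesis, that lemma gives $c_z\neq 0$ in $\pi_0\calF_z(U)$. I do not expect a genuine obstacle here: all the substance is already packaged in \Cref{lm:trivofclasses} and \Cref{lm:nontrivclass}, and the only thing needing a moment's care is the bookkeeping that rewrites the middle term of \eqref{eq:partialzprimez} as $\calF_{Z_x-z}(U-U_z)$ in the local setting, together with the elementary height computation singling out $z$ among the codimension-$d$ points.
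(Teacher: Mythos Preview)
Your argument is correct and is essentially the paper's own proof: both identify the target with $\pi_0\calF_z(U)$ using that $z$ is the unique codimension-$d$ point of the local scheme $B$, and both kill the source by rewriting $\calF_x(U)$ as $\calF_{Z_x-z}(U-U_z)$ and invoking \Cref{lm:trivofclasses}, then conclude with \Cref{lm:nontrivclass} for $Z=z$. The only cosmetic difference is that the paper packages the source as a filtered colimit $\varinjlim_{\codim_B Z=d-1}\pi_{l-(d-1)}\calF_{Z-z}(U-U_z)$ rather than your point-by-point sum over closures $Z_x$, which amounts to the same thing.
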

\begin{proof}
Since $B$ is local, we have the isomorphisms 
\begin{align*}
\bigoplus\limits_{\codim_B x=d-1}\pi_{l-(d-1)}\calF_{x}(U)&\cong\varinjlim_{\codim_B Z=d-1}\pi_{l-(d-1)}\calF_{Z-z}(U-U_z),\\
\bigoplus\limits_{\codim_B x=d}\pi_{l-d}\calF_{x}(U)&\cong\pi_{l-d}\calF_{z}(U).
\end{align*}
Thus, the left-hand side of \eqref{eq:differentialtoclp_ld} is trivial 
by applying \Cref{lm:trivofclasses} to $Z=z$,
and \eqref{eq:cokerpartialdFzU} follows.
The second claim follows by applying \Cref{lm:nontrivclass} to $Z=z$.
\end{proof}

\begin{proposition}\label{prop:differentialtoclp_lmo}
For any $U\in\EssSm_B$ such that $U\times_B z\neq\emptyset$,
the morphism
\begin{equation}
\label{eq:differentialtoclp_lmo}
g\colon\pi_{l}\calF(U)\to \bigoplus\limits_{\codim_B x=0}\pi_{l}\calF_{x}(U)
\end{equation}
induces, for $l=0$, an isomorphism
\begin{equation}\label{eq:kerpartial0}
\operatorname{Ker}(g)\cong \pi_{0}\calF(U).   
\end{equation}
The element $c_B\in \pi_{0}\calF(U)$ is non-trivial 
(under the canonical equivalence $\calF(U)\simeq \calF_B(U)$).
\end{proposition}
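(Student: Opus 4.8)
The plan is to show that the target of $g$ is the zero group, so that $\operatorname{Ker}(g)$ is tautologically all of $\pi_0\calF(U)$ --- which is the isomorphism \eqref{eq:kerpartial0} --- and then to deduce the non-triviality of $c_B$ directly from \Cref{lm:nontrivclass}. This mirrors \Cref{prop:differentialtoclp_ld}, where the source of the top differential of the $\tf$-Cousin complex collapses because $\calF$ is supported over $z$; here the target of its augmentation \eqref{eq:calFXcalFXBzero} collapses for the same reason.

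First I would record that, since $B$ is local with closed point $z$ and $\dim B=d>0$, we have $\codim_B z=\dim B=d\neq 0$, so $z\notin B^{(0)}$. Hence every $x\in B^{(0)}$ lies in the open subscheme $B-z$: indeed $z$ is never a generization of a point $x\neq z$, so the local scheme $B_{(x)}$, and therefore $U\times_B B_{(x)}$ and $(U\times_B B_{(x)})-(U\times_B x)$, are all schemes over $B-z$. Since $\calF$ is defined by \eqref{eq:calFOmegaSimeqBBz}, one has $u^*\calF\simeq 0$ for the open immersion $u\colon B-z\to B$ (as already used in the proof of \Cref{lm:trivofclasses}), and because $\calF$ is $\A^1$-invariant and Nisnevich local, restriction along $u$ is just re-evaluation; thus $\calF$ vanishes on every scheme over $B-z$. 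In particular $\calF_x(U)=\fib\bigl(\calF(U\times_B B_{(x)})\to\calF((U\times_B B_{(x)})-(U\times_B x))\bigr)\simeq 0$ for every $x\in B^{(0)}$, so the target $\bigoplus_{\codim_B x=0}\pi_l\calF_x(U)$ of $g$ vanishes for all $l$, and $\operatorname{Ker}(g)=\pi_0\calF(U)$ when $l=0$, establishing \eqref{eq:kerpartial0}.

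For the last assertion I would use that, under the canonical equivalence $\calF(U)\simeq\calF_B(U)$ --- which holds since $U\times_B B=U$, making the relevant complement empty --- the class $c_B$ is precisely the one attached to the non-empty closed subscheme $Z=B$ of $B$ in the paragraph preceding \Cref{lm:nontrivclass} (and $B\neq\emptyset$ because $U\times_B z\neq\emptyset$). Applying \Cref{lm:nontrivclass} with $Z=B$ then gives the non-triviality of $c_B$, completing the proof.

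I do not expect a real obstacle here: once it is observed that the points of $B^{(0)}$ avoid $z$ and that everything in sight is then a scheme over $B-z$, where $\calF$ vanishes, the statement falls out at once. The only point needing a little care is the routine verification that $u^*\calF\simeq 0$ genuinely forces $\calF$ to vanish on all of $\EssSm_{B-z}$, which is standard since $u$ is an open immersion and $\calF$ is already $\A^1$-invariant and Nisnevich local.
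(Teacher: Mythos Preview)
Your proposal is correct and follows essentially the same approach as the paper: show the target of $g$ vanishes because every $x\in B^{(0)}$ lies over $B-z$ where $u^*\calF\simeq 0$, so $\operatorname{Ker}(g)=\pi_0\calF(U)$, and then apply \Cref{lm:nontrivclass} with $Z=B$ for the non-triviality of $c_B$. The paper packages the vanishing of the target by citing \Cref{lm:trivofclasses} (whose proof is exactly the $u^*\calF\simeq 0$ observation you spell out), while you unpack that argument directly; the content is the same.
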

\begin{proof}
For $l=0$ the left-hand side of \eqref{eq:differentialtoclp_lmo} is $\pi_0\calF(U)$.
The element $c_B\in\pi_0\calF(U)$ is non-trivial
by \Cref{lm:nontrivclass} applied to $Z=B$.
The right-hand side of \eqref{eq:differentialtoclp_lmo} is trivial
by \Cref{lm:trivofclasses}
applied to all closed subschemes $Z$ in $B$ of codimension $1$.
\end{proof}

For any $U\in\EssSm_B$,
we consider 
the $d+2$-term complex of abelian groups 
\begin{equation*}\label{eq:Cou(B,F)}\pi_l \ovCou_\tf(U,\calF)\end{equation*}
concentrated in degrees $-1,\dots,d$,
see \Cref{def:tfCou,def:ovCoutf}.

\begin{theorem}
\label{cor:nontrivtermsBBz}
Suppose $B$ is a local scheme of Krull dimension $d>0$ with closed point $z$.
For any non-empty $U\in\EssSm_B$,
the cohomology group
\[
H^{i}(\pi_l \ovCou_\tf(U,\calF)) \cong \begin{cases}
\pi_{0}\calF(U),  & \text{if }i=-1\text{ and } l=0,\\
0,  & \text{if }i=0,\dots,d-1\text{ and } l\in\bbZ,\\
\pi_{0}\calF_{z}(U),  & \text{if }i=d\text{ and } l=d.
\end{cases}
\]
The element $c_B\in\pi_{0}\calF(U)$ defined by the canonical equivalence $\calF(U)\simeq \calF_B(U)$
is non-trivial.
Moreover, 
$c_z\in\pi_{0}\calF_{z}(U)$ is non-trivial.
\end{theorem}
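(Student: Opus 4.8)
The plan is to read off the cohomology directly from the explicit shape of the $\tf$-Cousin complex, using that $\calF$ is supported on the closed point $z$; most of the actual work is already packaged in \Cref{lm:trivofclasses,lm:nontrivclass,prop:differentialtoclp_lmo,prop:differentialtoclp_ld}. First I would recall from \Cref{def:tfCou,def:ovCoutf} (see also \eqref{eq:ovCoutfXA}) that $\pi_l\ovCou_\tf(U,\calF)$ is the complex of abelian groups concentrated in degrees $-1,0,\dots,d$ whose degree $-1$ term is $\pi_l\calF(U)$ and whose degree $i$ term, for $0\le i\le d$, is $\bigoplus_{x\in B^{(i)}}\pi_{l-i}\calF_x(U)$. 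Since there is nothing in degrees $-2$ or $d+1$, the group $H^{-1}$ is the kernel of the first differential and $H^d$ is the cokernel of the last.

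The key input is the support vanishing. Because $\calF=\Omega^\infty_{\Gm}\SigmainftyT(B/(B-z))$ and the pointed motivic space $B/(B-z)$ restricts to the base point over $B-z$, we have $u^*\calF\simeq0$ in $\SHs(B-z)$ for the open immersion $u\colon B-z\hookrightarrow B$; hence $\calF$ evaluates to $0$ on every object of $\EssSm_{B-z}$. Now for any point $x\in B$ with $x\neq z$ the closed point $z$ is not a generization of $x$, so the structure map of the local scheme $B_{(x)}$ factors through $B-z$, and therefore $\calF_x(U)=\calF_x(U\times_B B_{(x)})$ is the fibre of a map between spectra that both vanish; thus $\calF_x(U)\simeq0$. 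As $d>0$, every point of $B$ of codimension $<d$ is different from $z$, and a codimension-$d$ point other than $z$ is a fortiori different from $z$; so all terms of $\pi_l\ovCou_\tf(U,\calF)$ in degrees $0,\dots,d-1$ vanish and its degree-$d$ term reduces to $\pi_{l-d}\calF_z(U)$. In particular $H^i(\pi_l\ovCou_\tf(U,\calF))=0$ for $0\le i\le d-1$ and all $l\in\bbZ$.

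For the two extreme groups I would simply quote \Cref{prop:differentialtoclp_lmo,prop:differentialtoclp_ld}. The degree-$0$ term being zero, the first differential vanishes, so $H^{-1}(\pi_l\ovCou_\tf(U,\calF))\cong\pi_l\calF(U)$, which for $l=0$ is $\pi_0\calF(U)$ (\Cref{prop:differentialtoclp_lmo}); dually, the degree-$(d-1)$ term being zero, the last differential vanishes, so $H^d(\pi_l\ovCou_\tf(U,\calF))\cong\pi_{l-d}\calF_z(U)$, which for $l=d$ is $\pi_0\calF_z(U)$ (\Cref{prop:differentialtoclp_ld}). Under the tautological identification $\calF_B(U)=\fib(\calF(U)\to\calF(\emptyset))=\calF(U)$ this produces all the listed cohomology groups. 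Finally, the non-triviality assertions are \Cref{lm:nontrivclass}: applying it with $Z=B$ shows $c_B\in\pi_0\calF_B(U)=\pi_0\calF(U)$ is non-trivial, and applying it with $Z=z$ shows $c_z\in\pi_0\calF_z(U)$ is non-trivial, both using that the fibre $U\times_B z$ is non-empty.

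The argument has essentially no obstacle of its own, being a repackaging of results already in place; the one point requiring a little care is the support vanishing $\calF_x(U)\simeq0$ for $x\neq z$ — one must observe that $B_{(x)}$, hence $U\times_B B_{(x)}$ and the open complement of its fibre over $x$, all lie over $B-z$ — together with the degree bookkeeping in the mapping cone that pins down exactly which two cohomology groups survive.
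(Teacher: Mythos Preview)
Your proof is correct and follows essentially the same approach as the paper: the middle vanishing comes from the support vanishing encapsulated in \Cref{lm:trivofclasses}, and the identification of $H^{-1}$ and $H^d$ together with the non-triviality of $c_B$ and $c_z$ is precisely the content of \Cref{prop:differentialtoclp_lmo,prop:differentialtoclp_ld} (which in turn invoke \Cref{lm:nontrivclass}). One small caution: both \Cref{lm:nontrivclass} and the two propositions assume $U\times_B z\neq\emptyset$, which is not literally implied by ``non-empty $U\in\EssSm_B$'' (e.g.\ $U=B-z$), so the non-triviality assertions really need this extra hypothesis; the paper's proof has the same tacit reliance, and the only application (\Cref{th:nontrivtermsBBz}) is to local $U$ whose closed point maps to $z$, where it is satisfied.
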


\begin{proof}
The claim
for
$i=0,\dots,d-1$ and $l\in\bbZ$
follows from \Cref{lm:trivofclasses}.
The other claims
follow from \Cref{prop:differentialtoclp_lmo,prop:differentialtoclp_ld}.
\end{proof}

\begin{theorem}\label{th:nontrivtermsBBz}
Suppose $B$ is any base scheme of positive Krull dimension.
Let $U\in\EssSm_B$ be an essentially smooth local $B$-scheme. 
Assume the image $z\in B$ of the closed point of $U$ has positive codimension in $B$.
Then
\[
H^{i}(\pi_l\ovCou( U, \calF ))\neq 0,\quad i=\codim_B z,
\]
where $l=\codim_B z$, $\calF=\Omega^\infty_{\Gm}\SigmainftyT(B/(B-Z))$, and $Z$ is the closure of $z$ in $B$.
\end{theorem}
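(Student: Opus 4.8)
The plan is to reduce to the local situation already analysed in \Cref{cor:nontrivtermsBBz} and then transport the computation through the quasi-isomorphism between the $\tf$-Cousin and the Cousin complex. Write $d:=\codim_B z$; by hypothesis $d>0$, and the relevant cohomological and homological degrees are $i=l=d$. First I would replace $B$ by its local scheme $B_{(z)}$ at $z$. Since $U$ is local with closed point lying over $z$, every point of $U$ lies over a generization of $z$, so the structure morphism $U\to B$ factors through $B_{(z)}$; thus $U\in\EssSm_{B_{(z)}}$ and $U\times_{B_{(z)}}z\neq\emptyset$, while $B_{(z)}$ is local of Krull dimension $d>0$ with closed point $z$. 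The morphism $q\colon B_{(z)}\to B$ is a filtered limit of open immersions, so $q^{*}$ is computed by restriction of presheaves (by continuity, as in \Cref{ex:TCatisspectra}) and commutes with $L_\nis$, with $\SigmainftyT$, and with $\Omega^\infty_{\Gm}$; moreover $(B-Z)\times_B B_{(z)}=B_{(z)}-z$ because $z$ is the generic point of $Z=\overline{\{z\}}$, so
\[
q^{*}\calF\simeq\Omega^\infty_{\Gm}\SigmainftyT\bigl(B_{(z)}/(B_{(z)}-z)\bigr),
\]
which is exactly the object $\calF$ of \eqref{eq:calFOmegaSimeqBBz} over the base $B_{(z)}$.

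Next I would note that the Cousin complex $\ovCou(U,\calF)$ of \Cref{def:ovCou}, and likewise $\ovCou_\tf(U,\calF)$, only involves the spectra $(L_\nis\calF)_{x}(U)=(L_\nis\calF)_{x}(U_{(x)})$ for $x\in U$ together with the differentials built from localizing squares, and every $U_{(x)}$ lies in $\EssSm_{B_{(z)}}$. Combined with the compatibilities above, this shows that $\ovCou(U,\calF)$ formed over $B$ agrees with $\ovCou(U,q^{*}\calF)$ formed over $B_{(z)}$, and similarly for $\pi_{l}\ovCou_\tf$. Hence I may assume from the start that $B$ is local of dimension $d>0$ with closed point $z$, that $\calF=\Omega^\infty_{\Gm}\SigmainftyT(B/(B-z))$, and that $U\in\EssSm_B$ is local with $U\times_B z\neq\emptyset$: precisely the setting of \Cref{section:counterex}.

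In this reduced form, \Cref{cor:nontrivtermsBBz} gives an isomorphism $H^{d}\bigl(\pi_{d}\ovCou_\tf(U,\calF)\bigr)\cong\pi_{0}\calF_{z}(U)$ under which the distinguished class $c_{z}$ is carried to a nonzero element, so this cohomology group is nontrivial. On the other hand, taking $\SigmainftyT(B/(B-z))\in\SH_{\A^1,\nis}(B)$ as input (its $\Gm$-infinite loop space being $\calF$), \Cref{cor:CoutfsimeqCou}, applied to the cone defining $\ovCou$, provides a quasi-isomorphism $\pi_{l}\ovCou_\tf(U,\calF)\xrightarrow{\simeq}\pi_{l}\ovCou(U,\calF)$; evaluating in the case $l=d$ therefore yields
\[
H^{\codim_B z}\bigl(\pi_{l}\ovCou(U,\calF)\bigr)=H^{d}\bigl(\pi_{d}\ovCou(U,\calF)\bigr)\cong\pi_{0}\calF_{z}(U)\neq 0,
\]
which is the claim. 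The step I expect to be the main obstacle is the base-change identification of the second paragraph: one has to check carefully that forming the $\tf$-Cousin and Cousin complexes commutes with restriction along the pro-open morphism $B_{(z)}\to B$, which unwinds to the stability of $L_\nis$, $\SigmainftyT$ and $\Omega^\infty_{\Gm}$ under pullback along open immersions and filtered limits of base schemes; the remaining steps are formal once \Cref{cor:nontrivtermsBBz} and the quasi-isomorphism of \Cref{cor:CoutfsimeqCou} are in hand.
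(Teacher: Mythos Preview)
Your proof is correct and follows the same approach as the paper: reduce to the local base $B_{(z)}$ and invoke \Cref{cor:nontrivtermsBBz}. The paper's one-line proof leaves the passage from $\ovCou_\tf$ (which is what \Cref{cor:nontrivtermsBBz} actually computes) to $\ovCou$ implicit; you correctly make this step explicit via the quasi-isomorphism of \Cref{cor:CoutfsimeqCou} applied to $\SigmainftyT(B/(B-z))\in\SH_{\A^1,\nis}(B)$.
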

\begin{proof}
This follows by applying \Cref{cor:nontrivtermsBBz} to the local scheme $B_{(z)}$ of $B$ at $z$. 
\end{proof}

\printbibliography
\end{document}